\documentclass[11pt]{amsart}
\usepackage{amssymb}
\usepackage{amsmath}	
\usepackage{bm}

\usepackage{verbatim}

\calclayout

\usepackage{graphicx}
\usepackage{epstopdf}

\newcommand\inlinegraphic[2][{scale=1.0}]{
\begin{array}
  {c} \includegraphics[#1]{./minimal-graphics/#2}
\end{array}
}

\usepackage{xcolor}

\usepackage{enumitem}

\usepackage[plainpages=false, pdfpagelabels,  colorlinks=true, pdfstartview=FitV, linkcolor=blue, citecolor=blue, urlcolor=blue]{hyperref}

\numberwithin{equation}{section}
\numberwithin{figure}{section}

\newcommand\la{\lambda}
\newcommand\La{\Lambda}

\newcommand\ga{\gamma}

\newcommand\N{\mathbb{N}}
\newcommand\Z{\mathbb{Z}}
\newcommand\Q{\mathbb{Q}}

\DeclareMathOperator{\tr}{tr}
\DeclareMathOperator{\End}{End}
\DeclareMathOperator{\Ad}{Ad}
\DeclareMathOperator{\alg}{alg}
\DeclareMathOperator{\spn}{span}
\DeclareMathOperator{\res}{Res}
\DeclareMathOperator{\cl}{cl}
\DeclareMathOperator{\character}{char}
\DeclareMathOperator{\ord}{ord}

\newcommand\inv{^{-1}}
\newcommand\powerpm{^{\pm 1}}

\newcommand\qBr{\mathfrak B}  
\newcommand\Br{B}    
\newcommand\BMW{\mathfrak D}

\newcommand\ex[1]{e_{(#1)}}
\newcommand\barex[1]{{\bar e}_{(#1)}}
\newcommand\exprime[1]{e'_{(#1)}}

\newcommand\mfs{\mathfrak s}
\newcommand\mft{\mathfrak t}

\newcommand\eps{\varepsilon}

\newcommand{\cell}[2]{\Delta_{#1}^{#2}}
\newcommand{\Res}[3]{\res^{#1}_{#2}(#3)}

\newcommand{\qinteger}[2]{[#1]_{#2}}
\newcommand\qint{\qinteger}

\theoremstyle{plain}
\newtheorem{theorem}{Theorem}[section]

\theoremstyle{plain}
\newtheorem{proposition}[theorem]{Proposition}

\theoremstyle{plain}
\newtheorem{corollary}[theorem]{Corollary}

\theoremstyle{plain}
\newtheorem{lemma}[theorem]{Lemma}

\theoremstyle{definition}
\newtheorem{definition}[theorem]{Definition}
\theoremstyle{definition}
\newtheorem{example}[theorem]{Example}

\theoremstyle{definition}
\newtheorem{remark}[theorem]{Remark}

\theoremstyle{definition}

\theoremstyle{definition}
\newtheorem{notation}[theorem]{Notation}

\theoremstyle{plain}

\title{Semisimplicity Criteria for algebras with a Jones Basic Construction}

\author{Frederick M. Goodman}
\address{Department of Mathematics\\ University of Iowa\\ Iowa
City, Iowa}
\email{ goodman@uiowa.edu}

\author{Hans Wenzl}
\address{Department of Mathematics\\ University of California, San Diego\\San Diego, California}
\email{ hwenzl@ucsd.edu}

\subjclass[2000]{}

\begin{document}
\begin{abstract}
	We prove a semisimplicity criterion for a large class of algebras by a new method. This can be applied to Brauer, BMW, and $q$-Brauer
algebras.
\end{abstract}
\maketitle
\medskip

\section{Introduction}  We develop a new method for determining semisimplicity of algebras $A_n$ that
generically are obtained by repeated Jones basic constructions from a simpler
family of algebras $Q_n$. Such algebras include the Brauer algebras,
Birman-Wenzl-Murakami (BMW) algebras, and the $q$-Brauer algebras.
Our method is valid over a field of arbitrary characteristic.

Let us first describe our results in the context of the examples mentioned
above. These algebras appear as centralizer algebras of certain tensor
categories (Brauer algebras, BMW algebras) or module categories ($q$-Brauer
algebras) that carry natural dimension functions. The dimension functions give
rise to a special trace functional, the Markov trace; let $n_0$ be the smallest
value of $n$ such that the Markov trace is degenerate on $A_n$ -- that is,
there exists $0\neq x \in A_n$ such that $\tr(x y) = 0$ for all $y \in A_n$.
Moreover, let $n_1 +1 $ be the smallest value of $n$ such that the quotient
algebra $Q_n$ fails to be semisimple. (Put $n_0 = \infty$ in case the Markov
trace is always non-degenerate, and $n_1 = \infty$ in case $Q_n$ is always
semisimple.) By arguments presented by the second author for the Brauer
algebras ~\cite{Wenzl-Brauer}, and later for the BMW algebras ~\cite{Wenzl-BCD}
and $q$-Brauer algebras ~\cite{Wenzl-qBr1}, ~\cite{Wenzl-qBr2}, $A_n$ is
semisimple for all $n \le \min\{n_0, n_1\}$. Explicit bounds are obtained using
formulas for the dimension functions; these are El Samra-King polynomials for
the Brauer algebras and two different $q$-deformations of them for the BMW
algebras and the $q$-Brauer algebras, see Section \ref{explicit:sec} for
details.

To prove that these bounds are sharp, we exploit the fact that each of these
algebras is cellular ~\cite{Graham-Lehrer}. We show that the cellular structure
of $A_{k+2s}$ is related to the cellular structure of $A_k$ in a particular way
related to the Jones basic construction. Using this, we show that $A_n$ is
non-semisimple for $n > \min\{n_0, n_1\}$. This allows us to obtain explicit
semisimplicity criteria for these families of algebras, recovering results of
Rui et al.\ (\cite{Rui-Brauer-semisimplicity.1, Rui-Brauer-semisimplicity.2,
	Rui-Brauer-discriminants, Rui-BMW-Gram-determinants, rui2022jm}). We note that
for the Brauer and BMW algebras, Rui et al.\ have also determined for each cell
module whether the cell module is simple, which is a stronger result.

We prove our results for a general class of algebras determined by axioms
listed in Section \ref{axioms}. We then prove that the algebras $A_n$ in this
class are semisimple if and only if $n \le \min\{n_0, n_1\}$, in Theorems
\ref{first semisimplicity theorem - axiomatic version} and \ref{second
	semisimplicity theorem - axiomatic version}. We review properties of Brauer
algebras, BMW algebras and $q$-Brauer algebras in Section \ref{examples:sec}
and show that they satisfy our axioms. We then explicitly calculate $\min\{n_0,
	n_1\}$ for these examples in Section \ref{explicit:sec}, for all possible
choices of parameters.

The theoretical novelty of our method is that it plays the Jones basic
construction against the cellular structure of the algebras $A_n$ to obtain
criteria for semisimplicity involving the weights of the Markov trace. The
method does not require computation of seminormal representations or Gram
determinants. The practical advantage is that applying our conditions requires
only computing vanishing conditions for the weights of the Markov trace, rather
than for the Gram determinants, and that turns out to be relatively easy. A
limitation of this approach is that we cannot obtain all the information
encoded in the Gram determinants. We expect that our approach would also be
useful for other algebras related to the Jones basic construction, such as
e.g.\ walled Brauer algebras and cyclotomic BMW algebras.

\section{Preliminaries}
In this section, we review background information on embeddings of semisimple
algebras and on cellular algebras.

\subsection{Branching diagrams }  \label{section: branching diagrams}

Suppose $\varphi: A \to B$ is a unital homomorphism of finite dimensional split
semisimple algebras over a field $F$. Let $A(i)$, $i \in I$, be the minimal
two-sided ideals of $A$ and $B(j)$, $j \in J$, the minimal two-sided ideals of
$B$. We associate a $J \times I$ {\em inclusion matrix} $\omega$ to the
homomorphism $\varphi: A \to B$, as follows. Let $W_j$ be a simple
$B(j)$--module. Then $W_j$ becomes an $A$--module via the homomorphism, and
$\omega(j, i)$ is the multiplicity of a simple $A(i)$--module in the
decomposition of $W_j$ as an $A$--module. An equivalent characterization of the
inclusion matrix is the following. Let $q_i$ be a minimal idempotent in $A(i)$
and let $z_j$ be the minimal central idempotent in $B$ such that $z_j B =
	B(j)$. Then $\varphi(q_i) z_j$ is the sum of $\omega(j, i)$ minimal idempotents
in $B(j)$. We are mostly interested in injective homomorphisms or inclusions $A
	\subseteq B$, but at one point we need the more general setup.

It is convenient to encode an inclusion matrix by a bipartite graph, called the
	{\em branching diagram}; the branching diagram has vertices labeled by $I$
arranged on one horizontal line, vertices labeled by $J$ arranged along a
second (higher) horizontal line, and $\omega(j, i)$ edges connecting $j \in J$
to $i \in I$.

If $A_1 \to A_2 \to A_3 \cdots$ is a (finite or infinite) sequence of
homomorphisms of finite dimensional split semisimple algebras over $F$, then
the branching diagram for the sequence is obtained by stacking the branching
diagrams for each inclusion, with the upper vertices of the diagram for $A_i
	\to A_{i+1}$ being identified with the lower vertices of the diagram for
$A_{i+1} \to A_{i+2}$.

For our purposes, it suffices to restrict our attention to the case that the
entries in each inclusion matrix are all $0$ or $1$; thus in the branching
diagram there are no multiple edges between vertices. We say that the
homomorphisms (and the branching diagram) are {\em multiplicity free}. We can
also assume that $A_0 = F$, so that there is a unique vertex $\emptyset$ at
level zero of the branching diagram, which we call the {\em root} of the
branching diagram.

The branching diagram of a sequence of multiplicity free homomorphisms is an
abstract branching diagram in the following sense:

\begin{definition}
	\label{def of branching}  A (finite or infinite, multiplicity free) abstract branching diagram $\Gamma$  is a  graph  with vertex set
	$ \coprod_{i \ge 0} \Gamma_i$, with the following properties
	\begin{enumerate}
		\item $\Gamma_0$ is a singleton and $\Gamma_i$ is finite for all $i$.
		\item Two vertices $v \in \Gamma_i$ and $w \in \Gamma_j$ are adjacent only if $|i -
			      j|=1$. Adjacent vertices are connected by a unique edge.
		\item  If $i \ge 1$ and $v \in \Gamma_i$, then $v$ is adjacent to at least one vertex
		      in $\Gamma_{i-1}$.
	\end{enumerate}
	We write $v \nearrow w$   if $v \in \Gamma_i$ is adjacent to $w \in \Gamma_{i+1}$.     If $v \in \Gamma_i$, we write $|v|  = i$.
\end{definition}

Given a multiplicity-free branching diagram $\Gamma$, we define a new branching
diagram $\tilde \Gamma$ as follows. The vertex set $\tilde \Gamma_n$ at level
$n$ is the set of pairs $(v, n)$ where $v \in \Gamma_k$ for some $k \le n$ with
$n - k$ even. The branching rule is $(v, n) \nearrow (w, n+1)$ if and only if
$|w| = |v| + 1$ and $v \nearrow w$ in $\Gamma$, or $|w| = |v| - 1$ and $w
	\nearrow v$ in $\Gamma$. We say that $\tilde \Gamma$ is the branching diagram
obtained {\em by reflections} from $\Gamma$.

\begin{example}
	(Young's lattice) \label{Young's lattice}
	For $n \ge 0$,  let $\Lambda_n$ denote the set of Young diagrams (or integer partitions) of size $n$.    {\em Young's lattice} is the disjoint
union of $\Lambda_n$ for $n \ge 0$, with the following branching rule:
	for $\lambda \in \Lambda_n$ and $\mu \in \Lambda_{n+1}$,  $\lambda \to \mu$ if $\mu$ is obtained from $\lambda$ by adding one box.

	Over a field of characteristic zero, when the parameter $q$ is not a root of
	unity, the Hecke algebras $H_n(q)$ (see Section \ref{subsection Hecke algebras}) are split semisimple for all $n$, the simple
	modules of $H_n(q)$ are labelled by $\Lambda_n$, and the branching rule for the
	inclusion $H_n \subseteq H_{n+1}$ is as described above.

	Each $\Lambda_n$ has a partial order (dominance order) that plays a role in the
	representation theory of the Hecke algebras, in particular in the definition of
	a cellular structure on $H_n(q)$. The definition is as follows: $\lambda \unrhd
		\mu$ in $\Lambda_n$ if for all $k$, $$\lambda_1 + \lambda_2 + \cdots +
		\lambda_k \ge \mu_1 + \mu_2 + \cdots + \mu_k.$$
\end{example}

\begin{example}
	\label{lattice for generic Brauer algebras}
	Let $\tilde \Lambda$ denote the branching diagram obtained by reflections from Young's lattice.
	$\tilde \Lambda$ is the branching diagram for the sequence of Brauer algebras in the generic semisimple case:  over a field of characteristic
zero and when the parameter $\delta$ is not an integer, the Brauer algebras $\Br_n(\delta)$ are split semisimple, the simple modules of
$\Br_n(\delta)$ are labelled by $\tilde \Lambda_n$, and the branching rule for $\Br_n \subseteq \Br_{n+1}$ is that obtained by reflections from
Young's lattice.

	Each $\tilde \Lambda_n$ has a partial order that plays a role in the
	representation theory of the Brauer algebras. The definition is $(\lambda, n)
		\unrhd (\mu, n)$ if $|\lambda| < |\mu|$, or if $|\lambda| = |\mu|$ and $\lambda
		\unrhd \mu$ in dominance order for Young diagrams.
\end{example}

\subsection{Cellular Algebras}  \label{section cellular algebras}
We recall the definition of {\em cellularity} from ~\cite{Graham-Lehrer}; see
also ~\cite{Mathas-book}. We use a slightly weaker version of the definition,
which was introduced in ~\cite{MR2510050} . In the following, an {\em algebra
		involution} $*$ on an algebra $A$ is an algebra anti-automorphism such that
$(a^*)^* = a$ for all $a \in A$.

\begin{definition}
	\label{gl cell}  Let $R$ be an integral domain and $A$ a unital $R$--algebra.  A {\em cell datum} for $A$ consists of  an algebra involution
$*$ of $A$; a partially ordered set $(\Lambda, \ge)$ and
	for each $\la \in \Lambda$  a set $\mathcal T(\lambda)$;  and   a subset $
		\mathcal C = \{ c_{s, t}^\la :  \la \in \Lambda \text{ and }  s, t \in \mathcal T(\la)\} \subseteq A$;
	with the following properties:
	\begin{enumerate}
		\item  $\mathcal C$ is an $R$--basis of $A$.
		\item   \label{mult rule} For each $\la \in \Lambda$,  let $A^{> \la}$  be the span of the  $c_{s, t}^\mu$  with
		      $\mu > \la$.   Given $\la \in \Lambda$,  $s \in \mathcal T(\la)$, and $a \in A$,   there exist coefficients
		      $r_v^s( a) \in R$ such that for all $t \in \mathcal T(\la)$:
		      $$
			      a c_{s, t}^\la  \equiv \sum_v r_v^s(a)  c_{v, t}^\la  \mod  A^{> \la}.
		      $$
		\item  $(c_{s, t}^\la)^* \equiv c_{t, s}^\la   \mod  A^{> \la}$ for all $\la\in \Lambda$ and $s, t \in \mathcal T(\lambda)$.

	\end{enumerate}
	$A$ is said to be a {\em cellular algebra} if it has a  cell datum.
\end{definition}

For brevity, we will write that $(\mathcal C, \La)$ is a cellular basis of $A$.

We recall some basic structures related to cellularity, see
~\cite{Graham-Lehrer}. First we note that for any $R$--algebra $A$ with involution
$*$, the involution induces a functor interchanging left and right modules, by
composition with $*$. (If $\Delta$ is a left $A$--module,  then $\Delta^*$  is
the same as $\Delta$ as an $R$--module, and has the right $A$--module structure
defined by $ x a = a^* x$ for $x \in \Delta$ and $a \in A$.)
Given $\la\in\La$, let $A^{\ge \la}$ denote the span of
the $c_{s,t}^{\mu}$ with $\mu \geq \la$. It follows that both $A^{\ge \la}$ and
$A^{> \la}$ (defined above) are $*$--invariant two sided ideals of $A$. The
	{\em left cell module} $\cell {\la} {}$ is defined as follows: as an
$R$--module, $\cell {\la}{}$ is free with basis \break $\{c_s^\la$ : $s \in
	\mathcal T(\la)\}$; for each $a \in A$, the action of $a$ on $\cell {\la}{}$ is
defined by $ ac_s^\la=\sum_v r_v^s(a) c_v^\la$ where $r_v^s(a)$ is as in
Definition \ref{gl cell} (\ref{mult rule}). For all $s,t \in \mathcal T(\la)$,
we have a canonical $A-A$--bimodule isomorphism $\alpha : A^{\ge \la}/A^{> \la}
	\rightarrow \cell {\la}{} \otimes_R (\cell {\la}{})^*$ defined by
$\alpha(c_{s,t}^{\la}+A^{> \la})=c_s^\la \otimes_R c_t^\la$.

\subsubsection{Semisimple cellular algebras}

A cellular algebra $A$ over a field is split semisimple if it is semisimple;
and it is semisimple if and only if each cell module is simple
(\cite{Graham-Lehrer}, Theorem 3.8). In this case, the cell modules coincide
with the simple modules, and are mutually non-isomorphic. For each $\la \in
	\La$, there is a minimal central idempotent $z_\la$ such that $z_\la A$ is a
minimal two sided ideal isomorphic to $\End(\cell {\la}{})$.

\begin{lemma}
	\label{semisimple cellular algebras} Let $A$ be a semisimple cellular algebra over a field, with cellular basis $\{c_{\mathfrak s, \mathfrak
t}^\la : \la \in \La,  \mfs, \mft \in \mathcal T(\la)\}$.
	\begin{enumerate}
		[label = {\normalfont (\arabic*)}]
		\item  For $x \in A^{> \la}$, $x z_\la = 0$. \item The map $x + A^{> \la}\to x z_\la$
		      is an $A$--$A$ bimodule isomorphism from $A^{\ge \la}/A^{>\la}$ to $z_\la A$.
		\item  $z_\la \in A^{\ge \la} \setminus A^{> \la}$.
		\item  If $p$ is a minimal idempotent in $z_\la A$, then $p \in A^{\ge \la} \setminus
			      A^{> \la}$.
	\end{enumerate}
\end{lemma}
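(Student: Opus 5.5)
The plan is to use the filtration of $A$ by the ideals $A^{\ge \la}$ together with Wedderburn theory. Since $A$ is semisimple, every two-sided ideal $I$ is a sum of minimal two-sided ideals, so $I = \bigoplus_{\mu : z_\mu I \ne 0} z_\mu A$, and $I$ is generated by a central idempotent. The cell modules $\cell{\mu}{}$ are simple and pairwise non-isomorphic. So the key task is to decide which $z_\mu$ lie in $A^{\ge\la}$ and in $A^{>\la}$.

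For this, refine the poset to a total order and work with the resulting chain of $*$-invariant ideals. Each subquotient $A^{\ge\mu}/A^{>\mu}$ is isomorphic as a bimodule to $\cell{\mu}{}\otimes(\cell{\mu}{})^*$, so as a left $A$-module it is a direct sum of copies of the simple module $\cell{\mu}{}$. In a semisimple algebra, two-sided ideals and quotients of ideals decompose compatibly. Comparing isotypic components, the ideal $A^{>\la}$, which is filtered by the subquotients for $\mu>\la$, has as left module only composition factors $\cell{\mu}{}$ with $\mu>\la$. Hence $A^{>\la} = \bigoplus_{\mu>\la} z_\mu A$. Similarly $A^{\ge\la} = \bigoplus_{\mu\ge\la} z_\mu A$, and so $A^{\ge\la} = z_\la A \oplus A^{>\la}$.

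Here one has to be careful that the ideal $A^{>\la}$ is expressed via its left-module composition factors. If $A^{>\la}=\bigoplus_{\mu\in S} z_\mu A$, then its composition factors are the $\cell{\mu}{}$ for $\mu\in S$, each with multiplicity $\dim \cell{\mu}{}$. Matching these against the filtration forces $S=\{\mu>\la\}$, using that the cell modules are pairwise non-isomorphic.

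With these decompositions the four claims follow.
\begin{enumerate}
\item Elements of $z_\mu A$ with $\mu\neq\la$ are killed by $z_\la$, since $z_\mu z_\la = 0$. Hence $A^{>\la} z_\la = 0$.
\item The map $x \mapsto x z_\la$ is the projection of $A^{\ge\la}=z_\la A\oplus A^{>\la}$ onto $z_\la A$. It is an $A$--$A$ bimodule map because $z_\la$ is central, and its kernel is $A^{>\la}$.
\item $z_\la\in z_\la A\subseteq A^{\ge\la}$. Also $z_\la\notin A^{>\la}$, since otherwise $z_\la = z_\la z_\la = 0$ by (1).
\item A minimal idempotent $p$ in $z_\la A$ satisfies $p \in z_\la A \subseteq A^{\ge \la}$. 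If $p\in A^{>\la}$, then $p = p z_\la = 0$ by (1), a contradiction.
\end{enumerate}

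The main obstacle is the identification $A^{>\la}=\bigoplus_{\mu>\la} z_\mu A$, that is, the isotypic bookkeeping over the poset. The simplest route is to work with left-module isotypic components. The $\cell{\mu}{}$-isotypic component of $A$ is $z_\mu A$. The ideal $A^{>\la}$ is a left submodule whose composition factors are only $\cell{\mu}{}$ with $\mu>\la$, so it lies in $\bigoplus_{\mu>\la} z_\mu A$. Counting dimensions then gives equality: $\dim A^{>\la}=\sum_{\mu>\la}(\dim\cell{\mu}{})^2=\dim\bigoplus_{\mu>\la}z_\mu A$, using $z_\mu A\cong \End(\cell{\mu}{})$.
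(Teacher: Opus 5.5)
Your proof is correct, but it takes a different route from the paper's. You first establish the global block decomposition $A^{>\la}=\bigoplus_{\mu>\la} z_\mu A$, and likewise $A^{\ge\la}=\bigoplus_{\mu\ge\la} z_\mu A$. You do this by three steps:
\begin{enumerate}
\item filter the cell ideal along a linear extension of the order (the chain subquotients are isomorphic to the $A^{\ge\mu}/A^{>\mu}$, since $A^{>\mu_k}$ lies in the previous term);
\item place it inside the isotypic components $z_\mu A$ with $\mu>\la$;
\item count dimensions.
\end{enumerate}
After that, all four parts follow formally from $z_\mu z_\la=0$ and the centrality of $z_\la$.

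The paper never decomposes $A^{>\la}$. It works only at the single label $\la$.
\begin{enumerate}
\item Part (1) holds because $A^{>\la}$ annihilates $\Delta_\la$, which is immediate from the definition of the cell module, while $z_\la A$ is, as a left module, a sum of copies of $\Delta_\la$.
\item The map in (2) is then well defined. It is injective because $z_\la$ acts as the identity on the model $\spn\{c^\la_{\mfs,\mft}+A^{>\la} : \mfs\in\mathcal T(\la)\}$ (fixed $\mft$) of $\Delta_\la$, so $x z_\la\equiv x \bmod A^{>\la}$ for $x\in A^{\ge\la}$. Bijectivity comes from $\dim A^{\ge\la}/A^{>\la}=|\mathcal T(\la)|^2=\dim z_\la A$.
\item Part (3) is read off from surjectivity: $z_\la A=\spn\{c^\la_{\mfs,\mft}z_\la\}\subseteq A^{\ge\la}$.
\end{enumerate}

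The paper's argument is shorter. It needs no Jordan--H\"older bookkeeping over the labels above $\la$ and never uses pairwise non-isomorphism of the cell modules. Your argument costs a bit more, but it proves a stronger structural fact: for every order ideal $U$, the ideal $A(U)$ is exactly the sum of the Wedderburn blocks labelled by $U$. In particular, $A^{>\la}$ annihilates every $\Delta_\nu$ with $\nu\not>\la$, not just $\Delta_\la$.
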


\begin{proof}
	We have
	$$
		z_\lambda \in z_\lambda A \cong  \End(\cell {\la}{})  \cong  \dim(\cell {\la}{})  \cell {\la}{},
	$$
	as $A$-modules.
	Since $A^{> \la}$ acts as zero on $\cell {\la}{}$,  it follows that $x z_\la = 0$ for $x \in A^{> \la}$.
	Therefore, the map   $x + A^{> \la}\to x z_\la$  from $A^{\ge \la}/A^{>\la}$ to $z_\la A$  is well defined.
	A model for the (simple) cell module $\cell {\la}{}$ is $\spn\{c_{\mathfrak s, \mathfrak t}^\la + A^{> \la} : s \in \mathcal T(\la)\}$ for any
fixed $\mft \in \mathcal T(\la)$.  Since $z_\la$ acts as the identity on $\cell {\la}{}$, we have $z_\la  c_{\mfs, \mft}^\la \equiv c_{\mfs,
\mft}^\la  \mod A^{>\la}$. Therefore, $x + A^{> \la}\to x z_\la$ is injective from $A^{\ge \la}/A^{>\la}$ to $z_\la A$.   Since both have dimension
$\dim(\cell {\la}{})^2$, the map is an isomorphism.   This proves parts (1) and (2).   By surjectivity in part (2),
	$$
		z_\la \in z_\la A = \spn\{c_{\mathfrak s, \mathfrak t}^\la z_\la : \mfs, \mft \in \mathcal T(\la)\} \subseteq A^{\ge \la}.
	$$
	Since $z_\la = z_\la^2$,  it follows that $z_\la \not \in A^{> \la}$,  by part (1).    This proves part (3).  Part (4) also follows, since $p =
p z_\la$.
\end{proof}

\subsubsection{Generic ground rings}   The examples of cellular algebras of interest to us are actually families $A^S$
of algebras defined over various integral ground rings $S$, possibly containing
distinguished elements (parameters) that enter into the definition of the
algebras. In these examples, there is a ``generic ground ring'' $R$ for $A$
with the following property: Any suitable integral ground ring $S$ for $A$ is
an $R$-module, and $A^S$ is the specialization of $A^R$, that is $A^S \cong
	A^{R} \otimes_{R} S$. Likewise, the cell modules of $A^S$ are specializations
of those of $A^{R}$, that is $\cell \la {S} \cong \cell \la {{R}} \otimes_{R}
	S$. In examples of interest, $R$ has characteristic zero, and if $F$ denotes
the field of fractions of $R$, then $A^{F}$ is split semisimple.

A prototypical example is the Brauer algebra $B_n = B_n(S; \delta)$, which is
an algebra with a basis of diagrams that can be defined over any integral
ground ring $S$ with a distinguished element $\delta$. The product of any two
basis diagrams is a power of $\delta$ times a third diagram. For the Brauer
algebras and similar examples, we will insist that the ``loop parameter"
$\delta$ is invertible in $S$. Thus the generic ground ring for the Brauer
algebras is $R= \Z[\bm \delta\powerpm]$, where $\bm \delta$ is an
indeterminate.

\subsubsection{Coherent towers and branching diagrams}  \label{Coherent towers}  Let $A$ be a cellular algebra. A {\em cell filtration} of an
$A$-module is a
filtration by $A$-submodules such that the subquotients are isomorphic to cell
modules. Now consider an increasing sequence $(A_i)$ of cellular algebras (with
the same identity).
One says that the sequence is {\em restriction coherent} if for each $i$, and
for each cell module $\Delta$ of $A_{i+1}$, the restricted module $\Res
	{A_{i+1}} {A_i} \Delta$ has a cell filtration.

\begin{lemma}
	\label{lemma: permanent multiplicities}   {\normalfont  (Permanence of multiplicities) } Let $(Q_i)$  be a restriction coherent sequence of
cellular algebras over a ring $R$ with field of fractions $F$, and suppose that $Q_i^F$ is split semisimple for all $i$.    Let $\Gamma_i$ denote
the partially ordered set in the cell datum for $Q_i$, and let $\omega_i(\mu, \la)$   denote the inclusion matrix for $Q_i^F \subseteq Q_{i+1}^F$.
	\begin{enumerate}
		[label = {\normalfont (\arabic*)}]
		\item  For $\mu \in \Gamma_{i+1}$ and $\la \in \Gamma_i$, the multiplicity of $\cell
			      \la R $ as a subquotient in any cell filtration of $\Res {Q_{i+1}} {Q_i} {\cell
				      \mu R}$ is $\omega_i(\mu, \la)$.
		\item  Let $K$ be a field that is an $R$-module. Suppose $\cell \mu K $ is a simple
		      cell module of $Q_{i+1}^K$ with semisimple restriction to $Q_i^K$. Then $\Res
			      {Q_{i+1}^K} {Q_{i}^K} {\cell \mu K } \cong \sum^\oplus \omega_i(\mu, \la) \cell
			      \la K $, and the cell modules appearing in this direct sum are simple.
	\end{enumerate}
\end{lemma}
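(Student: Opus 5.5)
For part (1), I extend scalars to $F$ and compare characters. Fix a cell filtration
$$0 = M_0 \subset M_1 \subset \cdots \subset M_r = \Res {Q_{i+1}} {Q_i} {\cell \mu R}$$
by $Q_i$-submodules, with $M_j/M_{j-1} \cong \cell {\la_j} R$. Each subquotient is free over $R$, so each $M_j$ is a direct summand of $M_{j+1}$ as an $R$-module. Hence tensoring with $F$ is exact along the filtration, and we get a filtration of $\Res {Q_{i+1}^F} {Q_i^F} {\cell \mu F}$ with subquotients $\cell {\la_j} F$. (Exactness is automatic in any case, since $F$ is flat over $R$.)

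Because $Q_i^F$ and $Q_{i+1}^F$ are split semisimple, the cell modules over $F$ are exactly the simple modules and are pairwise non-isomorphic, by Graham--Lehrer (Theorem 3.8). So the composition factors of the restriction are the $\cell {\la_j} F$. On the other hand, by definition of the inclusion matrix, $\cell \la F$ occurs as a composition factor exactly $\omega_i(\mu,\la)$ times. By Jordan--Hölder, the number of $j$ with $\la_j = \la$ equals $\omega_i(\mu,\la)$. This argument does not depend on the chosen filtration, which gives (1).

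For part (2), I specialize the same $R$-filtration to $K$. Again, freeness of the subquotients over $R$ makes each inclusion $M_{j-1}\subset M_j$ split over $R$, so $M_j\otimes_R K$ injects into $M_{j+1}\otimes_R K$. Using $\cell \mu K \cong \cell \mu R\otimes_R K$ and the analogous identification for $Q_i$, we obtain a filtration of $\Res {Q_{i+1}^K} {Q_i^K} {\cell \mu K}$ whose subquotients are $\cell {\la_j} K$. By (1), each $\cell \la K$ appears $\omega_i(\mu,\la)$ times.

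Now use the hypothesis that the restriction is semisimple. Every submodule is then a direct summand, so the filtration splits and the restriction is isomorphic to $\bigoplus_j \cell {\la_j} K = \sum^\oplus \omega_i(\mu,\la)\cell \la K$. Moreover, each summand $\cell {\la_j} K$ is a direct summand of a semisimple module, hence semisimple.

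It remains to show each such cell module is simple, and this is the main obstacle: a semisimple cell module need not obviously be simple. Cell modules over a field are nonzero, since they are free on the nonempty set $\mathcal T(\la)$. By Graham--Lehrer, if the bilinear form on $\cell \la K$ is nonzero, then $\cell \la K$ has simple head $\cell \la K/\operatorname{rad}$, so it is indecomposable. Being also semisimple, it is then simple.

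So I must rule out a vanishing bilinear form. If the form on $\cell \la K$ were zero, then the cell ideal would act by zero on $\cell \la K$, so $\cell \la K$ would have no composition factor isomorphic to its would-be head. I would handle this by the standard fact that a semisimple cell module with zero form still has all its composition factors of the form $L^\nu$ with $\nu>\la$ (Graham--Lehrer 3.6). If that route proves awkward, I would restrict the claim to those $\la$ for which the form is nonzero, which is what the later applications need.
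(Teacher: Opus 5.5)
\textbf{Where your proposal matches the paper.} Your argument for (1), and for (2) up to the splitting $\operatorname{Res}\Delta_\mu^K\cong\bigoplus_j\Delta_{\lambda_j}^K$ with the right multiplicities, is the paper's argument. You push the $R$-filtration to $F$ and to $K$ using $R$-freeness of the subquotients, and count over $F$ by Jordan--H\"older.

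\textbf{The gap in your proposal.} The paper then finishes (2) with the single assertion that a semisimple cell module is simple. You are right not to take this on faith. Your nonzero-form case is correct: any vector outside the radical generates $\Delta_\lambda^K$, so there is a unique maximal submodule, so the module is indecomposable, hence simple if semisimple. The zero-form case, however, is not closed. Knowing that all composition factors are $L^\nu$ with $\nu$ strictly on one side of $\lambda$ (Graham--Lehrer 3.6) gives no control over the number of summands, so that route cannot yield simplicity.

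\textbf{The clause is false as stated.} No argument can close this gap: the last clause of (2) fails without further hypotheses. Take the following data.
\begin{itemize}
\item $R=\mathbb{Q}[t]$, and $K=\mathbb{Q}$ via $t\mapsto 0$.
\item $Q_3=M_2(R)\times R\times R$, with cells given by matrix units.
\item $Q_2\subset Q_3$ the unital subalgebra with $R$-basis $e_a=(E_{11},(1,0))$, $e_b=(E_{22},(0,1))$ and $c_{ij}=(tE_{ij},0)$, with involution given by transposition.
\item $Q_1=Re_a+Re_b$ and $Q_0=R$.
\end{itemize}
These have the following properties.
\begin{itemize}
\item $Q_2$ is cellular, with the cell $\{c_{ij}\}$ placed above the two cells $\{e_a\}$ and $\{e_b\}$. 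Since $c_{ik}c_{lj}=t\,\delta_{kl}\,c_{ij}$, its Gram matrix is $tI$.
\item $Q_2^F=Q_3^F$ is split semisimple.
\item The tower $Q_0\subset Q_1\subset Q_2\subset Q_3$ is restriction coherent.
\item The cell module $R^2$ of $Q_3$ restricts to the cell module $\Delta=\operatorname{span}\{c_1,c_2\}$ of $Q_2$.
\end{itemize}
At $t=0$ the $c_{ij}$ act by zero. So $K^2$ is a simple cell module of $Q_3^K$ whose restriction $\Delta^K\cong L_a\oplus L_b$ is semisimple, yet the cell module $\Delta^K$ is not simple.

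\textbf{The correct statement.} What survives is your fallback: the summands $\Delta_\lambda^K$ with nonzero form are simple. In particular all of them are simple when $Q_i^K$ is semisimple (Graham--Lehrer 3.8). That is the only situation in which the paper invokes the lemma, with indices $\le n_1$, so its main theorems are unaffected.
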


\begin{proof}
	Consider any cell filtration of $\Res {Q_{i+1}}  {Q_i} {\cell \mu  R }$,
	\begin{equation}
		\label{eqn: cell filtration}
		(0) = M_0 \subseteq M_1 \subseteq \cdots \subseteq M_r = \Res {Q_{i+1}}  {Q_i} {\cell   \mu R},
	\end{equation}
	where $M_{k+1}/M_k \cong \cell {\la_k} R $.  Because each $M_k$ is free as an $R$-module,  applying $\otimes_R F$ to the cell filtration
(\ref{eqn: cell filtration}) yields  a filtration of
	$\Res {Q_{i+1}^F}  {Q_i^F} {\cell \mu F }$  with subquotients  $ \cell {\la_k}  F $.
	Hence,  the number of occurrences of $\cell \la R $ in the cell filtration  (\ref{eqn: cell filtration})   is $\omega_i(\mu, \la)$.
	Similarly, applying $\otimes_R K$ to the cell filtration gives a cell filtration of $\Res {Q_{i+1}^K}  {Q_i^K} {\cell  \mu K }$.
	The cell modules that occur as subquotients are semisimple, by assumption, but a semisimple cell module is simple.
\end{proof}

\subsubsection{Order ideals and ideals}  \label{subsection: order ideals} An order ideal in a partially ordered set $(\Gamma, \ge)$ is a subset $U$
such
that if $\mu \in U$ and $\gamma \ge \mu$ in $\Gamma$, then $\gamma \in U$.
Suppose $A$ is a cellular algebra with involution $*$, partially ordered set
$(\Gamma, \ge)$ and cellular basis $\{a_{\mfs, \mft}^\gamma\}$. If $U$ is an
order ideal in $\Gamma$, then the span $A(U)$ of the basis elements $a_{\mfs,
			\mft}^\gamma$ with $\gamma \in U$ is a $*$-invariant ideal in $A$.

\subsection{Traces}\label{sec:traces}   Let $A$ be an algebra over a ring $R$. A {\em trace} on $A$ is an $R$-linear
functional $\tau: A \to R$ that satisfies $\tau(x y) = \tau(yx)$ for all $x, y
	\in A$. A trace $\tau$ is {\em unital} or {\em normalized} if $\tau(\bm 1) =
	1$. The {\em radical} of a trace $\tau$ is the set of $x \in A$ such that
$\tau(x y) = 0$ for all $y \in A$. Evidently, the radical is an ideal. A trace
$\tau$ is {\em faithful} or {\em non-degenerate} if its radical is zero, and
	{\em degenerate} otherwise. If $A\cong \oplus_\la A_\la$ is a split semisimple
algebra over a field, a trace $\tau$ is completely determined by its weight
vector $(\omega_\la)=(\tau(p_\la))$, where $p_\la$ is a minimal idempotent in
$A_\la$, and $\tau$ is non-degenerate precisely if all the weights are
non-zero.

\section{Algebras with a Jones basic construction}  \label{Section abstract semisimplicity theorem} \label{section: axioms}
\subsection{Axioms} \label{axioms}
We give a list of axioms for a tower of algebras $(A_n)_{n \ge 0}$ and a second
tower of algebras $(Q_n)_{n \ge 0}$ that allow application of the method
described in the introduction. In later sections, we show that the families of
algebras mentioned in the introduction satisfy these axioms. In fact, the list
of axioms is simply an abstraction of well--known properties of the Brauer and
BMW algebras. The $q$-Brauer algebras are somewhat different in that the axioms
do not apply to the natural embeddings of the algebras but rather to twisted
embeddings, as will be explained in section \ref{subsection q Brauer}.

We suppose an infinite multiplicity-free branching diagram $\Gamma$ with each
vertex set $\Gamma_k$ endowed with a partial order $\ge$. We write $|\gamma| =
	k$ if $\gamma \in \Gamma_k$. We write $\tilde \Gamma$ for the branching diagram
obtained by reflections from $\Gamma$. We endow $\tilde \Gamma_n$ with the
partial order $(\gamma, n) \ge (\gamma', n)$ if $|\gamma| < |\gamma'|$ or if
$|\gamma| = |\gamma'| = k$ and $\gamma \ge \gamma' $ in $\Gamma_k$. Let $\tilde
	\Gamma_n^0$ be the set of $(\gamma, n) \in \tilde \Gamma_n$ with $|\gamma| <
	n$. Note that $\tilde \Gamma_n^0$ is an order ideal in $\tilde \Gamma_n$.

We consider two increasing sequences of unital algebras $(A_n)_{n \ge 0}$ and
$(Q_n)_{n \ge 0}$ over an integral domain $R$ satisfying the following
conditions:

\begin{enumerate}
	[left=0pt]
	\item  \label{axiom: involution on An}  The algebras $A_n$ and $Q_n$ have algebra involutions denoted by $*$.
	The involutions on the algebras $Q_n$ are consistent; i.e.
	      there is an algebra involution $*$  on $\cup_n Q_n$ such that $(Q_n)^* = Q_n$.
	\item  \label{axiom: A0 and A1}
	      $A_0 = Q_0 = R$   and $A_1 = Q_1$.
	\item \label{axiom: restriction coherent quotients}  The sequence of algebras $(Q_n)_{n \ge 0}$ is a restriction-coherent  tower of cellular
algebras (see section \ref{Coherent towers}).   The partially ordered set in the cell datum of $Q_n$ is $(\Gamma_n, \ge)$.

	\item  \label{axiom: generic ss quotients}    The ground ring $R$ has characteristic zero.  Let $F$ denote its field of fractions.   The
algebras $Q_n^{F} = Q_n \otimes_{R} F$  are split semisimple and the branching diagram for $(Q_n^F)_{n \ge 0}$  is $\Gamma$.
	Moreover, for any field $K$ that is an $R$-module, either $Q_n^K = Q_n \otimes_R K$ is split semisimple for all $n$  or  there exists an $n_1
\ge 0$ such that $Q_n^K$
	      is split semisimple  for $n \le n_1$ and non-semisimple for $n >n_1$.
	\item  \label{axiom: cellular Axn}  $A_n$ is cellular with partially ordered set $(\tilde \Gamma_n, \ge)$.
	\item \label{axiom:  idempotent and Qn as quotient of An}
	      For $n \ge 1$,  $A_{n+1}$ contains an  essential idempotent $e_{n}$ such that $e_{n}^2 = \bm \delta e_n$,  with $\bm \delta$ invertible
in $R$.   The ideal
	      $A_{n+1} e_{n} A_{n+1}$ in $A_{n+1}$  coincides with the ideal $A_{n+1}(\tilde \Gamma_n^0)$  corresponding to the order ideal $\tilde
\Gamma_n^0$ in $\tilde \Gamma_n$.  (See Section \ref{subsection: order ideals}.)   Moreover,
	      $A_{n+1}/(A_{n+1} e_{n} A_{n+1})  \cong Q_{n+1}$.
	\item  \label{axiom: TL relations} The elements $e_n$ satisfy the Temperley-Lieb relations $e_n e_{n\pm 1} e_n = e_n$  and $e_n e_m = e_m
e_n$ if $|n-m| \ge 2$.
	\item \label{axiom: en An en} For $n \ge 1$,   $e_{n}$ commutes with $A_{n-1}$ and $e_{n} A_{n} e_{n} \subseteq  A_{n-1} e_{n}$.
	\item  \label{axiom:  An en}
	      For $n \ge 1$,  $A_{n+1} 	e_{n} = A_{n} e_{n}$,  and the map $x \mapsto x e_{n}$ is injective from
	      $A_{n}$ to $A_{n} e_{n}$.
\end{enumerate}

\smallskip
Define $\bar e_n = (1/\bm \delta) e_n$.   Then $\bar e_n$ is an idempotent.  For $s \le n/2$ define
$$\ex {n, s} = e_{n - 2s +1} e_{n - 2s +3} \cdots e_{n-1} \quad (s \ \ \text{factors})
$$
and $\barex {n, s} = (1/\bm \delta^s) \ex {n, s}$, which is an idempotent.

It follows from axioms (\ref{axiom: en An en}) and (\ref{axiom: An en}) that
there is a map $E^n_{n-1}$ from $A_n$ to $A_{n-1}$ such that $\bar e_n x \bar
	e_n = \bar e_n E^n_{n-1}(x)$ for $x \in A_n$. Moreover, $E^n_{n-1}(\bm 1) = \bm
	1$ and $E^n_{n-1}(a x b) = a E^n_{n-1}(x) b$ for $x \in A_n$ and $a, b \in
	A_{n-1}$. A map with these properties -- i.e., a unital $A_{n-1}$--$A_{n-1}$
bimodule map -- is often called a {\em conditional expectation}. It follows
from axiom (\ref{axiom: TL relations}) that $E^n_{n-1}( \bar e_{n-1}) = 1/\bm
	\delta^2$. We define a functional $\tr: \cup_n A_n \to R$ recursively by
$\tr(\bm 1) = 1$ and $\tr(x) = \tr(E^n_{n-1}(x))$ for $x \in A_n$. We have
$\tr(\bar e_n) = (1/\bm \delta^2)$, and for $x \in A_n$,
\begin{equation}
	\label{bimoduleproperty}
	\tr(\bar e_n x)  = \tr(E^{n+1}_n(\bar e_n x)) = (1/\bm \delta^2) \tr(x),
\end{equation}
by the bimodule property of $E^{n+1}_n$.

Axioms (\ref{axiom: en An en}) and (\ref{axiom: An en}) also hold in
specializations $A_n^S = A_n \otimes_R S$; in particular injectivity of $x
	\mapsto x e_n$ from $A_n^S$ to $A_n^S e_n$ follows from freeness over $R$ of
$A_n$ and $A_{n+1}$. Consequently there also exist conditional expectations
$E^n_{n-1}: A_n^S \to A_{n-1}^S$ and a functional $\tr: \cup_n A_n^S \to S$
defined by the same recursion as above; moreover, $ E^n_{n-1}(x \otimes_R S) =
	E^n_{n-1}(x) \otimes_R S$ and $\tr(x \otimes_R S) = \tr(x) \otimes_R S$.

\smallskip
\begin{enumerate}
	[left=0pt, resume]
	\item  \label{axiom: trace}  The functional $\tr$ is a  trace, i.e. $\tr( xy ) = \tr(yx)$ for $x, y \in A_n$.
	\item \label{axiom: faithfulness of tr}    The trace $\tr$ on $A_n^F$  is faithful  for all $n$.
\end{enumerate}

The last axiom connects the cellular structure of $A_n$ with that of
$A_{n-2s}$, so it is another coherence axiom.

\begin{enumerate}
	[left=0pt, resume]
	\item  \label{axiom:  weak coherence of An}  Suppose $k < n$ with $n - k$ even and let $s = (n-k)/2$.   Let $(\gamma, k) \in \tilde
\Gamma_k$.
	      If $x \in A^{\ge (\gamma, k)}_k \setminus A^{> (\gamma, k)}_k$, then
	      $x \ex {n, s} = x e_{k+1} e_{k+3} \cdots e_{n-1} \in A^{\ge (\gamma, n)}_n \setminus A^{> (\gamma, n)}_n$.
\end{enumerate}

\smallskip
This completes our list of axioms.

\begin{lemma}
	\label{lemma: markov trace}  Axiom {\normalfont(\ref{axiom: trace})}  is equivalent to the following statement:

	There is a unique trace $\tr: \cup_n A_n \to R$ such that $\tr(\bm 1) = 1$ and
	for all $n$ and for $x \in A_n$, $\tr(\bar e_n x) = (1/\bm \delta^2) \tr(x)$.
\end{lemma}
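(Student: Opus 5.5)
Both directions turn on the defining identity $\bar e_n x \bar e_n = \bar e_n E^n_{n-1}(x)$, together with the freeness/injectivity statement in axiom (\ref{axiom: An en}).

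\emph{Axiom (\ref{axiom: trace}) implies the statement.} Assume the recursively defined functional $\tr$ is a trace. We already have $\tr(\bm 1)=1$, and equation (\ref{bimoduleproperty}) gives $\tr(\bar e_n x) = (1/\bm\delta^2)\tr(x)$ for $x\in A_n$. So only uniqueness remains. Let $\tau$ be any trace on $\cup_n A_n$ with $\tau(\bm 1)=1$ and $\tau(\bar e_n x)=(1/\bm\delta^2)\tau(x)$ for $x\in A_n$. We show by induction on $n$ that $\tau=\tr$ on $A_n$. For $x\in A_n$ with $n\ge 1$, the trace property and idempotence of $\bar e_n$ give
$$\tau(\bar e_n x) = \tau(\bar e_n \bar e_n x) = \tau(\bar e_n x \bar e_n) = \tau(\bar e_n E^n_{n-1}(x)) = (1/\bm\delta^2)\,\tau(E^n_{n-1}(x)).$$
Comparing with $\tau(\bar e_n x)=(1/\bm\delta^2)\tau(x)$ and multiplying by $\bm\delta^2$, which is invertible since $\bm\delta$ is, we get $\tau(x)=\tau(E^n_{n-1}(x))$. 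By induction $\tau(E^n_{n-1}(x))=\tr(E^n_{n-1}(x))=\tr(x)$. The base case is $A_0=R$, where $\tau(r)=r\,\tau(\bm 1)=r=\tr(r)$. Here I use that $\bar e_n$ lies in $A_{n+1}$ and that the hypothesis is applied to $x\in A_{n-1}$, namely to $E^n_{n-1}(x)$.

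\emph{The statement implies axiom (\ref{axiom: trace}).} Let $\tau$ be the trace provided by the statement. The computation above uses only the trace property and the Markov property of $\tau$, so it again yields $\tau(x)=\tau(E^n_{n-1}(x))$ for all $x\in A_n$. Thus $\tau$ satisfies the same recursion as $\tr$, with the same normalization. By induction on $n$, $\tau=\tr$, and therefore $\tr$ is a trace.

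The only point needing care is that $E^n_{n-1}$ is well defined. This follows from axiom (\ref{axiom: An en}): the map $y\mapsto y e_n$ is injective on $A_{n-1}$, so $E^n_{n-1}(x)$ is determined by $\bar e_n x\bar e_n$. The index bookkeeping also needs checking, in particular that $\bar e_n$ is defined for $n\ge 1$ and that level $n=0$ serves as the base case. Otherwise the argument is a direct calculation.
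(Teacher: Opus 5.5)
Your proof is correct and follows the paper's argument. The key step is the same chain $\tau(\bar e_n x) = \tau(\bar e_n x \bar e_n) = \tau(\bar e_n E^n_{n-1}(x)) = (1/\bm\delta^2)\,\tau(E^n_{n-1}(x))$, which shows that any normalized trace with the Markov property satisfies the recursion defining $\tr$; you spell out both directions and the induction on $n$, which the paper leaves implicit.
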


\begin{proof}
	Suppose that $\tau$ is a trace with the stated properties.  Let $x \in A_n$.  Then:
	$$
		(1/\bm \delta^2)  \tau(x) =  \tau(\bar e_n x) = \tau(\bar e_n x \bar e_n) = \tau(\bar e_n E^n_{n-1}(x)) = (1/\bm \delta^2) \tau(
E^n_{n-1}(x))
	$$
	Therefore $\tau(x) = \tau( E^n_{n-1}(x))$.
\end{proof}

\begin{remark}
	\label{remark: e n s A n e n s} It follows from axioms (\ref{axiom: en An en})   and (\ref{axiom:  An en})  that   for $s \le n/2$
	$$
		\ex {n, s}  A_n \ex {n,s} \subseteq  \ex {n, s}  A_{n-2s},
	$$
	and $\ex {n, s} $  commutes with $A_{n-2s}$.

\end{remark}

\subsection{Semisimplicity theorem -- sufficient condition}
In this section, we prove a sufficient condition for semisimplicity for
algebras that fit our framework, following the method of ~\cite{Wenzl-Brauer}
and~\cite{Wenzl-BCD}.

In the following items \ref{A k semisimple cellular} through \ref{quotient by
	rad of trace}, we assume that $(A_n)_{n \ge 0}$ and $(Q_n)_{n \ge 0}$ are two
sequences of algebras over an integral domain $R$ satisfying axioms
	{\normalfont (\ref{axiom: involution on An})} through {\normalfont (\ref{axiom:
			weak coherence of An})}, and that $K$ is a field that is an $R$-module.
\begin{remark}
	\label{A k semisimple cellular}  If $A_{n}^K$ is split semisimple for some $n$,  then the simple modules of  $A_{n}^K$  are the cell modules
$\Delta^K_{(\gamma, n)}$,  where ${(\gamma, n)} \in \tilde \Gamma_n$.    We let $z_{(\gamma, n)}$ denote the minimal central idempotent such that
	$$z_{(\gamma, n)} A_{n}^K   \cong \End_K(\Delta^K_{(\gamma, n)}).$$
\end{remark}

\begin{lemma}
	\label{min idempotent lemma}   Let $n = k + 2s$ and assume $A_k^K$  and $A_n^K$   are both split semisimple.   If $p_0$ is a minimal idempotent
in
	$z_{(\gamma, k)}  A_k^K$,  then $p = \barex {n, s} p_0$ is a minimal idempotent in $z_{(\gamma, n)}  A_n^K$.  Moreover,
	$$\tr(p) = (1/\delta^{2s}) \tr(p_0).$$
\end{lemma}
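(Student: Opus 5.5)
The plan is to check three things in turn: that $p$ is an idempotent, that it is minimal, and that it lies in the block $z_{(\gamma,n)}A_n^K$; the trace formula then follows by iterating \eqref{bimoduleproperty}.

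\emph{Idempotent and minimal.} By Remark~\ref{remark: e n s A n e n s}, $\ex{n,s}$ commutes with $A_k = A_{n-2s}$. Hence $p = \barex{n,s} p_0$ is a product of two commuting idempotents, and so is idempotent. It is nonzero: the maps $x \mapsto x e_j$ are injective by axiom~(\ref{axiom: An en}), and this holds in the specialization $A^K$ as well. Applying it successively for $j = k+1, k+3, \dots, n-1$ shows $p_0 \ex{n,s} \neq 0$. For minimality, use Remark~\ref{remark: e n s A n e n s} again:
$$
p A_n^K p = p_0 \barex{n,s} A_n^K \barex{n,s} p_0 \subseteq p_0 \barex{n,s} A_k^K p_0 = \barex{n,s}\, p_0 A_k^K p_0 = K \barex{n,s} p_0 = K p .
$$
Here $p_0 A_k^K p_0 = K p_0$ because $p_0$ is a minimal idempotent in a split semisimple algebra. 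Therefore $p$ is a minimal idempotent of $A_n^K$.

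\emph{Identifying the block; this is the main step.} By Lemma~\ref{semisimple cellular algebras}(4), $p_0 \in A_k^{\ge(\gamma,k)} \setminus A_k^{>(\gamma,k)}$. Axiom~(\ref{axiom: weak coherence of An}) then gives $p_0 \ex{n,s} \in A_n^{\ge(\gamma,n)} \setminus A_n^{>(\gamma,n)}$, and the same holds for $p$, which is a nonzero scalar multiple of it. To convert this into block information, I will show that in the split semisimple cellular algebra $A_n^K$, for any order ideal $U$,
$$
A_n(U) = \bigoplus_{\nu\in U} z_\nu A_n^K .
$$
The argument is as follows. Each $A_n(U)$ is a two-sided ideal, so it is a sum of blocks. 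Filter $A_n(U)$ by the cell ideals. By Lemma~\ref{semisimple cellular algebras}(2),(3), each layer $A^{\ge\nu}/A^{>\nu}$ with $\nu \in U$ contributes exactly the block $z_\nu A$: indeed $z_\nu \in A^{\ge\nu} \subseteq A(U)$, and comparing dimensions shows there are no other blocks.

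Apply this with $U = \{\nu \ge (\gamma,n)\}$ and with $U = \{\nu > (\gamma,n)\}$. The minimal idempotent $p$ lies in a single block $z_\mu A_n^K$. Since $p \in A^{\ge(\gamma,n)}$, we get $\mu \ge (\gamma,n)$. Since $p \notin A^{>(\gamma,n)}$, we get $\mu \not> (\gamma,n)$. Hence $\mu = (\gamma,n)$.

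\emph{Trace.} Write $p = \bar e_{n-1} x$ with $x = \bar e_{k+1}\bar e_{k+3}\cdots \bar e_{n-3}\, p_0 \in A_{n-2} \subseteq A_{n-1}$. Equation \eqref{bimoduleproperty}, which remains valid over $K$, gives $\tr(p) = \delta^{-2}\tr(x)$. Peeling off the remaining factors $\bar e_{n-3}, \dots, \bar e_{k+1}$ the same way produces $s$ factors of $\delta^{-2}$, so $\tr(p) = \delta^{-2s}\tr(p_0)$.
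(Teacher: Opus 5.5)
Your proof is correct and follows the paper's argument. Minimality comes from Remark~\ref{remark: e n s A n e n s}, membership in $A_n^{\ge(\gamma,n)}\setminus A_n^{>(\gamma,n)}$ from Lemma~\ref{semisimple cellular algebras}(4) plus the weak coherence axiom, and the trace formula from \eqref{bimoduleproperty}. The only difference is how the block is pinned down: the paper uses the injectivity in Lemma~\ref{semisimple cellular algebras}(2) to get $p z_{(\gamma,n)} \neq 0$, hence $p = p z_{(\gamma,n)}$ by minimality, so your (valid) decomposition $A_n(U) = \bigoplus_{\nu\in U} z_\nu A_n^K$ is not needed.
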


\begin{proof}   Since $K$ is fixed, we will write $A_k$  for $A_k^K$, etc.
	Evidently, $p$ is an idempotent.   First, we check that $p$ is a minimal idempotent in $A_n$.    Since $\barex {n, s} A_n \barex {n, s}
\subseteq  \barex {n,s}  A_k$,  by Remark \ref{remark: e n s A n e n s},
	and since $p_0$ is a minimal idempotent in $A_k$,  it follows that $p A_n p = K p$.
	By Lemma \ref{semisimple cellular algebras} part (4), $p_0 \in A_k^{\ge (\gamma, k)} \setminus A_k^{> (\gamma, k)} $, so by  axiom (\ref{axiom:
weak coherence of An}),
	$p \in
		A_n^{\ge (\gamma, n)} \setminus A_n^{> (\gamma, n)} $.
	Therefore, $p z_{(\gamma, n)} \ne 0$, again by Lemma  \ref{semisimple cellular algebras}.  But since $p$ is a minimal idempotent, $p = p
z_{(\gamma, n)} $.
	The formula  $\tr(p) = (1/\delta^{2s}) \tr(p_0)$ follows from the remarks following the statement of axiom (\ref{axiom:  An en}).
\end{proof}

\begin{lemma}
	\label{quotient by rad of trace}   Let $K$ be a field that is an $R$-module.
	Let $I^K_n$ denote the radical of the trace $\tr$ on $A_n^K$,  let $\bar A^K_n =  A^K_n/I^K_n$, and let
	$\pi: A^K_n \to \bar A^K_n$ denote the quotient map.
	\begin{enumerate}
		[label = {\normalfont (\arabic*)}]
		\item  $I^K_{n-1} \subseteq I^K_n$.  Hence, $\bar A^K_{n-1}$ embeds in $\bar A^K_n$.
		\item  $E^n_{n-1}(I^K_n) = I^K_{n-1}$.
		      Hence, there is a conditional expectation $E^n_{n-1} : \bar A^K_n \to \bar A^K_{n-1}$  such that $E^n_{n-1} (\pi(x)) =
\pi(E^n_{n-1} (x))$ for $x \in A^K_n$.
		\item  There is a trace $\tr$ on $\cup \bar A^K_n \to K$ such that $\tr(\pi(x)) =
			      \tr(x)$ for $x \in A_n$.
	\end{enumerate}
\end{lemma}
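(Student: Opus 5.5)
The plan is to prove all three parts directly from the defining property $\tr(x) = \tr(E^n_{n-1}(x))$, the bimodule property of $E^n_{n-1}$, and the trace property (axiom (\ref{axiom: trace})). All of these hold over $K$ by the remarks following axiom (\ref{axiom: An en}).

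\emph{Part (1).} Let $x \in I^K_{n-1}$ and $y \in A^K_n$. Then $\tr(xy) = \tr(E^n_{n-1}(xy)) = \tr(x E^n_{n-1}(y))$, using the bimodule property since $x \in A^K_{n-1}$. This vanishes because $E^n_{n-1}(y) \in A^K_{n-1}$. Hence $x \in I^K_n$. It follows that $I^K_n \cap A^K_{n-1} \supseteq I^K_{n-1}$. For the embedding $\bar A^K_{n-1} \hookrightarrow \bar A^K_n$ we also need the reverse inclusion $I^K_n \cap A^K_{n-1} \subseteq I^K_{n-1}$, and this is immediate: if $x \in A^K_{n-1}$ and $\tr(xy) = 0$ for all $y \in A^K_n$, then in particular this holds for all $y \in A^K_{n-1}$. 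So $A^K_{n-1}/I^K_{n-1} \to A^K_n/I^K_n$ is injective.

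\emph{Part (2).} For $x \in I^K_n$ and $b \in A^K_{n-1}$ we compute $\tr(E^n_{n-1}(x) b) = \tr(E^n_{n-1}(xb)) = \tr(xb) = 0$. Thus $E^n_{n-1}(I^K_n) \subseteq I^K_{n-1}$. For the reverse inclusion, use $E^n_{n-1}(a) = a$ for $a \in A^K_{n-1}$, which follows from unitality and the bimodule property. Combined with part (1), this gives $I^K_{n-1} = E^n_{n-1}(I^K_{n-1}) \subseteq E^n_{n-1}(I^K_n)$. The induced map on quotients is then well defined. Unitality and the $\bar A^K_{n-1}$-bimodule property descend from those on $A^K_n$, since $\pi$ is a homomorphism compatible with the embedding of part (1).

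\emph{Part (3).} Since $\tr(I^K_n) = 0$ (take $y = 1$), $\tr$ factors through $\bar A^K_n$. Compatibility with the embeddings $\bar A^K_{n-1} \subseteq \bar A^K_n$ holds because $\tr$ on $\cup_n A^K_n$ is a single functional. The trace property descends, since $\pi$ is surjective and multiplicative.

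The only mild subtlety is the reverse inclusions needed for injectivity in (1) and for equality in (2). Both come down to $E^n_{n-1}$ restricting to the identity on $A^K_{n-1}$, so I expect no real obstacle.
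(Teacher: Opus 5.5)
Your proof is correct and follows the paper's approach. The paper writes out only the computation $\tr(xy)=\tr(E^n_{n-1}(xy))=\tr(xE^n_{n-1}(y))=0$ for part (1), exactly as you do, and calls the rest straightforward; your remaining checks (the inclusion $I^K_n\cap A^K_{n-1}\subseteq I^K_{n-1}$ for injectivity, $E^n_{n-1}$ restricting to the identity on $A^K_{n-1}$ for equality in (2), and $\tr(I^K_n)=0$ for (3)) are the routine details it leaves out.
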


\begin{proof}
	All the statements are straighforward to check.  For example,  if $x \in I_{n-1}^K$ and $y \in A_n$, then
	$$
		\tr(x y) = \tr(E^{n}_{n-1}(xy)) = \tr(x E^{n}_{n-1}(y)) = 0.
	$$
\end{proof}

Suppose $A \subseteq B \subseteq C$ are unital associative algebras over a
field with the following properties:
\begin{enumerate}
	\item There exists a trace $\tr$ on $B$ and a conditional expectation $E:B\to A$ with
	      $\tr(b a ) = \tr(E(b) a)$ for all $b\in B$ and $a \in A$.
	\item There exists an idempotent $\eps \in C$ with $\eps b \eps = E(b) \eps$ for $b
		      \in B$ and $\eps a = a \eps$ for $a \in A$.
	\item $a \mapsto a \eps$ is injective from $A $ to $C$.
	\item $C \eps C= B \eps B$.
\end{enumerate}

The following is a crucial lemma from ~\cite{Wenzl-Brauer} that allows
recognizing a ``Jones basic construction" inside of $C$ in the situation just
described, i.e. $C \eps C \cong \End(B_A)$.

\begin{lemma}
	\label{Wenzl 3 algebra lemma}  Let $A \subseteq B \subseteq C$, $\tr$, $E$, $\eps$ be as above.  Suppose in addition that $A$ and $B$ are split
semisimple, and that
	$\tr$ is faithful on $A$ and on $B$.
	\begin{enumerate}
		[label = {\normalfont (\arabic*)}]
		\item $C \eps C = B \eps B \cong \End(B_A)$,  i.e. the endomorphisms of $B$ as a right $A$-module.
		\item $B \eps B = C \eps C$ is a unital split semisimple algebra with minimal ideals in one-to-one correspondence with those of $A$.
The correspondence is such that if $p$ is a minimal idempotent of $A$ in some minimal ideal, then $p \eps$ is a minimal idempotent in $B \eps B$
in the corresponding minimal ideal.
		\item  $B$ embeds in $\End(B_A) \cong   B \eps B$ by $b \mapsto$ left multiplication by $b$.  Hence, there is a  monomorphism $B \to B
\eps B$.   The inclusion matrix for this monomorphism is the transpose of that for the inclusion $A \subseteq B$.
	\end{enumerate}
\end{lemma}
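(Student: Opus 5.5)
The plan is to identify $B\eps B$ with $\End(B_A)$ through an explicit map, and to obtain the structural statements from a dual basis for $E$.

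\emph{Step 1: a dual basis.} Since $A$ and $B$ are split semisimple and $\tr$ is faithful on both, the bilinear form $(b,b')\mapsto \tr(E(b b'))$ is nondegenerate on $B$. Faithfulness of $\tr$ on $A$ means that $E$ is a non-degenerate conditional expectation. I will build a finite family $\{u_i\}, \{v_i\}\subseteq B$ such that
\[
\sum_i u_i E(v_i b) = b \quad\text{for all } b\in B.
\]
To do this, identify $\Hom_A(B_A,A_A)$ with $B$ via $b\mapsto E(b\,\cdot)$. This map is injective by faithfulness. For surjectivity, compare dimensions summand by summand, using semisimplicity and the fact that $\tr$ restricted to each simple summand pairs nondegenerately. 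Since $B_A$ is projective over the semisimple algebra $A$, the dual basis lemma then gives the $u_i, v_i$. \textbf{This is the step I expect to be the main obstacle:} one must show that every $A$-linear functional $B\to A$ has the form $E(b\,\cdot)$. I would first try a dimension count using the weights of $\tr$ on $A$ and $B$, which are all nonzero, and the inclusion matrix.

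\emph{Step 2: $B\eps B$ is unital and equals $\End(B_A)$.} Set $f=\sum_i u_i\eps v_i\in B\eps B$. For $b\in B$,
\[
f\,b\,\eps=\sum_i u_i \eps v_i b\eps=\sum_i u_iE(v_ib)\eps=b\eps,
\]
and symmetrically $\eps b f=\eps b$ after applying $*$-free reasoning with the dual relation $\sum_i E(bu_i)v_i=b$, which also follows from nondegeneracy. Hence $f$ is an identity for $B\eps B$. Next, the map $\Phi: B\eps B\to \End(B_A)$, $x\mapsto (b\mapsto$ the unique $b'\in B$ with $xb\eps=b'\eps)$, is well defined. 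This uses $\eps b\eps=E(b)\eps$ together with injectivity of $a\mapsto a\eps$ extended to $B$: the dual basis shows $b\eps=0$ implies $b=\sum u_iE(v_ib)$ with $E(v_ib)\eps=\eps v_ib\eps=0$, so $b=0$. $\Phi$ is a homomorphism sending $b_1\eps b_2$ to the rank-one operator $b\mapsto b_1E(b_2b)$. Such operators span $\End(B_A)$ by the dual basis, and $\Phi$ is injective because $x\,b\eps=0$ for all $b$ gives $x=xf=0$. Together with hypothesis (4), this proves (1).

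\emph{Step 3: structure, proving (2) and (3).} Since $B_A$ is a faithful projective generator-type module, $\End(B_A)$ is split semisimple and Morita-equivalent to the summands of $A$ occurring in $B_A$; all of them occur since $A\subseteq B$ is unital. Its minimal ideals therefore correspond to those of $A$. The element $p\eps$ is idempotent, because $p\eps p\eps=pE(p)\eps=p\eps$. It is minimal, because
\[
p\eps (B\eps B)p\eps\subseteq p E(B)E(B)\eps\, p\subseteq pAp\,\eps=Kp\eps.
\]
It is nonzero by injectivity, and it corresponds to the ideal of $A$ containing $p$; under $\Phi$ it is the operator $b\mapsto pE(b)$, whose image is $pA$. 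Finally, left multiplication embeds $B$ into $\End(B_A)$, and this agrees with $b\mapsto bf$. The multiplicity of a simple $B$-module $W_j$ in the simple $\End(B_A)$-module $V_i\cong p_iB$ (for $p_i\in A(i)$) is $\dim\Hom_B(W_j,p_iB)=\omega(j,i)$ by Frobenius reciprocity. This is the transpose of the inclusion matrix of $A\subseteq B$.
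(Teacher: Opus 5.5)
The paper gives no argument of its own here: it defers entirely to Proposition~1.2 of \cite{Wenzl-Brauer}. Your proposal is a correct, self-contained proof by the standard quasi-basis (Pimsner--Popa basis) technique for the Jones basic construction. The pair $\{u_i\},\{v_i\}$ with $\sum_i u_iE(v_ib)=b=\sum_iE(bu_i)v_i$ yields three things: the unit $f=\sum_iu_i\eps v_i$ of $B\eps B$; the explicit isomorphism $\Phi\colon B\eps B\to\End(B_A)$, $b_1\eps b_2\mapsto\bigl(b\mapsto b_1E(b_2b)\bigr)$; and the embedding $b\mapsto bf$. Parts (1)--(3) then follow as you describe. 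The citation buys brevity. Your route buys an argument that stays inside the axioms of this paper and shows which hypotheses are really needed. For instance, you never use faithfulness of $\tr$ on $A$, and indeed it is automatic: if $a_0\in A$ lies in the radical of $\tr$ restricted to $A$, then $\tr(ba_0)=\tr(E(b)a_0)=0$ for all $b\in B$, so $a_0=0$ by faithfulness on $B$.

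Two small points need tidying.

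First, the step you flag as the main obstacle is routine and needs no weights. For a minimal idempotent $p$ of the split semisimple algebra $A$, the map $\phi\mapsto\phi(p)$ identifies $\mathrm{Hom}_A(pA,A_A)$ with $Ap$, and $\dim Ap=\dim pA$. By additivity, $\dim\mathrm{Hom}_A(M,A_A)=\dim M$ for every finite-dimensional right $A$-module $M$. Taking $M=B$, your injectivity argument already gives bijectivity of $b\mapsto E(b\,\cdot\,)$. The second relation $\sum_iE(bu_i)v_i=b$ follows as you indicate: sum $\tr\bigl(E(bu_i)v_ix\bigr)=\tr\bigl(bu_iE(v_ix)\bigr)$ over $i$ and use faithfulness on $B$.

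Second, in Step 3 the sides are switched. The simple left $\End(B_A)$-module determined by $\Phi(p_i\eps)$ is $\End(B_A)\Phi(p_i\eps)\cong Bp_i$ via $S\mapsto S(p_i)$, not $p_iB$. Then $\mathrm{Hom}_B(Bp_i,W_j)\cong p_iW_j$ has dimension $\omega(j,i)$, which gives the transpose statement.
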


\begin{proof}
	This follows e.g.\  from ~\cite{Wenzl-Brauer}, Proposition 1.2.
\end{proof}

The following theorem gives a sufficient condition for semisimplicity for
algebras that fit our framework. The proof follows the approach in
~\cite{Wenzl-Brauer} and ~\cite{Wenzl-BCD} where the statement was proved for
the special cases of Brauer and BMW algebras.

\begin{theorem}
	\label{first semisimplicity theorem - axiomatic version}
	Let $(A_n)_{n \ge 0}$ and $(Q_n)_{n \ge 0}$ be sequences of algebras satisfying the axioms {\normalfont (\ref{axiom: involution on An})}
through {\normalfont (\ref{axiom:  weak coherence of An}).}  Let $K$ be a field that is an $R$-module.
	Let $n_0$ be the smallest value of $n$ such that  the  trace $\tr$ on $A_n^K$ is degenerate, with the convention that $n_0 = \infty$ if $\tr$
is non-degenerate
	on $A_n^K$ for all $n$.
	Let $n_1 + 1$ be the smallest value of $n$ such that $Q_n^K$ fails to be semisimple, again with the convention that $n_1 = \infty$ if $Q_n^K$
is semisimple for all $n$.

	Then $A_k^K$ is split semisimple for all $k \le m = \min\{n_0, n_1\}$.
	Moreover, the branching diagram for the sequence of algebras $(A_k^K)_{k \le
				m}$ is $\tilde \Gamma$ (truncated at level $m$).
\end{theorem}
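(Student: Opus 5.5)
We argue by induction on $k \le m$, proving simultaneously that $A_k^K$ is split semisimple, that its minimal ideals are labelled by $\tilde \Gamma_k$ via the cell modules $\Delta^K_{(\gamma,k)}$ (Remark \ref{A k semisimple cellular}), and that the inclusion $A_{k-1}^K \subseteq A_k^K$ has the branching of $\tilde\Gamma$.

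\emph{Base cases.} For $k=0,1$ we have $A_0=Q_0=R$ and $A_1=Q_1$ by axiom (\ref{axiom: A0 and A1}). Since $1\le n_1$ (if $m\ge1$), axiom (\ref{axiom: generic ss quotients}) gives semisimplicity of $Q_1^K$. Its branching is that of $\Gamma$, which agrees with $\tilde\Gamma$ in levels $0,1$.

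\emph{Inductive step.} Suppose $k+1\le m$ and the claim holds for $k-1$ and $k$. We apply Lemma \ref{Wenzl 3 algebra lemma} with
\[
A=A_{k-1}^K,\quad B=A_k^K,\quad C=A_{k+1}^K,\quad \eps=\bar e_k,\quad E=E^k_{k-1}.
\]
The four hypotheses are checked as follows.
\begin{enumerate}
\item The compatibility $\tr(ba)=\tr(E(b)a)$ follows from the definition of $\tr$ and the bimodule property of $E$.
\item The identities $\eps b\eps=E(b)\eps$ and $\eps a=a\eps$ come from axiom (\ref{axiom: en An en}).
\item Injectivity of $a\mapsto a\eps$ is axiom (\ref{axiom: An en}) for specializations.
\item For $C\eps C=B\eps B$, use $A_{k+1}e_k=A_ke_k$ together with the involution, which gives $e_kA_{k+1}=e_kA_k$.
\end{enumerate}
The extra hypotheses hold as well: $A,B$ are split semisimple by induction, and $\tr$ is faithful on them because $k<n_0$.

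Lemma \ref{Wenzl 3 algebra lemma} then tells us three things. First, $J:=A_{k+1}^K e_kA_{k+1}^K$ is a unital split semisimple algebra. Second, its minimal ideals correspond to those of $A_{k-1}^K$, hence to $\tilde\Gamma_{k-1}$. Third, the inclusion $A_k^K\to J$ has the transposed inclusion matrix, which is exactly the reflected branching $(\gamma,k)\nearrow(\gamma,k+1)$ with $|\gamma|<k+1$.

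\emph{Splitting off the quotient.} Since $J$ is unital, its identity $z$ is a central idempotent of $A_{k+1}^K$. Hence
\[
A_{k+1}^K = J\oplus(1-z)A_{k+1}^K, \qquad (1-z)A_{k+1}^K\cong A_{k+1}^K/J\cong Q_{k+1}^K
\]
by axiom (\ref{axiom:  idempotent and Qn as quotient of An}). Because $k+1\le n_1$, $Q_{k+1}^K$ is split semisimple. Its minimal ideals are labelled by $\Gamma_{k+1}$, and by Lemma \ref{lemma: permanent multiplicities} its branching from $Q_k^K$ is given by $\Gamma$.

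It remains to see that the branching from $A_k^K$ into this summand is the part of $\tilde\Gamma$ with $|\gamma|=k+1$. Only the minimal ideals of $A_k^K$ labelled $(\gamma,k)$ with $|\gamma|=k$ should connect to it, through the quotient $A_k^K\to Q_k^K$. The map $A_k^K\to Q_{k+1}^K$ factors through $A_k^K\to Q_k^K\subseteq Q_{k+1}^K$, since $e_{k-1}\in J$. This is the one place where the non-injective homomorphisms of Section \ref{section: branching diagrams} are needed. Hence $A_{k+1}^K$ is split semisimple with branching $\tilde\Gamma$.

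\emph{Main obstacle: the labelling.} The delicate point is identifying the labels of the minimal ideals of $A_{k+1}^K$ with the cellular labels $\tilde\Gamma_{k+1}$, so that the next induction step and Remark \ref{A k semisimple cellular} apply. We do this with Lemma \ref{min idempotent lemma}, whose hypotheses hold since $A_{k-1}^K$ and $A_{k+1}^K$ are now both split semisimple.
\begin{itemize}
\item For the summand $J$: if $p_0$ is a minimal idempotent in $z_{(\gamma,k-1)}A_{k-1}^K$, then $p_0\bar e_k$ lies in $z_{(\gamma,k+1)}A_{k+1}^K$, via axiom (\ref{axiom:  weak coherence of An}). So the Wenzl correspondence matches the cellular labelling.
\item For the summand $Q_{k+1}^K$: the cellular labels with $|\gamma|=k+1$ are compatible with those of $Q_{k+1}$, because $A_{k+1}(\tilde\Gamma^0_{k+1})=J$ and the cell modules of the quotient are those of $Q_{k+1}$.
\end{itemize}
I expect this bookkeeping to be the hardest step. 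The $Q$-part may need a small argument that the cellular structure of $A_{k+1}$ modulo the order ideal $\tilde\Gamma^0_{k+1}$ induces the cell structure of $Q_{k+1}$, with the same branching from $Q_k$.
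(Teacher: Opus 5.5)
Your argument follows the paper's proof. You use induction on $k$ and apply Lemma~\ref{Wenzl 3 algebra lemma} to $A_{k-1}^K\subseteq A_k^K\subseteq A_{k+1}^K$ with $\eps=\bar e_k$. You split $A_{k+1}^K=J\oplus(1-z)A_{k+1}^K$ with $(1-z)A_{k+1}^K\cong Q_{k+1}^K$. The branching into the second summand comes from $e_{k-1}\in J$ (the paper's $z_k(1-z_{k+1})=0$) together with Lemma~\ref{lemma: permanent multiplicities}. Two points need fixing.

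\emph{Hypothesis (4).} You deduce $e_kA_{k+1}=e_kA_k$ from $A_{k+1}e_k=A_ke_k$ ``via the involution''. That needs an involution of $A_{k+1}$ which fixes $e_k$ and preserves $A_k$, and the axioms do not provide one. Axiom~(\ref{axiom: involution on An}) only makes the involutions of the $Q_n$ consistent. In the $q$-Brauer realization each $A_n$ carries its own involution transported by $\psi_{M-n}$, and the paper checks the needed identity there directly (Lemma~\ref{Proposition  Ak properties}(1)). The paper's abstract proof asserts $A_{k+1}e_kA_{k+1}=A_ke_kA_k$ without detail, but it does follow from the axioms:
\begin{enumerate}
\item The isomorphism $B\eps B\cong\End(B_A)$ does not use hypothesis (4), so $B\eps B$ has a unit $z$.
\item Since $C\eps=B\eps$, $B\eps B$ is a left ideal of $C$. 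Hence $Cz=zCz=B\eps B$ and $C\eps C=zC$.
\item By axiom~(\ref{axiom:  idempotent and Qn as quotient of An}), $C\eps C$ is a cellular ideal, hence $*$-invariant, so $zC=Cz^*$.
\item This ideal contains $z$ and $z^*$, with $z$ a left unit and $z^*$ a right unit. So $z=zz^*=z^*$, and therefore $C\eps C=Cz=B\eps B$.
\end{enumerate}

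\emph{Labelling.} Your ``main obstacle'' is not part of the theorem, which concerns only the branching graph. The paper labels the components of $J$ through Wenzl's correspondence with $A_{k-1}^K$. It labels those of $(1-z)A_{k+1}^K\cong Q_{k+1}^K$ by the cell modules of $Q_{k+1}^K$. With these labels the induction closes and never mentions the cell modules of $A_{k+1}$.

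You should remove the cellular labelling from your induction hypothesis, because its $Q$-part cannot be derived from the axioms. Axiom~(\ref{axiom:  idempotent and Qn as quotient of An}) gives only an abstract isomorphism $A_{k+1}/J\cong Q_{k+1}$, and no axiom ties the top layer of the cell datum of $A_{k+1}$ to that of $Q_{k+1}$. Your $J$-part argument via Lemma~\ref{min idempotent lemma} and axiom~(\ref{axiom:  weak coherence of An}) is correct, but this theorem does not need it.
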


\begin{proof}
	Since $K$ is fixed, we will write $A_k$  for $A_k^K$ and    $Q_k$  for $Q_k^K$.   The proof is by induction on $k$.  The strategy is to apply
Lemma \ref{Wenzl 3 algebra lemma} to recognize the Jones basic construction for $A_{k-1}  \subseteq A_k$ inside $A_{k+1}$.

	The assertions are evident for $k=0$ and $k=1$, using axiom (\ref{axiom: A0 and
		A1}). Assume the assertion holds for the algebras up to a certain level $k < m$
	and consider the inclusions $A_{k-1} \subseteq A_k \subseteq A_{k+1}$. By
	assumption, the trace is non-degenerate on $A_{k-1}$ and on $A_k$, and by the
	induction hypothesis, $A_{k-1}$ and $A_k$ are split semisimple. The remaining
	hypotheses of Lemma \ref{Wenzl 3 algebra lemma} hold, with $\eps = \bar e_{k}$
	and $E = E^{k}_{k-1}$, using axioms (\ref{axiom: en An en}), (\ref{axiom: An
		en}), and (\ref{axiom: trace}) and the discussion following axiom (\ref{axiom:
		An en}). Therefore, the ideal $A_{k+1} e_{k} A_{k+1} = A_{k} e_{k} A_{k}$ in
	$A_{k+1}$ is unital split semisimple. By axiom (\ref{axiom: idempotent and Qn
		as quotient of An}), the quotient $A_{k+1}/A_{k+1} e_{k} A_{k+1}$ is isomorphic
	to $Q_{k+1}$, which is split semisimple by hypothesis. Hence $A_{k+1}$ is split
	semisimple.

	It remains to check the assertion about the branching diagram for $A_k
		\subseteq A_{k+1}$. For $j \le k+1$, let $z_j$ be the central idempotent in
	$A_j$ such that $z_j A_j = A_j e_{j-1} A_j$. By the induction hypothesis, the
	branching diagram for $A_{k-1} \subseteq A_k$ is the $k$-th level of $\tilde
		\Gamma$. According to Lemma \ref{Wenzl 3 algebra lemma}, the simple components
	of $z_{k+1} A_{k+1}$ correspond one-to-one with those of $A_{k-1}$ and the
	branching diagram for $A_k \to z_{k+1}A_{k+1}$ is the reflection of that for
	$A_{k-1} \subseteq A_k$. Hence, we can label the simple components of $z_{k+1}
		A_{k+1}$ by pairs $(\gamma, k+1)$, where $(\gamma, k-1) \in
		\tilde\Gamma_{k-1}$, and the branching for $A_k \to z_{k+1} A_{k+1}$ is
	specified by $(\mu, k) \nearrow (\gamma, k+1)$ if and only if $(\gamma, k-1)
		\nearrow (\mu, k)$. Finally, we have to determine the branching for $A_k \to
		(1-z_{k+1}) A_{k+1}$. Because of the Temperley-Lieb relations (axiom
	(\ref{axiom: TL relations})), $\bar e_1, \dots, \bar e_{k}$ are mutually
	equivalent idempotents in $A_{k+1}$, and therefore generate the same ideal in
	$A_{k+1}$. Thus $A_k \bar e_{k-1} A_k \subseteq A_{k+1} \bar e_{k-1} A_{k+1} =
		A_{k+1} \bar e_{k} A_{k+1} $; hence, $z_k (1-z_{k+1}) = 0$. Moreover, $(1-z_j)
		A_j \cong Q_j$ for $j \le k+1$, and the branching diagram for $(1-z_k) A_k \to
		(1- z_{k+1}) A_{k+1}$ is the same as that for $Q_k \subseteq Q_{k+1}$. By Lemma
	\ref{lemma: permanent multiplicities} and axioms (\ref{axiom: restriction
		coherent quotients}) and (\ref{axiom: generic ss quotients}), it is given by
	the $(k+1)$-st level of $\Gamma$. Combining these observations, the branching
	diagram for $A_k \subseteq A_{k+1}$ coincides with the $(k+1)$-st level of
	$\tilde \Gamma$.
\end{proof}

\begin{corollary}
	\label{corollary: generic semisimplicity}   \phantom{xxx}
	\begin{enumerate}
		[label = {\normalfont (\arabic*)}]
		\item
		      Let $(A_n)_{n \ge 0}$ and $(Q_n)_{n \ge 0}$ be sequences of algebras satisfying the axioms {\normalfont (\ref{axiom: involution on
An})}  through {\normalfont (\ref{axiom:  weak coherence of An}).}   Let $F$ denote the field of fractions of the generic ground ring $R$ for the
algebras $A_n$.    Then $A_n^F$ is split semisimple for all $n$, and the branching diagram for $(A_n^F)_{n\ge 0}$ is $\tilde \Gamma$.
		\item    Let $\omega_{(\gamma, n)}$ denote the trace of a minimal idempotent in
		      $z_{(\gamma, n)} A_n^F$. If $|\gamma| = k$ and $n = k + 2s$, then we have
		      $\omega_{(\gamma, n)} = \omega_{(\gamma, k)}/\bm \delta^{2s}.$ Hence, writing
		      $d^F_\gamma = \bm \delta^k \omega_{(\gamma, k)}$, we have $\omega_{(\gamma, n)}
			      = d^F_\gamma/ \bm \delta^n$.

	\end{enumerate}
\end{corollary}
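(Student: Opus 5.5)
Both parts reduce to results already established above, specialized to the generic field $K = F$.

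For part (1), I will apply Theorem \ref{first semisimplicity theorem - axiomatic version} with $K = F$. By axiom (\ref{axiom: faithfulness of tr}), the trace $\tr$ is faithful on $A_n^F$ for every $n$, so $n_0 = \infty$. By axiom (\ref{axiom: generic ss quotients}), $Q_n^F$ is split semisimple for every $n$, so $n_1 = \infty$. Hence $m = \min\{n_0, n_1\} = \infty$. The theorem then says that every $A_k^F$ is split semisimple and that the branching diagram, truncated at any finite level, agrees with $\tilde\Gamma$. Since these truncations are compatible, the branching diagram of the whole sequence $(A_n^F)_{n \ge 0}$ is $\tilde\Gamma$. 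The only point to check is that $F$ is indeed a field that is an $R$-module, which is clear.

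For part (2), both $A_k^F$ and $A_n^F$ are split semisimple by part (1), with $n = k+2s$. Take a minimal idempotent $p_0$ in $z_{(\gamma,k)} A_k^F$. Lemma \ref{min idempotent lemma} (with $K = F$) shows that $p = \barex{n,s}\, p_0$ is a minimal idempotent in $z_{(\gamma,n)}A_n^F$, and that $\tr(p) = \tr(p_0)/\bm\delta^{2s}$.

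We must also know that $\omega_{(\gamma,n)}$ is well defined, i.e.\ independent of the choice of minimal idempotent in $z_{(\gamma,n)}A_n^F$. This holds because any two minimal idempotents in a split simple component are conjugate by an invertible element, and $\tr$ is a trace. Consequently $\omega_{(\gamma,n)} = \tr(p) = \omega_{(\gamma,k)}/\bm\delta^{2s}$.

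Finally, set $d^F_\gamma = \bm\delta^k \omega_{(\gamma,k)}$. Since $n = k + 2s$,
$$\omega_{(\gamma,n)} = \frac{\bm\delta^{-k} d^F_\gamma}{\bm\delta^{2s}} = \frac{d^F_\gamma}{\bm\delta^{n}}.$$
This also covers the case $s = 0$, so the formula holds for every $n$ at which $\gamma$ appears.

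There is no real obstacle here. The most delicate point is the well-definedness of the weight $\omega_{(\gamma,n)}$, which is handled by the conjugacy of minimal idempotents within a simple component.
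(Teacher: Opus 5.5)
Your proposal is correct and follows the paper's own proof: part (1) is Theorem~\ref{first semisimplicity theorem - axiomatic version} with $K=F$, where axioms (\ref{axiom: generic ss quotients}) and (\ref{axiom: faithfulness of tr}) force $n_0=n_1=\infty$, and part (2) is Lemma~\ref{min idempotent lemma} applied over $F$. Your remark that $\omega_{(\gamma,n)}$ does not depend on the choice of minimal idempotent (because minimal idempotents in a split simple component are conjugate and $\tr$ is a trace) is a useful detail that the paper leaves implicit.
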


\begin{proof}
	Part (1) follows from Theorem \ref{first semisimplicity theorem - axiomatic version}  and axioms  (\ref{axiom: generic ss quotients}) and
(\ref{axiom: faithfulness of tr}).   Part (2) follows from Lemma \ref{min idempotent lemma}.
\end{proof}

\subsection{Semisimplicity theorem -- necessary and sufficient condition}
In this section, we use the cellular structure of the algebras $A_n$, in
particular axiom (\ref{axiom: weak coherence of An}), to show that the
sufficient condition for semisimplicity of Theorem \ref{first semisimplicity
	theorem - axiomatic version} is also necessary.

Adopt the hypotheses of Theorem \ref{first semisimplicity theorem - axiomatic
	version}. For the rest of this section, we will be considering algebras
$A_n^K$, $Q_n^K$, etc. over a fixed field $K$, so we will drop the superscript
$K$ from our notation, except in the formal statement of results. Also, we will
assume $n_0<n_1$, and $n\leq n_1$.

Let $I_n$ denote the radical of the trace on $A_n$ and $\bar A_n = A_n/I_n$.
Let $\pi : A_n \to \bar A_n$ denote the quotient map. Let $J_n = A_n e_{n-1}
	A_n$ and let $\rho : A_n \to Q_n = A_n/J_n$ denote the quotient map. Let
$\overline Q_n$ denote $$ \overline Q_n = A_n/(I_n + J_n) \cong \bar
	A_n/\pi(J_n) \cong Q_n/\rho(I_n). $$ As $Q_n$ is semisimple with simple
components $Q_{n,\gamma}$ labeled by $\gamma \in \Gamma_n$, we obtain a subset
$\Gamma(K)_n\subset \Gamma_n$ consisting of all labels $\gamma$ such that
$Q_{n,\gamma}\subset \overline Q_n$. We define $\Gamma(K)$ to be the subgraph
of $\Gamma$ whose vertices are the ones in $\cup_{n\leq n_1} \Gamma(K)_n$, with
the edges inherited from $\Gamma$. We then have

\begin{proposition}
	\label{semisimplicity of quotients by the radical} Assume $n_0< n_1$ and $n\leq n_1$. Then $\bar A_n^K$ is split semisimple, and the branching
diagram for the sequence of algebras $(\bar A_n^K)_{n\leq n_1}$ is $\tilde \Gamma(K)$. In particular, $\Gamma(K)_{n_0}\neq \Gamma_{n_0}$.
\end{proposition}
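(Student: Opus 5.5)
The plan is to run the induction of Theorem \ref{first semisimplicity theorem - axiomatic version} again, now for the quotients $\bar A_n$. These carry a faithful trace by construction, so the hypothesis $n \le n_0$ is no longer needed.

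\emph{Base case.} For $n < n_0$ we have $I_n = 0$, so $\bar A_n = A_n$ and $\overline Q_n = Q_n$. Hence $\Gamma(K)_n = \Gamma_n$. For $n \le n_0$ (which is $\le n_1$), Theorem \ref{first semisimplicity theorem - axiomatic version} gives split semisimplicity of $A_n$ and branching diagram $\tilde\Gamma$ up to level $n_0$.

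\emph{Inductive step.} Suppose $\bar A_{n-1}$ and $\bar A_n$ are split semisimple with the claimed branching, where $n+1 \le n_1$. I apply Lemma \ref{Wenzl 3 algebra lemma} with
\[
A=\bar A_{n-1},\quad B=\bar A_n,\quad C=\bar A_{n+1},\quad \eps=\pi(\bar e_n),\quad E=E^n_{n-1}.
\]
Lemma \ref{quotient by rad of trace} supplies the following ingredients: the inclusions of the $\bar A_j$, the descended conditional expectation, and the descended trace, which is faithful on each $\bar A_j$ by definition. The relations $\eps b\eps=E(b)\eps$ and $\eps a=a\eps$ descend from axiom (\ref{axiom: en An en}) and the definition of $E^n_{n-1}$. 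The equality $C\eps C=B\eps B$ descends from axiom (\ref{axiom: An en}).

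The one hypothesis needing an argument is injectivity of $a\mapsto a\eps$ on $\bar A_{n-1}$. Suppose $x\in A_{n-1}$ and $x\bar e_n\in I_{n+1}$. For any $y\in A_n$, the bimodule property (\ref{bimoduleproperty}) together with the trace property gives
\[
0=\tr(x\bar e_n y)=\tr(\bar e_n\, yx)=\delta^{-2}\tr(yx).
\]
Hence $x\in I_n\cap A_{n-1}$. Applying $E^n_{n-1}$ and Lemma \ref{quotient by rad of trace}(2) then puts $x$ in $I_{n-1}$.

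The lemma now shows that $\pi(J_{n+1})=C\eps C$ is a unital split semisimple ideal of $\bar A_{n+1}$. Its components correspond to those of $\bar A_{n-1}$, and the branching from $\bar A_n$ into it is the reflection of $\bar A_{n-1}\subseteq\bar A_n$. The complementary summand is $\overline Q_{n+1}$, a quotient of the split semisimple $Q_{n+1}$ (since $n+1\le n_1$). It is therefore split semisimple, a sum of the components $Q_{n+1,\gamma}$ with $\gamma\in\Gamma(K)_{n+1}$. Since the ideal is unital, $\bar A_{n+1}$ is their direct product and is split semisimple.

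\emph{Branching into $\overline Q_{n+1}$.} This is the step I expect to be the main obstacle, and I would argue as in the end of the proof of Theorem \ref{first semisimplicity theorem - axiomatic version}. Take the simple module of $Q_{n+1,\gamma'}$ with $\gamma'\in\Gamma(K)_{n+1}$ and restrict it to $A_n$. It is killed by $I_n\subseteq I_{n+1}$, and also by $J_n\subseteq J_{n+1}$ (the $\bar e_j$ generate the same ideal). So the restriction factors through $\overline Q_n$. By Lemma \ref{lemma: permanent multiplicities}(2), its constituents are the $\la\in\Gamma_n$ with $\la\nearrow\gamma'$, and each such $\la$ must therefore lie in $\Gamma(K)_n$. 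Thus the edges are exactly those inherited from $\Gamma$, and $\Gamma(K)$ is a subgraph satisfying the branching-diagram axioms. Combining the two parts gives level $n+1$ of $\tilde\Gamma(K)$.

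\emph{The final claim.} At $n=n_0$ the trace is degenerate, so $\dim\bar A_{n_0}<\dim A_{n_0}$, where $A_{n_0}$ has diagram $\tilde\Gamma$ by the theorem. The ideal part $\pi(J_{n_0})$ has the same component structure as $J_{n_0}$, because both are basic constructions over $A_{n_0-2}=\bar A_{n_0-2}$ and $A_{n_0-1}=\bar A_{n_0-1}$. So the loss of dimension must occur in $\overline Q_{n_0}$, which forces $\Gamma(K)_{n_0}\neq\Gamma_{n_0}$.
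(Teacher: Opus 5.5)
Your proposal is correct and follows the paper's proof. Like the paper, you rerun the induction of Theorem~\ref{first semisimplicity theorem - axiomatic version} on the quotients $\bar A_n$ via Lemma~\ref{Wenzl 3 algebra lemma}, take $\overline Q_{n+1}$ as the complementary summand, and get the branching into it from the fact that these modules are killed by $J_n\subseteq J_{n+1}$ and $I_n\subseteq I_{n+1}$. The differences are minor: you check the injectivity hypothesis for the quotient tower explicitly (the paper leaves it implicit), and for the final claim you compare dimensions of the basic constructions $J_{n_0}$ and $\pi(J_{n_0})$ rather than using Lemma~\ref{min idempotent lemma} to get nondegeneracy of the trace on $J_{n_0}$; both arguments amount to $I_{n_0}\cap J_{n_0}=0$.
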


\begin{proof}
	It follows from Lemma \ref{quotient by rad of trace} that we have well-defined unital embeddings $\bar A_n\subset \bar A_{n+1}.$
	We can
	use the proof of Theorem \ref{first semisimplicity theorem - axiomatic version} to prove our claim  for this sequence of algebras for $n\leq
n_1$, by induction on $n\leq n_1$.
	It is obviously true for $n\leq n_0$. Assuming the claim for $\bar A_{n-1}\subset \bar A_n$, it follows that $\bar J_{n+1}=\bar A_n\bar e_n\bar
A_n$ is semisimple and its simple summands are labeled by the ones of $\bar A_{n-1}$,
	using Lemma \ref{Wenzl 3 algebra lemma}. In particular, the branching $\bar A_n\subset \bar J_{n+1}$ is the reflection of the branching of
	$\bar A_{n-1}\subset \bar A_n$. Moreover, $\bar A_{n+1}/\bar J_{n+1}\cong \overline Q_{n+1}$. This proves that the simple components of
	$\bar A_{n+1}$ are labeled by $\bigcup_{0\leq k\leq \lfloor (n+1)/2\rfloor} \Gamma(K)_{n+1-2k}$,
	as claimed. As a simple $\overline Q_{n+1}$ module is annihilated by $J_n\subset J_{n+1}$, its restriction to $\bar A_n$ must be a direct
	sum of simple $Q_n$ modules.
	Indeed, each of them must also be a simple $\bar Q_n$ module, as the embedding $\bar A_n\subset \bar A_{n+1}$  is unital.

	To prove the last claim, it follows from our arguments before that
	$A_{n_0}\cong J_{n_0}\oplus Q_{n_0}$ is semisimple. As $\tr$ is non-degenerate
	on $A_{n_0-2}$, it is so also on $J_{n_0}$, by Lemma \ref{min idempotent
		lemma}, with $n=n_0$ and $s=1$. Hence there must be a simple component in
	$Q_{n_0}$ that is annihilated by $\tr$.
\end{proof}

\begin{corollary}
	\label{bookkeeping lemma -abstract setting}   Adopt the hypotheses  and notation of Proposition \ref{semisimplicity of quotients by the
radical}.
	Let $\pi : A_n^K \to \bar A_n^K$  denote the quotient map.
	Suppose
	$$k < n_0 <n = k+2s \le n_1,$$
	so $I_k^K = (0)$  and $A_k^K = \bar A_k^K$ is semisimple.
	Suppose $\gamma \in \Gamma_k$ and $p_0$ is a minimal idempotent in $z_{(\gamma, k)}A_k^K$ .   Then
	$\pi(\barex {n, s} p_0)$ is a minimal idempotent in the simple component of $\bar A_n^K$   labelled by $(\gamma, n) \in \tilde \Gamma(K)$.
\end{corollary}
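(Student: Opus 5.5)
We argue by induction on $s$, tracking the element through the reflection steps in the proof of Proposition \ref{semisimplicity of quotients by the radical}. The claim has three parts: $\pi(\barex{n,s} p_0)$ is an idempotent, it is minimal in $\bar A_n$, and it lies in the component labelled $(\gamma,n)$.

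\emph{Idempotence and minimality.} Idempotence is immediate, since $\barex{n,s}$ commutes with $A_k$ (Remark \ref{remark: e n s A n e n s}). Remark \ref{remark: e n s A n e n s} also gives $p A_n p \subseteq \barex{n,s} p_0 A_k p_0 = K p$, where $p=\barex{n,s}p_0$; this uses that $p_0$ is minimal in the semisimple algebra $A_k$. Applying $\pi$, we get $\pi(p)\bar A_n\pi(p) \subseteq K\pi(p)$. Next, $\pi(p)\ne 0$ because $\tr(p)=\tr(p_0)/\bm\delta^{2s}$, and $\tr(p_0)\neq 0$ since $\tr$ is faithful on the semisimple algebra $A_k$ ($k<n_0$). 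Since $\bar A_n$ is split semisimple (Proposition \ref{semisimplicity of quotients by the radical}), $\pi(p)$ is therefore a minimal idempotent of $\bar A_n$.

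\emph{Identifying the component.} This is the main point. In the proof of Proposition \ref{semisimplicity of quotients by the radical}, the components of $\bar A_{m+1}$ labelled $(\gamma,m+1)$ with $|\gamma|<m+1$ are those of $\bar J_{m+1}=\bar A_m\bar e_m\bar A_m$. By Lemma \ref{Wenzl 3 algebra lemma}(2), applied to $\bar A_{m-1}\subseteq\bar A_m\subseteq\bar A_{m+1}$ with $\eps=\bar e_m$, these correspond bijectively to the components of $\bar A_{m-1}$: if $q$ is a minimal idempotent of $\bar A_{m-1}$ in the component $(\gamma,m-1)$, then $q\bar e_m$ is a minimal idempotent of $\bar J_{m+1}$. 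The labelling $(\gamma,m-1)\mapsto(\gamma,m+1)$ is defined through exactly this correspondence. The hypotheses of the lemma hold for the quotient algebras: $\tr$ is faithful on $\bar A_{m-1}$ and $\bar A_m$ by construction, the conditional expectation descends by Lemma \ref{quotient by rad of trace}, and injectivity of $a\mapsto a\bar e_m$ on $\bar A_{m-1}$ follows from faithfulness of the trace, since $\tr(a\bar e_m y)$ relates to $\tr(a\,\cdot)$ via \eqref{bimoduleproperty}.

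\emph{Induction on $s$.} For $s=0$ there is nothing to prove. Suppose $q=\pi(\barex{n-2,s-1}p_0)$ is a minimal idempotent in the $(\gamma,n-2)$ component of $\bar A_{n-2}$. Since $\barex{n,s}=\barex{n-2,s-1}\bar e_{n-1}$, we have $\pi(\barex{n,s}p_0)=q\,\pi(\bar e_{n-1})$. By the correspondence above with $m=n-1$, this is a minimal idempotent in the component of $\bar J_n\subseteq\bar A_n$ labelled $(\gamma,n)$. Because $\bar J_n$ is a unital ideal, its central support is a central idempotent of $\bar A_n$, so minimal idempotents of $\bar J_n$ are minimal in $\bar A_n$ and lie in the same labelled component.

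\emph{Main obstacle.} The difficulty is purely bookkeeping: one must confirm that the labelling of components of $\bar A_n$ in Proposition \ref{semisimplicity of quotients by the radical} is the one induced by Lemma \ref{Wenzl 3 algebra lemma}(2) at every reflection step. If that labelling were instead intended to agree with the cellular labels of $A_n$, we would argue differently. Axiom (\ref{axiom: weak coherence of An}) puts $p$ in $A_n^{\ge(\gamma,n)}\setminus A_n^{>(\gamma,n)}$, and that cellular information would then have to be transported through $\pi$. That version looks harder, so we adopt the first reading, which matches how $\tilde\Gamma(K)$ is defined in the proof.
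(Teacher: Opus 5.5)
Your proposal is correct and follows the paper's own route. The paper's proof is the one-line remark that the statement follows from the proof of Proposition~\ref{semisimplicity of quotients by the radical} together with repeated application of Lemma~\ref{Wenzl 3 algebra lemma}(2). Your induction on $s$, which tracks $\pi(\barex{n,s}p_0)=\pi(\barex{n-2,s-1}p_0)\,\pi(\bar e_{n-1})$ through the reflection steps, is exactly that argument written out. You also supply details the paper leaves implicit: why the hypotheses of Lemma~\ref{Wenzl 3 algebra lemma} hold for the quotients $\bar A_{m-1}\subseteq\bar A_m\subseteq\bar A_{m+1}$, notably injectivity of $a\mapsto a\bar e_m$ via faithfulness of the induced trace, and a direct check of minimality.
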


\begin{proof}
	This follows from the proof of Proposition \ref{semisimplicity of quotients by the radical} and by  repeated application of  Lemma \ref{Wenzl 3
algebra lemma}, part (2).
\end{proof}

The following is our main theoretical result, providing a necessary and
sufficient condition for semisimplicity of algebras that fit our framework.

\begin{theorem}
	\label{second semisimplicity theorem - axiomatic version}
	Adopt the hypotheses of Theorem \ref{first semisimplicity theorem - axiomatic version}.  Then $A_n^K$ is  semisimple if and only if
	$n \leq m = \min\{n_0, n_1\}$.
\end{theorem}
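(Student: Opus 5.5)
The plan is to get the ``if'' direction directly from Theorem \ref{first semisimplicity theorem - axiomatic version}, and to prove the ``only if'' direction by contradiction: assume $A_n^K$ is semisimple for some $n>m$. As in the text we drop the superscript $K$. There are two cases, according to which of $n_0,n_1$ realizes $m$.

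\emph{Case $n>n_1$.} By axiom (\ref{axiom: idempotent and Qn as quotient of An}), $Q_n\cong A_n/A_ne_{n-1}A_n$ is a quotient of $A_n$. A quotient of a semisimple algebra is semisimple. By axiom (\ref{axiom: generic ss quotients}), $Q_n^K$ is non-semisimple for $n>n_1$, which is a contradiction. This disposes of the case $m=n_1$, and also of every $n>n_1$ when $m=n_0$.

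\emph{Case $n_0<n\le n_1$.} Here $n_0<n_1$, so Proposition \ref{semisimplicity of quotients by the radical} applies: $\bar A_n$ is split semisimple with branching diagram $\tilde\Gamma(K)$, and there is some $\gamma'\in\Gamma_{n_0}\setminus\Gamma(K)_{n_0}$. I will choose $\gamma\in\Gamma_k$ with $k<n_0$ and $k\equiv n \pmod 2$ as follows:
\begin{itemize}
\item if $n-n_0$ is odd, take $\gamma\nearrow\gamma'$ at level $k=n_0-1$;
\item if $n-n_0$ is even, take $\gamma''\nearrow\gamma_1\nearrow\gamma'$ at level $k=n_0-2$ and set $\gamma=\gamma''$.
\end{itemize}
Such vertices exist by Definition \ref{def of branching}(3); this needs $n_0\ge 2$, which holds since $A_0=R$ and $A_1=Q_1=A_1^F$-generic cases have faithful trace, so the low levels should be checked by hand. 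Note that $A_k=\bar A_k$ is semisimple since $k<n_0$.

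Let $p_0$ be a minimal idempotent in $z_{(\gamma,k)}A_k$ and set $p=\barex{n,s}p_0$ with $s=(n-k)/2$. By Lemma \ref{min idempotent lemma}, applied with $A_k$ and $A_n$ both semisimple, $p$ is a minimal idempotent of $z_{(\gamma,n)}A_n$. By Corollary \ref{bookkeeping lemma -abstract setting}, $\pi(p)$ is a minimal idempotent in the component of $\bar A_n$ labelled $(\gamma,n)$. Hence $\pi$ maps the simple component $z_{(\gamma,n)}A_n$ onto the $(\gamma,n)$-component of $\bar A_n$. Since $z_{(\gamma,n)}A_n$ is simple and $\pi(p)\neq 0$, this map is injective, so it is an isomorphism.

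Comparing dimensions of the corresponding simple modules gives a contradiction. Since $A_n$ is semisimple and cellular, its simple module for $(\gamma,n)$ is the cell module $\cell{(\gamma,n)}{K}$. Its dimension equals that of $\cell{(\gamma,n)}{F}$, which by Corollary \ref{corollary: generic semisimplicity} is the number of paths from the root to $(\gamma,n)$ in $\tilde\Gamma$. On the other hand, the simple $\bar A_n$-module for $(\gamma,n)$ has dimension equal to the number of such paths in $\tilde\Gamma(K)$. Now $\tilde\Gamma(K)$ is a subgraph of $\tilde\Gamma$, and some path in $\tilde\Gamma$ passes through the missing vertex $(\gamma',n_0)$:
\begin{itemize}
\item in the odd case, go up to $\gamma'$, step down to $(\gamma,n_0+1)$, then oscillate between $\gamma$ and a neighbour until level $n$;
\item in the even case, go up to $\gamma'$, step down through $\gamma_1$ to $(\gamma'',n_0+2)$, then oscillate similarly.
\end{itemize}
So the path count in $\tilde\Gamma(K)$ is strictly smaller, contradicting the isomorphism above.

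The main obstacle is making this dimension identification rigorous. Two points need care. First, one must confirm that the simple $\bar A_n$-module labelled $(\gamma,n)$ really has dimension equal to the number of $\tilde\Gamma(K)$-paths, which follows from the inductive description in Proposition \ref{semisimplicity of quotients by the radical}. Second, one must confirm that the labels in Lemma \ref{min idempotent lemma} and Corollary \ref{bookkeeping lemma -abstract setting} refer to the same component. If the dimension bookkeeping turns out awkward, I would instead compare traces: the generic weight $d^F_\gamma/\bm\delta^n$ specializes to $\tr(p)\neq0$, while the multiplicity structure forced by passing through $\gamma'$ is different.
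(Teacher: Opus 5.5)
Your proposal is correct and follows the paper's proof essentially step for step. You handle $n>n_1$ via the non-semisimple quotient $Q_n$. For $n_0<n\le n_1$ you use the minimal idempotent $p=\barex{n,s}p_0$ with $|\gamma|<n_0$, the isomorphism of $z_{(\gamma,n)}A_n$ onto the $(\gamma,n)$-component of $\bar A_n$, and the strict inequality between path counts in $\tilde\Gamma$ and $\tilde\Gamma(K)$. You get $\pi(p)\neq 0$ from Corollary~\ref{bookkeeping lemma -abstract setting}, where the paper uses $\tr(p)=\tr(p_0)/\delta^{2s}\neq 0$; either works.

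The one flaw is your justification of $n_0\ge 2$: faithfulness of $\tr$ over $F$ says nothing over $K$. The paper's proof tacitly needs the same thing. It holds automatically when $A_1=R$, as in all the examples. If $n_0=1$ and $n$ is odd, you can instead take $k=1$ and $\gamma\in\Gamma(K)_1$ and run the same argument.
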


\begin{proof}
	We drop the superscript $K$ from $A_n^K$,  $Q_n^K$, etc. It was already shown in Theorem \ref{first semisimplicity theorem - axiomatic version}
that the algebras $A_n$ are semisimple if $n\leq m$.

	If $n > n_1$, then $A_n$ is non-semisimple because the quotient algebra $Q_n$
	is non-semisimple, by axiom (\ref{axiom: generic ss quotients}). So we can
	suppose $n_0 < n \le n_1$. In order to reach a contradiction, suppose that
	$A_n$ is semisimple. We will show that the dimension of a certain simple
	component of $A_n$ can be computed in two different ways with different results
	-- a contradiction that shows that $A_n$ cannot be semisimple.

	By Theorem \ref{first semisimplicity theorem - axiomatic version}, $A_{n_0}$ is
	semisimple; but there exists $\gamma_0\in \Gamma_{n_0}\setminus
		\Gamma(K)_{n_0}$ by Proposition \ref{semisimplicity of quotients by the
		radical}. There exists a $(\gamma, n) \in \tilde \Gamma_n$ that is connected by a path in
	the branching diagram $\tilde \Gamma$ to $(\gamma_0, n_0)$, and such that $k =
		|\gamma| < n_0$. Write $s = (n-k)/2$. $A_k$ is split semisimple according to
	Theorem \ref{first semisimplicity theorem - axiomatic version}, and the weights
	of the trace on $A_k$ are non-zero by assumption. Let $p_0$ be a minimal
	idempotent in $z_{(\gamma, k)} A_k$. Then by Lemma \ref{min idempotent lemma},
	$p = \barex {n, s} p_0$ is a minimal idempotent in $z_{(\gamma, n)} A_n$, and
	$\tr(p) = (1/\delta)^{2s} \tr(p_0) \ne 0$. In particular, $p \not\in I_n$.
	Since $z_{(\gamma, n)} A_n$ is simple, $I_n \cap z_{(\gamma, n)} A_n$ is either
	zero or equal to $z_{(\gamma, n)} A_n$; we conclude that it is zero.

	Therefore, $\pi : A_n \to \bar A_n = A_n/I_n$ maps $z_{(\gamma, n)} A_n$
	isomorphically onto some simple component of $\bar A_n $; by Lemmas \ref{min
		idempotent lemma} and \ref {bookkeeping lemma -abstract setting} the image is
	(unsurprisingly) the simple component labelled by $(\gamma, n)$. Because
	$z_{(\gamma, n)} A_n \cong \End(\Delta_{(\gamma, n)})$, where $\Delta_{(\gamma,
			n)}$ denotes the cell module, $\dim(z_{(\gamma, n)} A_n) =
		\dim(\Delta_{(\gamma, n)})^2$. The rank of the cell module is independent of
	the ground ring, and over the field of fractions $F$ of the generic ground ring
	$R$, it is the number $d$ of paths from the root to $(\gamma , n)$ in the
	branching diagram $\tilde \Gamma$, using Corollary \ref{corollary: generic
		semisimplicity} . On the other hand, the dimension of the corresponding simple
	component of $\bar A_n$ is strictly less. It is $d'^2$, where $d'$ is the
	number of paths on the restricted branching diagram $\tilde \Gamma(K)$ of
	Proposition \ref{semisimplicity of quotients by the radical} from the root to
	$(\gamma, n)$. This is strictly less than $d$, because any path through
	$(\gamma_0, n_0)$ is missing. This contradiction shows that $A_n$ is not
	semisimple.
\end{proof}

\subsection{Determining  \boldmath $ {m = \min\{n_0, n_1\} }$\unboldmath} \label{sec:trace} It remains to find an efficient way to determine $m =
\min\{n_0, n_1\}$.  In our examples, explicit formulas are known for  the trace evaluated at minimal idempotents for the algebras defined over
$F$,  the field of fractions of the generic ground ring.  We will show that these formulas can still be used for specializations over a field $K$
as long as our algebras remain semisimple over $K$.   That is, we will show that as long as the algebras $A_n^K$ remain semisimple, then
the weights of the trace on $A_n^F$  are ``evaluable"  in $K$, and the evaluations give the weights of the trace in $A_n^K$.
This allows us to determine $m$ by looking at vanishing conditions for the weights over $K$.

Here is our general set-up in more detail: Let $A$ be an algebra over an
integral domain $R$, with $A$ free as an $R$-module. Let $F$ denote the field
of fractions of $R$. Suppose $K$ is a field that is an $R$-module; that is
there exists a ring homomorphism $\theta : R \to K$. Let $\mathfrak I$ denote
the kernel of $\theta$, a prime ideal in $R$. Then we have the auxiliary rings
$R_\mathfrak I \subseteq F$, the localization of $R$ at $\mathfrak I$, and
$R/\mathfrak I$, the image of $\theta$ in $K$. The homomorphism $\theta$
extends to $\theta: R_\mathfrak I \to K$. We call elements of $A \otimes_R
	R_\mathfrak I \subseteq A \otimes_R F$ the {\em $K$-evaluable elements} of $A
	\otimes_R F$. Abusing language, we will sometimes refer to $K$-evaluable
elements of $A$, meaning $K$-evaluable elements of $A^F$. If $x$ is
$K$-evaluable, then $x \otimes_{R_\mathfrak I} K$ is a well defined element of
$A^K$. If $\{a_i\}$ is any $R$-basis of $A$, then $A \otimes_R R_\mathfrak I$
is the set of $R_{\mathfrak I}$ linear combinations of the basis elements,
independent of the choice of the basis. If $x = \sum_i \alpha_i a_i$ is any
such element, then $x \otimes_{R_\mathfrak I} K = \sum_k \theta(\alpha_i) a_i$.

We review well-known facts about calculating idempotents and matrix units for
certain classes of algebras.

\begin{definition}
	\label{matrixunitdef} \phantom{xxx}
	\begin{enumerate}
		[label = {\normalfont (\alph*)}]
		\item Let $A\subset B$ be finite-dimensional split-semisimple algebras over a field.
		      We say that a collection of minimal idempotents $(p_t)$ is a partition of unity
		      if $\sum_t p_t=1$ and $p_tp_{\tilde t}=p_{\tilde t}p_t=0$ for $t\neq \tilde t$.
		      We say that a partition of unity $(q_s)\subset B$ is a refinement of $(p_t)$ if
		      each $q_s$ is a subidempotent of some $p_t$, i.e. $p_tq_s=q_s=q_sp_t$.
		\item We say that a system of matrix units $(f_{t,s}^\mu)$ of $B$ is a refinement of
		      a system of matrix units $(e_{i,j}^\lambda)$ of $A$ if $(f_{t,t}^\mu)$ is a
		      refinement of $(e_{i,i}^\lambda)$. In this case, we call $(e_{i,j}^\lambda)$
		      and $(f_{t,s}^\mu)$ compatible systems of matrix units.
	\end{enumerate}
\end{definition}

\begin{lemma}
	\label{inductionargument}   \label{evaluable matrix unit lemma}
	Let $R$, $F$, $K$, etc. be as in the discussion above. Suppose that  $A \subseteq B$ are $R$-algebras that are free as $R$-modules, that
	$A^F$ and $B^F$  are split semisimple,  that the inclusion $A^F \subseteq B^F$ is multiplicity free (see Section \ref{section: branching
	diagrams}),  that $A^K$ is split semisimple, and that $A^F$ has  a system of $K$-evaluable  matrix units $(e_{i, j}^\lambda)$.

	Assume that for each minimal central idempotent $z_\mu$ in $B^F$ and for each
	minimal central idempotent $z'_\la$ in $A^F$ such that $z'_\la z_\mu \ne 0$,
	there exists a $K$-evaluable minimal idempotent $p_\la$ in $B^F$ such that
	$$p_\lambda = e^\la_{i_0, i_0} p_\la = e^\la_{i_0, i_0} z_\mu$$ for some $i_0.$
	Moreover, assume that for each pair $z'_\lambda$, $z'_\nu$ such that $z'_\la
		z_\mu \ne 0$ and $z'_\nu z_\mu \ne 0$, there exist $K$-evaluable elements $h,
		h'$ in $B^F$ such that $ p_\la h p_\nu h' p_\la \otimes_{R_\mathfrak I} K \ne 0
	$.

	Then $B^F$ has a $K$-evaluable system of matrix units $(f_{s, t}^\mu)$ refining
	$(e_{i, j}^\lambda)$.
\end{lemma}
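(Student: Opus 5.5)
The plan is to build the matrix units of $B^F$ one simple component $z_\mu B^F$ at a time. All of them will be products of the given $K$-evaluable elements $e^\la_{i,j}$, $p_\la$, $h$, $h'$, divided by scalars that I will show are units in $R_\mathfrak I$.

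Fix $\mu$, and let $\La_\mu$ be the set of $\la$ with $z'_\la z_\mu \neq 0$. Since $A^F \subseteq B^F$ is multiplicity free, for each $\la \in \La_\mu$ and each $i$ the element $e^\la_{i,i} z_\mu$ is a minimal idempotent of $z_\mu B^F$. These elements, over all $\la \in \La_\mu$ and all $i$, form a partition of $z_\mu$ into minimal idempotents. So the index set for the matrix units of $z_\mu B^F$ will be the set of pairs $(\la, i)$ with $\la \in \La_\mu$, and I will take
$$
f^\mu_{(\la,i),(\la,i)} = e^\la_{i,i} z_\mu .
$$
Because $z_\mu$ is central, the hypothesis $p_\la = e^\la_{i_0,i_0} z_\mu$ gives
$$
e^\la_{i,i} z_\mu = e^\la_{i,i_0}\, p_\la\, e^\la_{i_0,i}.
$$
Hence every diagonal unit is $K$-evaluable. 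Here $i_0 = i_0(\la)$ is the index provided by the hypothesis for $\la$.

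The key step is to connect the different $p_\la$, $\la \in \La_\mu$, by evaluable off-diagonal elements.

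Fix a reference $\la_0 \in \La_\mu$ and let $\nu \in \La_\mu$. Since $p_{\la_0}$ and $p_\nu$ are minimal idempotents in the same simple component, $p_{\la_0} B^F p_\nu$ and $p_\nu B^F p_{\la_0}$ are one-dimensional. Therefore
$$
p_{\la_0} h p_\nu h' p_{\la_0} = c_\nu\, p_{\la_0} \quad\text{for some } c_\nu \in F,
$$
where $h, h'$ are the elements provided by the hypothesis for the pair $\la_0, \nu$. I then need $c_\nu \in R_\mathfrak I$ and $\theta(c_\nu) \neq 0$.

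Both sides of this identity are $K$-evaluable, and the left side has nonzero image in $B^K$ by hypothesis. In particular $p_{\la_0}$ has nonzero image, so in some $R$-basis expansion $p_{\la_0}$ has a coefficient $\beta \in R_\mathfrak I \setminus \mathfrak I R_\mathfrak I$, which is a unit of the local ring $R_\mathfrak I$. Comparing that coefficient on both sides gives $c_\nu \beta \in R_\mathfrak I$, hence $c_\nu \in R_\mathfrak I$. Since the image of $c_\nu p_{\la_0}$ in $B^K$ is nonzero, $\theta(c_\nu) \neq 0$, so $c_\nu$ is a unit of $R_\mathfrak I$. This is the step I expect to be the main obstacle: one has to be careful that evaluability of a product together with nonvanishing of its image actually forces the scalar to be a unit in $R_\mathfrak I$, and this local-ring argument is how I will handle it.

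With this in hand, define
$$
u_{\la_0,\nu} = p_{\la_0} h p_\nu, \qquad u_{\nu,\la_0} = c_\nu^{-1}\, p_\nu h' p_{\la_0}, \qquad u_{\la,\nu} = u_{\la,\la_0}\, u_{\la_0,\nu},
$$
taking $u_{\la_0,\la_0} = p_{\la_0}$. All of these are $K$-evaluable. Using one-dimensionality of $p_\nu B^F p_\nu$, they satisfy $u_{\la,\nu} u_{\nu,\kappa} = u_{\la,\kappa}$ and $u_{\la,\la} = p_\la$. Finally set
$$
f^\mu_{(\la,i),(\nu,j)} = e^\la_{i,i_0}\, u_{\la,\nu}\, e^\nu_{j_0,j},
$$
with $i_0 = i_0(\la)$ and $j_0 = i_0(\nu)$.

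It remains to check that these form a refining system of matrix units. The multiplication relations $f_{(\la,i),(\nu,j)} f_{(\nu',j'),(\kappa,l)} = \delta_{(\nu,j),(\nu',j')} f_{(\la,i),(\kappa,l)}$ follow from three facts: the relations among the $e^\la_{i,j}$; the identity $e^\nu_{j_0,j} e^\nu_{j,j_0} z_\mu = p_\nu$; and the orthogonality of $z_\mu$ and $z_{\mu'}$ for $\mu \neq \mu'$. The diagonal elements agree with $e^\la_{i,i} z_\mu$, and summing them over $\mu$, $\la$, $i$ gives $1$. Hence $(f^\mu_{s,t})$ is a system of matrix units of $B^F$ refining $(e^\la_{i,j})$, and it is $K$-evaluable by construction.
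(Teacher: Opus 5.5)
Your proof is correct and follows essentially the paper's argument: the diagonal units are $e^\la_{i,i}z_\mu = e^\la_{i,i_0}p_\la e^\la_{i_0,i}$, the $p_\la$ are linked by $p_\la h p_\nu$ and $\alpha^{-1}p_\nu h' p_\la$, and the remaining units are obtained by conjugating with the $e^\la_{i,j}$. You also make explicit two points the paper leaves implicit. First, you route every link through a fixed reference $\la_0$, which guarantees that the composed off-diagonal units are consistent. Second, your local-ring argument shows that $\alpha$ is actually a unit of $R_{\mathfrak I}$.
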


\begin{proof}
	It suffices to show that $z_\mu B^F$   has a $K$-evaluable system of matrix units refining  $(e_{i, j}^\lambda)$, for a fixed minimal central
idempotent $z_\mu$ of $B^F$.
	Let $L$  denote the set  of minimal central idempotents  $z'_\lambda$  of $A^F$ such that
	$z'_\lambda z_\mu \ne 0$.    For $z'_\lambda \in L$, define
	$$f^\mu_{(i, \lambda), (j, \lambda)} =  e^\lambda_{i, j} z_\mu =  e^\la_{i, i_0}  e^\la_{i_0, i_0} z_\mu  e^\la_{i_0, j} = e^\la_{i, i_0}
p_\la e^\la_{i_0, j}.$$
	This is a complete system of $K$-evaluable matrix units in $z'_\la z_\mu B^F z'_\la$.

	Impose an arbitrary total order on $L$. By hypothesis, for each $z'_\la <
		z'_\nu$ in $L$, there exist $K$-evaluable elements $h, h'$ in $B^F$ such that
	$p_\la h p_\nu h' p_\la \otimes_{R_\mathfrak I} K \ne 0$. Since $p_\la$ is a
	minimal idempotent, necessarily $ p_\la h p_\nu h' p_\la = \alpha p_\la$, where
	$\alpha$ is $K$-evaluable and $\alpha^K$ is non-zero. It follows that $p_\nu h'
		p_\la h p_\nu= \alpha p_\nu$ as well.

	If $j_0$ is such that $p_\nu = e^\nu_{j_0, j_0} z_\mu$, define $$f^\mu_{(i_0,
			\la), (j_0, \nu)} = p_\la h p_\nu \quad \text{ and} \quad f^\mu_{(j_0, \nu),
			(i_0, \la)} = (1/\alpha) p_\nu h' p_\la.$$ Then $$ f^\mu_{(i_0, \la), (i_0,
			\la)}, f^\mu_{(i_0, \la), (j_0, \nu)}, f^\mu_{(j_0, \nu), (i_0, \la)},
		f^\mu_{(j_0, \nu), (j_0, \nu)} $$ is a $2$-by-$2$ system of $K$-evaluable
	matrix units. We complete the system of $K$-evaluable matrix units by defining
	for $\la \ne \nu$ in $L$, $$ f^\mu_{(i, \la), (j, \nu)} = f^\mu_{(i, \la),
			(i_0, \la)} f^\mu_{(i_0, \la), (j_0, \nu)} f^\mu_{(j_0, \nu), (j, \nu)}. $$
\end{proof}

\begin{lemma}
	\label{Anmatrixunits} Let $(A_n)_{n \ge 0}$ and $(Q_n)_{n \ge 0}$ be sequences of algebras satisfying the
	axioms {\normalfont (\ref{axiom: involution on An})}  through {\normalfont (\ref{axiom:  weak coherence of An}).}  Let $K$ be a field that is
an
	$R$-module.  Assume the algebras $Q_n$ have compatible systems of matrix units that are $K$-evaluable for $n\leq n_1$. Then we obtain
	compatible systems of  $K$-evaluable matrix units for $A_n$ if  $n\leq m=\min\{n_0, n_1\}$.
\end{lemma}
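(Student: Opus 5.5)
We argue by induction on $n\le m$, constructing at each step a $K$-evaluable system of matrix units of $A_n^F$ that refines the one already built for $A_{n-1}^F$. The refinement step will be Lemma \ref{evaluable matrix unit lemma} with $A=A_{n-1}$ and $B=A_n$. Its standing hypotheses hold: $A_{n-1}^F\subseteq A_n^F$ is split semisimple and multiplicity free by Corollary \ref{corollary: generic semisimplicity}, and $A_{n-1}^K$ is split semisimple by Theorem \ref{first semisimplicity theorem - axiomatic version}. For $n=0,1$ the claim is just the hypothesis on $Q_0$ and $Q_1$, by axiom (\ref{axiom: A0 and A1}). So what remains, for each minimal central idempotent $z_{(\mu,n)}$ of $A_n^F$, is to produce the evaluable minimal idempotents $p_\lambda$ and the evaluable connecting elements $h,h'$.

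The central idempotents split into two kinds. For $|\mu|=n$, the component lies in the complement $(1-z_n)A_n^F\cong Q_n^F$ of $J_n^F$. Here we use the given compatible evaluable matrix units of $Q_n$ and $Q_{n-1}$, lifted to $A_n$ and $A_{n-1}$. One way to lift is to note that $e_{i,i}^\lambda z_\mu$ with $\lambda\in\Gamma_{n-1}$ is determined modulo $J_n$. Another is to build the matrix units directly inside $(1-z_n)A_n$ by applying the lemma to the $Q$-part with $p_\lambda$ taken from the $Q_n$ system. The branching from $A_{n-1}$ to this summand only involves $\lambda$ with $|\lambda|=n-1$, by the proof of Theorem \ref{first semisimplicity theorem - axiomatic version}, and there the systems are compatible by assumption.

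For $|\mu|<n$, the component is $(\mu,n)$ inside $J_n^F=A_{n-1}\bar e_{n-1}A_{n-1}$. Take $\lambda$ with $z'_{(\lambda,n-1)}z_{(\mu,n)}\neq 0$, so that $(\mu,n-2)\nearrow(\lambda,n-1)$. Let $e^{(\mu,n-2)}_{j,j}$ be a diagonal matrix unit of $A_{n-2}$ lying under $e^{(\lambda,n-1)}_{i_0,i_0}$, and set
$$p_\lambda=\delta^{-1} e^{(\lambda,n-1)}_{i_0,i_0}\, e_{n-1}\, e^{(\lambda,n-1)}_{i_0,i_0}.$$
This is evaluable, since $\delta$ is invertible in $R$. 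By the Jones basic construction picture of Lemma \ref{Wenzl 3 algebra lemma}, $p_\lambda$ equals $e^{(\lambda,n-1)}_{i_0,i_0}z_{(\mu,n)}$ up to the scalar $\tr(e^{(\mu,n-2)}_{j,j})/\tr(e^{(\lambda,n-1)}_{i_0,i_0})$, the ratio of the trace weights $\omega_{(\mu,n-2)}/\omega_{(\lambda,n-1)}$. The key computation uses $\bar e_{n-1}x\bar e_{n-1}=E(x)\bar e_{n-1}$ and $E(e^{(\lambda,n-1)}_{i_0,i_0})=(\omega_{(\lambda,n-1)}/\omega_{(\mu,n-2)})e^{(\mu,n-2)}_{j,j}$ on that block. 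Because $n-1<m\le n_0$, these weights are nonzero in $K$: they evaluate to the weights of the faithful trace on the semisimple algebras $A_{n-1}^K$ and $A_{n-2}^K$. Hence the normalized $p_\lambda$ is evaluable and is a minimal idempotent of the required form. For the connecting elements take $h=h'=\bar e_{n-1}$. Then
$$p_\lambda\bar e_{n-1}p_\nu\bar e_{n-1}p_\lambda$$
reduces via $E$ to a multiple of $p_\lambda$ by a product of ratios of weights, and this multiple is nonzero after specialization to $K$.

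We expect the main obstacle to be the \emph{evaluability and nonvanishing of the weight ratios in $K$}. This needs a separate argument that the weights computed over $F$ specialize to the weights over $K$. That argument should itself come from the inductive hypothesis: the evaluable matrix units of $A_{n-1}$ specialize to matrix units of $A_{n-1}^K$, so $\tr$ of a diagonal unit specializes correctly, and it is nonzero because $\tr$ is faithful on $A_{n-1}^K$ for $n-1<n_0$.
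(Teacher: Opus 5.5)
Your treatment of the components inside $J_n^F=A_{n-1}e_{n-1}A_{n-1}$ is the paper's argument. The paper also normalizes $p_t\bar e_{n-1}p_t$ by $\alpha(t)=\tr(p_t)/\tr(\tilde p_r)$, where $E(p_t)=\alpha(t)\tilde p_r$. It gets evaluability of $1/\alpha(t)$ from the weights on $A_{n-1}^K$ and $A_{n-2}^K$ being nonzero in $K$. Its connecting elements $p_t\bar e_{n-1}p_s$ and $p_s\bar e_{n-1}p_t$ agree with your $h=h'=\bar e_{n-1}$ once sandwiched between $p_\lambda$ and $p_\nu$; the resulting scalar is $\alpha(s)\alpha(t)$.

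One point there needs to be made explicit, as the paper does with its common $\tilde p_r$. The unit $e^{(\mu,n-2)}_{j,j}$ must be fixed once for the block $(\mu,n)$. Then $i_0=i_0(\lambda)$ must be chosen so that $e^{(\lambda,n-1)}_{i_0,i_0}$ is a subidempotent of that same $e^{(\mu,n-2)}_{j,j}$ for every $\lambda$. Suppose instead $p_\nu$ sits over a different unit $e^{(\mu,n-2)}_{j',j'}$ with $j'\neq j$. Then $\bar e_{n-1}p_\nu\bar e_{n-1}$ is a multiple of $\bar e_{n-1}e^{(\mu,n-2)}_{j',j'}$, and $e^{(\mu,n-2)}_{j',j'}p_\lambda=0$. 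So your product $p_\lambda\bar e_{n-1}p_\nu\bar e_{n-1}p_\lambda$ would vanish.

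The genuine gap is in the case $|\mu|=n$. An element of $(1-z_n)A_n^F$ is the unique preimage $(1-z_n)\tilde h$ of its image in $Q_n^F$. Being ``determined modulo $J_n$'' by an evaluable element of $Q_n$ therefore says nothing about its own evaluability. Indeed, the sum of your lifted diagonal units is $1-z_n$ itself, and you never show that $z_n$ is $K$-evaluable. Your second route has the same defect, since Lemma~\ref{inductionargument} needs the $p_\lambda$ to be evaluable elements of $A_n^F$.

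The missing step, which the paper states explicitly, is to handle the $J_n$ components first. Then $z_n=\sum_t f_{t,t}$, the sum of the evaluable diagonal matrix units just built in $J_n^F$, is $K$-evaluable. Next, lift the evaluable matrix units $h_{i,j}$ of $Q_n$ to $\tilde h_{i,j}\in A_n\otimes_R R_{\mathfrak I}$; this is possible because $A_n\otimes_R R_{\mathfrak I}\to Q_n\otimes_R R_{\mathfrak I}$ is onto. Finally, take $(1-z_n)\tilde h_{i,j}$ as the evaluable system on $(1-z_n)A_n^F$.

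Compatibility with the system on $A_{n-1}$ then follows from the fact you cite. Suppose $h_{i,i}$ is a subidempotent of a diagonal unit $h'_{j,j}$ of $Q_{n-1}$, with lift $\tilde h'_{j,j}\in A_{n-1}$. The relations $z_{n-1}(1-z_n)=0$ and $(1-z_n)J_n^F=0$ give $(1-z_{n-1})\tilde h'_{j,j}\,(1-z_n)\tilde h_{i,i}=(1-z_n)\tilde h'_{j,j}\tilde h_{i,i}=(1-z_n)\tilde h_{i,i}$.
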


\begin{proof}
	We follow the proof in  (\cite{Ram-Wenzl}, Theorem 1.4), which   goes by induction on $n$, with the claim obviously true for $n=0,1$.
	Let $n <m$  and suppose the claim holds for $A_k$, for $k \le n$.   In particular, we have compatible systems of $K$-evaluable matrix units in
$A_k^F$  for $k \le n$;   and the weights of the trace  on $A_k^F$ are given by $(\omega_{(\gamma,k)})=(d^F_\gamma/\bm \delta^{k})$,  with
$d^F_\gamma$ $K$-evaluable and $d_\gamma^K$ non-zero.

	We know that $A_{n+1}^F$ is split semisimple. Let $z_{(\mu, n+1)}$ be a minimal
	central idempotent in $A^F_ne_nA^F_n$, and let $z_{(\mu, n-1)}$ be the
	corresponding minimal central idempotent in $A_{n-1}^F$; see Lemma \ref{Wenzl 3
		algebra lemma}. Let $p_t$ and $p_s$ be $K$-evaluable minimal idempotents in
	$A_n^F$ such that there exists a $K$-evaluable minimal idempotent $\tilde
		p_r\in z_{(\mu, n-1)}A^F_{n-1}$ with $p_s\tilde p_r =p_s$, $\tilde p_rp_t=p_t$.
	Suppose moreover that $p_t$, $p_s$, and $\tilde p_r$ are diagonal matrix units
	in the compatible system of matrix units given by the induction hypothesis.
	Suppose that $p_t$ and $p_s$ belong respectively to the minimal ideals in
	$A_n^F$ labelled by $(\lambda(t), n)$ and $(\lambda(s), n)$. Then it follows
	from the bimodule property and the fact that $\tilde p_r$ is a minimal
	idempotent in $A_{n-1}^F$ that $$E^{n}_{n-1}(p_t)=E^{n}_{n-1}(\tilde
		p_rp_t\tilde p_r)= \tilde p_rE^{n}_{n-1}(p_t)\tilde p_r=\alpha(t) \tilde p_r,$$
	for some $\alpha(t) \in F$. As $\tr(p_t)=\tr(E^{n}_{n-1}(p_t))=\alpha(t)
		\tr(\tilde p_r)$, we have $\alpha(t)= \tr(p_t)/\tr(\tilde p_r) =
		{d^F_{\la(t)}}/{\bm \delta d^F_\mu}$, using the induction hypothesis. It
	follows that $\bar e_np_t\bar e_n= \alpha(t) \bar e_n\tilde p_r.$ As $\bar
		e_n\tilde p_r$ is a minimal idempotent in $A_n^Fe_nA_n^F$, this implies that
	$(1/\alpha(t))p_t \bar e_np_t$ is a minimal idempotent. It also follows that
	$$(p_t\bar e_np_s)(p_s\bar e_n p_t)=\alpha(s) p_t\bar e_np_t,\quad (p_s\bar
		e_np_t)(p_t\bar e_n p_s)=\alpha(t) p_s\bar e_np_s.$$ The hypotheses of Lemma
	\ref{inductionargument} are satisfied with $$ p_\la = (1/\alpha(t)) p_t \bar
		e_n p_t, \ \ p_\nu = (1/\alpha(s)) p_s\bar e_n p_s, \ \ h = p_t \bar e_n p_s, \
		\ \text{and} \ \ h' = p_s \bar e_n p_t. $$ Applying Lemma
	\ref{inductionargument} gives a system of $K$-evaluable matrix units for the
	simple component $z_{(\mu, n+1)} A_{n+1}^F$, with these matrix units refining
	the given matrix units in $A_n^F$. The weight of the trace on the simple
	component $z_{(\mu, n+1)} A_{n+1}^F$ is $\tr(\bar e_n \tilde p_r) =
		(1/\bm\delta^2) \tr(\tilde p_r) = d^F_\mu/\bm\delta^{n+1}$, using Equation
	\ref{bimoduleproperty} and the induction hypothesis.

	Hence, we have proved the claim for matrix units $(f_{ts})$ in $A_n^Fe_nA_n^F$.
	In particular, the central idempotent $z_{n+1}=\sum_t f_{tt}$ of the ideal
	$A_n^Fe_nA_n^F$ is $K$-evaluable. Let $(h_{i,j})$ be a $K$-evaluable system of
	matrix units for $Q_{n+1}$; as $A_{n+1} \otimes_R R_{\mathfrak I} \to Q_{n+1}
		\otimes_R R_{\mathfrak I}$ is surjective, lift these to elements $(\tilde
		h_{i,j})$ in $A_{n+1} \otimes_R R_{\mathfrak I}$. Then $((1-z_{n+1}) \tilde
		h_{i,j})$ is a $K$-evaluable system of matrix units for
	$(1-z_{n+1})A_{n+1}^F\cong Q_{n+1}^F$.

	Thus we have obtained a complete system of $K$-evaluable matrix units in
	$A_{n+1}^F$.
\end{proof}

In the following statement, let $d^F_\gamma$ be as in Corollary \ref{corollary:
	generic semisimplicity}, so that the weights of the trace on $A_n^F$ are given
by $\omega_{(\gamma, n)} = d^F_\gamma/ \bm \delta^n$.

\begin{proposition}
	\label{traceevaluation} Adopt the hypotheses of Lemma \ref{Anmatrixunits}.
	\begin{enumerate}
		[label = {\normalfont (\arabic*)}]
		\item  If $x \in A_n \otimes _R R_\mathfrak I$, then $\tr(x)$ is $K$--evaluable, and
		      $\tr(x \otimes_{R_\mathfrak I} K) = \theta(\tr(x))$, where $\theta: R_\mathfrak I \to K$ is the evaluation homomorphism.
		\item  For $n \le m = \min\{n_0, n_1\}$, the quantities $\omega_{(\gamma, n)} =
			      d^F_\gamma/ \bm \delta^n$ and $d^F_\gamma$ are necessarily $K$-evaluable, and
		      the weights of the trace on $A_n^K$ are given by $(d_\gamma^K/(\delta^K)^n)$.
		\item 
			Either $m$ is equal to the the least $n \le n_1$ such that there exists $\gamma \in \Gamma_n$
		      with $d^F_\gamma$ $K$-evaluable and $d_\gamma^K = 0$,  or $m = n_1$ if no such $n$ exists.
	\end{enumerate}
\end{proposition}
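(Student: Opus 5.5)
Part (1) comes straight from the recursive definition of $\tr$. We show by induction on $n$ that the conditional expectation $E^n_{n-1}$ maps $A_n \otimes_R R_\mathfrak I$ into $A_{n-1}\otimes_R R_\mathfrak I$ and commutes with specialization. Since $E^n_{n-1}$ is defined over $R$ by $\bar e_n x \bar e_n = \bar e_n E^n_{n-1}(x)$, and $\bm\delta$ is invertible, it is $R$-linear on the free module $A_n$. So for $x=\sum_i \alpha_i a_i$ with $\alpha_i\in R_\mathfrak I$ we get $E^n_{n-1}(x)=\sum_i\alpha_i E^n_{n-1}(a_i)$, which is an $R_\mathfrak I$-combination of basis elements of $A_{n-1}$. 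The discussion following axiom (\ref{axiom: An en}) says $E^n_{n-1}(x\otimes_R S)=E^n_{n-1}(x)\otimes_R S$, and the same holds with $S=R_\mathfrak I$ and then $K$. Iterating down to $A_0=R$, $\tr(x)=\sum_i\alpha_i\tr(a_i)$ with $\tr(a_i)\in R$. Hence $\tr(x)\in R_\mathfrak I$ and $\theta(\tr(x))=\sum_i\theta(\alpha_i)\tr^K(a_i)=\tr(x\otimes_{R_\mathfrak I}K)$.

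For (2), fix $n\le m$. By Lemma \ref{Anmatrixunits}, $A_n^F$ has a compatible $K$-evaluable system of matrix units $(f^{(\gamma,n)}_{s,t})$. Specializing gives elements $\bar f_{s,t}=f_{s,t}\otimes K\in A_n^K$. These satisfy the matrix unit relations and sum to $1$, and their number is $\sum_\gamma d_\gamma^2=\dim A_n^K$, where $d_\gamma$ is the number of paths. So they form a complete system of matrix units of $A_n^K$. By Theorem \ref{first semisimplicity theorem - axiomatic version}, $A_n^K$ is split semisimple with the same labels and branching, and $\bar f^{(\gamma,n)}_{t,t}$ is a minimal idempotent in the component labelled $(\gamma,n)$. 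We match labels through the branching diagram / Lemma \ref{min idempotent lemma}. Applying (1) to $x=f_{t,t}$ gives that $\omega_{(\gamma,n)}=\tr(f_{t,t})$ is $K$-evaluable and that its image is the weight of $\tr$ on $A_n^K$. Since $\bm\delta$ is a unit in $R$, $d^F_\gamma=\bm\delta^n\omega_{(\gamma,n)}$ is also $K$-evaluable and the weight equals $d^K_\gamma/(\delta^K)^n$. Here $d_\gamma$ depends only on $\gamma$, by Corollary \ref{corollary: generic semisimplicity}(2).

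For (3), by (2) and the criterion of Section \ref{sec:traces}, for $n\le m$ the trace on $A_n^K$ is degenerate iff some $d^K_{\gamma'}=0$ with $(\gamma',n)\in\tilde\Gamma_n$, i.e. $\gamma'\in\Gamma_k$ for some $k\le n$. For $k<n\le m$ we have $k<n_0$, so all these weights are nonzero. Hence the only possible vanishing is at $\gamma\in\Gamma_n$.
\begin{enumerate}
\item If $m=n_0\le n_1$, the trace on $A_{n_0}^K$ is degenerate while $A_{n_0}^K$ is semisimple. So some $\gamma\in\Gamma_{n_0}$ has $d^K_\gamma=0$, and no smaller $n$ has such a $\gamma$, because the traces there are faithful and (2) applies.
\item If $m=n_1<n_0$, then for all $n\le n_1$ every $d^K_\gamma\ne0$, so no such $n\le n_1$ exists and $m=n_1$.
\end{enumerate}
The main subtlety is that in (3) the minimum is taken only over $\gamma$ with $d^F_\gamma$ $K$-evaluable. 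Indices $n>m$ are never reached, since we stop at the first vanishing. For $n<m$, all $d_\gamma$ are evaluable and nonzero by (2). Hence the least such $n$ is exactly $m$ when $n_0\le n_1$.

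I expect the main obstacle to be the identification in (2) that specialized matrix units give minimal idempotents of the correct component of $A_n^K$. The dimension count plus Theorem \ref{first semisimplicity theorem - axiomatic version} should settle it.
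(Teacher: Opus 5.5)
Your proposal is correct and follows the paper's own proof. Part (1) expands in an $R$-basis and uses that $\tr$ commutes with specialization. Part (2) specializes the $K$-evaluable matrix units from Lemma \ref{Anmatrixunits} and applies (1). Part (3) splits into the cases $n_0\le n_1$ and $n_1<n_0$, with the lower-level weights nonzero by faithfulness.

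The step you flag needs one more ingredient than the dimension count, because that count alone does not rule out a whole block of specialized matrix units vanishing. The fix: for a $K$-evaluable idempotent $p$, both $p(A_n\otimes_R R_{\mathfrak I})$ and $p(A_n\otimes_R R_{\mathfrak I})p$ are direct summands of a free module over the local ring $R_{\mathfrak I}$. They are therefore free, of the same ranks as $pA_n^F$ and $pA_n^Fp$. Hence each $f_{t,t}\otimes K$ is a nonzero minimal idempotent of $A_n^K$. The paper's proof simply asserts this point.
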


\begin{proof}  We have already observed that (1) holds if  $x \in A_n$.   The statement for $x \in A_n \otimes _R R_\mathfrak I$ follows, by
writing $x$ as an $R_\mathfrak I$ linear combination of basis elements in $A_n$.   We have seen  in Lemma \ref{Anmatrixunits} that every
equivalence class of minimal idempotent in $A_n^F$  is represented by a $K$--evaluable idempotent;   and every equivalence class of minimal
idempotent in $A_n^K$ is represented by the evaluation $p^K = p \otimes_{R_\mathfrak I} K$ of a $K$--evaluable idempotent $p$  in $A_n^F$.   Point
(2) follows from this and point (1).

	For point (3), we consider the cases $n_1  <  n_0$ and $n_0 \le n_1$ separately.
	If $n_1 <  n_0$, then $m = n_1$, $A_n^K$ is semisimple for $n \le n_1$, and
	the weights of the trace on $A_n^K$ are non-zero for $n \le  n_1$. Using part (2),
	we see that $d^F_\gamma$ is $K$-evaluable and $d_\gamma^K \ne 0$ when $|\gamma|
	\le  n_1$.   If $n_0 < n_1$, then $m = n_0$, and similar considerations show that for all $n \le n_0$ and all $\gamma \in \Gamma_n$,   $d^F_\gamma$ is $K$-evaluable, and moreover $n_0$ is the least $n$ such that $d^K_\gamma = 0$ for some $\gamma \in \Gamma_n$.  	
\end{proof}

\section{Examples}\label{examples:sec}

In this section, we verify that the Brauer algebras, the BMW algebras, and the
$q$-Brauer algebras satisfy the axioms of section \ref{axioms}.
\subsection{Hecke algebras}  \label{subsection Hecke algebras}

\begin{definition}
	Let $S$ be an integral domain with invertible element $q$.  The Hecke algebra $H_n(S; q^2)$  is the unital $S$-algebra with generators $T_i$
($1 \le i \le n-1$)  satisfying  the braid relations $T_i T_{i+1} T_i = T_{i+1} T_i T_{i+1}$,  and the quadratic relation
	$(T_i -q) (T_i + q\inv) = 0$.
\end{definition}

The specialization at $q = 1$ of the Hecke algebra $H_n(S; q^2)$ is the
symmetric group algebra $S \mathfrak S_n$. For our primary examples, the Brauer
algebras, the BMW algebras, and the $q$-Brauer algebras, either the symmetric
group algebra or the Hecke algebra will play the role of $Q_n$.

There is an embedding of $H_n(S; q^2)$ into $H_{n+1}(S; q^2)$ determined by
$T_i \mapsto T_i$ for $1 \le i \le n-1$. The algebras $H_n(S; q^2)$ have a
consistent algebra involution $^*$ fixing the generators, $T_i^* =T_i$. There
is an automorphism $\#$ of $H_n(S; q^2)$ determined by $T_i^\# = - T_i\inv$.
For $w \in \mathfrak S_n$, let $w = s_{i_1} s_{i_2} \cdots s_{i_k}$ be a
minimum length expression. Then $T_w = T_{i_1} T_{i_2} \cdots T_{i_k}$ is well
defined and the set of $T_w$ for $w \in \mathfrak S_n$ is a basis of $H_n(S;
	q^2)$.

Dipper and James ~\cite{dipper-james1, dipper-james2} studied the
representation theory of the Hecke algebras, defining Specht modules $S^\la$
that generalize Specht modules for symmetric groups. Jost ~\cite{jost} showed
that restrictions of Specht modules from $H_n$ to $H_{n-1}$ have Specht
filtrations. Murphy ~\cite{murphy-hecke92} showed that the Hecke algebras are
cellular (before the formalization of the notion of cellularity in
~\cite{Graham-Lehrer}). The partially ordered set in the cell datum for $H_n$
is the set $\Lambda_n$ of Young diagrams of size $n$, with dominance order.

Murphy shows that his cell modules $\Delta^\la$ satisfy $\Delta^\la \cong
	({S^{\la'}})^\#$, where $\la'$ is the transpose of $\la$ and the superscript
$\#$ means that the module is twisted by the automorphism $\#$. Thus it follows
from the results of Dipper--James, Jost, and Murphy that the Hecke algebras
form a restriction coherent sequence of cellular algebras.

\begin{notation}
	\label{notation:  q integer and q-characteristic}    \phantom{xxx}
	\begin{enumerate}
		\item
		      For $d$ an integer,  the $q$ integer $\qint d q$  is defined to be $ \frac{q^d  -q^{-d}}{q - q\inv}$  if $q \ne \pm 1$,   and
$\qint d q = d$ or $(-1)^d d$ if $q = 1$ or $q=-1$ respectively.
		\item Let $K$ be a field and let $q \in K$. Let $e(q)$ be the smallest positive
		      integer $d$ such that $\qint d q = 0$; set $e(q) = \infty$ if $q^2$ is not a
		      proper root of unity.
	\end{enumerate}
\end{notation}

With this notation, the Hecke algebras $H_n(K; q^2)$ are split semisimple when
$n < e(q)$ and non-semisimple when $n \ge e(q)$. The branching diagram for the
sequence of Hecke algebras is Young's lattice. A convenient reference for these
facts is \cite{Mathas-book}.

The following is a restatement of the well-known fact that semisimple Hecke
algebras have a partition of unity by projections labeled by Young diagrams. It
was e.g.\ proved in \cite{Wenzl-type-A-subfactors}, Corollary 2.3 by a slightly
more complicated method.

\begin{proposition}
	Let $K$ be a field.  For $q \in K$ let $e(q)$  be defined as above.   Then the sequence of algebras
	$(H_n(q^2))_{n < e(q)}$  has a system of compatible $K$-evaluable matrix units.
\end{proposition}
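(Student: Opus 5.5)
We will argue by induction on $n$, using Lemma \ref{evaluable matrix unit lemma} for each inclusion $H_n \subseteq H_{n+1}$ with $n+1 < e(q)$. Here $A = H_n(R;\bm q^2)$ and $B = H_{n+1}(R;\bm q^2)$ over the generic ground ring $R = \Z[\bm q^{\pm 1}]$, and $\theta: R\to K$ sends $\bm q\mapsto q$. For $n = 0,1$ the statement is trivial. The inclusion $H_n^F\subseteq H_{n+1}^F$ is multiplicity free with branching given by Young's lattice, and $H_n^K$ is split semisimple since $n<e(q)$.

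The tool that makes everything evaluable is the Jucys--Murphy elements $L_1 = 0$ (or $1$, depending on normalization) and $L_{k+1} = T_k L_k T_k$. Each $L_{k+1}$ lies in $H_{k+1}$ over $R$, commutes with $H_k$, and $\sum_{k\le n} L_k$ is central in $H_{n+1}$. Over $F$, $L_{n+1}$ acts on the path $\la\nearrow\mu$ by a scalar $c(\mu/\la)$, a $q$-power (or $q$-integer) of the content of the added box. Fix the minimal central idempotent $z_\mu$ of $H_{n+1}^F$, a diagonal matrix unit $e^\la_{i_0,i_0}$ of $H_n^F$ with $\la\nearrow\mu$, and form
\[
p_\la = e^\la_{i_0,i_0}\prod_{\nu}\frac{L_{n+1}-c(\nu/\la)}{c(\mu/\la)-c(\nu/\la)},
\]
where the product runs over the other $\nu$ with $\la\nearrow\nu$. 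This equals $e^\la_{i_0,i_0}z_\mu$ because $L_{n+1}$ acts on $e^\la_{i_0,i_0}H_{n+1}^F$ through the distinct contents of addable boxes of $\la$. The key point is that $p_\la$ is $K$-evaluable. The denominators are differences of contents of two addable boxes of $\la$, which differ by at most $n+1$, and the content difference $d$ yields $[d]_q$ times a unit. Since $n+1<e(q)$, these denominators are units in $R_\mathfrak I$.

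It remains to verify the nondegeneracy hypothesis: for $\la,\nu\nearrow\mu$ we need $K$-evaluable $h,h'$ with $p_\la h p_\nu h' p_\la\otimes K\neq 0$. I would take $h = h' = T_n$, or more generally elements of the form $e^\la_{i_0,j}T_n e^\nu_{k,i_0}$ chosen so that the paths to $\la$ and $\nu$ share the same vertex at level $n-1$. Via the seminormal form one computes
\[
p_\la T_n p_\nu T_n p_\la = \alpha\, p_\la ,
\]
with $\alpha$ equal, up to units, to $[d+1]_q[d-1]_q/[d]_q^2$, where $d$ is the axial distance between the two boxes. We have $|d\pm1| \le n+1 < e(q)$, unless $d\pm 1 = 0$, which cannot happen because two distinct addable boxes are not adjacent. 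So $\alpha^K\neq 0$. This gives all hypotheses of Lemma \ref{evaluable matrix unit lemma}, hence compatible $K$-evaluable matrix units in $H_{n+1}^F$, which completes the induction.

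The main obstacle is this last computation. It requires identifying $\alpha$ without circularly assuming seminormal formulas over $K$. I would derive it over $F$ from the quadratic relation and the action of $L_n, L_{n+1}$ on the two-dimensional space spanned by the two paths through a common level-$(n-1)$ vertex. A fallback, if choosing a common level-$(n-1)$ vertex is awkward, is to route through intermediate $\nu$ and compose $2\times2$ blocks, as in the proof of Lemma \ref{evaluable matrix unit lemma}.
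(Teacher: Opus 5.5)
Your proposal is correct and is essentially the paper's argument. The paper takes Murphy's idempotents $F_t$ as the $K$-evaluable diagonal matrix units; these are products of Lagrange interpolants in the Jucys--Murphy elements, which is exactly what your formula for $p_\lambda$ builds one level at a time. It then applies Lemma~\ref{inductionargument} with $h=h'=T_{n-1}$ and $\alpha=\qint{d-1}{q}\qint{d+1}{q}/\qint{d}{q}^2$ from the seminormal form, just as you do.

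One correction: with $L_1=0$ your recursion $L_{k+1}=T_kL_kT_k$ gives $L_k=0$ for all $k$. Use instead $L_1=0$, $L_{k+1}=T_k+T_kL_kT_k$, whose eigenvalue gaps are $q^{a+b}\qint{a-b}{q}$. The other option, $L_1=1$ with $L_{k+1}=T_kL_kT_k$, has gaps carrying a factor $q-q\inv$. That factor is not a unit in $R_{\mathfrak I}$ when $q=\pm1$ in $K$, which is the symmetric group case the paper needs for the Brauer algebras.
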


\begin{proof}
	This is well-known, and can be fairly easily deduced from e.g. the exposition in \cite{Mathas-book}.  We sketch the proof.  Write $e$ for
$e(q)$.
	We regard standard tableaux as paths on Young's lattice.
	For a standard tableau $t$ of length  $n < e$,  let $F_t$ be the Murphy idempotent defined in  \cite{Mathas-book}, 3.33.   Then the elements
	$F_t$  for $t$ of fixed length $n < e$  are a $K$-evaluable partition of unity in $H_n(q^2)$.   Moreover,  if $s$ is a standard tableau of
length $k$ and $t$ is a standard tableau of length $n$ with $k \le n <e$,  then $F_s F_t = F_t$ if $s$ is a subtableau of $t$, and zero otherwise.
Hence the partitions of unity in $(H_n)_{n <e}$ are compatible.

	The existence of the $K$-evaluable off-diagonal matrix units can now be deduced
	using Lemma \ref{inductionargument}. Supposing $n < e$ and that compatible
	systems of $K$-evaluable matrix units exist in $H_k(q^2)$ for $k < n$, it
	suffices to consider tableaux $s$ and $t$ of length $n$ that are identical
	except that the boxes for $n-1$ and $n$ are switched; i.e. $s = (n-1, n) \cdot
		t$, where $(n-1, n)$ is the transposition in $\mathfrak S_n$. We apply the
	argument of Lemma \ref{inductionargument} with the idempotents $F_t$ and $F_s$
	and with $h$ and $h'$ both equal to $T_{n-1}$. It follows from Young's
	seminormal representations for the Hecke algebras, see \cite{Hoefsmit,
		Wenzl-type-A-subfactors} or \cite{dipper-james2}, Theorem 4.9, that the
	quantity $\alpha$ in our lemma is equal to $ \qint {d-1} q \qint {d+1} q/{\qint
		d q^2} $ where $d=i_n-i_{n+1}+j_{n+1}-j_n$ and where $(i_k,j_k)$ are row and
	column indices of the box containing $k$. Since $n < e$, $\alpha$ is
	$K$-evaluable and non-zero in $K$.
\end{proof}
\subsection{Brauer  algebras}

An $n$-Brauer diagram is a graph with $2n$ vertices, $n$ upper vertices and $n$
lower vertices, connected in pairs by $n$ edges. An edge that connects two
vertices on the same level is ``horizontal" and an edge that connects two
vertices on different levels is ``vertical". The product $ab$ of two $n$-Brauer
diagrams $a$ and $b$ is obtained by stacking $a$ over $b$, identifying the
lower vertices of $a$ with the upper vertices of $b$; the resulting graph may
have some number $k$ of loops, not connected to an upper or lower vertex. Let
$c$ be the $n$-Brauer diagram obtained by removing the loops. The product $a b$
is then defined to be $\delta^k c$, where $\delta$ is a variable or parameter.
We will insist that the ``loop parameter" $\delta$ be an invertible element in
some integral domain.

\begin{definition}
	Let $S$ be an integral domain with a distinguished invertible element $\delta$.   The $n$-Brauer algebra $\Br_n(S; \delta)$ is the free $S$
module with the basis of $n$-Brauer diagrams,  with bilinear multiplication determined by the multiplication of $n$-Brauer diagrams.
\end{definition}

By convention, $\Br_0(S; \delta) = S$. The generic ground ring for the Brauer
algebras is $R = \Z[\bm \delta\powerpm]$, where $\bm \delta$ is an
indeterminate. For any $S$, $\Br_n(S; \delta) = \Br_n(R; \bm \delta) \otimes_R
	S$. The field of fractions $F$ of $R$ is $\Q(\bm \delta)$.

When the context is clear, we may omit the ring $S$ or the parameter $\delta$
from the notation. Alternatively, we may use superscripts to indicate
specializations, for example $\Br_n^F(\bm \delta) = \Br_n(F; \bm \delta)$, as
in section \ref{section: axioms}. The same remark holds for the other examples
discussed below.

Let $\mathfrak S_n$ denote the symmetric group on $n$ objects. We will show
that the sequences of algebras $A_n = \Br_n(R; \bm \delta)$ and $Q_n = R
	\mathfrak S_n$ (with $R = \Z[\bm \delta\powerpm]$) satisfy the axioms of
section \ref{section: axioms}, with $\Gamma$ equal to Young's lattice.

Let $e_j$ and $s_j$ denote the $n$--Brauer diagrams: \newline \centerline{$ e_j
		= \inlinegraphic[scale=.7]{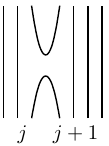}\qquad s_j = \inlinegraphic[scale=
			.7]{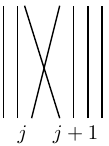} $} \newline It is easy to see that $e_1, \dots, e_{n-1}$ and
$s_1, \dots, s_{n-1}$ generate $\Br_n$ as an algebra.

Note that $e_j^2 = \delta e_j$; hence $\bar e_j = (1/\delta) e_j$ is an
idempotent. A diagrammatic computation shows that the elements $e_j$ satisfy
the Temperley-Lieb relations. The elements $s_1, \dots, s_{n-1}$ generate a
subalgebra isomorphic to the symmetric group algebra, namely the subalgebra
spanned by Brauer diagrams with only vertical strands.

There is a map $\iota$ from $n$-Brauer diagrams to $n+1$-Brauer diagrams, by
adding a pair of vertices and a vertical strand connecting them on the right.
\newline \centerline{$\iota: \quad \inlinegraphic{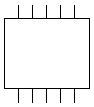} \quad \mapsto
		\quad \inlinegraphic{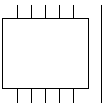} $} \newline The linear extension of $\iota$ to
$\Br_n$ is an injective algebra homomorphism into $\Br_{n+1}$. Using this, we
identify $\Br_n$ as a subalgebra of $\Br_{n+1}$.

There is an algebra involution $^*$ on $\Br_n$, which maps a Brauer diagram to
its reflection in a horizontal axis, interchanging upper and lower vertices.
The involution satisfies $\iota(a^*) = \iota(a)^*$. The generators $e_j$ and
$s_j$ satisfy $e_j^* = e_j$ and $s_j^* = s_j$.

For $n \ge 1$ define a map $\cl$ from $n$--Brauer diagrams into $\Br_{n-1}$ as
follows. First ``partially close" a given $n$--Brauer diagram by adding an
additional smooth curve connecting the rightmost upper and lower vertices,
\newline \centerline{$ \inlinegraphic{tangle_box2} \quad \mapsto \quad
		\inlinegraphic{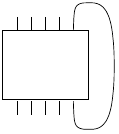}. $} \newline In case the resulting ``tangle"
contains a closed curve (which happens precisely when the original diagram
already had a vertical strand connecting the rightmost upper and lower
vertices), remove this loop and replace it with a factor of $\delta$. The
linear extension of $\cl$ to $\Br_n$ is a (non-unital) $\Br_{n-1}$--$\Br_{n-1}$
bimodule map, and $\cl\circ\,\iota(x) = \delta x$ for $x \in \Br_n$. Hence
$E^n_{n-1} (x) := (1/\delta) \cl(x)$ is a conditional expectation, and
$E^n_{n-1} (\iota(x)) = x$ for $x \in \Br_{n-1}$. A diagrammatic computation
shows that for $b \in B_n$, $e_n b e_n = \cl(b) e_n$, so $\bar e_n b \bar e_n =
	E^n_{n-1}(b) \bar e_n$.

The composition $\tr = E^1_0 \circ \cdots \circ E^n_{n-1} : \Br_n \to \Br_0
	\cong S$ can be computed as follows on an $n$-Brauer diagram $b$. ``Fully
close" the diagram by joining each upper vertex with the corresponding lower
vertex by a curve, and count the number $c$ of closed loops in the resulting
$0$-tangle. Then $\tr(b) = \delta^{c-n}$. It is not hard to check by pictures
that the full closure of $\chi_i b$ and the full closure of $b \chi_i$ are
homotopic, where $\chi_i$ stands for $e_i$ or $s_i$. It follows that $\tr$ is a
normalized trace.

The span $J_n$ of $n$-Brauer diagrams with at least one horizontal strand is an
ideal in $\Br_n(S; \delta)$, and is generated as an ideal by any one of the
elements $e_j$. The span of diagrams with only vertical strands is a subalgebra
isomorphic to the symmetric group algebra $S \mathfrak S_n$, and a linear
complement of $J_n$. Hence $B_n/J_n \cong S \mathfrak S_n$.

\subsubsection{Verification of the axioms}
We begin the verification of the axioms of section \ref{section: axioms}, for
$A_n = \Br_n(R; \bm \delta)$ and $Q_n = R \mathfrak S_n$, with $R = \Z[\bm
		\delta\powerpm]$, and with Young's lattice $\Lambda$ as $\Gamma$. Axioms
(\ref{axiom: involution on An}) and (\ref {axiom: A0 and A1}) are evident from
the discussion above. Axioms (\ref{axiom: restriction coherent quotients}) and
(\ref{axiom: generic ss quotients}) follow from the discussion of the Hecke
algebras in section \ref{subsection Hecke algebras}, since the symmetric group
algebras are specializations of the Hecke algebras. In particular, if $K$ is a
field of characteristic zero, then $Q_n^K$ is semisimple for all $n$, and if
$K$ has characteristic $p$, then $Q_n^K$ is semisimple if and only if $n \le
	p-1$. In the notation of Theorem \ref{first semisimplicity theorem - axiomatic
	version}, $n_1 = \infty$ in the first case and $n_1 = p-1$ in the second case.

The cellularity of the Brauer algebras (axiom (\ref{axiom: cellular Axn})) is
shown in ~\cite{Graham-Lehrer}. We will discuss the form of the cellular basis
below, in connection with axiom (\ref{axiom: weak coherence of An}). Axiom
(\ref{axiom: idempotent and Qn as quotient of An}) follows from the discussion
above and the explicit description of the cellular basis in Proposition
\ref{Graham-Lehrer cellularity theorem}, since $J_{n+1} = \Br_{n+1} e_{n}
	\Br_{n+1}$ and $\Br_{n+1}/J_{n+1} \cong R \mathfrak S_{n+1}$. We have already
mentioned that the elements $e_j$ satisfy the Temperley-Lieb relations, so
axiom (\ref{axiom: TL relations}) is satisfied. It is evident from the
multiplication of Brauer diagrams that $e_n$ commutes with $\Br_{n -1}$. Since
$e_n b e_n = \cl(b) e_n $ for $b \in \Br_n$, we have $e_n \Br_n e_n \subseteq
	\Br_{n-1} e_n$. Thus axiom (\ref{axiom: en An en}) is satisfied. Axiom
(\ref{axiom: An en}) is verified by a short argument using
~\cite{Wenzl-Brauer}, Proposition 2.1. See for example ~\cite{MR2794027}, Lemma
5.3. We have already observed that axiom (\ref{axiom: trace}) holds.

To show axiom (\ref{axiom: faithfulness of tr}), faithfulness of the trace on
$\Br_n^F$, where $F = \Q(\bm \delta)$, it suffices to show that $D =
	\det((\tr(b b')_{b, b'} )) \ne 0$, where $b, b'$ run through the diagram basis.
But for any $b, b'$, $\tr(b b') = \bm \delta^k$, where $k \le 0$, and $\tr(b
	b') = 1$ if and only if $b'= b^*$. Hence, $D = \pm 1 + \sum_{j < 0} n_j \bm
	\delta^j \ne 0$. Alternatively, use ~\cite{Wenzl-Brauer}, Theorem 3.2.

Axiom (\ref{axiom: weak coherence of An}) is verified below.

\subsubsection{A cellular basis of $\Br_n$}    We now describe the cellular basis of $\Br_n$ from ~\cite{Graham-Lehrer} and
verify axiom (\ref{axiom: weak coherence of An}).

\newcommand\sym{\mathfrak S}

For $1 \le a < b \le n$, let $\sym_{a, b} \subset \sym_n$ denote the
permutations of $\{a, a+1, \dots, b\}$.

\begin{definition}
	\label{definition:  shortest coset reps}  Let $1 \le s \le n/2$  and let $\ell = n - 2s$.
	Let  $\mathcal D_{n, s} $  denote the set of $\pi \in \sym_n$ such that
	$$
		\pi(\ell+ 1) < \pi(\ell+ 3) < \cdots < \pi(n -1),
	$$
	$$
		\pi(1) < \pi(2) < \cdots < \pi(\ell),
	$$
	and
	$$
		\pi(\ell+1) < \pi(\ell+2),  \pi(\ell+3) < \pi(\ell+4), \dots, \pi(n-1) < \pi(n).
	$$
\end{definition}

Note that for $s = 0$, $\mathcal D_{n, s}$ is the singleton set containing the
identity permutation. Recall that for $1 \le s \le n/2$, $\ex {n, s}$ denotes
$\ex {n, s} = e_{n - 2s +1} e_{n - 2s +3} \cdots e_{n-1} \quad (s \ \
	\text{factors})$. By convention $\ex {n, 0} = 1$.

\begin{lemma}
	\label{factorization of Brauer diagrams}   Every Brauer diagram $d$ on $n$ strands is either a permutation diagram, or, if $d$ has $\ell = n -
2s$ vertical strands for $1 \le s \le n/2$,  then $d$
	can be written uniquely as
	$$
		d =  u \, \pi \ex {n, s} \,v\inv,
	$$
	as a product in the Brauer algebra $\Br_n$,   where $u, v \in \mathcal D_{n,s}$ and  $\pi \in \sym_{\ell}$.
\end{lemma}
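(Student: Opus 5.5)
We argue directly with diagrams: compute what the product $u\,\pi\,\ex{n,s}\,v\inv$ looks like, then show that every Brauer diagram with $\ell = n-2s$ vertical strands arises from exactly one triple $(u,\pi,v)$.

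First we describe the diagram $\pi \ex{n,s}$ for $\pi \in \sym_\ell$. Its upper horizontal strands join $\ell+2i-1$ to $\ell+2i$ for $i=1,\dots,s$, and its lower horizontal strands do the same. Its vertical strands connect the first $\ell$ upper vertices to the first $\ell$ lower vertices according to $\pi$. Multiplying on the left by a permutation diagram $u$ only relabels the upper vertices by $u$, and multiplying on the right by $v\inv$ relabels the lower vertices by $v$; no loops are created. So $u\,\pi \ex{n,s} v\inv$ is a single Brauer diagram with the following shape.
\begin{enumerate}
	\item Its upper horizontal strands join $u(\ell+2i-1)$ and $u(\ell+2i)$.
	\item Its lower horizontal strands join $v(\ell+2i-1)$ and $v(\ell+2i)$.
	\item Its vertical strands join $u(j)$ on top to $v(\pi(j))$ on the bottom, for $j \le \ell$, up to the precise convention for reading permutation diagrams.
\end{enumerate}

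Next, given a diagram $d$ with $\ell$ vertical strands, we reconstruct $u$, $\pi$ and $v$. The conditions defining $\mathcal D_{n,s}$ say exactly the following about $u$.
\begin{enumerate}
	\item $u(1)<\dots<u(\ell)$ lists the upper endpoints of the vertical strands in increasing order.
	\item Each horizontal pair is listed as $u(\ell+2i-1)<u(\ell+2i)$.
	\item The pairs are ordered by their left endpoints, $u(\ell+1)<u(\ell+3)<\cdots$.
\end{enumerate}
These rules determine $u$ uniquely from the set of upper endpoints of vertical strands together with the upper horizontal pairing, and every such configuration occurs. The same holds for $v$ and the bottom row. Once $u$ and $v$ are fixed, $\pi$ is forced: it is the bijection matching the $j$-th upper vertical endpoint to the lower vertical endpoint it is joined to, read through $v$. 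This gives existence.

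Uniqueness follows because $u$, $v$ and $\pi$ were each recovered from the diagram $d$ alone. As a consistency check, we count: the number of choices of $u$ is $\binom{n}{2s}(2s-1)!!$, which is the number of ways to choose $2s$ upper vertices and pair them. Multiplying the choices for $u$, $v$ and $\pi$ gives exactly the number of Brauer diagrams with $\ell$ vertical strands. The main obstacle is bookkeeping with the composition conventions, namely whether $u$ acts on upper labels via $u$ or $u\inv$, and whether $\pi$ or $\pi\inv$ appears. We fix the convention for permutation diagrams once at the start and check it against $s_j$.
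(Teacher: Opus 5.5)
The paper states this lemma without proof, treating it as the standard diagrammatic fact behind the Graham--Lehrer cellular basis. Your argument is a correct and complete proof of it, and it is the standard one: $u$ encodes the set of top through-strand endpoints together with the top horizontal matching, $v$ encodes the same data on the bottom row, and $\pi$ is the through-strand bijection.

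One caution concerns the convention you postpone to the end. Checking against $s_j$ cannot detect it, since $s_j=s_j\inv$. What you need is that $w\mapsto(\text{diagram of } w)$ is an algebra homomorphism when permutations are composed as functions from right to left. Then left multiplication by $u$ sends top vertex $j$ to $u(j)$. A non-involution such as $s_2s_1$ is a valid test.

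This choice is not merely cosmetic. Under the opposite convention the top vertices are relabelled by $u\inv$, and the lemma with this $\mathcal D_{n,s}$ is false. For $n=3$, $s=1$, the permutations with one-line notation $[2,1,3]$ and $[3,1,2]$ both lie in $\mathcal D_{3,1}$, but they would produce the same diagram $u\,\ex{3,1}$.

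Your first step already describes the horizontal strands using $u$ itself, so the proof is correct as written once that convention is stated explicitly.
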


In the following, recall from Examples \ref{Young's lattice} and \ref{lattice
	for generic Brauer algebras} that $\Lambda_\ell$ denotes the set of Young
diagrams of size $\ell$, with dominance order $\unrhd$. Moreover, $\tilde
	\Lambda_n$ denotes the set of pairs $(\lambda, n)$, with $\lambda$ a Young
diagram of size $|\lambda| \le n$ and $n - |\lambda|$ even. The set $\tilde
	\Lambda_n$ is endowed with the partial order $(\lambda, n) \unrhd (\mu, n)$ if
$|\lambda| < |\mu|$ or if $|\lambda| = |\mu|$ and $\lambda \unrhd \mu$ in
dominance order. For a Young diagram $\lambda$, let $\mathcal T(\lambda)$
denote the set of standard Young tableaux of shape $\lambda$.

The following proposition says that one obtains a cellular basis of $\Br_n$ by
replacing the permutation $\pi \in \sym_\ell$ in Lemma \ref{factorization of
	Brauer diagrams} by elements of a cellular basis of $R \sym_\ell$.

\begin{proposition}
	{\normalfont (\cite{Graham-Lehrer}),   }     \label{Graham-Lehrer cellularity theorem}
	The Brauer algebra $\Br_n(R; \bm \delta)$ is cellular with cell datum described as follows. For each $\ell \le n$ with $n-\ell$ even,  choose
any cell datum for $R \sym_\ell$,  with partially ordered set $(\Lambda_\ell, \unrhd)$,
	index sets $\mathcal T(\lambda)$,  and cellular basis
	$\{m_{\mathfrak s, \mathfrak t}^\lambda:  \lambda \in \Lambda_\ell \ \text{and} \    \mathfrak s, \mathfrak t \in \mathcal T(\lambda)\}.
	$

	The partially ordered set in the cell datum for $\Br_n$ is $(\tilde\Lambda_n,
		\unrhd)$. For each $(\lambda, n) \in \tilde\Lambda_n$, where $\lambda$ has size
	$\ell = n-2s$, the index set $\mathcal T((\lambda, n))$ is the set of pairs
	$(\mathfrak s, u)$, where $\mathfrak s$ is a standard tableau of shape
	$\lambda$, and $u \in \mathcal D_{n, s}$, the family of permutations from
	Definition \ref{definition: shortest coset reps}.

	For $(\lambda, n) \in \tilde \Lambda_n$ and $(\mathfrak s, u), (\mathfrak t, v)
		\in \mathcal T((\lambda, n))$, let $$ x^{(\lambda, n)}_{(\mathfrak s, u),
				(\mathfrak t, v)} = u \,m^\lambda_{\mathfrak s, \mathfrak t} \ex {n, s} \,v\inv
	$$ Then the set of $ x^{(\lambda, n)}_{(\mathfrak s, u), (\mathfrak t, v)} $
	with $(\lambda, n) \in \tilde \Lambda_n$ and $(\mathfrak s, u), (\mathfrak t,
		v) \in \mathcal T((\lambda, n))$ is a cellular basis of $\Br_n$.
\end{proposition}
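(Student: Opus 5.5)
We need to prove that the elements $x^{(\lambda,n)}_{(\mathfrak s,u),(\mathfrak t,v)} = u\, m^\lambda_{\mathfrak s,\mathfrak t}\, \ex{n,s}\, v\inv$ form a cellular basis of $\Br_n$ with poset $(\tilde\Lambda_n,\unrhd)$. The plan is to verify the three conditions of Definition~\ref{gl cell} in turn, reducing each to cellularity of $R\sym_\ell$ together with Lemma~\ref{factorization of Brauer diagrams}.

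\emph{Basis.} By Lemma~\ref{factorization of Brauer diagrams}, the diagrams with exactly $\ell = n-2s$ vertical strands are in bijection with triples $(u,\pi,v)$, where $u,v\in\mathcal D_{n,s}$ and $\pi\in\sym_\ell$, via $d = u\pi\ex{n,s}v\inv$. For fixed $u,v$, the map $y\mapsto u\,y\,\ex{n,s}\,v\inv$ is therefore an $R$-linear bijection from $R\sym_\ell$ onto the span of those diagrams of the form $u\pi\ex{n,s}v\inv$. Since $\{m^\lambda_{\mathfrak s,\mathfrak t}\}$ is an $R$-basis of $R\sym_\ell$, its image is a basis of that span. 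Taking the union over $s$ and over $u,v$ gives a basis of $\Br_n$.

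\emph{Involution.} We have $(x^{(\lambda,n)}_{(\mathfrak s,u),(\mathfrak t,v)})^* = v\,(m^\lambda_{\mathfrak s,\mathfrak t})^*\,\ex{n,s}\,u\inv$, because $\ex{n,s}^* = \ex{n,s}$ and $\ex{n,s}$ commutes with $\sym_\ell$. Condition~(3) for $R\sym_\ell$ gives $(m^\lambda_{\mathfrak s,\mathfrak t})^* \equiv m^\lambda_{\mathfrak t,\mathfrak s}$ modulo $(R\sym_\ell)^{\rhd\lambda}$. So condition~(3) for $\Br_n$ follows once we know that $u\,(R\sym_\ell)^{\rhd\lambda}\,\ex{n,s}\,v\inv \subseteq \Br_n^{\rhd(\lambda,n)}$. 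This containment is handled as part of the next step.

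\emph{Multiplication rule.} This is the main obstacle. Let $J^{(s)}$ be the span of diagrams with at most $\ell-1$ vertical strands. It is an ideal, and it is the span of basis elements $x^{(\mu,n)}$ with $|\mu|<\ell$, all of which lie above $(\lambda,n)$ in the order. It suffices to treat a generator $a=s_j$ or $a=e_j$ acting on $u\,m\,\ex{n,s}\,v\inv$.

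First, if $a=e_j$, I would show that either the product lies in $J^{(s)}$ or $e_ju = u'\ex{}$-type with $u'$ again in $\mathcal D_{n,s}$. More precisely, for any $a$ I expect a relation
\[
a\,u\,\ex{n,s} \equiv \tilde u\,\sigma\,\ex{n,s} \pmod{J^{(s)}},
\]
where $\tilde u\in\mathcal D_{n,s}$, the element $\sigma\in\sym_\ell$ is uniquely determined by $(a,u)$, and the product carries a possible factor of $\delta$. To justify it, compute the product diagram $a\,u\,\ex{n,s}$: its top half has the same number $s$ of horizontal arcs, or it falls into $J^{(s)}$. Refactoring it via Lemma~\ref{factorization of Brauer diagrams} yields a permutation $\sigma$ of the vertical strands that is independent of $m$ and $v$. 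Here we also use the fact that $\ex{n,s}$ absorbs permutations in the stabilizer of the pairing, i.e.\ $w\,\ex{n,s} = \ex{n,s}$ for $w$ permuting pairs and swapping within pairs, with $\sigma$ commuting past $\ex{n,s}$.

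With this relation, $a\,x = \tilde u\,(\sigma m^\lambda_{\mathfrak s,\mathfrak t})\,\ex{n,s}\,v\inv$ modulo $J^{(s)}$. Expanding $\sigma m^\lambda_{\mathfrak s,\mathfrak t}$ by the cellular rule of $R\sym_\ell$ gives coefficients $r^{\mathfrak s}_{\mathfrak s'}(\sigma)$ that are independent of $\mathfrak t$ and $v$, plus terms in $(R\sym_\ell)^{\rhd\lambda}$. The latter terms map into basis elements $x^{(\mu,n)}$ with $|\mu|=\ell$ and $\mu\rhd\lambda$, again using the basis step. This simultaneously establishes the containment needed for the involution step. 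The coefficient function $(\mathfrak s,u)\mapsto$ the new indices depends only on $a$ and $(\mathfrak s,u)$, which gives condition~(2). The careful point is checking that $\sigma$ and $\tilde u$ do not depend on $v$; this holds because the computation only involves the top half of the diagram.
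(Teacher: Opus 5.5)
Your proof is correct. The paper gives no argument for this proposition; it is quoted from Graham--Lehrer, and your diagrammatic verification is the standard argument behind that citation.

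The closest argument the paper writes out is the appendix proof of Theorem~\ref{Nguyen's theorem} for $\qBr_n$. That proof must proceed differently, because no diagram basis with unique factorization is available there:
\begin{enumerate}
\item it proves spanning using Enyang's coset representatives (Lemma~\ref{enyang lemma}) and the absorption relations of Lemma~\ref{ex s absorbs K s};
\item it gets linear independence by counting against Brauer diagrams;
\item it checks the multiplication rule algebraically on generators.
\end{enumerate}
For $\Br_n$, Lemma~\ref{factorization of Brauer diagrams} gives you the basis directly, which is why your route is shorter.

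Two points should be made explicit.

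\emph{The multiplication step.} Write $u\,m\,\ex{n,s}\,v\inv = (u\,\ex{n,s})\,m\,v\inv$, and analyse only the single diagram $a\,u\,\ex{n,s}$. Suppose it still has $\ell$ vertical strands. Then its bottom half is exactly that of $\ex{n,s}$. The right-hand factor in the factorization of Lemma~\ref{factorization of Brauer diagrams} is determined by the bottom half of the diagram, so that factor is the identity. Hence $a\,u\,\ex{n,s} = \bm\delta^k\,\tilde u\,\sigma\,\ex{n,s}$ exactly. This is the precise reason that $\tilde u$ and $\sigma$ do not depend on $v$, $m$, or $\mathfrak t$, rather than "only the top half is involved". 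The argument holds for every diagram $a$. So you need neither the reduction to generators (which would itself require knowing that $\Br_n^{\rhd(\lambda,n)}$ is a left ideal) nor the absorption by the stabilizer of the pairing.

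\emph{The involution step.} The containment you defer is immediate. Any element $w_1\,m^\mu_{\mathfrak a,\mathfrak b}\,\ex{n,s}\,w_2\inv$ with $w_1,w_2\in\mathcal D_{n,s}$ and $\mu\rhd\lambda$ is, by definition, a basis element indexed by $(\mu,n)\rhd(\lambda,n)$. You should also state that the cell datum for $R\sym_\ell$ is taken with respect to the involution $\pi\mapsto\pi\inv$, the restriction of $*$. Your use of condition (3) for $R\sym_\ell$ depends on this.
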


Now consider $k < n$ with $n-k$ even and let $s = (n-k)/2$. Moreover, let $\ell
	\le k$ with $k -\ell$ even and write $t = (k -\ell)/2$. Observe that $\mathcal
	D_{k, t} \subseteq \mathcal D_{n, s+t}$ and $\ex {k, t} \ex {n, s} = \ex {n,
		s+t}$. Let $\lambda \in \Lambda_\ell$ and consider $x^{(\lambda,
			k)}_{(\mathfrak s, u), (\mathfrak t, v)} = u\, m^\lambda_{\mfs, \mft} \ex {k,
		t} \,v\inv$, a cellular basis element in $\Br_k^{\unrhd (\lambda, k)} \setminus
	\Br_k^{\rhd (\lambda, k)}$. Then $$ x^{(\lambda, k)}_{(\mathfrak s, u),
			(\mathfrak t, v)} \ex {n, s} = u\, m^\lambda_{\mfs, \mft} \ex {n, s + t}
	\,v\inv = x^{(\lambda, n)}_{(\mathfrak s, u), (\mathfrak t, v)} \in
	\Br_n^{\unrhd (\lambda, n)} \setminus \Br_n^{\rhd (\lambda, n)}. $$ Hence,
axiom (\ref{axiom: weak coherence of An}) holds.

\subsection{BMW algebras}  The BMW algebras were first introduced in ~\cite{Birman-Wenzl} and
~\cite{Murakami-BMW} as abstract algebras defined by generators and relations.
The version of the presentation given here follows \cite{Morton-Traczyk} and
~\cite{Morton-Wassermann}.

\begin{definition}
	\label{definition: BMW algebra}
	Let $S$ be a commutative unital ring with invertible elements $r$, $q$, and $\delta$ satisfying $r\inv - r = (q\inv -q)(\delta -1)$.  The {\em
Birman--Wenzl--Murakami algebra}
	$\BMW_n(S; r, q, \delta)$ is the unital $S$--algebra
	with generators $g_i^{\pm 1}$  and
	$e_i$ ($1 \le i \le n-1$) and relations:
	\begin{enumerate}
		\item (Inverses) \hskip4pt $g_i g_i\inv = g_i\inv g_i = 1$.
		\item (Essential idempotent relation)\hskip4pt $e_i^2 = \delta e_i$.
		\item (Braid relations) \hskip4pt $g_i g_{i+1} g_i = g_{i+1} g_i g_{i+1}$
		      and $g_i g_j = g_j g_i$ if $|i-j|  \ge 2$.
		\item (Commutation relations)  \hskip4pt $g_i e_j = e_j g_i$  and
		      $e_i e_j = e_j e_i$  if $|i-j|\ge 2$.
		\item (Tangle relations)\hskip4pt $e_i e_{i\pm 1} e_i = e_i$, $g_i
			      g_{i\pm 1} e_i = e_{i\pm 1} e_i$, and $ e_i  g_{i\pm 1} g_i=   e_ie_{i\pm 1}$.
		\item (Kauffman skein relation)\hskip4pt  $g_i - g_i\inv = (q - q\inv)(1- e_i )$.
		\item (Untwisting relations)\hskip4pt $g_i e_i = e_i g_i = r\inv e_i$,
		      and $e_i g_{i \pm 1} e_i = r e_i$.
	\end{enumerate}
\end{definition}

The generic ground ring for the BMW algebras is $$ R = \Z[\bm r^{\pm1}, \bm
		q^{\pm1}, \bm \delta^{\pm1}]/\langle \bm r\inv - \bm r = (\bm q\inv - \bm
	q)(\bm \delta -1) \rangle, $$ where $\bm r$, $\bm q$, and $\bm \delta$ are
indeterminates over $\Z$. $R$ is an integral domain whose field of fractions is
$F \cong \Q(\bm r, \bm q)$ (with $\bm \delta = (\bm r\inv - \bm r)/(\bm q\inv -
	\bm q) + 1$ in $F$.)

The BMW algebra $\BMW_n$ can also be realized as the algebra of $n$--tangle
diagrams modulo regular isotopy and the following {\em Kauffman skein
		relations:}
\begin{enumerate}
	\item Crossing relation: $ \quad \inlinegraphic[scale=.6]{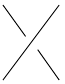} -
		      \inlinegraphic[scale=.3]{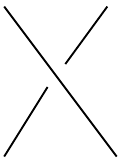} \quad = \quad (q\inv - q)\,\left(
		      \inlinegraphic[scale=1.2]{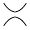} -
		      \inlinegraphic[scale=1.2]{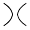}\right). $
	\item Untwisting relation: $\quad \inlinegraphic{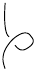} \quad = \quad r \quad
		      \inlinegraphic{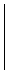} \quad\ \text{and} \quad\
		      \inlinegraphic{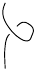} \quad = \quad r \inv \quad
		      \inlinegraphic{vertical_line}. $
	\item  Free loop relation: $T\, \cup \, \bigcirc = \delta \, T, $ where $T\, \cup \,
		      \bigcirc$ means the union of a tangle diagram $T$ and a closed loop having no
	      crossings with $T$.
\end{enumerate}
In the tangle picture,
$e_j$ and $g_j$ are represented by the following $n$--tangle diagrams:  \newline
\centerline{$
		e_j =  \inlinegraphic[scale=.7]{ordinary_E_j}\qquad
		g_j =  \inlinegraphic[scale= .7]{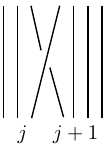}
	$}  \newline
The realization of the BMW algebra as an algebra of tangles is   from ~\cite{Morton-Wassermann},  where it is also shown that the BMW algebra is
free over the ground ring with a basis of certain tangles, and that the specialization of the BMW algebra at $q = 1$,  $r = 1$ is isomorphic to the
Brauer algebra.
We will freely identify the BMW algebras with the isomorphic tangle algebras.
We are going to show that the sequences of algebras $A_n = \BMW_n(R; \bm r, \bm q, \bm \delta)$  and $Q_n = H_n(R; \bm q^2)$  satisfy the axioms of
section \ref{section: axioms}.

There is an inclusion $\iota: \BMW_n \to \BMW_{n+1}$ by adding a vertical
strand on the right, as for the Brauer algebras. Due to the symmetry of the
relations, there is an algebra involution $*$ fixing the generators $e_i$ and
$g_i$; on tangles, the involution flips a tangle top to bottom. There is a map
$\cl: \BMW_n \to \BMW_{n-1}$ for $n \ge 1$ defined on tangles by partially
closing an $n$- tangle, as for Brauer diagrams, and reducing the resulting
$n-1$-tangle using the skein relations. One has $\cl \circ \iota (x) = \delta
	x$ for $x \in \BMW_{n-1}$ and hence $E^n_{n-1}(x) = (1/\delta) \cl(x)$ is a
conditional expectation from $\BMW_n$ to $\BMW_{n-1}$. By a tangle computation,
$e_n x e_n = \cl(x) e_n$ for $x \in \BMW_n$, and hence $\bar e_n x \bar e_n =
	E^n_{n-1}(x) \bar e_n$, where $\bar e_n$ is the idempotent $\bar e_n =
	(1/\delta) e_n$.

As for the Brauer algebras, the composition $\tr = E^1_0 \circ \cdots \circ
	E^n_{n-1} : \BMW_n \to \BMW_0 \cong S$ can be computed by ``fully closing" an
$n$-tangle, reducing the resulting $0$-tangle using the skein relations, and
dividing by $\delta^n$ to normalize the result. Since the full closure of
$\chi_i b$ and the full closure of $b \chi_i$ are homotopic, where $\chi_i$
represents $e_i$ or $g_i$, it follows that $\tr$ is a trace.

The ideal $J_n$ in $\BMW_n$ generated by the elements $e_i$, or any one of
them, satisfies $\BMW_n/J_n \cong H_n(S; q^2)$. In the tangle realization,
$J_n$ is the span of elements that factor as $x y$, where $x$ is an $(n,
	\ell)$-tangle and $y$ is an $(\ell, n)$ tangle, with $\ell < n$. There is a
section of the quotient map $\theta: \BMW_n \to H_n(S; q^2)$ defined as
follows: for $w \in \sym_n$, let $w = s_{i_1} s_{i_2} \cdots s_{i_k}$ be a
minimum length expression. Then $T_w = T_{i_1} T_{i_2} \cdots T_{i_k}$ and $g_w
	= g_{i_1} g_{i_2} \cdots g_{i_k}$ do not depend on the choice of the reduced
expression and $\theta(g_w) = T_w$. The span of $g_w$ for $w \in \sym_n$ is a
linear complement of $J_n$.

\subsubsection{Verification of the axioms.}
We proceed to verify the axioms of section \ref{section: axioms}, for $A_n =
	\BMW_n(R; \bm r, \bm q, \bm \delta)$ and $Q_n = H_n(R; \bm q^2)$, with $R$ the
generic ground ring, and with Young's lattice $\Lambda$ as $\Gamma$.

Axioms (\ref{axiom: involution on An}) and (\ref {axiom: A0 and A1}) are
evident. Axioms (\ref{axiom: restriction coherent quotients}) and (\ref{axiom:
	generic ss quotients}) follow from the discussion of the Hecke algebras in
section \ref{subsection Hecke algebras}. In particular, with the notation of
Theorem \ref{first semisimplicity theorem - axiomatic version} and Notation
\ref{notation: q integer and q-characteristic}, for a field $K$ and $q \in K$,
$n_1 = \infty$ if $q^2$ is not a proper root of unity and $n_1 = e(q) -1$
otherwise.

For axiom (\ref{axiom: cellular Axn}), we have the following theorem of Enyang \cite{Enyang-cellular-bases}.  (Cellularity was proved earlier by Xi \cite{Xi-BMW},  however only with respect to a total order refining $\unrhd$.)

\begin{theorem}
	{\normalfont (\cite{Enyang-cellular-bases}),   }  \label{Enyang's cellularity theorem}
	The {\normalfont BMW} algebra $\BMW_n(R; \bm r, \bm q, \bm \delta)$ is cellular with cell datum described as follows. For each $\ell \le n$
with $n-\ell$ even,  choose any cell datum for $H_\ell(R; \bm q^2)$,  with partially ordered set $(\Lambda_\ell, \unrhd)$,
	index sets $\mathcal T(\lambda)$,  and cellular basis
	$\{m_{\mathfrak s, \mathfrak t}^\lambda:  \lambda \in \Lambda_\ell \ \text{and} \    \mathfrak s, \mathfrak t \in \mathcal T(\lambda)\}.
	$    Also write $m_{\mathfrak s, \mathfrak t}^\lambda$ for the lift of this element in the span of $g_w$,  $w \in \sym_n$.

	The partially ordered set in the cell datum for $\BMW_n$ is $(\tilde\Lambda_n,
		\unrhd)$. For each $(\lambda, n) \in \tilde\Lambda_n$, where $\lambda$ has size
	$\ell = n-2s$, the index set $\mathcal T((\lambda, n))$ is the set of pairs
	$(\mathfrak s, u)$, where $\mathfrak s$ is a standard tableau of shape
	$\lambda$, and $u \in \mathcal D_{n, s}$, the family of permutations from
	Definition \ref{definition: shortest coset reps}.

	For $(\lambda, n) \in \tilde \Lambda_n$ and $(\mathfrak s, u), (\mathfrak t, v)
		\in \mathcal T((\lambda, n))$, let $$ x^{(\lambda, n)}_{(\mathfrak s, u),
				(\mathfrak t, v)} = g_u \,m^\lambda_{\mathfrak s, \mathfrak t} \ex {n, s} \,
		g_v^* $$ Then the set of $ x^{(\lambda, n)}_{(\mathfrak s, u), (\mathfrak t,
				v)} $ with $(\lambda, n) \in \tilde \Lambda_n$ and $(\mathfrak s, u),
		(\mathfrak t, v) \in \mathcal T((\lambda, n))$ is a cellular basis of $\BMW_n$.
\end{theorem}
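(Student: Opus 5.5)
The plan is to follow the strategy used for the Brauer algebra in Proposition \ref{Graham-Lehrer cellularity theorem}, lifted to tangles. The argument has four steps: a basis theorem, a filtration by ideals, the module structure of each layer, and an inflation argument that inserts the Hecke cellular bases.

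\textbf{Step 1: basis theorem.} Using the Morton--Wassermann result that $\BMW_n$ is $R$-free of rank $(2n-1)!!$, I would first prove the tangle analogue of Lemma \ref{factorization of Brauer diagrams}. For each $s$, the elements $g_u\, g_\pi\, \ex{n,s}\, g_v^*$ with $u,v \in \mathcal D_{n,s}$ and $\pi \in \sym_\ell$ ($\ell = n-2s$) should form an $R$-basis. This is shown by a spanning argument together with a count. For spanning, argue by induction on the number of vertical strands: a tangle whose underlying Brauer diagram is $u\pi\ex{n,s}v\inv$ equals $g_u g_\pi \ex{n,s} g_v^*$ modulo tangles with fewer vertical strands. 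The reason is that a crossing change costs, by the Kauffman skein relation, only terms with more horizontal strands, and the untwisting relations absorb twists into scalars. The count then matches the Brauer count, and since the generic ground ring is a domain, spanning plus rank gives a basis.

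\textbf{Step 2: filtration by ideals.} Let $J^{(s)}$ be the span of tangles with at least $s$ horizontal pairs on top; each $J^{(s)}$ is a $*$-invariant two-sided ideal. By Step 1 and axiom-style identities ($e_n$ commutes with $\BMW_{n-1}$, and $e_j g_{j+1} g_j = e_j e_{j+1}$, etc.), the layer $J^{(s)}/J^{(s+1)}$ is a bimodule isomorphic to $V_s \otimes_{H_\ell} V_s^*$. Here $V_s$ is the left $\BMW_n$-module spanned by the classes of $g_u \ex{n,s}$ for $u \in \mathcal D_{n,s}$, and it is a free right $H_\ell$-module. The right $H_\ell$-action comes from the elements $g_\pi$ commuting past $\ex{n,s}$ modulo $J^{(s+1)}$, so that $g_\pi\,\ex{n,s} \equiv \ex{n,s}\, g_\pi$. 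Moreover $\BMW_n$ acts on the left $H_\ell$-linearly.

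\textbf{Step 3: action of generators on $V_s$ (the main obstacle).} The key computation, which I expect to be the hardest, is the following. For a generator $x \in \{g_i, e_i\}$ and $u \in \mathcal D_{n,s}$, one must show
\[
x\, g_u\, \ex{n,s} \equiv \sum_{u'} g_{u'}\, h_{u'}\, \ex{n,s} \pmod{J^{(s+1)}},
\]
with $h_{u'}$ lying in the span of $g_\pi$, $\pi \in \sym_\ell$. This requires a case analysis according to whether $s_i u$ is again a distinguished coset representative, is shorter, or moves $i, i+1$ into the same cap. In these cases one uses the relations $g_j e_j = r\inv e_j$, $g_{j\pm1}g_j e_{j\pm1}\cdots$, and the fact that caps can be slid through crossings. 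The point of the case analysis is that any product of the $g$'s decomposes as $g_{u'}$ times a Hecke part on the $\ell$ through-strands, possibly plus a scalar, modulo $J^{(s+1)}$. I would first try to reduce everything to the braid-group normal form for double cosets $\sym_n/(\sym_\ell\times(\sym_2\wr\sym_s))$. In that approach the Hecke algebra of $\sym_2\wr\sym_s$ acts on $\ex{n,s}$ by scalars (powers of $r$), which is the deformation of the Brauer fact that this subgroup fixes the cap diagram.

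\textbf{Step 4: inflation.} Once $V_s$ is free over $H_\ell$ with the required action, cellularity follows from the general inflation argument. Substitute the cellular basis $m^\la_{\mfs,\mft}$ of $H_\ell$, which is permitted since any cell datum for $H_\ell$ may be chosen. Then $g_u\, m^\la_{\mfs,\mft}\, \ex{n,s}\, g_v^*$ satisfies the multiplication rule modulo the span of elements of $J^{(s+1)}$ and of elements with the same $s$ and larger $\la$, that is, modulo $\BMW_n^{\rhd(\la,n)}$ for the order on $\tilde\Lambda_n$. The involution condition holds because $*$ fixes each $g_i$ and $e_i$, and because $(m^\la_{\mfs,\mft})^* \equiv m^\la_{\mft,\mfs}$ in $H_\ell$ lifts modulo higher terms.
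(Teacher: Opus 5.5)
The gap is in Steps 1 and 2. Counting the through strands of the underlying Brauer diagram is not compatible with the Kauffman skein relations. As a result, your $J^{(s)}$ is not the right ideal, and your spanning mechanism is false.

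A crossing change does not produce only terms with more horizontal strands. The vertical smoothing in $g_i - g_i\inv = (q-q\inv)(1-e_i)$ has the same number of horizontal strands, and a crossing between a cap and a cup smooths into two through strands. Concretely, let $C\in\BMW_2$ be the clasp: a cap hooked once through a cup, whose Brauer diagram is $e_1$. The skein relations are invariant under a quarter turn. That turn interchanges $1$ and $e_1$, sends $g_1$ to $g_1\inv$, and sends $g_1^2$ to $C$. Applying it to $g_1^2 = 1+(q-q\inv)g_1-(q-q\inv)r\inv e_1$ gives
\[
C = e_1 + (q-q\inv)\,(g_1\inv - r\inv).
\]
So $C\notin \BMW_2 e_1\BMW_2 = Re_1$; its image in $\BMW_2/\BMW_2e_1\BMW_2\cong H_2$ is $(q-q\inv)(T_1\inv - r\inv)\neq 0$. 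Hence your $J^{(1)}$, the span of tangles with a horizontal pair on top, is strictly larger than the ideal generated by $e_1$. Then $J^{(0)}/J^{(1)}$ is not $H_2$, which breaks the layer identification in Step 2.

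The congruence in Step 1 also fails, even for the correct ideal. Take $s=0$, $u=v=1$, $\pi=s_1$. The tangle $g_1\inv$ has the same Brauer diagram as $g_1$, but $g_1-g_1\inv\equiv (q-q\inv)\cdot 1\not\equiv 0$ modulo $Re_1$.

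To repair this, take $J^{(s)} = \BMW_n\,\ex{n,s}\,\BMW_n$. This is the span of tangles that \emph{factor} through at most $n-2s$ points, generalizing the paper's description of $J_n$. Then prove, by downward induction on $s$, that the span $\mathcal S_s$ of the elements $g_u g_\pi \ex{n,t} g_v^*$ with $t\ge s$ is a two-sided ideal.
\begin{itemize}
\item $\mathcal S_s$ is $*$-invariant.
\item Closure under left multiplication by $g_i$ and $e_i$ is your Step 3, but you must show the error terms land in $\BMW_n\ex{n,s+1}\BMW_n=\mathcal S_{s+1}$, not merely in ``fewer through strands''.
\item In Step 3, note that $\sym_\ell\times(\sym_2\wr\sym_s)$ is not parabolic. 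When $s_iu\notin\mathcal D_{n,s}$ you can get $s_iu=u'w_k$ with $w_k$ of length four; this is absorbed because $g_{w_k}\ex{n,s}=\ex{n,s}$.
\item When $\ell(s_iu)<\ell(u)$, the quadratic relation feeds an $e_i$-term back in, so the $g_i$ and $e_i$ cases are intertwined.
\end{itemize}
Once this holds, $\mathcal S_0=\BMW_n$, and the rank $(2n-1)!!$ gives a basis adapted to the filtration. Your Steps 2 and 4 then go through.

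The paper does not prove this theorem itself; it cites Enyang. Its appendix proof of the $q$-Brauer analogue, Theorem \ref{Nguyen's theorem}, has exactly this corrected shape:
\begin{itemize}
\item the filtration is by the ideals $I(n,s)$ generated by $\ex s$;
\item spanning comes from $\sum_s H_n\ex s H_n$, together with the absorption Lemma \ref{ex s absorbs K s} and Enyang's coset Lemma \ref{enyang lemma};
\item the count is matched against the number of Brauer diagrams.
\end{itemize}
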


Axiom (\ref{axiom: idempotent and Qn as quotient of An}) follows from the
discussion above, since $J_{n+1} = \BMW_{n+1} e_{n} \BMW_{n+1}$ and
$\BMW_{n+1}/J_{n+1} \cong H_{n+1}$. The Temperley-Lieb relations (axiom
(\ref{axiom: TL relations})) are included in the defining relations for
$\BMW_n$. It follows from the commutation relations in Definition
\ref{definition: BMW algebra} that $e_n$ commutes with $\BMW_{n -1}$. Since
$e_n b e_n = \cl(b) e_n $ for $b \in \BMW_n$, we have $e_n \BMW_n e_n \subseteq
	\BMW_{n-1} e_n$. Thus axiom (\ref{axiom: en An en}) is satisfied. Axiom
(\ref{axiom: An en}) is verified in the same manner as for the Brauer algebras,
using ~\cite{Birman-Wenzl}, Lemma 3.1. See for example ~\cite{MR2794027}, Lemma
5.10. We have already observed that axiom (\ref{axiom: trace}) holds.

Next we consider axiom (\ref{axiom: faithfulness of tr}), faithfulness of the
trace on $\BMW_n^F$, where $F$ is the field of fractions of the generic ground
ring $R$. Consider the ring homomorphisms from $R$ to $ \Z[\bm \delta
		\powerpm]$ sending $\bm r$ and $\bm q$ to $1$ and $\bm \delta $ to $\bm
	\delta$. We recall from ~\cite{Morton-Wassermann} that $\BMW_n^R \otimes_R
	\Z[\bm \delta \powerpm]$ is isomorphic to the Brauer algebra over $ \Z[\bm
		\delta \powerpm]$. Moreover, $\BMW_n^R$ has a basis $\{b_d\}$ of tangles
indexed by Brauer diagrams $d$ such that $b_d \otimes_R \Z[\bm \delta \powerpm]
	= d$ under the identification of $\BMW_n^R \otimes_R \Z[\bm \delta \powerpm]$
with the Brauer algebra. Furthermore, the trace on $\BMW_n$ specializes to the
trace of $\Br_n$. Now $$ \det((\tr(b_d b_{d'})_{d, d'})) \otimes_R \Z[\bm
		\delta \powerpm] = \det((\tr(d d')_{d, d'})) \ne 0, $$ This implies the
non-degeneracy of the trace on $\BMW_n^F$. Alternatively, one can use
~\cite{Wenzl-BCD}, Theorem 5.5.

Axiom (\ref{axiom: weak coherence of An}) is shown in the same manner as for
the Brauer algebras, using Theorem \ref{Enyang's cellularity theorem} in place
of Proposition \ref{Graham-Lehrer cellularity theorem}.

\subsection{$q$-Brauer algebras}   \label{subsection q Brauer}

\begin{definition}
	[\cite{Wenzl-qBr1}] \label{def qBr}  Let $S$ be an integral domain with invertible elements $q$,  $r$,   and $\delta$ such that
	$\delta (q - q\inv) = r - r\inv$.  For $n \ge 2$, the
	$q$-Brauer algebra $\qBr_n = \qBr_n(S; r, q, \delta)$ is the associative unital algebra with generators $e, g_1, \dots, g_{n-1}$, with
relations:
	\begin{enumerate}
		\item The elements $g_i$ are invertible and satisfy the type $A$ braid relations and
		      the quadratic (Hecke algebra) relation $(g_i - q)(g_i + q\inv) = 0$.
		\item $e^2 = \delta e$.
		\item $e g_i = g_i e$ for $i > 2$,   $e g_1 = g_1 e = q e$, and   $e g_2 e = r e$.
		\item  Define $e_{(2)} = e g_2 g_3 g_1\inv g_2\inv e$. Then $g_2 g_3 g_1 \inv g_2\inv
			      \ex 2 = \ex 2 =\ex 2 g_2 g_3 g_1\inv g_2\inv $.
	\end{enumerate}
\end{definition}

We put $\qBr_0 = \qBr_1 = S$ for convenience. Evidently, $g_1\inv e = e g_1\inv
	= q\inv e$. Using the quadratic relation for $g_2$, one has $e g_2\inv e =
	r\inv e$.

The generic ground ring for the $q$-Brauer algebras is $$ R = \Z[\bm r\powerpm,
		\bm q\powerpm, \bm \delta\powerpm ]/\langle \bm \delta (\bm q - \bm q\inv) =
	(\bm r - \bm r\inv)\rangle, $$ where $\bm r, \bm q$, and $\bm \delta$ are
indeterminates. That is, any instance of a $q$-Brauer algebra is a
specialization of the algebra over $R$, whose field of fractions is $F \cong
	\Q(\bm r, \bm q)$ (with $\bm \delta = (\bm r\inv - \bm r)/(\bm q\inv - \bm q)$
in $F$.)

It is shown in ~\cite{Wenzl-qBr1} that the specialization of the $q$-Brauer
algebra $\qBr_n$ at $q = 1$, $r = 1$ and $\delta$ arbitrary is isomorphic to
the Brauer algebra $\Br_n(\delta)$. According to ~\cite{Wenzl-qBr1}, the
subalgebra $H_n$ of $\qBr_n$ generated by the elements $g_j$ is isomorphic to
the Hecke algebra $H_n(q^2)$, as is the quotient by the ideal generated by $e$.

The $q$-Brauer algebras have natural embeddings $\qBr_n \hookrightarrow
	\qBr_{n+1}$, but they do not satisfy the axioms of section \ref{section:
	axioms} with the natural embeddings. Instead, for a fixed integer $M$, we will
define an increasing sequence of subalgebras $A_n$ of $\qBr_M(R; \bm r, \bm q,
	\bm \delta)$ such that $A_n \cong \qBr_n$, and the two sequences $A_n$ and $Q_n
	= H_n(R; \bm q^2)$ for $n \le M$ satisfy the axioms of section \ref{section:
	axioms}.

We will require a number of observations about the $q$-Brauer algebras that
were not available in the previous literature. We have placed the proof of
these observations in appendix \ref{appendix}. Unless specified, $\qBr_n$
designates the $q$-Brauer algebra over an arbitrary ground ring $S$, $\qBr_n =
	\qBr_n(S; r; q, \delta)$.

\begin{proposition}
	{\normalfont (~\cite{Nguyen1}, Proposition 3.14, see also appendix \ref{appendix: involution})}  \label{proposition involution}
	There is a unique algebra involution $*$ on $\qBr_n$  that fixes the generators, i.e. $e^* = e$, $g_i^* = g_i$ and $(g_i \inv)^* = g_i\inv$.
\end{proposition}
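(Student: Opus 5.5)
Uniqueness is immediate: $*$ is an anti-automorphism, so it is determined by its values on the generators $e, g_1,\dots,g_{n-1}$. Since $g_i\inv$ is the inverse of $g_i$ and $*$ reverses products, $(g_i\inv)^* = (g_i^*)\inv = g_i\inv$ holds automatically once $g_i^* = g_i$.

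For existence, I will define $*$ on the free algebra on $e, g_i, g_i\inv$ as the unique anti-automorphism fixing each generator, and show it preserves the ideal of defining relations in Definition \ref{def qBr}. Equivalently, the reversed word of each relation must hold in $\qBr_n$. Relation (1) is palindromic: the braid relations read backwards are the same braid relations, and the quadratic relation and invertibility are symmetric. Relation (2) is likewise fixed. In relation (3), reversing swaps $eg_i = g_ie$ with itself and $eg_1 = qe$ with $g_1e = qe$, while $eg_2e = re$ is palindromic. So everything reduces to relation (4).

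Relation (4) is the main obstacle. The element $\ex 2 = e g_2 g_3 g_1\inv g_2\inv e$ maps to $\ex 2^* = e g_2\inv g_1\inv g_3 g_2 e$. Note $g_1\inv$ commutes with $g_3$, so this equals $e g_2\inv g_3 g_1\inv g_2 e$. I need to show $\ex 2^* = \ex 2$ in $\qBr_n$, and then that $\ex 2 g_2\inv g_1\inv g_3 g_2 = \ex 2 = g_2\inv g_1\inv g_3 g_2 \ex 2$. The second identity is just the $*$-image of the first once the first is known.

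To prove $\ex 2^* = \ex 2$, I would use the Hecke relation $g_2\inv = g_2 - (q-q\inv)$ to write
\[
e g_2\inv g_3 g_1\inv g_2 e - e g_2 g_3 g_1\inv g_2\inv e
\]
as a combination of terms $e g_2 g_3 g_1\inv e$ and $e g_3 g_1\inv g_2 e$, plus a constant times $e g_3 g_1\inv e$. Since $e$ commutes with $g_3$ and $eg_1\inv = q\inv e$, this last term equals $q\inv \delta g_3 e$. For the other two, $g_1\inv e = q\inv e$ and $e g_1\inv = q\inv e$ reduce them to $q\inv e g_2 g_3 e$ and $q\inv e g_3 g_2 e = q\inv g_3 e g_2 e = q\inv r g_3 e$, and similarly $e g_2 g_3 e = e g_2 e g_3 = r e g_3$. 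Tracking the coefficients of $(q-q\inv)$ from both expansions should make the terms cancel.

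For the remaining relation I would reuse relation (4) itself. Writing $w = g_2 g_3 g_1\inv g_2\inv$, relation (4) gives $w\ex 2 = \ex 2 = \ex 2 w$. Applying $*$ and using $\ex 2^* = \ex 2$ gives $\ex 2 w^* = \ex 2 = w^* \ex 2$, which is exactly the reversed relation. Hence $*$ descends to $\qBr_n$, and it is an involution because its square fixes the generators.
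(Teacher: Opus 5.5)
Your reduction matches the paper's: define $*$ on the free algebra $\mathcal F$, check that the reversed relations lie in the ideal $\mathcal I$ of defining relations, and note that only relation (4) needs work. Your direct computation of $e_{(2)}^* = e_{(2)}$ is correct. Expanding $g_2^{-1} = g_2 - (q-q^{-1})$ in both terms leaves $(q-q^{-1})\bigl(e g_2 g_3 g_1^{-1} e - e g_3 g_1^{-1} g_2 e\bigr) = (q-q^{-1})q^{-1}r\,(eg_3 - g_3 e) = 0$; the paper simply quotes this from \cite{Wenzl-qBr1}, Lemma 3.2.

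The gap is your last step. ``Applying $*$'' to $w e_{(2)} = e_{(2)}$ in $\qBr_n$ presupposes that $*$ is already well defined on $\qBr_n$, which is exactly what you are proving. Work inside $\mathcal F$ with $E = e w e$. Applying $*$ to $wE - E \in \mathcal I$ only gives $E^* w^* - E^* \in \mathcal I^*$, and whether this element lies in $\mathcal I$ is precisely the question. Knowing $E^* \equiv E \pmod{\mathcal I}$ merely reduces it to showing $E w^* \equiv E$ and $w^* E \equiv E \pmod{\mathcal I}$, which still have to be proved. Your remark that the second identity is ``just the $*$-image of the first'' has the same circularity.

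To close the gap, verify directly from the defining relations that $w^* e_{(2)} = e_{(2)} = e_{(2)} w^*$, where $w^* = g_2^{-1} g_3 g_1^{-1} g_2$. This is what the paper does in Lemma \ref{Lemma additional e 2 relations}, parts (3) and (4).

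\begin{itemize}
\item The braid relations give $g_3 w = w g_1$. Combined with relation (4), this yields $g_3 e_{(2)} = w g_1 e_{(2)} = q\, e_{(2)}$ and $e_{(2)} g_3 = e_{(2)} w g_1 w^{-1} = q\, e_{(2)} w^{-1} = q\, e_{(2)}$.
\item Hence $g_3 g_1^{-1} e_{(2)} = e_{(2)}$, and $w e_{(2)} = e_{(2)}$ can be rewritten as $g_3 g_1^{-1} g_2^{-1} e_{(2)} = g_2^{-1} e_{(2)}$.
\end{itemize}

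With $c = q - q^{-1}$ and the quadratic relation in the form $g_2 - g_2^{-1} = c$, one gets
\begin{align*}
w^* e_{(2)} &= (g_2 - c)\, g_3 g_1^{-1} (g_2^{-1} + c)\, e_{(2)} \\
&= w e_{(2)} + c\bigl(g_2 g_3 g_1^{-1} - g_3 g_1^{-1} g_2^{-1} - c\, g_3 g_1^{-1}\bigr) e_{(2)} \\
&= e_{(2)} + c\,(g_2 - g_2^{-1} - c)\, e_{(2)} = e_{(2)}.
\end{align*}

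The right-handed identity $e_{(2)} w^* = e_{(2)}$ must be checked the same way, not obtained from the left-handed one by applying $*$. The same expansion works, using $e_{(2)} g_3 g_1^{-1} = e_{(2)}$ and $e_{(2)} g_2 g_3 g_1^{-1} = e_{(2)} w g_2 = e_{(2)} g_2$. With both identities established, the reversed form of relation (4) lies in $\mathcal I$ and your argument is complete.
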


The elements $\ex k$ defined below play an essential role in the study of the
$q$-Brauer algebra in ~\cite{Wenzl-qBr1}. They specialize to the elements $e_1
	e_3 \cdots e_{2k-1}$ in the Brauer algebras. Note that the definition is
consistent with the previous definition of $\ex 2$.

\begin{notation}
	For $j \le k$, let  $g_{j, k}^+ = g_j g_{j+1}\cdots g_k$  and for $j \ge k$,  let $g_{j, k}^+ = g_j g_{j-1}\cdots g_k$.  Define elements
	$g_{j, k}^-$ similarly with $g_i$ replaced with $g_i\inv$.
\end{notation}

\begin{definition}
	\label{Definition ex k}  {\normalfont  (\cite{Wenzl-qBr1})}   $\ex 1 = e$  and for $k \ge 1$,
	$\ex {k+1} = e g_{2, 2k+1}^+ g_{1, 2k}^- \ex k$.
\end{definition}

\begin{lemma}
	{\normalfont (\cite{Nguyen1}, Proposition 3.14)}  \label{lemma e(k) * invariant}
	For $k \ge 1$,  $\ex k^* = \ex k$.
\end{lemma}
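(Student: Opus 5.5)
The plan is an induction on $k$, using the recursion of Definition \ref{Definition ex k} together with the properties of the involution $*$ from Proposition \ref{proposition involution}. Write $w_k = g_{2, 2k+1}^+ g_{1, 2k}^-$, so that $\ex {k+1} = e\, w_k \ex k$. Because $*$ is an anti-automorphism fixing each $g_i$, $g_i\inv$ and $e$, it reverses words in the generators. Hence $(g_{j,k}^\pm)^* = g_{k,j}^\pm$.

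\textbf{Base case.} For $k=1$ we have $\ex 1 = e$, and $e^* = e$ by Proposition \ref{proposition involution}. For $k = 2$ the claim is that
$$\ex 2^* = e\, g_2\inv g_1\inv g_3 g_2\, e$$
equals $\ex 2 = e g_2 g_3 g_1\inv g_2\inv e$. This should follow from relation (4) of Definition \ref{def qBr}, which says that $w := g_2 g_3 g_1\inv g_2\inv$ is absorbed by $\ex 2$ on either side. Since $w\inv = g_2 g_1 g_3\inv g_2\inv$, the element $w^*$ is not obviously related to $w\inv$. I would instead manipulate with $e g_1^{\pm1} = q^{\pm1} e$, $e g_i = g_i e$ for $i \ge 3$, and the braid relations. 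The aim is to show that both words become the same after commuting $g_3$ and $g_1$ past $e$.

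\textbf{Inductive step.} Assume $\ex j^* = \ex j$ for $j \le k$. Then
$$\ex {k+1}^* = \ex k\, w_k^*\, e, \qquad w_k^* = g_{2k,1}^- \, g_{2k+1,2}^+ .$$
I would first establish an auxiliary identity: $\ex k$ commutes with, or is absorbed by, suitable braid words, in the spirit of relation (4) generalized. Concretely, I want $\ex k = e\,(\text{word})\,\ex {k-1}$ and also $\ex k = \ex {k-1}\,(\text{word})^*\, e$. Using the induction hypothesis, $\ex k = \ex k^*$ is already given by the reversed recursion, $\ex k = \ex {k-1} w_{k-1}^* e$. Substituting, $\ex {k+1}$ and $\ex {k+1}^*$ become symmetric expressions of the shape $e \cdots \ex {k-1} \cdots e$. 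I would then use that $\ex {k-1}$ lies in the subalgebra generated by $e, g_1, \dots, g_{2k-3}$. Therefore it commutes with $g_i$ for $i \ge 2k-1$, and with a shifted copy of $e$ when that copy is expressed by conjugating $e$ by braid words. This lets me move the factors into a common normal form.

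\textbf{Main obstacle.} The hardest step is the braid-word bookkeeping, namely proving the generalized absorption identity $w_k \ex {k+1} = \ex {k+1}$ or an equivalent one. I would try to get it from the characterization in \cite{Wenzl-qBr1}, if available, that $\ex k$ is, up to scalar, a quasi-idempotent invariant under a braid-group element. Failing that, I would use the specialization argument as a sanity check: at $q=r=1$ the claim reduces to $*$-invariance of $e_1 e_3 \cdots e_{2k-1}$ in the Brauer algebra. Alternatively, since both sides lie in the free $R$-module $\qBr_M$, one could try to verify $\ex k^* = \ex k$ through a faithful representation, such as the $q$-Brauer tensor representation, where $\ex k$ acts as a projection onto an invariant vector.
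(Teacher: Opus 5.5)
The paper does not prove this lemma itself; it quotes it from \cite{Nguyen1}, Proposition~3.14. Your proposal therefore has to stand on its own, and as written it does not, because the inductive step is never carried out.

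Given $\ex{k}^*=\ex{k}$, you have $\ex{k+1}^*=\ex{k}\,w_k^*\,e$. So the claim at level $k+1$ is \emph{equivalent} to the identity $e\,w_k\,\ex{k}=\ex{k}\,w_k^*\,e$. That identity is the entire content of the lemma, and your sketch only restates it: substituting the reversed recursion gives $e\,w_k\,\ex{k-1}w_{k-1}^*\,e$ and $e\,w_{k-1}\ex{k-1}\,w_k^*\,e$, which are $*$-images of each other.

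The tools you name do not reach this identity:
\begin{itemize}
\item Commutation of $\ex{k-1}$ with $g_i$ for $i\ge 2k-1$ says nothing about the generators $g_1,\dots,g_{2k-2}$ in $w_k$ and $w_{k-1}$, which do not commute with $\ex{k-1}$.
\item The absorption identity $w_k\ex{k+1}=\ex{k+1}$ is in fact available: $w_k=G_2G_4\cdots G_{2k}$ with $G_i=g_ig_{i+1}g_{i-1}\inv g_i\inv$, so Lemma~\ref{ex s absorbs K s} applies. But it does not relate $\ex{k+1}$ to its mirror image.
\item The specialization $q=r=1$ is not injective on the generic ring, so it cannot prove a generic identity.
\item The tensor-representation route would need generic faithfulness and a compatibility of $*$ with some transpose, and you establish neither.
\end{itemize}
Even for $k=2$, braid relations plus commuting $g_1,g_3$ past $e$ do not suffice, since $g_3g_1\inv$ is shielded from $e$ by $g_2^{\pm1}$. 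You need the quadratic relation, $eg_2^{\pm1}e=r^{\pm1}e$, and $\delta(q-q\inv)=r-r\inv$; this is Lemma~\ref{Lemma additional e 2 relations}(1).

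The gap can be closed with what the paper provides. By Lemma~\ref{product expression for ex s}, $\ex{k+1}=\ex{k}e_{2k+1}=e_{2k+1}\ex{k}$. By Lemma~\ref{left hand lemma 1}(1), which is proved by induction on $j$ independently of the present lemma, $e_{2k+1}^*=e'_{2k+1}$. So if $\ex{k}^*=\ex{k}$, then $\ex{k+1}^*=e'_{2k+1}\ex{k}$, and it suffices to prove the exchange identity $e'_{2k+1}\ex{k}=e_{2k+1}\ex{k}$ for all $k\ge 0$ (with $\ex{0}=1$). This follows by induction on $k$, using the identity at level $k-1$ three times:
\[
\begin{aligned}
e'_{2k+1}\ex{k}&=e'_{2k+1}e_{2k-1}\ex{k-1}=e'_{2k+1}e'_{2k-1}\ex{k-1}=e'_{2k-1}G_{2k}\,e'_{2k-1}\ex{k-1}=e'_{2k-1}G_{2k}\,\ex{k-1}e_{2k-1}\\
&=e'_{2k-1}\ex{k-1}\,G_{2k}e_{2k-1}=\ex{k-1}e_{2k-1}G_{2k}e_{2k-1}=\ex{k-1}e_{2k-1}e_{2k+1}=e_{2k+1}\ex{k}.
\end{aligned}
\]
Besides the Temperley--Lieb commutations among the $e_j$ and among the $e'_j$, this chain uses three facts:
\begin{itemize}
\item $e_je_{j+2}=e_{j+2}e_j=e_jG_{j+1}e_j$, from the appendix;
\item its left-handed analogue $e'_je'_{j+2}=e'_{j+2}e'_j=e'_jG_{j+1}e'_j$, which holds because $eG_2e=eG_2'e$ by Lemma~\ref{Lemma additional e 2 relations};
\item $\ex{k-1}$ commutes with $G_{2k}$.
\end{itemize}
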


The sequence of elements $e_j$ defined below is new. They satisfy the
Temperley-Lieb relations and play a role similar to the contractions $e_j$ in
the Brauer or BMW algebras (although with a twist, as we shall see). For $j$
odd, they coincide with elements defined in ~\cite{rui2022jm}, and for $j \le
	3$ with elements defined in ~\cite{Wenzl-qBr1}.

\begin{definition}
	\label{q Brauer subalgebras A k}  \label{TL elements in q-Brauer}  In $\qBr_n$ for  $n \ge 3$, set
	$e_1 = e$, and for $1 \le j \le n-2$,
	$$
		e_{j+1} =
		\begin{cases}
			\Ad(g_j g_{j+1})(e_j)         & j  \ \text{is odd}   \\
			\Ad(g_j\inv g_{j+1}\inv)(e_j) & j  \ \text{is even}. \\
		\end{cases}
	$$
\end{definition}

The following statement is proved in appendix \ref{appendix TL}.

\begin{proposition}
	The elements $e_j$  satisfy the Temperley-Lieb relations $e_j^2 = \delta e_j$,  $e_j e_{j \pm 1} e_j = e_j$, and $e_j e_k = e_k e_j$ if $|j-k|
\ge 2$.   Moreover,
	$e_j$ commutes with $g_k$ for $k \ge j + 2$.
\end{proposition}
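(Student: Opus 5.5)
The plan is an induction along the chain of conjugations that defines the $e_j$. The goal is to reduce every relation to a small number of base identities among $e_1,e_2,e_3$, and to check those directly from the defining relations of $\qBr_n$ in Definition~\ref{def qBr}.

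\emph{Easy parts.} Each $e_j$ is conjugate to $e_1=e$ by an invertible element, so $e_j^2=\delta e_j$ is immediate from $e^2=\delta e$. For the commutation with $g_k$, $k\ge j+2$, I would induct on $j$. For $j=1$ this is the relation $eg_i=g_ie$ for $i>2$. If $e_j$ commutes with $g_k$ for all $k\ge j+2$, take $k\ge j+3$. Then $g_k$ commutes with $g_j^{\pm1}$ and $g_{j+1}^{\pm1}$ by the far braid relations, and with $e_j$ by the induction hypothesis. Hence it commutes with $e_{j+1}=\Ad(g_j^{\pm1}g_{j+1}^{\pm1})(e_j)$.

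\emph{Reduction to small indices.} For the remaining relations I would look for ``shift'' elements $\sigma_j$, words in the $g_i^{\pm1}$, with $\Ad(\sigma_j)(e_i)=e_{i+1}$ for all $i$ in a suitable range. In the Brauer specialization such an element is a cyclic permutation. Because the definition of $e_{j+1}$ alternates between $g_jg_{j+1}$ and $g_j\inv g_{j+1}\inv$, the natural candidate here is a product of the form $g_1^{\pm1}g_2^{\mp1}g_3^{\pm1}\cdots$ with alternating signs. An alternative that may be more robust is $\tau_j=\Ad(h_{j+1}h_j)$, where $h_i$ is the two-letter word with $e_{i+1}=\Ad(h_i)(e_i)$. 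I would then prove by induction, using only the braid relations, the quadratic relation, and the commutation just established, that
\[
\Ad(h_{j+1}h_j)(e_{j-1})=\Ad(h_j)(e_j)\cdot(\text{something commuting appropriately}).
\]
That is, the pair $(e_{j-1},e_j)$ is carried to $(e_j,e_{j+1})$, and the triple $(e_{j-1},e_j,e_{j+1})$ to $(e_j,e_{j+1},e_{j+2})$. Once this holds, the relations $e_je_{j\pm1}e_j=e_j$ follow from $e_1e_2e_1=e_1$ and $e_2e_1e_2=e_2$. Likewise $e_je_{j+2}=e_{j+2}e_j$ follows from $e_1e_3=e_3e_1$. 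For $|j-k|\ge3$ commutation is automatic: write $e_k$ as a conjugate of $e_{j+2}$ by a word in the $g_i$ with $i\ge j+2$. These generators commute with $e_j$ by the previous paragraph, and $e_{j+2}$ commutes with $e_j$.

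\emph{Base identities.} Here $e_2=\Ad(g_1g_2)(e)$ and $e_3=\Ad(g_2\inv g_3\inv g_1g_2)(e)$. For $e_1e_2e_1$ I would use $eg_1=qe$ to absorb the outer $g_1$'s, and then $eg_2e=re$ and $eg_2\inv e=r\inv e$; the resulting scalar should come out to $1$. For $e_2e_1e_2$ I would conjugate back by $g_1g_2$ and do the same computation. For $e_1e_3=e_3e_1$ I would use relation (4): the element $g_2g_3g_1\inv g_2\inv$ fixes $\ex 2=eg_2g_3g_1\inv g_2\inv e$ on both sides. Together with $\ex 2^*=\ex 2$ (Lemma~\ref{lemma e(k) * invariant}), this should let me identify $e_1e_3$ and $e_3e_1$ with a scalar multiple of $\ex 2$.

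\emph{Main obstacle.} I expect the hardest step to be the shift step. One must find the correct conjugating word and verify, with the alternating signs, that it moves $e_{j-1}$ to $e_j$ exactly, not up to a power of $q$ or $r$. My first attempt would be to verify directly the identity
\[
\Ad(h_jh_{j-1})(e_{j-1})=\Ad(h_j)(e_j),
\]
together with $\Ad(h_jh_{j-1})(e_{j-2})=e_{j-1}$. The second reduces, via the braid relations, to $g_{j}^{\pm1}$ commuting with $e_{j-2}$, plus a quadratic-relation correction that is killed by $e_{j-2}g_{j-2}=qe_{j-2}$ (suitably transported). If that fails, I would fall back to direct inductive computation of each relation, carrying the base cases through the definition one index at a time.
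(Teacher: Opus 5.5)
Your easy steps are fine. These are $e_j^2=\delta e_j$ by conjugacy, the induction giving $e_jg_k=g_ke_j$ for $k\ge j+2$, the reduction of $|j-k|\ge 3$ to $|j-k|=2$, and the base computations of $e_1e_2e_1$ and $e_2e_1e_2$. The gap is the reduction to small indices, which is the heart of the proof. None of your candidate shifts does what you need.

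\emph{Your candidate shifts fail.}
\begin{enumerate}
\item Conjugating by an alternating product $g_1^{\pm1}g_2^{\mp1}g_3^{\pm1}\cdots$ does not even send $e_1$ to $e_2$. Since $e$ commutes with $g_3,g_4,\dots$, the equation $\Ad(g_1g_2\inv)(e)=\Ad(g_1g_2)(e)$ would force $g_2^2=1+(q-q\inv)g_2$, hence $g_2$, to commute with $e$. The opposite choice of signs fails in the same way.
\item The map $\Ad(h_{j+1}h_j)$ sends $e_j$ to $e_{j+2}$, not to $e_{j+1}$.
\item Of the two identities you plan to check, $\Ad(h_jh_{j-1})(e_{j-1})=\Ad(h_j)(e_j)$ is a tautology, since both sides are $e_{j+1}$. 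The other, $\Ad(h_jh_{j-1})(e_{j-2})=e_{j-1}$, is false already in the Brauer specialization, where the left side is the contraction joining strands $j-2$ and $j+1$.
\end{enumerate}

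The real obstruction is the alternation of signs in the definition of $e_{j+1}$. The natural shift $\Ad(g_1g_2\cdots g_m)$ sends $e_1\mapsto e_2$, but it sends $e_2\mapsto\Ad(g_2g_3)(e_2)$, which in general is not $e_3=\Ad(g_2\inv g_3\inv)(e_2)$. It is $e_2^*$, not $e_2$, that is carried to $e_3$; this is why the paper later needs the left-handed elements $e'_j$. So nothing you wrote lets you deduce the relations at index $j$ from $e_1e_2e_1=e_1$, $e_2e_1e_2=e_2$ and $e_1e_3=e_3e_1$. Your fallback of ``direct inductive computation'' does not say what is carried along the induction.

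\emph{The missing idea.} Shift the generators $e,g_1,g_2,\dots$ rather than the sequence $e_1,e_2,\dots$.
\begin{enumerate}
\item Since $e_s$ commutes with $g_t$ for $t\ge s+2$, conjugation by $g_sg_{s+1}\cdots g_m$ ($s$ odd) or $g_s\inv\cdots g_m\inv$ ($s$ even) sends $e_s\mapsto e_{s+1}$ and $g_\ell\mapsto g_{\ell+1}$ for $s\le\ell\le m-1$.
\item Composing these gives the paper's isomorphism $\psi_j$, with $e\mapsto e_{j+1}$ and $g_\ell\mapsto g_{\ell+j}$. It transports $g_1^{\pm1}e=q^{\pm1}e$, $eg_2^{\pm1}e=r^{\pm1}e$, and relation (4) of Definition~\ref{def qBr} to every index.
\item Then $e_je_{j\pm1}e_j=e_j$ is checked at each $j$ by your own base computation, after substituting $e_{j+1}=\Ad(g_j^{\pm1}g_{j+1}^{\pm1})(e_j)$.
\item The relation $e_je_{j+2}=e_{j+2}e_j$ follows from $e_{j+2}=\Ad(G_{j+1})(e_j)$ or $e_{j+2}=\Ad(G'_{j+1})(e_j)$, depending on parity. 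Here $G_{j+1}=g_{j+1}g_{j+2}g_j\inv g_{j+1}\inv$ and $G'_{j+1}=g_{j+1}\inv g_{j+2}\inv g_jg_{j+1}$. You also need the transported invariance of $e_jG_{j+1}e_j$ under $G_{j+1}^{\pm1}$ and $(G'_{j+1})^{\pm1}$.
\end{enumerate}
Alternatively, the even shift $\psi_2$ does send $e_i\mapsto e_{i+2}$ for all $i$. So you could reduce to two families of base cases, starting at $j=1$ and at $j=2$.

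\emph{A smaller point.} In your $e_1e_3$ base case, relation (4) together with $\ex{2}^*=\ex{2}$ gives $e_3e_1=\ex{2}$. But $e_1e_3=eG_2'e(G_2')\inv$ also requires $eG_2'e=\ex{2}$, equivalently $eG_2\inv e=\ex{2}$. This needs a further computation using the quadratic relation and $\delta(q-q\inv)=r-r\inv$.
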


\begin{remark}
	In contrast with the Brauer algebras and the BMW algebras, the  elements $e_j$ do not satisfy $e_j = e_j^*$ for $j > 1$, and $e_j$  does not
commute with $g_k$ for $k \le j -2$.
\end{remark}

\begin{lemma}
	{\normalfont (\cite{rui2022jm}, Lemma 2.6 part (4)) }  \label{product expression for ex s} In $\qBr_n$ for $n \ge 3$,  for $1 \le k \le n/2$,
$$\ex k = e_1 e_3 \cdots e_{2k-1}.$$
\end{lemma}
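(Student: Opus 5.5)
We argue by induction on $k$, using the recursive Definition \ref{Definition ex k} of $\ex k$ and the explicit definition of the elements $e_j$ in Definition \ref{TL elements in q-Brauer}. For $k=1$ the statement is simply $\ex 1 = e = e_1$. For the inductive step, suppose $\ex k = e_1 e_3 \cdots e_{2k-1}$. Then
$$\ex{k+1} = e\, g_{2,2k+1}^+ g_{1,2k}^- \ex k = e_1\, g_{2,2k+1}^+ g_{1,2k}^-\, e_1 e_3\cdots e_{2k-1}.$$
The target is $e_1 e_3 \cdots e_{2k-1} e_{2k+1}$. Since the $e_{2j-1}$ with $j \le k+1$ mutually commute (Temperley--Lieb relations), this may be rewritten as $e_1 e_{2k+1} e_3 \cdots e_{2k-1}$. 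It would therefore suffice to establish an identity of the form
$$e_1\, g_{2,2k+1}^+ g_{1,2k}^-\, e_1 e_3\cdots e_{2k-1} = e_1\, e_{2k+1}\, e_3 \cdots e_{2k-1}.$$

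\textbf{Step 1: a closed form for $e_{2k+1}$.} Unwinding the alternating definition of the $e_j$ gives, up to reordering,
$$e_{2k+1} = \Ad(g_{2k} g_{2k+1})\,\Ad(g_{2k-1}\inv g_{2k}\inv)\cdots \Ad(g_1 g_2)(e_1),$$
that is, $e_{2k+1} = w\, e_1\, w\inv$ for an explicit word $w$ in the $g_i^{\pm1}$. I would first simplify $w$ using the braid relations together with the relations $g_1^{\pm1} e = q^{\pm1} e$ and $e g_i = g_i e$ for $i>2$. The aim is to bring $w$ into a form comparable with $g_{2,2k+1}^+ g_{1,2k}^-$, for example showing that $e_{2k+1} e_1$ equals $w' e_1 g_{1,2k}^-{}^{-1}\cdots$ modulo factors that are absorbed by $e_1$ as scalars.

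\textbf{Step 2: comparing with the recursion.} The plan is to show that
$$e_1\, g_{2,2k+1}^+ g_{1,2k}^- = e_1\, e_{2k+1}\, y$$
for some $y$ satisfying $y\, e_1 e_3 \cdots e_{2k-1} = e_3\cdots e_{2k-1}$, or a variant of this. A cleaner route may be to use the fact that $e_{2k+1}$ commutes with $g_i$ for $i \le 2k-1$; this is the mirror of the stated commutation of $e_j$ with $g_k$ for $k \ge j+2$, and would need its own check. With it, one can move the parts of $g_{1,2k}^-$ involving small indices past $e_{2k+1}$ and let $e_1$ absorb them.

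\textbf{Main obstacle.} The hardest step is the explicit braid manipulation matching $g_{2,2k+1}^+ g_{1,2k}^-$ against the conjugating word for $e_{2k+1}$. I expect to need the relation $e g_2 e = re$ and the defining relation (4) for $\ex 2$; the case $k=1$ is exactly that relation, which reflects this dependence. I would first verify $k=1$ by hand, namely $\ex 2 = e_1 e_3$ with $e_3 = \Ad(g_2\inv g_3\inv)\Ad(g_1 g_2)(e_1)$. I would then try to reduce the general step to the $k=1$ identity, shifted by conjugating with the elements $g_{2,2k-1}$ that carry strands $1,2$ into position, so that the induction runs through a shifted copy of relation (4).
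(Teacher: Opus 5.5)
Your base case and the hand check of $k=1$ are fine. The inductive step, however, has a genuine gap: Step~2 never supplies a working mechanism, and both mechanisms you suggest fail.

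The identity $e_1\, g^+_{2,2k+1}g^-_{1,2k} = e_1 e_{2k+1}\, y$ cannot hold for any $y$. Specialize to $q=r=1$, where $\qBr_n$ becomes the Brauer algebra. There the left side is a single diagram with exactly one cap on top, at $(1,2)$. The right side lies in the span of diagrams with caps at both $(1,2)$ and $(2k+1,2k+2)$.

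Your ``cleaner route'' needs $e_{2k+1}$ to commute with $g_i$ for $i\le 2k-1$. This is false in $\qBr_n$: the paper explicitly remarks that $e_j$ does not commute with $g_k$ for $k\le j-2$. Only $e_jg_k=g_ke_j$ for $k\ge j+2$ is available.

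Step~1 also has the signs swapped. By definition $e_{2k+1}=\Ad(g_{2k}\inv g_{2k+1}\inv)(e_{2k})$ and $e_{2k}=\Ad(g_{2k-1}g_{2k})(e_{2k-1})$, and in this algebra the signs are exactly what matters. In any case, no closed form for $e_{2k+1}$ is needed.

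The missing idea is to factor the recursion word. Set $G_{2i}=g_{2i}g_{2i+1}g_{2i-1}\inv g_{2i}\inv$. Far commutativity gives
\[
g^+_{2,2k+1}g^-_{1,2k}=G_2G_4\cdots G_{2k};
\]
induct on $k$, sliding $g_{2k}g_{2k+1}$ left past $g_1\inv\cdots g_{2k-2}\inv$. Since $e_{2i-1}$ commutes with $g_m$ for $m\ge 2i+1$, it commutes with $G_{2j}$ for $j\ge i+1$. So the induction hypothesis gives
\[
\ex{k+1}=e_1G_2\cdots G_{2k}\,e_1e_3\cdots e_{2k-1}=(e_1G_2e_1)(G_4e_3)(G_6e_5)\cdots(G_{2k}e_{2k-1}).
\]

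Now use the shifted $k=1$ identity $e_jG_{j+1}e_j=\ex{j,2}=e_je_{j+2}$ from (\ref{e k 2}). It is obtained by transporting relation (4) of Definition~\ref{def qBr} and Lemma~\ref{Lemma additional e 2 relations} through the shift isomorphism of Lemma~\ref{lemma: shift isomorphisms}. For odd $j$ one has $e_{j+2}=\Ad(G'_{j+1})(e_j)$, and one uses that $\ex{j,2}$ absorbs $(G'_{j+1})\inv=G_{j+1}^*$.

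The product then telescopes: $e_1G_2e_1=e_1e_3$, then $e_3G_4e_3=e_3e_5$, and so on, giving $e_1e_3\cdots e_{2k+1}$. Your last paragraph points toward ``a shifted copy of relation (4)'', but without this factorization and interleaving there is no way to bring it to bear.
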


\begin{notation}
	For $1 \le j < n$, define $\qBr_{j, n} = \alg\{e_j, g_j, g_{j+1}, \dots, g_{n-1}\}$, as a subalgebra of $\qBr_n$.
	$H_{j, n} = \alg\{g_j, \dots, g_{n-1}\}$  is the Hecke subalgebra of $\qBr_{j, n}$.
\end{notation}

For the remainder of this section, we fix some large natural number $M$ and
regard the algebras $\qBr_n$ for $n\le M$ as subalgebras of $\qBr_M$.

\begin{theorem}
	{\normalfont (\cite{Wenzl-qBr1}, Lemma 4.2, Theorem 4.5, and Lemma 5.2) }  \label{Theorem q-Brauer Markov trace}
	There is a unique unital  $S$ valued  trace $\tr$ on $\qBr_M = \qBr_M(S; r, q, \delta)$ with the ``Markov property":  $\tr(a g_1) = ( r/
\delta) \tr(a)$  and $\tr(a e) =  (1/ \delta) \tr(a)$
	for $a \in  \qBr_{2, M}$.
\end{theorem}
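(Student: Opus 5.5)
The plan is to build $\tr$ by recursion, in the same way as for the Brauer and BMW algebras, but organized along the \emph{left} end of the strands. The defining relations single out the first two strands through $e = e_1$ and $g_1$, and they treat $\qBr_{2,M}$ as the commutant-type subalgebra. The first step is a structural result in the spirit of \cite{Wenzl-qBr1}: every element of $\qBr_M$ is a linear combination of elements of two kinds. The first kind is $a\,g_{1,k}^{\pm}\,b$ with $a,b\in \qBr_{2,M}$. The second kind is $a\,e\,b$, or more generally words having at most one occurrence of $e$ or $g_1^{\pm1}$ between elements of $\qBr_{2,M}$. This is the analogue of $\Br_n = \Br_{n-1} + \Br_{n-1}s_{n-1}\Br_{n-1} + \Br_{n-1}e_{n-1}\Br_{n-1}$, read from the left. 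With this decomposition in hand, I would define a conditional expectation $E:\qBr_{1,M}\to \qBr_{2,M}$, where $\qBr_{1,M}=\qBr_M$. It is the $\qBr_{2,M}$-bimodule map with
\[
E(a)=a,\qquad E(a g_1 b)=(r/\delta)\,ab,\qquad E(a e b)=(1/\delta)\,ab .
\]
Then I would set $\tr = \tr_{2}\circ E$, where $\tr_2$ is the trace on $\qBr_{2,M}$, and proceed by induction on the number of strands. For this to make sense, $\qBr_{2,M}$ must itself be a $q$-Brauer algebra on $M-1$ strands, generated by $e_2, g_2,\dots,g_{M-1}$. That identification should follow from the relations checked in appendix \ref{appendix TL}, namely $e_2 = \Ad(g_1g_2)(e_1)$ together with the Temperley--Lieb and commutation properties.

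Uniqueness is the easy direction. Any trace with the Markov property agrees with $\tr_2\circ E$ on spanning words, because cyclicity lets one move the outer factors $a,b$ together. Induction then pins down $\tr$ on $\qBr_{2,M}$, and so uniqueness follows from the spanning set alone.

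For existence, I would split the work into two parts. The first part is well-definedness of $E$. Rather than checking relations directly, I would use the basis theorem of \cite{Wenzl-qBr1}: $\qBr_M$ is free, and it is free as a module over $\qBr_{2,M}$ with an explicit basis of coset-type words. Defining $E$ on that basis makes it automatically well defined. The second part is the trace property $\tr(xy)=\tr(yx)$. It suffices to check $\tr(xg) = \tr(gx)$ for each generator $g\in\{e,g_1,\dots,g_{M-1}\}$ and each basis word $x$. For generators lying in $\qBr_{2,M}$, this reduces to the bimodule property of $E$ together with the inductive trace property of $\tr_2$. So the real content is commuting $e$ and $g_1$ past words.

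I expect this last verification to be the main obstacle. The standard device for handling it is a two-step expectation: pass from $\qBr_{1,M}$ to $\qBr_{3,M}$, and show that the two orders of closing strands $1$ and $2$ agree. Concretely, this means checking that $\tr_3\circ E_2\circ E_1$ is invariant under conjugation by $g_1$ and by $e$. Two relations drive this: $e g_2 e = r e$, which is compatible with $r/\delta\cdot\delta$, and the relation (4) involving $\ex 2$. These relations should reduce the check to a finite list of words in $e, g_1, g_2, g_3$. My first attempt would be to carry out this finite verification on the three-strand and four-strand local algebras, exactly as done in \cite{Wenzl-qBr1}, Lemma 4.2 and Theorem 4.5. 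Finally, I would check consistency: the traces obtained for different $M$ restrict compatibly, which follows from uniqueness.
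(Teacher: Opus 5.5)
The paper gives no argument of its own for this statement; it imports it from \cite{Wenzl-qBr1}. Judged as a self-contained proof, your proposal has a genuine gap in the existence half and a smaller one in uniqueness.

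\textbf{Uniqueness.} The Markov hypothesis says nothing directly about $\tau$ on $\qBr_{2,M}$, so ``induction pins down $\tr$ on $\qBr_{2,M}$'' needs one more input. The shift $\psi_1\colon\qBr_{M-1}\to\qBr_{2,M}$ is conjugation by $g_1g_2\cdots g_{M-1}\in\qBr_M$ (Lemma~\ref{lemma: shift isomorphisms}). Hence $\tau\circ\psi_1=\tau|_{\qBr_{M-1}}$, and the latter is a Markov trace relative to $\qBr_{2,M-1}$. With that added, your uniqueness argument works.

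\textbf{Well-definedness of $E$.} Your claim that $\qBr_M$ is free over $\qBr_{2,M}$ is false. Specialize to $q=r=1$ and $M=3$, where $\qBr_3$ becomes $\Br_3(\delta)$ and $\qBr_{2,3}$ becomes $C=\spn\{1,s_2,e_2\}$. Freeness would make $\Br_3$ free of rank $15/3=5$ over $C$. That forces $\dim \bar e_2\Br_3=5\dim(\bar e_2C)=5$. But $\bar e_2\Br_3$ is spanned by the $3$ diagrams whose top vertices $2,3$ are joined. The same count fails for every $M\ge 3$. Even a genuine one-sided basis would only give a one-sided module map, whereas $E(ag_1b)=(r/\delta)ab$ for all $a,b$ is a bimodule statement, which is what ``well defined'' has to mean here.

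Your $E$ does exist, but it must be built differently. Use the compression device the paper applies after axiom (9), inside $\qBr_{M+1}$:
\begin{itemize}
\item for $x\in\qBr'_{2,M+1}$ one has $e\,x\,e\in\qBr'_{3,M+1}\,e$;
\item the map $y\mapsto ye$ is injective on $\qBr'_{3,M+1}$;
\item transport back by the left-handed shift $e\mapsto e'_2$, $g_\ell\mapsto g_{\ell+1}$, which carries $\qBr_{2,M}$ onto $\qBr'_{3,M+1}$.
\end{itemize}

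\textbf{The trace property.} This is the actual content of the theorem, and your proposal does not establish it. Reducing ``to a finite list of words in $e,g_1,g_2,g_3$'' needs an intermediate algebra with two properties. First, $\tr_2$ must factor through it by a bimodule expectation. Second, it must commute with both $e$ and $g_1$, so its elements can be pulled out of the words.

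$\qBr_{3,M}$ is not such an algebra: as the paper remarks, $e_j$ does not commute with $g_k$ for $k\le j-2$, so $e_3$ does not commute with $g_1$. The algebra your recursively defined $\tr_2$ (the trace of $\qBr_{M-1}$ transported by $\psi_1$) actually factors through is $\psi_1(\qBr_{2,M-1})$. It is generated by $g_3,\dots,g_{M-1}$ and by $\Ad(g_2g_3)(e_2)$, not by $e_3=\Ad(g_2\inv g_3\inv)(e_2)$. This algebra does commute with $g_1$. How it interacts with $e$ is something you would still have to prove, and the relations you invoke do not obviously settle it: $eg_2e=re$, and relation (4), which is phrased through $g_2g_3g_1\inv g_2\inv$.

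This twisted, non-self-adjoint behaviour of the $e_j$ is exactly the difficulty. At that point your proposal defers to Lemma 4.2 and Theorem 4.5 of \cite{Wenzl-qBr1}, the same source the paper cites, so your argument does not establish existence.
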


Of course, the restriction of $\tr$ to any $\qBr_n$ for $n \le M$ is the unique
Markov trace on $\qBr_n$. Moreover, the trace is consistent with
specializations, for example $\tr(x \otimes_R S) = \tr(x) \otimes_R S$ for $x
	\in \qBr_M(R; \bm r, \bm q, \bm \delta)$.

The following statement is proved in appendix \ref{appendix TL}.

\begin{lemma}
	\label{lemma: shift isomorphisms}    {\normalfont (The shift isomorphism)}
	For $j \ge 1$ and $k \ge 2$, with $j+k \le M$,    $e_1 \mapsto e_{j+1}$  and $g_\ell \mapsto g_{\ell +j }$  for $1 \le \ell \le k-1$ determines
an isomorphism $\psi_{j}$ from
	$\qBr_{k}$ to $\qBr_{j+1, j+k}$.   Moreover, the isomorphism is implemented by an inner automorphism of $\qBr_M$, and is therefore trace
preserving.
\end{lemma}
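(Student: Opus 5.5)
The plan is to write down an explicit element $u_j \in \qBr_M$, built from the $g_i$, with $\Ad(u_j)$ sending $e_1 \mapsto e_{j+1}$ and $g_\ell \mapsto g_{\ell+j}$ for $1 \le \ell \le k-1$. Conjugation is an algebra automorphism of $\qBr_M$. It therefore sends $\qBr_k = \alg\{e_1, g_1,\dots,g_{k-1}\}$ onto $\alg\{e_{j+1}, g_{j+1},\dots,g_{j+k-1}\} = \qBr_{j+1,j+k}$. Since $\tr$ is a trace on $\qBr_M$ (Theorem \ref{Theorem q-Brauer Markov trace}), $\tr(u x u\inv) = \tr(x)$, so the map is trace preserving. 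Injectivity is automatic because $\Ad(u_j)$ is an automorphism, so the map is an isomorphism onto its image, which is $\qBr_{j+1,j+k}$.

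It suffices to treat $j=1$ and compose, since $\psi_j = \psi_1^{(j)}\circ\cdots$, where each step shifts indices by one. In practice it is cleanest to build $u_j$ inductively from the recursion in Definition \ref{TL elements in q-Brauer}. Set
\[
w_1 = g_1 g_2 \cdots g_{M-1} \quad\text{or}\quad w_1' = g_1\inv g_2\inv\cdots g_{M-1}\inv .
\]
By the braid relations, $\Ad(w_1)(g_\ell) = g_{\ell+1}$ for $1 \le \ell \le M-2$, and the same holds for $w_1'$. The issue is the image of $e_1$. We have $\Ad(w_1)(e_1) = g_1 g_2 (g_3\cdots g_{M-1})\, e_1\, (\cdots)\inv$. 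Since $e_1$ commutes with $g_i$ for $i \ge 3$, this collapses to $\Ad(g_1g_2)(e_1) = e_2$. For even steps, Definition \ref{TL elements in q-Brauer} instead gives $e_{j+1} = \Ad(g_j\inv g_{j+1}\inv)(e_j)$. So at step $j$ we use the shift element $v_j = g_j^{\pm1} g_{j+1}^{\pm1}\cdots g_{M-1}^{\pm1}$, with sign $+$ if $j$ is odd and $-$ if $j$ is even, and put $u_j = v_j v_{j-1}\cdots v_1$.

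We then check two things by induction on $j$. First, $\Ad(v_j)(g_\ell) = g_{\ell+1}$ for $j \le \ell \le M-2$, which is the braid relation. Second, $\Ad(v_j)(e_j) = e_{j+1}$. For the second, we need that $e_j$ commutes with $g_i$ for $i \ge j+2$, which is given by the proposition on the Temperley--Lieb relations. Then the tail $g_{j+2}^{\pm1}\cdots g_{M-1}^{\pm1}$ of $v_j$ commutes with $e_j$, and we are reduced to exactly the defining formula for $e_{j+1}$. Composing the steps gives $\Ad(u_j)(e_1) = e_{j+1}$ and $\Ad(u_j)(g_\ell) = g_{\ell+j}$ for $\ell \le M-1-j$. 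This range includes $\ell \le k-1$ because $j+k \le M$.

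The main obstacle is the bookkeeping in the second check. We must make sure that $\Ad(v_{j-1}\cdots v_1)$ has already produced $e_j$ and the shifted $g$'s before $v_j$ is applied. We must also make sure that the alternating choice of $g$ versus $g\inv$ never breaks the shift of the $g_\ell$; it does not, since both $g_{j,M-1}^+$ and $g_{j,M-1}^-$ conjugate $g_\ell$ to $g_{\ell+1}$ in the relevant range. Well-definedness of $\psi_j$ as a homomorphism on the abstract $\qBr_k$ follows from the identification of $\qBr_k$ with its image in $\qBr_M$ under the natural embedding, which is assumed throughout this section.
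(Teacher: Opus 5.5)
Your proof is correct and is essentially the paper's argument. The paper also conjugates by $g^{\pm}_{j,m}\cdots g^{-}_{2,m}\,g^{+}_{1,m}$ (for any $m\ge j+k-1$; you take $m=M-1$), using only the braid relations and the fact that $e_s$ commutes with $g_t$ for $t\ge s+2$, which is proved directly from the definition by induction. So citing that fact is not circular, even though the Temperley--Lieb relations themselves are later derived from this lemma.

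The one inaccuracy is your opening remark that it suffices to treat $j=1$ and compose. In general $\psi_1\circ\psi_1\ne\psi_2$: one has $\psi_1(e_2)=\Ad(g_2g_3)(e_2)$, which equals $e_3=\Ad(g_2\inv g_3\inv)(e_2)$ only if $e_2=e_2^*$. Your actual construction with alternating signs handles this correctly.
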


The isomorphism $\psi_j$ depends essentially only on $j$ and not on $k$, as
long as $j + k \le M$, in the sense that $\psi_j\circ \iota = \iota \circ
	\psi_j$, where $\iota$ denotes the embedding of $\qBr_k$ into $\qBr_{k+1}$ and
also the embedding of $\qBr_{j+k}$ onto $\qBr_{j+ k+1}$.

\begin{lemma}
	\label{lemma even shifts} $\psi_{2s} \circ \psi_k = \psi_{2s + k}$  for all $s$ and $k$.
\end{lemma}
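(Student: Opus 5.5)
Since both sides of the identity are algebra homomorphisms out of $\qBr_k$ (with appropriate ranges), the plan is to check that $\psi_{2s}\circ\psi_k$ and $\psi_{2s+k}$ agree on the generators $e_1, g_1, \dots, g_{\ell-1}$ of the relevant $q$-Brauer algebra $\qBr_\ell$ (with $2s+k+\ell \le M$). Throughout, $\psi_k$ is read as the shift isomorphism of Lemma~\ref{lemma: shift isomorphisms}, namely $e_1\mapsto e_{k+1}$ and $g_i\mapsto g_{i+k}$. By the remark following that lemma, the shifts do not depend on the size of the source algebra, so no bookkeeping of $\ell$ is needed.

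On the Hecke generators the check is immediate: $\psi_{2s}(\psi_k(g_i)) = \psi_{2s}(g_{i+k}) = g_{i+k+2s} = \psi_{2s+k}(g_i)$. Here $\psi_{2s}$ is applied to $g_{i+k}$ regarded as an element of some $\qBr_{k+\ell}$, which is legitimate provided $2s+k+\ell\le M$.

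The real content is the generator $e_1$. We need $\psi_{2s}(e_{k+1}) = e_{2s+k+1}$. Since $\psi_{2s}$ sends $e_1\mapsto e_{2s+1}$ and $g_i\mapsto g_{i+2s}$, the plan is to induct on $j$ and prove $\psi_{2s}(e_j) = e_{j+2s}$ for all $j\ge 1$ with $j+2s\le M-1$. The base case $j=1$ is the definition of $\psi_{2s}$. For the inductive step, use Definition~\ref{TL elements in q-Brauer}. We have $e_{j+1} = \Ad(g_j^{\pm1} g_{j+1}^{\pm1})(e_j)$, with sign $+$ when $j$ is odd and $-$ when $j$ is even. Applying the homomorphism $\psi_{2s}$ gives
\[
\psi_{2s}(e_{j+1}) = \Ad\bigl(g_{j+2s}^{\pm1} g_{j+1+2s}^{\pm1}\bigr)\bigl(\psi_{2s}(e_j)\bigr) = \Ad\bigl(g_{j+2s}^{\pm1} g_{j+1+2s}^{\pm1}\bigr)(e_{j+2s}).
\]
Because $2s$ is even, $j$ and $j+2s$ have the same parity. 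The sign pattern therefore matches the defining recursion for $e_{j+2s+1}$, and the right-hand side equals $e_{j+1+2s}$.

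This parity point is exactly where the evenness of the first shift is used, and it is the only delicate step. For an odd shift the alternating signs in Definition~\ref{TL elements in q-Brauer} would mismatch, and the identity would genuinely fail. One must also note that $e_j$ for $j\le k+1$ lies in the domain algebra of $\psi_{2s}$: $e_{k+1}\in\qBr_{k+2}\subseteq\qBr_{k+\ell}$ since $\ell\ge2$. Since the two homomorphisms agree on generators, they coincide.
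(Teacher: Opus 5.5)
Your proof is correct and essentially matches the paper's: both reduce to checking $\psi_{2s}(e_{k+1}) = e_{2s+k+1}$ and prove it by induction along the recursive definition of the $e_j$, using that an even shift preserves the parity that selects $g^{\pm 1}$. The only difference is bookkeeping: the paper first reduces to $s=1$ by induction on $s$ and then shows $\psi_2(e_{k+1}) = e_{k+3}$, while you treat the shift by $2s$ directly.
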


\begin{proof}
	It suffices to show $\psi_2 \circ \psi_k = \psi_{2 + k}$,  since the general case follows by induction on $s$.  Moreover, it suffices to show
that $\psi_2 \circ \psi_k(e) = \psi_{2 + k}(e)$, that is,  $\psi_2(e_{k+1}) = e_{k+3}$.
	This statement follows by induction on $k$, using Definition \ref{TL elements in q-Brauer}.
\end{proof}

\subsubsection{Nguyen's cellular basis}   The theorem on cellular bases of the $q$-Brauer algebras below is due to Nguyen
in \cite{Nguyen2}. The statement here is more general than that in
\cite{Nguyen2}, but the idea is basically the same. We give a shorter proof in
the appendix \ref{appendix subsection: Nguyen}.

The following family of permutations $\mathcal D'_{n, s} $ replaces $\mathcal
	D_{n, s} $ from Definition \ref{definition: shortest coset reps} in the
description of the cellular basis of $\qBr_n$.

\begin{definition}
	\label{definition:  shortest coset reps q-Brauer}
	Let $1 \le s \le n/2$.  Let  $\mathcal D'_{n, s} $  denote the set of $\pi \in \sym_n$ such that
	$$
		\pi(1) < \pi(3) < \cdots  < \pi(2s -1),
	$$
	$$
		\pi(2s +1) < \pi(2s+2) < \cdots < \pi(n),
	$$
	and
	$$
		\pi(1) < \pi(2),  \pi(3) < \pi(4), \dots, \pi(2s-1) < \pi(2s).
	$$
\end{definition}

\begin{theorem}
	{\normalfont (\cite{Nguyen2}, Theorem 3.2)}    \label{Nguyen's theorem}     	The $q$-Brauer algebra $\qBr_n$ is cellular with cell datum described as
	follows: First, for each $k \le n$ with $n - k$ even, take some cell datum for
	$H_k(q)$, with partially ordered set $(\Lambda_k, \unrhd)$, index sets
	$\mathcal T(\lambda)$, the set of standard tableaux of shape $\lambda$, and
	cellular basis $\{b_{\mathfrak s, \mathfrak t}^\lambda: \lambda \in \Lambda_k \
		\text{and} \ \mathfrak s, \mathfrak t \in \mathcal T(\lambda)\}. $

	The partially ordered set in the cell datum for $\qBr_n$ is $(\tilde\Lambda_n,
		\unrhd)$, defined in Example \ref{lattice for generic Brauer algebras}. For
	each $(\lambda, n) \in \tilde\Lambda_n$, where $\lambda$ has size $k = n-2s$,
	the index set $\mathcal T((\lambda, n))$ is the set of pairs $(\mathfrak s,
		u)$, where $\mathfrak s$ is a standard tableau of shape $\lambda$, and $u \in
		\mathcal D'_{n, s}$, the family of permutations from Definition
	\ref{definition: shortest coset reps q-Brauer}.

	For $(\lambda, n) \in \tilde\Lambda_n$ and $(\mathfrak s, u), (\mathfrak t, v)
		\in \mathcal T((\lambda, n))$, let $$ x^{(\lambda, n)}_{(\mathfrak s, u),
				(\mathfrak t, v)} = g_u\ex s \psi_{2s}(b^\lambda_{\mathfrak s, \mathfrak t})
		g_v^* $$ Then the set of $ x^{(\lambda, n)}_{(\mathfrak s, u), (\mathfrak t,
				v)} $ with $(\lambda, n) \in \tilde\Lambda_n$ and $(\mathfrak s, u), (\mathfrak
		t, v) \in \mathcal T((\lambda, n))$ is a cellular basis.
\end{theorem}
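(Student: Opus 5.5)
Following the paper's remark that the statement is more general than Nguyen's but with the same idea, the plan is to build the basis by induction on $s$ from a filtration by two-sided ideals. Let $J^{(s)}$ be the two-sided ideal of $\qBr_n$ generated by $\ex s$; by Lemma \ref{product expression for ex s} and the Temperley--Lieb relations, $J^{(s+1)}\subseteq J^{(s)}$. The proof will have three steps.

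\emph{Step 1: spanning.} I would show that the elements $g_u \ex s \psi_{2s}(h) g_v^*$, with $u,v\in\mathcal D'_{n,s}$ and $h\in H_{n-2s}$, span $J^{(s)}$ modulo $J^{(s+1)}$. To get this, I need three facts:
\begin{itemize}
\item $e$ and $g_1^{\pm1}$ absorb into $\ex s$ up to scalars and elements of $J^{(s+1)}$;
\item $\psi_{2s}(\qBr_{n-2s})$ commutes with $\ex s$, so that $\ex s\,\psi_{2s}(e)\in J^{(s+1)}$ and $\psi_{2s}(e)$ may be discarded;
\item every $g_w$ with $w\in\sym_n$ factors as $g_u\cdot(\text{element of } \sym_{2s}\wr\text{-type stabilizer})\cdot\psi_{2s}(g_{w'})$.
\end{itemize}
The relation $g_2g_3g_1^{-1}g_2^{-1}\ex2=\ex2$, propagated by the recursion in Definition \ref{Definition ex k}, should show that the stabilizer of the pairing $\{1,2\},\dots,\{2s-1,2s\}$ acts on $\ex s$ by scalars. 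This is what forces the coset representatives to be $\mathcal D'_{n,s}$.

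\emph{Step 2: linear independence.} Counting gives
\[
\sum_s |\mathcal D'_{n,s}|^2\,(n-2s)! \;=\; \#\{\text{Brauer diagrams on } n \text{ strands}\}.
\]
The spanning set is therefore a basis if $\qBr_n$ is free of that rank. To see this, I would specialize to $q=r=1$, where $\qBr_n\cong\Br_n(\delta)$ by \cite{Wenzl-qBr1}. Under this specialization the proposed elements become, up to unitriangular corrections, the Graham--Lehrer basis of Proposition \ref{Graham-Lehrer cellularity theorem}, conjugated by the permutation that moves the contracted strands to the left. Independence over the generic ring then follows, and the spanning argument works over any $S$.

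\emph{Step 3: cellular axioms.} Refine the filtration $J^{(s)}$ by the Hecke cell ideals transported via $\psi_{2s}$. For the multiplication rule (Definition \ref{gl cell}(2)), left multiplication by a generator on $g_u\ex s\psi_{2s}(b)$ gives three kinds of terms:
\begin{itemize}
\item terms in $J^{(s+1)}$, which lie higher in the order on $\tilde\Lambda_n$ since $|\lambda|$ decreases;
\item terms $g_{u'}\ex s\psi_{2s}(hb)$, handled by the Hecke cellularity of $b^\lambda_{\mfs,\mft}$ (whose lower-order remainders land in $\unrhd$-larger cells);
\item terms with the same $u$-part, which contribute nothing to the cell.
\end{itemize}
The resulting coefficients are independent of the right index $(\mft,v)$. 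For the involution axiom, Proposition \ref{proposition involution} and Lemma \ref{lemma e(k) * invariant} give $x^*=g_v\,\psi_{2s}(b)^*\ex s\,g_u^*$. Since $\psi_{2s}$ is implemented by an inner automorphism built from the $g_i$, I expect $\psi_{2s}(b)^*\equiv\psi_{2s}(b^*)$ after commuting past $\ex s$.

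The main obstacle is Step 1, specifically the precise absorption rule. One has to show that $g_w\ex s$, for $w$ in the stabilizer of the pairing, is a scalar multiple of $\ex s$ plus possibly terms in $J^{(s+1)}$. One also has to check that $\ex s$ commutes with $\psi_{2s}(\qBr_{n-2s})$, which rests on the inner implementation of $\psi_{2s}$ in Lemma \ref{lemma: shift isomorphisms}. I would attack both by induction on $s$ using $\ex{s+1}=\ex s e_{2s+1}$ and Lemma \ref{lemma even shifts}.
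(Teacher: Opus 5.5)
Your outline has the same skeleton as the paper's proof: coset decomposition by $\mathcal D'_{n,s}$, absorption into $\ex s$, counting against Brauer diagrams, and Hecke cellularity transported by $\psi_{2s}$. But it leaves out the one step that is not formal. That step is control of \emph{left multiplication by $e$} on $g_u\ex s$ for arbitrary $u\in\mathcal D'_{n,s}$.

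Your three facts only handle $e$ or $g_1^{\pm1}$ sitting directly next to $\ex s$, and the shifted copy $\psi_{2s}(\qBr_{n-2s})$ on the right. From them you get $H_n\ex s\,\qBr_{2s+1,n}\subseteq\spn\{g_u\ex s\psi_{2s}(h)\}+J^{(s+1)}$, which is essentially Lemma \ref{H n e s}. But $J^{(s)}=\qBr_n\ex s\qBr_n$ also contains words such as $e\,g_u\,\ex s$. Since $e$ commutes with $H_{3,n}$ but not with $g_2$, it cannot simply be moved onto $\ex s$.

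What you need is $e\,H_n\,\ex s\subseteq\sum_{m\ge s}H_n\ex mH_n$. Equivalently, $\qBr_n=\sum_sH_n\ex sH_n$ and $\sum_{m\ge s}H_n\ex mH_n$ is a two-sided ideal, hence equal to your $J^{(s)}$. For Brauer diagrams this is a one-line picture. In $\qBr_n$ it is a genuine computation in the defining relations, and it is exactly what the paper imports from \cite{Wenzl-qBr1}: Proposition 3.5 for the spanning, and Lemma 3.4 for the multiplication rule with respect to $e$.

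Your Step 3 uses this same fact twice without saying so. First, to rewrite $e\,g_u\ex s\psi_{2s}(b)$ as a combination of terms $g_{u'}\ex s\psi_{2s}(hb)$ modulo $J^{(s+1)}$. Second, to know that $J^{(s+1)}$ is spanned by basis elements with more than $s$ cups, which is what justifies ``terms in $J^{(s+1)}$ lie higher.'' Without it, neither spanning nor axiom (2) of Definition \ref{gl cell} for the generator $e$ is established.

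Conversely, what you flag as the main obstacle is the easy part. That $\ex s=e_1e_3\cdots e_{2s-1}$ commutes with $\qBr_{2s+1,n}$ is immediate from Lemma \ref{commutation of e j and g k} and the Temperley--Lieb relations. The absorption is Lemma \ref{ex s absorbs K s}.

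Your absorption rule is, however, misstated. For $w$ in the $\mathfrak S_s$-part of $K_s$, $g_w$ is a word in $\overline G_{2i}=g_{2i}g_{2i+1}g_{2i-1}g_{2i}$, while only $G_{2i}=g_{2i}g_{2i+1}g_{2i-1}\inv g_{2i}\inv$ fixes $\ex s$. For instance, $\overline G_2\ex2=\ex2+(q-q\inv)(g_2+q\,g_3g_2)\ex2$. So $g_w\ex s$ is $\ex s$ plus shorter Hecke words times $\ex s$, not plus terms of $J^{(s+1)}$. You therefore need induction on length, as in Lemma \ref{H n e s}.

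Finally, in Step 2 the specialization to $q=r=1$ only bounds the generic rank from above and cannot rule out torsion. By itself it does not give independence over $R$. Cite the freeness of $\qBr_n$ of rank equal to the number of Brauer diagrams (\cite{Wenzl-qBr1}, Theorem 3.8), as the paper does.
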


The proof of the following corollary is similar to the verification of axiom
(\ref{axiom: weak coherence of An}) for the Brauer or BMW algebras.

\begin{corollary}
	\label{corollary: qBr weak coherence 1}
	Suppose $k < n$ with $n - k$ even and let $s = (n-k)/2$.   Let $(\gamma, k) \in \tilde \Lambda_k$.
	If $x \in \qBr^{\unrhd (\gamma, k)}_k \setminus \qBr^{ \rhd(\gamma, k)}_k$, then  $\ex s \psi_{2s}(x) \in
		\qBr^{\unrhd (\gamma, n)}_n \setminus \qBr^{ \rhd(\gamma, n)}_n$.
\end{corollary}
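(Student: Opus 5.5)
Our plan is to mimic the Brauer-algebra argument: check the claim on cellular basis elements of Nguyen's basis (Theorem \ref{Nguyen's theorem}), then pass to general $x$ by linearity. Let $(\gamma,k)$ have $|\gamma| = \ell = k-2t$. By Theorem \ref{Nguyen's theorem}, $\qBr_k^{\unrhd(\gamma,k)}$ is spanned by the basis elements $x^{(\mu,k)}_{(\mfs,u),(\mft,v)} = g_u \ex t \psi_{2t}(b^\mu_{\mfs,\mft}) g_v^*$ with $(\mu,k)\unrhd(\gamma,k)$. We will show that the map $\Phi(x) = \ex s\psi_{2s}(x)$ sends each such basis element of $\qBr_k$ labelled $(\mu,k)$ to a basis element of $\qBr_n$ labelled $(\mu,n)$, injectively on basis elements.

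\emph{Key computation.} Since $\psi_{2s}$ is an algebra homomorphism (Lemma \ref{lemma: shift isomorphisms}),
$$\Phi\bigl(g_u \ex t \psi_{2t}(b) g_v^*\bigr) = \ex s\, g_{\psi(u)}\, \psi_{2s}(\ex t)\, \psi_{2s}\psi_{2t}(b)\, \psi_{2s}(g_v^*).$$
Here $\psi_{2s}(g_w) = g_{w'}$, where $w'$ is $w$ shifted by $2s$ (it permutes $\{2s+1,\dots,n\}$). The indices of $w'$ are at least $2s+1$, and $\ex s = e_1e_3\cdots e_{2s-1}$ (Lemma \ref{product expression for ex s}) commutes with $g_j$ for $j\ge 2s+1$, since $e_i$ commutes with $g_j$ for $j\ge i+2$. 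So $\ex s$ can be moved past $g_{u'}$.

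Next, $\psi_{2s}(\ex t) = \psi_{2s}(e_1e_3\cdots e_{2t-1}) = e_{2s+1}\cdots e_{2s+2t-1}$, using Lemma \ref{lemma even shifts}, which gives $\psi_{2s}(e_{j}) = e_{j+2s}$ for odd $j$ (apply $\psi_{2s}\circ\psi_{j-1}=\psi_{2s+j-1}$ to $e_1$). Hence $\ex s\psi_{2s}(\ex t) = \ex{s+t}$. Also $\psi_{2s}\psi_{2t} = \psi_{2(s+t)}$ by Lemma \ref{lemma even shifts}. Finally, for $u\in\mathcal D'_{k,t}$, the shift $u'$ composed with the identity on $\{1,\dots,2s\}$ lies in $\mathcal D'_{n,s+t}$. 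Thus the image is $g_{u'}\ex{s+t}\psi_{2(s+t)}(b)g_{v'}^* = x^{(\mu,n)}_{(\mfs,u'),(\mft,v')}$, and $u\mapsto u'$ is injective. The step we expect to need the most care is this commutation and shift bookkeeping: that $\psi_{2s}$ maps $g_u$ to $g_{u'}$ and $g_v^*$ to $g_{v'}^*$, that $u'\in\mathcal D'_{n,s+t}$, and that the chosen Hecke cellular basis on size $\ell$ is the same one in both algebras. For the last point we fix one cell datum for each $H_\ell$ and use it in all the $\qBr_n$.

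\emph{Conclusion.} Write $x = \sum c_{\beta}\beta$ over basis elements $\beta$ with labels $\unrhd(\gamma,k)$. Some $\beta$ labelled exactly $(\gamma,k)$ has $c_\beta\ne 0$, since $x\notin\qBr_k^{\rhd(\gamma,k)}$. The order on $\tilde\Lambda$ depends only on $|\mu|$ and the dominance order on $\mu$, so $(\mu,k)\unrhd(\gamma,k)$ if and only if $(\mu,n)\unrhd(\gamma,n)$. Therefore $\Phi(x)$ is a combination of basis elements of $\qBr_n$ with labels $\unrhd(\gamma,n)$. Because $\Phi$ is injective on basis elements, it has a nonzero coefficient on some basis element labelled $(\gamma,n)$. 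Since basis elements are linearly independent, $\Phi(x)\notin\qBr_n^{\rhd(\gamma,n)}$.
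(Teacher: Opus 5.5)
Your proof is correct and follows the route the paper intends. The paper only says the argument mirrors the Brauer/BMW verification: $\ex s\psi_{2s}$ sends Nguyen's basis element $g_u\ex t\psi_{2t}(b^\mu_{\mfs,\mft})g_v^*$ to the basis element $g_{u'}\ex{s+t}\psi_{2(s+t)}(b^\mu_{\mfs,\mft})g_{v'}^*$ of $\qBr_n$, labelled $(\mu,n)$, which is exactly your key computation. Your extra step passes from basis elements to arbitrary $x$, using that the map is injective on basis elements and that the order on $\tilde\Lambda$ does not depend on the level; the paper leaves this implicit, and you handle it correctly.
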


\subsubsection{The subalgebras $A_n$}
For $n \ge 1$ define $$A_n = \qBr_{M - n + 1, M} = \alg\{ e_{M-n+1}, g_{M-n+1},
	\dots, g_{M-1}\} = \psi_{M-n}(\qBr_n). $$ and $A_0 = S$. Evidently, the
sequence of algebras $A_n$ is increasing, $A_n \subset A_{n+1}$, and $A_M =
	\qBr_M$. The essential idempotents that play the role of the elements $e_i$ in
the axiom of section \ref{axioms} is the reversed sequence of the elements
$e_i$ of this section, $f_i = e_{M-i}$ for $1 \le i \le M-1$. We have $$ \{f_1,
	f_2, \dots, f_{n-1}\} \subset A_n. $$

The algebras $A_n$ are not invariant under the involution $*$ of $\qBr_M$.
However, each $A_n$ has its own involution induced by the isomorphism
$\psi_{M-n} : \qBr_n \to A_n$. Moreover, each $A_n$ is cellular with the
cellular basis $\psi_{M-n}(\mathcal C)$, where $\mathcal C$ is the Nguyen
cellular basis of $\qBr_n$.

\subsubsection{The left-handed alternative}
The algebras $\qBr_n(S; r, q, \delta )$ are also generated by \break $e,
	g_1\inv, \dots, g_{n-1}\inv$, and these generators satisfy the same relations
as in Definition \ref{def qBr}, but with $q$ replaced by $q\inv$ and $r$
replaced with $r\inv$. The loop parameter $\delta$ is unchanged, and $\ex 2$
remains unchanged, using ~\cite{Wenzl-qBr1}, Lemma 3.2. The involution defined
in terms of these generators coincides with the involution in Proposition
\ref{proposition involution}.

Using this, we define ``left-handed" versions $e_i'$ of the Temperley-Lieb
elements $e_i$ and left-handed versions $\exprime i$ of the elements $\ex i$;
the definitions are the same, except $g_i$ and $g_i\inv$ are interchanged. We
have $\exprime k = e'_1 e'_3 \cdots e'_{2k -1}$ by Lemma \ref{product
	expression for ex s}. The left-handed version of $\qBr_{k, n}$ is $\qBr'_{k, n}
	= \alg\{e'_k, g_k, \dots, g_{n-1}\}$.

\begin{lemma}
	\label{left hand lemma 1}  \phantom{xxxx}
	\begin{enumerate}
		[label = {\normalfont (\arabic*)}]
		\item  For all $j$, $e'_j = e_j^*$. \smallskip
		\item  For all $j$, $ \exprime j = \ex j.$
		\item  $\qBr'_{k, n} = (\qBr_{k,n})^*$.
	\end{enumerate}
\end{lemma}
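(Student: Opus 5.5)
The plan is to establish part (1) first and derive (2) and (3) from it. Parts (2) and (3) also need one more ingredient: how $*$ interacts with the defining relations when we switch to the generators $g_i\inv$.

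\textbf{Part (1).} Argue by induction on $j$. For $j=1$ we have $e'_1=e=e_1=e^*$. For the inductive step, take $j$ odd. Then $e_{j+1}=\Ad(g_jg_{j+1})(e_j)=g_jg_{j+1}e_j g_{j+1}\inv g_j\inv$. Applying the anti-automorphism $*$ of Proposition \ref{proposition involution}, which fixes each $g_i$ and $g_i\inv$, gives
$$e_{j+1}^*=g_j\inv g_{j+1}\inv e_j^* g_{j+1}g_j .$$
The left-handed definition has $g$ and $g\inv$ interchanged: $e'_{j+1}=\Ad(g_j\inv g_{j+1}\inv)(e'_j)=g_j\inv g_{j+1}\inv e'_j g_{j+1} g_j$. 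By the inductive hypothesis $e'_j=e_j^*$, so the two expressions agree. The case $j$ even is identical with the roles of $g$ and $g\inv$ swapped.

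\textbf{Part (2).} By Lemma \ref{product expression for ex s} and its left-handed analogue, $\exprime k=e'_1e'_3\cdots e'_{2k-1}=e_1^*e_3^*\cdots e_{2k-1}^*=(e_{2k-1}\cdots e_3e_1)^*$. The $e_i$ with odd indices commute pairwise by the Temperley--Lieb relations, so $e_{2k-1}\cdots e_3e_1=\ex k$. Hence $\exprime k=\ex k^*=\ex k$ by Lemma \ref{lemma e(k) * invariant}.

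One caveat: this argument uses the left-handed analogue of Lemma \ref{product expression for ex s}. That is legitimate because the generators $e,g_i\inv$ satisfy the defining relations with $(q,r)$ replaced by $(q\inv,r\inv)$, and $\ex 2$ is unchanged. The lemma then applies verbatim in the left-handed presentation once we know that $\exprime k$ is defined by the same recursion as $\ex k$, in those generators. Alternatively, one can verify directly from Definition \ref{Definition ex k}, using $*$ and induction, that $\exprime k=\ex k^*$.

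\textbf{Part (3).} Since $*$ is an anti-automorphism, $(\qBr_{k,n})^*$ is the algebra generated by $e_k^*,g_k^*,\dots,g_{n-1}^*$, which equals $\alg\{e'_k,g_k,\dots,g_{n-1}\}=\qBr'_{k,n}$ by part (1).

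The only real subtlety is the bookkeeping in part (2). The left-handed product formula must be genuinely available, and that requires checking that the left-handed recursion for $\exprime k$ is exactly Definition \ref{Definition ex k} with $g_i \leftrightarrow g_i\inv$. If this is awkward to justify, the fallback is to prove $\exprime{k}=\ex k^*$ directly by induction from the recursion, using parts (1) and the Temperley--Lieb commutations.
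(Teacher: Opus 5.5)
Your proposal is correct and matches the paper's proof. Part (1) goes by induction on $j$, using that $*$ fixes each $g_i^{\pm 1}$. Part (2) follows from (1) together with the product formula $\ex{k}=e_1e_3\cdots e_{2k-1}$, its left-handed analogue, and $\ex{k}^*=\ex{k}$; you rightly justify the left-handed analogue by the presentation in the generators $e, g_i\inv$ with $q\inv, r\inv$. Part (3) is immediate from (1). Your fallback is unnecessary and less immediate than you suggest: applying $*$ to $\ex{k+1}=e\,g^+_{2,2k+1}g^-_{1,2k}\ex{k}$ reverses the word and does not produce the left-handed recursion.
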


\begin{proof}
	Point (1) is proved by induction on $j$.  Point (2) follows from point (1), together with Lemmas \ref{lemma e(k) * invariant} and \ref{product
expression for ex s}, and their left-handed analogues.  Point (3) is evident from point (1).
\end{proof}

\begin{lemma}
	\label{lemma shift of e(s)}    For all $k$ and $s$,
	$$
		\psi_k(\ex s) =  e_{k +1} e_{k+3} \cdots e_{k + 2s -1} = \ex {k+2s,s}.
	$$
\end{lemma}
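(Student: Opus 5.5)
The plan is to reduce to the product formula of Lemma \ref{product expression for ex s}, which gives $\ex s = e_1 e_3 \cdots e_{2s-1}$, and then compute how $\psi_k$ acts on each factor. Since $\psi_k$ is an algebra homomorphism (Lemma \ref{lemma: shift isomorphisms}),
$$
\psi_k(\ex s) = \psi_k(e_1)\,\psi_k(e_3) \cdots \psi_k(e_{2s-1}).
$$
It therefore suffices to show that $\psi_k(e_{j}) = e_{k+j}$ for every odd $j$. The second equality in the statement is then just the definition $\ex {n,s} = e_{n-2s+1} e_{n-2s+3}\cdots e_{n-1}$ with $n = k+2s$.

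For the key identity $\psi_k(e_j) = e_{k+j}$, I would write $j = 2t+1$ and proceed in two steps.

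\emph{Step 1: $\psi_{2t}(e_1) = e_{2t+1}$.} By Lemma \ref{lemma even shifts} and the definition of $\psi_{2t}$, we have $e_{2t+1} = \psi_{2t}(e_1)$; this is exactly the case $k=1$ of the computation in that lemma's proof, iterated, so $\psi_2(e_{2m-1}) = e_{2m+1}$.

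\emph{Step 2: $\psi_k(e_{2t+1}) = e_{k+2t+1}$.} Using Step 1,
$$
\psi_k(e_{2t+1}) = \psi_k\psi_{2t}(e_1).
$$
Lemma \ref{lemma even shifts} only gives $\psi_{2s}\circ\psi_k = \psi_{2s+k}$, with the even shift applied on the outside, so I need the commuted form $\psi_k\circ\psi_{2t} = \psi_{k+2t}$. Checking this on generators, both sides send $g_\ell \mapsto g_{\ell+k+2t}$, so it reduces to proving $\psi_k(e_{2t+1}) = e_{k+2t+1}$ directly. I would do this by induction on $t$ using Definition \ref{TL elements in q-Brauer}: $e_{2t+1} = \Ad(g_{2t-1}g_{2t})\Ad(g_{2t}^{-1}g_{2t+1}^{-1})(e_{2t-1})$, or the analogous two-step recursion. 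Applying $\psi_k$ sends $g_\ell \mapsto g_{\ell+k}$, which turns this into the recursion expressing $e_{k+2t+1}$ in terms of $e_{k+2t-1}$.

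\emph{Main obstacle: the parity of $k$.} Definition \ref{TL elements in q-Brauer} alternates between $\Ad(g_jg_{j+1})$ and $\Ad(g_j^{-1}g_{j+1}^{-1})$ according to the parity of $j$. Composing two consecutive steps gives a conjugation by $g_{j}g_{j+1}g_{j+1}^{-1}g_{j+2}^{-1}$-type words, and I need this two-step composite to have the same form starting from an odd index as from an even one, so that the recursion is invariant under a shift by an odd $k$. If it does not, I would fall back on the fact from Lemma \ref{lemma: shift isomorphisms} that $\psi_k$ is implemented by an inner automorphism $\Ad(w_k)$. I would then verify directly, using the braid relations and the fact that $e_j$ commutes with $g_m$ for $m \ge j+2$, that $\Ad(w_k)$ carries $e_{2t+1}$ to $e_{k+2t+1}$, treating $k$ odd and $k$ even separately and using Lemma \ref{left hand lemma 1} if the left-handed elements $e'_j$ appear for odd $k$.
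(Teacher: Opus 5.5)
\emph{Your argument has a genuine gap when $k$ is odd.} For even $k$ your reduction is fine: it is the parity-preserving induction behind Lemma \ref{lemma even shifts}, and it matches the paper. For odd $k$, however, the key claim $\psi_k(e_j)=e_{k+j}$ for odd $j$ is false once $j\ge 3$. So Step 2 cannot be completed by any route, including your $\Ad(w_k)$ fallback.

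The reason is that an odd shift flips the parity of indices. It therefore swaps the two cases $\Ad(g_jg_{j+1})$ and $\Ad(g_j\inv g_{j+1}\inv)$ in Definition \ref{TL elements in q-Brauer}. Running the recursion, an odd shift carries the \emph{left-handed} elements to the right-handed ones: $\psi_k(e'_j)=e_{k+j}$ for all $j$, with base case $e'_1=e_1$. Since $\psi_k$ is injective, $\psi_k(e_j)=e_{k+j}$ would force $e_j=e'_j=e_j^*$. This fails for $j>1$, as stated in the remark following the Temperley--Lieb proposition. Concretely, for $k=1$ and $j=3$,
$$\psi_1(e_3)=\Ad(g_3\inv g_4\inv g_2g_3)(e_2),\qquad e_4=\Ad(g_3g_4g_2\inv g_3\inv)(e_2)=\psi_1(e'_3),$$
and these differ. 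One can confirm this directly in the standard $N=2$ tensor representation of $\qBr_4$, where $e_3$ and $e_3^*$ have different images. So the factors are not preserved individually. Only the product is preserved, because $e_1e_3=e_1e_3^*$ $(=\ex{2})$.

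\emph{The missing idea is to change the factorization, not the verification.} Lemma \ref{left hand lemma 1}(2), together with the left-handed version of Lemma \ref{product expression for ex s}, gives
$$\ex{s}=\exprime{s}=e'_1e'_3\cdots e'_{2s-1}.$$
For odd $k$, apply $\psi_k$ to this expression, using $\psi_k(e'_j)=e_{k+j}$, which follows by induction on $j$ by comparing the two recursions. For even $k$, keep $\ex{s}=e_1e_3\cdots e_{2s-1}$ and use $\psi_k(e_j)=e_{k+j}$. This is exactly the paper's proof.

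Your last sentence does mention $e'_j$ and Lemma \ref{left hand lemma 1}. But you use them only as an aid to proving the (false) factorwise identity $\psi_k(e_{2t+1})=e_{k+2t+1}$, not as the replacement factorization. As written, the odd case is not proved.
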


\begin{proof}
	If $k$ is even, $\psi_k(e_j) = e_{j+k}$ for all $j$, by Lemma \ref{lemma even shifts}.   If $k$ is odd, then $\psi_k(e'_j) = e_{j+k}$ for all
$j$, by induction on $j$.  Hence the result follows from
	Lemma \ref{left hand lemma 1}.
\end{proof}

\begin{lemma}
	\label{q-Brauer left Markov property}
	The Markov trace $\tr$  of Theorem \ref{Theorem q-Brauer Markov trace}  satisfies
	$\tr(a e) =  (1/ \delta) \tr(a)$
	for $a \in  \qBr'_{2, M}$.
\end{lemma}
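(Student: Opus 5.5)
We need $\tr(ae)=(1/\delta)\tr(a)$ for $a\in\qBr'_{2,M}=(\qBr_{2,M})^*$, where $\qBr_{2,M}$ is as in Theorem \ref{Theorem q-Brauer Markov trace}. The idea is to transport the known Markov property through the involution $*$. This requires knowing that $\tr$ is $*$-invariant, $\tr(x^*)=\tr(x)$, and we prove that first from the uniqueness statement in Theorem \ref{Theorem q-Brauer Markov trace}.

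Define $\tau(x)=\tr(x^*)$ on $\qBr_M$. Since $*$ is an anti-automorphism, $\tau(xy)=\tr(y^*x^*)=\tr(x^*y^*)=\tau(yx)$, so $\tau$ is a trace, and $\tau(\bm 1)=1$. It remains to check that $\tau$ has the Markov property for $a\in\qBr_{2,M}$. We have
$$\tau(ag_1)=\tr(g_1a^*) = \tr(a^* g_1),\qquad \tau(ae)=\tr(a^*e),$$
with $a^*\in\qBr'_{2,M}$ by Lemma \ref{left hand lemma 1}(3). So we are reduced to a Markov property on the left-handed subalgebra, which is circular as stated. The way out is the left-handed presentation. $\qBr_M$ is also presented by $e,g_1\inv,\dots,g_{M-1}\inv$ with parameters $(q\inv,r\inv,\delta)$. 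Applying Theorem \ref{Theorem q-Brauer Markov trace} to this presentation gives a unique trace $\tr'$ with $\tr'(ag_1\inv)=(r\inv/\delta)\tr'(a)$ and $\tr'(ae)=(1/\delta)\tr'(a)$ for $a\in\qBr'_{2,M}$, since $\qBr'_{2,M}$ is exactly the "$\qBr_{2,M}$" of the left-handed presentation.

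The main step, and the main obstacle, is to show $\tr'=\tr$. Once this holds, the lemma is immediate from the defining property of $\tr'$.

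To prove $\tr'=\tr$, use uniqueness in Theorem \ref{Theorem q-Brauer Markov trace} for the original presentation: it suffices to check that $\tr'$ satisfies $\tr'(ag_1)=(r/\delta)\tr'(a)$ and $\tr'(ae)=(1/\delta)\tr'(a)$ for $a\in\qBr_{2,M}$. The first condition follows from the left-handed $g_1$-property and the quadratic relation $g_1=g_1\inv+(q-q\inv)$, provided we also know $\tr'(a)$ versus the $e$-property. Concretely, $\tr'(ag_1)=\tr'(ag_1\inv)+(q-q\inv)\tr'(a)$. Here we use that $g_1$ commutes with $\qBr_{2,M}$ up to terms involving $e_1$, and the constraint $\delta(q-q\inv)=r-r\inv$ turns $r\inv/\delta+(q-q\inv)$ into $r/\delta$. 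This computation is legitimate only after reducing to $a$ in the intersection $H_{2,M}$ plus elements involving $e_2$.

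A cleaner route, which I would try first, is an induction on $M$ using the shift isomorphism. Lemma \ref{lemma: shift isomorphisms} makes $\tr$ restricted to $\qBr_{2,M}$ (through $\psi_1$) a trace on $\qBr_{M-1}$ satisfying the Markov property, hence equal to its unique Markov trace. By Lemma \ref{lemma shift of e(s)}, $\psi_1$ carries left-handed elements to right-handed ones, i.e. $\psi_1(e'_j)=e_{j+1}$. Comparing the restrictions of $\tr$ and $\tr'$ to $\qBr_{2,M}$ and $\qBr'_{2,M}$ via $\psi_1$ and the inductive hypothesis, both traces are then determined by their values on $\qBr_{2,M}\cdot\{1,g_1,e\}$-type spanning sets. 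That forces $\tr=\tr'$, after checking that these elements span $\qBr_M$ modulo commutators, in the manner of the proof of uniqueness in \cite{Wenzl-qBr1}.
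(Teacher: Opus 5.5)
Your $\tr'$ is exactly the paper's proof. That proof is the single sentence applying Theorem~\ref{Theorem q-Brauer Markov trace} to the presentation by $e, g_1\inv,\dots,g_{M-1}\inv$ with parameters $(r\inv,q\inv,\delta)$, and it leaves implicit that the resulting trace is $\tr$. You are right that $\tr'=\tr$ is the real content. However, neither of your two arguments proves it, so the proposal has a genuine gap at exactly that step.

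\textbf{First route.} It needs $\tr'(ae)=\delta\inv\tr'(a)$ for $a\in\qBr_{2,M}$, which is just the mirror image of the lemma. Your own computation shows why. Take $\tau=\tr\circ *$ and combine it with $g_1\inv=g_1-(q-q\inv)$ and $\delta(q-q\inv)=r-r\inv$. Then $\tau$ satisfies both left-handed Markov conditions: $\tau(bg_1\inv)=(r\inv/\delta)\tau(b)$ and $\tau(be)=\delta\inv\tau(b)$ for $b\in\qBr'_{2,M}$. By uniqueness, $\tr'=\tr\circ *$. So ``$\tr'=\tr$'' is literally the $*$-invariance of $\tr$ that you had already recognized as circular. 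The supporting claim that $g_1$ commutes with $\qBr_{2,M}$ up to terms involving $e$ is also false: modulo the ideal generated by $e$ you are in $H_M$, where $g_1g_2\neq g_2g_1$.

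\textbf{Second route.} Transporting the inductive hypothesis by $\psi_1$ only gives $\tr(be_2)=\delta\inv\tr(b)$ for $b\in\psi_1(\qBr'_{2,M-1})=\qBr_{3,M}$. That is a statement about $e_2$, not about $e$. The spanning sets adapted to the two traces are different: $\qBr_{2,M}\{1,e,g_1\}\qBr_{2,M}$ for $\tr$ versus $\qBr'_{2,M}\{1,e,g_1\inv\}\qBr'_{2,M}$ for $\tr'$. To evaluate $\tr'$ on the first you need $\tr'(ze)$ and $\tr'(zg_1)$ for $z\in\qBr_{2,M}$, and neither the inductive hypothesis nor uniqueness supplies these. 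The sentence ``that forces $\tr=\tr'$'' is asserted, not argued.

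\textbf{What would close the gap.} You need an independent proof that $\tr\circ *=\tr$; this symmetry is what the paper's one-line proof takes for granted. Given it, the lemma is one line. For $a\in\qBr'_{2,M}$ we have $a^*\in\qBr_{2,M}$ by Lemma~\ref{left hand lemma 1}, so
\[
\tr(ae)=\tr(ea^*)=\tr(a^*e)=\delta\inv\tr(a^*)=\delta\inv\tr(a).
\]
Here is one non-circular way to get $*$-invariance.
\begin{enumerate}
\item By freeness, and because $\tr$ and $*$ are compatible with specialization, it suffices to work over $F$.
\item Over $F$, $\qBr_M^F$ is semisimple by \cite{Wenzl-qBr1}; extend scalars to an algebraic closure so that it is split.
\item $*$ is the involution of Nguyen's cell datum (Theorem~\ref{Nguyen's theorem}), whose proof does not use the trace.
\item In a semisimple cellular algebra, each cell module is isomorphic, via its nondegenerate cell form, to its $*$-twisted dual. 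Hence $*$ fixes every minimal central idempotent.
\item So for a minimal idempotent $p$, $p^*$ is a minimal idempotent in the same simple component, and $\tr(p^*)=\tr(p)$.
\item Therefore $\tr\circ *$ and $\tr$ have the same weights and coincide.
\end{enumerate}
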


\begin{proof}
	Simply apply Theorem  \ref{Theorem q-Brauer Markov trace} to the isomorphic algebra $\qBr(S; r\inv, q\inv, \delta)$ with generators
	$e, g_1\inv, \dots g_{M-1}\inv$.
\end{proof}

\begin{lemma}
	\label{lemma q-Brauer Markov property}  For any $k$,  $e_k \in \qBr_{k, M}$ satisfies
	$$
		\tr(e_k a) = (1/\delta) \tr(a)
	$$
	for all $a \in \qBr_{k+1, M}$.
\end{lemma}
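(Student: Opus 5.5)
The plan is to reduce the statement to the two Markov properties already available, Theorem~\ref{Theorem q-Brauer Markov trace} and Lemma~\ref{q-Brauer left Markov property}, by transporting $e_k$ back to $e_1=e$ with a trace-preserving shift isomorphism. The case split is by the parity of $k$.

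\emph{Case $k$ odd} (write $k-1=2s$, which is even). By Lemma~\ref{lemma even shifts}, $\psi_{k-1}(e_j)=e_{j+k-1}$ for all $j$; in particular $\psi_{k-1}(e_1)=e_k$. By definition, $\psi_{k-1}(g_\ell)=g_{\ell+k-1}$, so $\psi_{k-1}$ maps $\qBr_{2,M-k+1}$ onto $\qBr_{k+1,M}$. The generator $e_2$ goes to $e_{k+1}$ and the $g_\ell$ with $\ell\ge2$ go to $g_{\ell+k-1}$ with index at least $k+1$. Hence any $a\in\qBr_{k+1,M}$ can be written $a=\psi_{k-1}(b)$ with $b\in\qBr_{2,M-k+1}\subseteq\qBr_{2,M}$. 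Lemma~\ref{lemma: shift isomorphisms} says $\psi_{k-1}$ is implemented by an inner automorphism of $\qBr_M$, so it preserves the trace. Therefore
$$
\tr(e_k a)=\tr(\psi_{k-1}(e b))=\tr(eb)=\tr(be)=(1/\delta)\tr(b)=(1/\delta)\tr(a),
$$
using the trace property and Theorem~\ref{Theorem q-Brauer Markov trace}.

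\emph{Case $k$ even.} Here I use the computation in the proof of Lemma~\ref{lemma shift of e(s)}: for odd shifts, $\psi_{k-1}(e'_j)=e_{j+k-1}$. So $e_k=\psi_{k-1}(e'_1)=\psi_{k-1}(e)$, since $e'_1=e_1^*=e$. Also $\psi_{k-1}(e'_2)=e_{k+1}$, while the $g_\ell$ shift as before. Hence $\qBr_{k+1,M}=\psi_{k-1}(\qBr'_{2,M-k+1})$. Exactly as in the odd case, trace preservation together with Lemma~\ref{q-Brauer left Markov property} (applied with $b\in\qBr'_{2,M}$) gives $\tr(e_k a)=(1/\delta)\tr(a)$.

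The main obstacle is bookkeeping: confirming that the image of $\qBr_{2,\cdot}$ (respectively $\qBr'_{2,\cdot}$) under $\psi_{k-1}$ is exactly $\qBr_{k+1,M}$. This rests on the identities $\psi_{k-1}(e_2)=e_{k+1}$ in the odd case and $\psi_{k-1}(e'_2)=e_{k+1}$ in the even case. The even-shift identity is Lemma~\ref{lemma even shifts}. For the odd-shift identity I would run an induction from Definition~\ref{TL elements in q-Brauer}, noting that an odd shift swaps the parity of the index and so exchanges $\Ad(g_jg_{j+1})$ with $\Ad(g_j\inv g_{j+1}\inv)$. That swap is precisely the left-handed definition of the $e'_j$. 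A second point to check is the domain constraint $j+k\le M$ in Lemma~\ref{lemma: shift isomorphisms}; the source algebra $\qBr_{M-k+1}$ with shift $k-1$ has total size $M$, which is admissible.
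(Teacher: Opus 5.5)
Your proposal is correct and is essentially the paper's proof. You split by the parity of $k$ and use the trace-preserving shift $\psi_{k-1}$ (with $\psi_{k-1}(e)=e_k$) to identify $\qBr_{k+1,M}$ with $\psi_{k-1}(\qBr_{2,M-k+1})$ for $k$ odd, or with $\psi_{k-1}(\qBr'_{2,M-k+1})$ for $k$ even. You then invoke Theorem~\ref{Theorem q-Brauer Markov trace} or Lemma~\ref{q-Brauer left Markov property} respectively, exactly as the paper does. Your extra checks ($\psi_{k-1}(e_2)=e_{k+1}$, resp.\ $\psi_{k-1}(e'_2)=e_{k+1}$, and the admissibility of the shift inside $\qBr_M$) fill in details the paper leaves implicit.
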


\begin{proof}
	For any $k$, $\psi_{k-1}(e) = e_k$.   If $k$ is odd, $e_{k+1} = \psi_{k-1}(e_2)$,  so $\qBr_{k +1 , M} =
		\psi_{k-1}(\qBr_{2, M-k+1})$, and the assertion follows from Theorem  \ref{Theorem q-Brauer Markov trace}.   If $k$ is even,  $e_{k+1} =
\psi_{k-1}(e'_2)$, so  $\qBr_{k + 1, M} =
		\psi_{k-1}(\qBr'_{2, M-k+1})$, and the assertion follows from Lemma   \ref{q-Brauer left Markov property}.
\end{proof}

\subsubsection{Verification of the axioms}
We are going to verify the axioms of section \ref{section: axioms}, for $A_n$
as above, defined over the generic ground ring $R$, and $Q_n = H_n(R; \bm
	q^2)$, for $0 \le n \le M$. Young's lattice $\Lambda$ (truncated at level $M$)
plays the role of $\Gamma$, and the elements $f_i$ ($1 \le i \le M-1$) the role
of the essential idempotents in section \ref{section: axioms}.

Axioms (\ref{axiom: involution on An}) and (\ref {axiom: A0 and A1}) are
evident. Axioms (\ref{axiom: restriction coherent quotients}) and (\ref{axiom:
	generic ss quotients}) follow from the discussion of the Hecke algebras in
section \ref{subsection Hecke algebras}. As for the BMW algebras, for $K$ a
field and $q \in K$, we have $n_1 = \infty$ if $q^2$ is not a proper root of
unity, and $n_1 = e(q) -1$ otherwise.

Axiom (\ref{axiom: cellular Axn}) follows from Theorem \ref{Nguyen's theorem}
and the isomorphism $\psi_{M-n}: \qBr_n \to A_n$.

Because of the isomorphism $\psi_{M-n-1}: \qBr_{n+1} \to A_{n+1}$ taking $e$ to
$f_{n}$, for axiom (\ref{axiom: idempotent and Qn as quotient of An}), it
suffices to verify the corresponding statements with $A_{n+1}$ replaced by
$\qBr_{n+1}$ and $f_n$ by $e$. It follows from the form of Nguyen's cellular
basis that the ideal in $\qBr_{n+1}$ generated by $e$ coincides with the span
of the basis elements corresponding to $(\lambda, n+1) \in \tilde
	\Lambda_{n+1}$ with $|\lambda| < n+1$. The isomorphism $\qBr_{n+1}/\langle e
	\rangle \ \cong \ H_{n+1}(\bm q^2)$ is from ~\cite{Wenzl-qBr1}.

We already know that the elements $f_i$ satisfy the Temperley-Lieb relations,
which is axiom (\ref{axiom: TL relations}). Axioms (\ref {axiom: en An en}) and
(\ref{axiom: An en}) are verified in appendix \ref{appendix TL}.

As we already know that $\tr$ is a trace, the content of axiom (\ref{axiom:
	trace}) is that the conditional expectations $E^k_{k-1}$ are trace-preserving.
But by Lemma \ref{lemma: markov trace}, this is equivalent to the Markov
property $\tr(f_{n-1} a) = (1/\delta) \tr(a)$ for $a \in A_{n-1}$. The
equivalence was proved in Lemma \ref{lemma: markov trace}; the verification of
the Markov property is in Lemma \ref{lemma q-Brauer Markov property}.

To verify axiom (\ref {axiom: faithfulness of tr}), it suffices to show that
$\tr$ is non-degenerate on $\qBr_n(F; \bm r, \bm q, \bm \delta)$ for all $n$,
where $F$ is the field of fractions of the generic ground ring $R$ (because the
isomorphism of $A_n$ with $\qBr_n$ is trace-preserving). According to
~\cite{Wenzl-qBr1}, Theorem 3.8, $\qBr_n(R)$ has a basis $(g_d)$, labelled by
Brauer diagrams on $n$ strands, with the property that, for each Brauer diagram
$d$, $g_d \otimes_R \Z[\bm \delta^\pm] = d$. It suffices to show that
$\det((\tr(g_d g_{d'})_{d, d'}) \ne 0$, where $d, d'$ range through Brauer
diagrams on $n$ strands. But this is a statement about the trace on
$\qBr_n(R)$. The statement follows from the non-degeneracy of the trace on the
generic Brauer algebra $\Br_n(\Z[\bm \delta \powerpm]; \bm \delta)\cong
	\qBr_n(\Z[\bm \delta \powerpm]; 1, 1, \bm \delta)$. Namely, if $\varphi : R \to
	\Z[\bm \delta \powerpm]$ is the ring homomorphism taking $\bm r \mapsto 1$ and
$\bm q \mapsto 1$, then $\varphi\circ \tr(x) = \tr( x \otimes_{R} \Z[\bm \delta
		\powerpm])$. Therefore, $\varphi(\det((\tr(g_d g_{d'})_{d, d'})) ) =
	\det((\tr(d d')_{d, d'})) \ne 0$.

For axiom (\ref{axiom: weak coherence of An}), consider $k < n$ with $n-k$ even
and let $s = (n-k)/2$. Moreover, let $\ell \le k$ with $k -\ell$ even and write
$t = (k - \ell)/2$. Let $\lambda \in \Lambda_\ell$ and consider a basis element
$$ x = x^{(\lambda, k)}_{(\mathfrak s, u), (\mathfrak t, v)} = g_u\ex t
	\psi_{2t}(b^\lambda_{\mathfrak s, \mathfrak t}) g_v^* $$ of $\qBr_k^{\unrhd
		(\la, k)} \setminus \qBr_k^{\rhd (\la, k)} $, where $u, v \in \mathcal
	D'_{k,\ell}$. Then $\psi_{M-k}(x)$ is a basis element of $A_k^{\unrhd (\la, k)}
	\setminus A_k^{\rhd (\la, k)} $. We have to show that $$ f_{(n, s)}
	\psi_{M-k}(x) = e_{(M-k, s)} \psi_{M-k}(x) \in A_n^{\unrhd (\la, n)} \setminus
	A_n^{\rhd (\la, n)} . $$
One has $\ex s \psi_{2s}(x) \in \qBr_n^{\unrhd (\la, n)} \setminus \qBr_n^{\rhd
		(\la, n)}$, by Corollary \ref{corollary: qBr weak coherence 1}, so
$\psi_{M-n}(\ex s \psi_{2s}(x)) \in A_n^{\unrhd (\la, n)} \setminus A_n^{\rhd
			(\la, n)}$. But one computes that $\psi_{M-n}(\ex s \psi_{2s}(x)) = e_{(M-k,
			s)} \psi_{M-k}(x) $, making use of Lemma \ref{lemma shift of e(s)}.
\section{Explicit semisimplicity criteria}\label{explicit:sec}

We have already checked all the axioms for the Brauer, $q$-Brauer and BMW
algebras, and we have determined the number $n_1$ for these algebras. In all of
these examples, the quantities $d_\ga^F$ (see Proposition
\ref{traceevaluation}) have been calculated explicitly in the literature, and
we can hence use that proposition to calculate $n_0$ and $m$ for any
specialization over a field $K$. As our index set is given by Young diagrams,
we will write $d_\la^F$ instead of $d_\ga^F$.

\subsection{Weights} Let $\la$ be a Young diagram. We denote its box in the $i$-th row and $j$-th
column by $(i,j)$. The quantity $h_\la(i,j)$ is the length of the hook in $\la$
with corner at $(i,j)$, given explicitly by
\begin{equation}
	h_\la(i,j)=\lambda_i-i+\lambda_j' -j+1,
\end{equation}
where $\la_i$ and $\la_j'$ denote the number of boxes in the $i$-th
row and $j$-th column of $\la$.
We will also need the following quantities
for a given Young diagram $\la$
\begin{equation}
	\label{eqn:  defn of d(i, j)}
	d_\la(i,j)\ =
	\begin{cases}
		\lambda_i+\lambda_j-i-j      & \text{if $i\leq j$,} \\
		-\lambda_i'-\lambda_j'+i+j-2 & \text{if $i> j$.}
	\end{cases}
\end{equation}
The numbers $d_\la(i,j)$ in the first few boxes of a sufficiently large Young diagram $\la$
can be visualized as follows
\[
	\begin{tabular}
		{|c|c|c|c|c|}
		\hline
		$2\la_1-2$         & $\la_1+\la_2-3$    & $ \la_1+\la_3-4$   & $\la_1+\la_4-5$ & $\la_1+\la_5-6$ \\
		\hline
		$-\la_1'-\la_2'+1$ & $2\la_2-4$         & $ \la_2+\la_3-5$   & $\la_2+\la_4-6$ & $\la_2+\la_5-7$ \\
		\hline
		$-\la_1'-\la_3'+2$ & $-\la_2'-\la_3'+3$ &
		$2\la_3-6$         & $\la_3+\la_4-7$    & ...                                                    \\
		\hline
		$-\la_1'-\la_4'+3$ & $-\la_2'-\la_4'+4$ & $-\la_3'-\la_4'+5$ & $2\la_4-8$      & ...             \\
		\hline
		$-\la_1'-\la_5'+4$ & $-\la_2'-\la_5'+5$ & ...                & ...             & ...             \\
		\hline
	\end{tabular}
\]
\begin{remark}
	\label{tabmonotone} It is straightforward to see that

	(a) the numbers $d_\la(i,j)$ decrease if we move to the right or down, as long as $i\leq j$, and

	(b) the numbers $d_\la(i,j)$ increase if we move to the right or down, as long as $i> j$.
\end{remark}

\subsection{Brauer algebras}

\begin{theorem}
	{\normalfont (\cite{Wenzl-Brauer})}   \label{theorem: Brauer trace formula}  The quantities $d_\la^F$ for the Brauer algebra $\Br_n^F(\bm
\delta)$, defined over
	the field $F=\Q(\bm \delta)$ are given by
	\begin{equation}
		\label{equation:  Brauer trace formula}
		d_\la^F(\bm \delta)=\prod_{(i,j)\in\la} \frac{\bm \delta+d_\la(i,j)}{h_\la(i,j)},
	\end{equation}
	where $\la$ is a Young diagram with $|\la|\in \{ n-2k, 0\leq k\leq \lfloor n/2\rfloor\}$.
\end{theorem}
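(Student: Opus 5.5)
By Corollary \ref{corollary: generic semisimplicity}, $d_\la^F = \bm\delta^{k}\,\omega_{(\la,k)}$ with $k=|\la|$. Here $\omega_{(\la,k)}$ is the trace of a minimal idempotent in the component of $\Br_k^F$ labelled by $(\la,k)$, i.e. the top (non-$J_k$) part. So the plan is to compute $\omega_{(\la,k)}$ by induction on $k$, using the recursion for weights along the Bratteli diagram $\tilde\Lambda$.

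The weights of a trace satisfy the following identity: for $(\mu,k-1)$, the weight equals the sum of the weights of its successors at level $k$. This is because a minimal idempotent of $A_{k-1}$ decomposes in $A_k$ as a sum of minimal idempotents, one for each edge. Write $\omega_{(\nu,k)} = d^F_\nu/\bm\delta^k$. Then the identity becomes
\[
\bm\delta\, d^F_\mu \;=\; \sum_{\nu = \mu+\square} d^F_\nu \;+\; \sum_{\nu=\mu-\square} d^F_\nu .
\]
Hence $d^F$ is a function on Young diagrams that satisfies this ``Pieri with removal'' recursion with $d^F_\emptyset=1$. Since $\tilde\Lambda$ is multiplicity free, this recursion determines $d_\la^F$ inductively on $|\la|$, but only once the individual summands are known. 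The sum alone does not separate the new diagrams $\nu=\mu+\square$.

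To pin down each $d^F_\nu$ individually, I would use the element $E^k_{k-1}$ applied to the Jucys--Murphy type element, as in \cite{Wenzl-Brauer}. Concretely, one computes $E^{k}_{k-1}(p_\nu)=\alpha\,\tilde p_\mu$ with $\alpha = d^F_\nu/(\bm\delta d^F_\mu)$, as in the proof of Lemma \ref{Anmatrixunits}. The ratio $\alpha$ can be read off from Young's seminormal-type eigenvalue data: $\sum_\nu \alpha_\nu/(x-c_\nu)$ is a rational generating function in the contents $c(\square)$ of addable and removable boxes. The resulting ratio for adding a box $\square=(i,j)$ to $\mu$ should be
\[
\frac{d^F_\nu}{d^F_\mu} \;=\; \prod\text{(factors } \bm\delta + d_\nu(\cdot,\cdot)) \times \text{(hook ratios)},
\]
which is what one gets by dividing the claimed product formula for $\nu$ by the one for $\mu$.

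The alternative route, which I would actually try first because it avoids seminormal forms, is to verify directly that the right-hand side of \eqref{equation: Brauer trace formula} satisfies the recursion above together with a normalization. The normalization is $d_\la^F$ at $\bm\delta=N$ large. There the weights of the Brauer centralizer of $O(N)$ on $V^{\otimes k}$ are $\dim V_\la^{O(N)}/N^k$, and El Samra--King give the dimension of $O(N)$-irreducibles as precisely this product. Both sides are rational in $\bm\delta$ and agree for infinitely many integers $N$, so they are equal. This uses Schur--Weyl duality for $O(N)$ to identify the Markov trace with the normalized representation trace. That identification holds because $\tr$ is the unique trace with the Markov property (Lemma \ref{lemma: markov trace}), and the representation trace on $V^{\otimes k}$ has that property with $\bm\delta=N$.

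The main obstacle is the identification of labels: for large $N$ and $|\la|\le k$, the component $(\la,k)$ of $\Br_k(N)$ must correspond to the $O(N)$-irreducible $V_\la$. I would handle this by induction on $k$ via branching. The branching of $\Br_k$ is $\tilde\Lambda$ (Corollary \ref{corollary: generic semisimplicity}), and $V^{\otimes k}\otimes V$ decomposes by adding or removing a box for $O(N)$ when $N$ is large. These rules match, and the labelling of the top $Q_k$ part agrees with the $\mathfrak S_k$ labelling via the traceless tensors. Then the El Samra--King formula gives the result.
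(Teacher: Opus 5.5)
The paper gives no proof of this statement; it quotes \cite{Wenzl-Brauer}, and the formula there comes from the El Samra--King dimension polynomials, as the introduction indicates. Your second route is that argument:
\begin{itemize}
\item at $\bm\delta=N$ the diagram trace equals $\mathrm{Tr}_{V^{\otimes k}}/N^k$, which you justify correctly via the Markov property and Lemma \ref{lemma: markov trace};
\item the weights of $\Br_k(N)\cong\End_{O(N)}(V^{\otimes k})$ are $\dim V^{O(N)}_\la/N^k$;
\item El Samra--King evaluates these dimensions, and one concludes by rationality in $\bm\delta$.
\end{itemize}
This route is sound. Your first route is only a heuristic as written: the ratio you propose is obtained by dividing the target formula for $\nu$ by the one for $\mu$. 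The recursion $\bm\delta d^F_\mu=\sum_\nu d^F_\nu$ is correct but plays no role in the second route.

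One step is asserted without justification, and it is the crux of the rationality argument. You need that $d^F_\la\in\Q(\bm\delta)$ has no pole at $\bm\delta=N$, and that its value there is the weight of the specialized trace on $\Br_{|\la|}(\Q;N)$. This is not automatic, because generic minimal idempotents have denominators in $\bm\delta$. It is exactly Proposition \ref{traceevaluation}(2) with $K=\Q$ and $\theta(\bm\delta)=N$. Its hypotheses hold for the following reasons:
\begin{itemize}
\item $n_1=\infty$, and the matrix units of $\Q\mathfrak S_n$ do not involve $\bm\delta$, so they are $K$-evaluable;
\item $n_0>N$: for $j\le N$, $\Br_j(\Q;N)$ acts faithfully on $V^{\otimes j}$ (Brauer);
\item the image of that action is closed under transpose, since $b^*$ acts by the transposed matrix;
\item $\mathrm{Tr}(xx^{T})>0$ for $x\neq 0$, so the trace is nondegenerate.
\end{itemize}
Hence $d^F_\la(N)=\dim V^{O(N)}_\la$ for every $N\ge|\la|$, and the rational-function argument goes through.

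The labelling problem is smaller than you make it. Since $d^F_\la=\bm\delta^{|\la|}\omega_{(\la,|\la|)}$, only the top component of $\Br_{|\la|}$ matters. There the identification is the traceless-tensor one: $J_k$ annihilates exactly the traceless tensors $V^{\otimes k}_0$, $\Br_k$ acts on them through $\Q\mathfrak S_k$, and $V^{\otimes k}_0\cong\bigoplus_\la V^{O(N)}_\la\otimes S^\la$. Your branching induction for the lower components is unnecessary. It also could not settle the labels on its own: $\la\mapsto\la'$ is an automorphism of $\tilde\Lambda$, while the El Samra--King polynomials for $\la$ and $\la'$ differ (for example $(N+2)(N-1)/2$ versus $N(N-1)/2$ for $\la=(2)$ and $(1,1)$). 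What fixes the answer is that the cell module of $\Q\mathfrak S_k$ labelled $\la$ is the Specht module $S^\la$, trivial for $\la=(k)$. This matches Weyl's labelling of $O(N)$-modules, which El Samra--King use.
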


Let $\delta$ be a non--zero integer. We want to find conditions for $\delta +
	d_\la(i, j) \in \Q$ to be zero.

\begin{notation}
	\label{notation:  quantities for vanishing weights}  Let $\delta$ be an integer.
	\begin{enumerate}
		\item
		      Let $m_0(\delta)$ be the least positive integer $n \ge2 $ such that there exists a Young diagram $\la$ of size $n$ and $(i, j) \in
\la$ such that  $\delta + d_\la(i, j) = 0$.
		\item  Let $m_1(\delta)$ be the least positive integer $n \ge2 $ such that there
		      exists a Young diagram $\la$ of size $n$ and $(i, j) \in \la$ with $i \ne j$
		      such that $\delta + d_\la(i, j) = 0$.
		\item Let $m_2(\delta)$ be the least positive integer $n \ge2 $ such that there
		      exists a Young diagram $\la$ of size $n$ and $i$ such that $(i, i) \in \la$ and
		      $\delta + d_\la(i, i) = 0$.
	\end{enumerate}
\end{notation}

Evidently $m_0(\delta) = \min \{m_1(\delta), m_2(\delta)\}$.

\begin{remark}
	\label{remark:  m sub i of zero}  $m_0(0) = m_2(0) = 2$,   and $m_1(0) = 3$,   as follows from direct examination of Young diagrams of size
$\le 3$.
\end{remark}

\begin{lemma}
	\label{lemma:  n zero prime for Brauer}   Let $\delta$ be a non-zero integer.
	\begin{enumerate}
		[label = {\normalfont (\arabic*)}]
		\item $m_1(\delta) =  - \delta +3$ if $\delta$ is negative and $m_1(\delta) = \delta + 1$ if $\delta$ is positive.
		\item $m_2(\delta) = -\delta/2 + 1$ if $\delta$ is negative and even and $m_2(\delta) = \infty$ otherwise.
		\item  $$m_0(\delta) =  \min \{m_1(\delta)\ , m_2(\delta)\}  =
			      \begin{cases}
				      \delta + 1      & \text{if} \  \delta  \ \text{is positive}          \\
				      -\delta/2 + 1   & \text{if} \  \delta \ \text{is negative and even}  \\
				      -\delta + 3  \  & \text{if}  \  \delta \  \text{is negative and odd} \\
			      \end{cases}
		      $$
	\end{enumerate}
\end{lemma}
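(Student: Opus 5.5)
The plan is to treat the diagonal and off-diagonal cases separately, exactly as in Notation \ref{notation:  quantities for vanishing weights}. In each case I will exhibit a one-row or one-column diagram realizing the claimed value, and then prove a matching lower bound on $|\la|$ for every box with $\delta+d_\la(i,j)=0$. The only inputs are the formula (\ref{eqn:  defn of d(i, j)}) and the following facts for a box $(i,j)\in\la$: $\la_i\ge j$ and $\la'_j\ge i$; rows $1,\dots,i$ all have length $\ge \la_i$; columns $1,\dots,j$ all have length $\ge \la'_j$. Part (3) then follows by taking the minimum, noting that $-\delta/2+1<-\delta+3$ when $\delta<0$.

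For positive $\delta$ we need $d_\la(i,j)=-\delta<0$.
\begin{itemize}
\item On the diagonal, $d_\la(i,i)=2\la_i-2i\ge 0$, so $m_2(\delta)=\infty$.
\item In the region $i>j$, the single column $\la=(1^{\delta+1})$ with box $(2,1)$ gives $d=1-(\delta+1)-0=-\delta$. Conversely, $\la'_i+\la'_j=\delta+i+j-2$, and columns $i\neq j$ are distinct. Hence $|\la|\ge \la'_i+\la'_j\ge \delta+1$, since $i\ge2$ and $j\ge1$.
\item In the region $i<j$, we have $d=\la_i+\la_j-i-j\ge -i$, because $\la_i\ge j$. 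So $d=-\delta$ forces $i\ge\delta$. Then rows $1,\dots,i$ each have length $\ge j\ge \delta+1$, giving $|\la|\ge\delta(\delta+1)\ge\delta+1$.
\end{itemize}
This gives $m_1(\delta)=\delta+1$.

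For negative $\delta=-a$ we need $d_\la(i,j)=a>0$.
\begin{itemize}
\item On the diagonal, $2\la_i-2i=a$ requires $a$ even. The one-row diagram $(a/2+1)$ with box $(1,1)$ realizes it. Conversely, rows $1,\dots,i$ have length $\ge \la_i=i+a/2$, so $|\la|\ge i(i+a/2)\ge a/2+1$. Hence $m_2(\delta)=-\delta/2+1$ if $\delta$ is even, and $\infty$ otherwise.
\item In the region $i<j$, the one-row diagram $(a+3)$ with box $(1,2)$ gives $d=(a+3)+0-3=a$. Conversely, $|\la|\ge\la_i+\la_j=a+i+j\ge a+3$.
\item In the region $i>j$, we have $d\le i+j-2-\la'_j\le j-2$, since $\la'_j\ge i$. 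So $d=a$ forces $j\ge a+2$ and $i\ge a+3$. Then columns $1,\dots,j$ each have length $\ge i$, giving $|\la|\ge(a+2)(a+3)>a+3$.
\end{itemize}
This gives $m_1(\delta)=-\delta+3$.

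The only step needing care is choosing, in each region, the crude size bound that actually matches the extremal example; the off-diagonal lower-triangle case, where $\la'_i$ may be $0$, is the one to watch. Everything else is routine bookkeeping with (\ref{eqn:  defn of d(i, j)}).
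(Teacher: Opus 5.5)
Your proposal is correct and follows the paper's proof: it uses the same extremal diagrams $(-\delta+3)$, $(1^{\delta+1})$ and $(-\delta/2+1)$ with the boxes $(1,2)$, $(2,1)$, $(1,1)$, and the same observation $d_\la(i,i)=2\la_i-2i\ge 0$ to rule out diagonal solutions for $\delta>0$ or $\delta$ odd. The only difference is that you actually carry out the region-by-region lower bounds on $|\la|$, which the paper leaves as ``one can show'' and ``easy to check''; your bounds are correct.
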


\begin{proof}
	If $\delta $ is negative,  consider the one--row diagram $\la = (-\delta + 3)$.   Then $\delta +  d_\la(1, 2) = 0$,  so $m_1(\delta) \le
-\delta + 3$.  If $\mu$ is a Young diagram of size $|\mu| < -\delta +3$,  one can show that for all $(i, j)$ with $i \ne j$,  $\delta + d_\mu(i,j)
< 0$,  so $m_1(\delta) = -\delta + 3$.  Similarly, if $\delta$ is positive, consider the one--column diagram $\la = (1^{\delta + 1})$.  Then
$\delta +  d_\la(2, 1) = 0$, so $m_1(\delta) \le \delta + 1$.  If $\mu$ is a Young diagram of size $|\mu| < \delta +1$,  one can show that for all
$(i, j)$ with $i \ne j$,  $\delta + d_\mu(i,j) >  0$,  so $m_1(\delta) = \delta + 1$.   Note that $d_\la(i, i) = 2 \la_i - 2i \ge 0$, so $\delta +
d_\la(i, i) = 0$ requires that $\delta$ is negative and even.
	If $\delta$ is negative and even, let $\la = (-\delta/2 + 1)$.  Then $\delta + d_\la(1,1) = 0$, so $m_2(\delta) \le -\delta/2 + 1$.   It's easy
to check that if $|\mu| < -\delta/2 +1$,  then $\delta + d_\mu(i, i) <0$ for all $i$.  Hence $m_2(\delta) = -\delta/2 + 1$.   The expression for
$m_0(\delta)$ follows immediately.
\end{proof}

\begin{theorem}
	\label{Brauersolution}   Let $K$ be a field and $\delta$ a non-zero element of $K$.
	\begin{enumerate}
		[label = {\normalfont (\arabic*)}]
		\item If $\character K = 0$ and $\delta \not \in \Z $ then the Brauer algebra
		      $\Br_n^K(\delta)$ is semisimple for all $n\in \N$.
		\item If $\character K = p >0$ and $\delta \not \in \Z 1_K$, then the Brauer algebra
		      $\Br_n^K(\delta)$ is semisimple if and only if $n \le p-1$.
		\item If $\character K = 0$, and $\delta$ is an integer, then $\Br_n^K(\delta)$ is
		      semisimple if and only if $n \le m_0(\delta)$.
		\item Suppose $\character K = p >0$ and $\delta \in \Z 1_K$. Write $\delta = N 1_K$,
		      where $0 < N \le p-1$. Then $\Br_n^K(\delta)$ is semisimple if and only if $n
			      \le \min \{ p-1, m_0(N), m_0(N-p)\}$.
	\end{enumerate}
\end{theorem}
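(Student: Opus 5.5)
The plan is to combine Theorem \ref{second semisimplicity theorem - axiomatic version} with Proposition \ref{traceevaluation}(3), using the explicit weights of Theorem \ref{theorem: Brauer trace formula}. For the Brauer algebras we already know $n_1=\infty$ in characteristic zero and $n_1=p-1$ in characteristic $p$. The remaining task is to find $m=\min\{n_0,n_1\}$ for each specialization $\theta:R=\Z[\bm\delta^{\pm1}]\to K$, $\bm\delta\mapsto\delta$. The criterion in Proposition \ref{traceevaluation}(3) requires $K$-evaluable matrix units for $Q_n=\mathbb Z\mathfrak S_n$ when $n\le n_1$; these exist by the Hecke algebra proposition specialized at $q=1$, since $e(1)=\infty$ in characteristic $0$ and $e(1)=p$ in characteristic $p$. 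So $m$ is the least $n\le n_1$ for which some $\la\in\Lambda_n$ has $d_\la^F$ evaluable with $d_\la^K=0$, or $m=n_1$ if no such $n$ exists.

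Next I would control evaluability and vanishing of $d^F_\la=\prod(\bm\delta+d_\la(i,j))/\prod h_\la(i,j)$. The hook lengths are integers $\le|\la|$, so for $|\la|\le n_1$ they are nonzero in $K$ (automatic in characteristic $0$, and since $|\la|\le p-1<p$ in characteristic $p$). Hence each factor is $K$-evaluable, and $d_\la^K=\prod(\delta+d_\la(i,j))/\prod h_\la(i,j)$. This vanishes exactly when $\delta+d_\la(i,j)=0$ in $K$ for some box. Therefore $m$ is the minimum of $n_1$ and the least $n\ge 2$ (sizes $0,1$ give nonzero weights, as $\delta\neq 0$) for which some $\la\vdash n$ has a box with $\delta+d_\la(i,j)=0$ in $K$.

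Cases (1) and (2) follow because $d_\la(i,j)\in\Z$: if $\delta\notin\Z 1_K$, no factor vanishes, so $m=n_1$, which gives $\infty$ or $p-1$. In case (3), $\delta+d_\la(i,j)=0$ in $K$ means vanishing as an integer, so the least such $n$ is $m_0(\delta)$ by Notation \ref{notation: quantities for vanishing weights}, and Lemma \ref{lemma:  n zero prime for Brauer} applies. Since $n_1=\infty$, we get $m=m_0(\delta)$.

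Case (4) is the main point. For $|\la|<p$ we have $|d_\la(i,j)|\le 2|\la|$, roughly, and $\delta+d_\la(i,j)=0$ in $K$ means $N+d_\la(i,j)\equiv 0 \pmod p$. I need to show that, restricted to sizes $n\le p-1$, the least such $n$ equals $\min\{m_0(N),m_0(N-p)\}$ truncated at $p-1$. One direction is clear: vanishing of $N+d$ or of $(N-p)+d$ as integers gives vanishing mod $p$. For the converse I would bound $d_\la(i,j)$ for $|\la|=n\le p-1$. From the formula, $d_\la(i,j)\le 2\la_1-2\le 2n-2$ and $d_\la(i,j)\ge -2\la'_1+1\ge -2n+1$. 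The only multiples of $p$ reachable by $N+d$ are then $0$, $p$, or possibly $-p$. The case $N+d=-p$ requires $d\le -p-1$, hence $n$ is large. I would check that in this range the minimal size needed exceeds what $m_0(N-p)$ already provides, using the monotonicity in Remark \ref{tabmonotone} and the explicit minimizers from the proof of Lemma \ref{lemma:  n zero prime for Brauer}. Similarly, $N+d=2p$ is impossible or is dominated by $m_0(N)$. Doing this bookkeeping cleanly, so that only $m_0(N)$ and $m_0(N-p)$ matter, is the step I expect to be the main obstacle. Finally, Theorem \ref{second semisimplicity theorem - axiomatic version} converts the value of $m$ into the stated if-and-only-if criteria.
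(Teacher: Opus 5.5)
Your route is the paper's: Proposition \ref{traceevaluation}(3) combined with Theorem \ref{second semisimplicity theorem - axiomatic version}, with $n_1\in\{\infty,p-1\}$. The weights $d^F_\la$ are evaluable because hook lengths of diagrams of size at most $n_1$ are nonzero in $K$, and cases (1)--(3) are read off directly. The only difference is case (4). There the paper simply asserts that $N+d_\la(i,j)\in\{0,p\}$ are the only possibilities. You are right not to take that for granted, but one of the claims you plan to prove there is false as stated.

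Take $|\la|=n\le p-1$.

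\emph{The value $-p$ never occurs.} For $i>j$, Remark \ref{tabmonotone} gives $d_\la(i,j)\ge d_\la(2,1)=1-\la'_1-\la'_2\ge 1-n$. For $i<j$ one has $d_\la(i,j)\ge -i\ge 1-n$, and diagonal values are $\ge 0$. Hence $N+d_\la(i,j)\ge 3-p>-p$.

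\emph{Multiples $\ge 3p$ never occur,} since $d_\la(i,j)\le 2n-2$ gives $N+d_\la(i,j)\le 3p-5$.

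\emph{The value $2p$ does occur.} Off-diagonal boxes have $d_\la(i,j)\le n-3$, so $N+d_\la(i,j)=2p$ forces a diagonal box with $N$ even and $\la_i-i=p-N/2$. Then $n\ge\la_1\ge p-N/2+1$, which is compatible with $n\le p-1$ once $N\ge 4$. For example, $p=7$, $N=6$, $\la=(5)$ gives $N+d_\la(1,1)=14$, while $m_0(6)=7>5$. So this case is \emph{not} dominated by $m_0(N)$, as you propose.

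It is dominated by $m_0(N-p)$ instead. Since $p$ is odd, $N-p$ is negative and odd, so $m_0(N-p)=p-N+3$, and $p-N+3\le p-N/2+1$ precisely when $N\ge 4$. In the example, $m_0(-1)=4$, realized by $\la=(4)$ with $N+d_\la(1,2)=7$. With this correction your bookkeeping closes and gives $m=\min\{p-1,m_0(N),m_0(N-p)\}$, so the stated formula (and the paper's conclusion) stands.
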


\begin{proof}
	Point (1) was already proved in  \cite{Wenzl-Brauer}. If $\character K = 0$,  then $n_1 = \infty$, and if $\character K = p$,  then $n_1 =
p-1$.  In both cases, if $\la$ is a Young diagram with $|\la| < n_1$,  then $d^F_\la$ is $K$  evaluable, since the hook lengths in the denominator
in (\ref{equation:  Brauer trace formula}) are non-zero in $K$.   It follows from Proposition \ref{traceevaluation}  that $\Br_n^K$ is semisimple
if and only if $n \le m$,   where $m$ is the least $n < n_1$ such that $d^K_\la = 0$ for some $\la$ with $|\la| = n$,  or $m = n_1$ if no such $n$ exists.    If $\delta \not \in \Z 1_K$,  then $d^K_\la$ is never zero,  which proves
(1) and (2).   Point (3) is immediate from Lemma \ref{lemma:  n zero prime for Brauer}.  For point (4),  $\delta + d_\la(i,j) = 0$ in $K$ if and
only if $N + d_\la(i,j) $  is a multiple of $p$.  The only possibilities are $N + d_\la(i,j) = 0$ and $N + d_\la(i,j)  = p$.  The latter is
equivalent to $N - p + d_\la(i,j) = 0$.   It follows that
	$$
		m =  \min \{ p-1, m_0(N), m_0(N-p)\}.
	$$
\end{proof}

\begin{remark}
	This recovers the result of \cite{Rui-Brauer-semisimplicity.2},  Corollary 2.5, part (1).   Rui and Si  also determine when $\Br^K_n(0)$ is
semisimple.
\end{remark}

\subsection{$q$-Brauer algebras} Observe that we can express $\bm \delta$ in terms of $\bm q$ and $\bm r$ in the
quotient field $F$ for our generic ring $R$. Hence we will only give the
formulas for $d_\la^F$ in terms of $\bm r$ and $\bm q$.
\begin{theorem}
	{\normalfont (\cite{Wenzl-qBr2}, Theorem 2.8) } The quantities $d_\la^F$  for the  $q$-Brauer algebra $\qBr_n^F(\bm  r,\bm q)$ with $F=\Q(\bm
r, \bm q)$
	are given by
	$$d_\la^F(\bm r, \bm q)=\prod_{(i,j)\in\la} \frac{\bm r\bm q^{d_\la(i,j)}-\bm r^{-1}\bm q^{-d_\la(i,j)}}{\bm q^{h_\la(i,j)}-\bm
q^{-h_\la(i,j)}}.$$
\end{theorem}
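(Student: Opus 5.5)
The statement is the $q$-analogue of Theorem \ref{theorem: Brauer trace formula}, cited from \cite{Wenzl-qBr2}. I will derive it inside the present framework rather than quote it. By Corollary \ref{corollary: generic semisimplicity}, the weight on the component $(\la,n)$ of $A_n^F$ is $d^F_\la/\bm\delta^n$, where $d^F_\la = \bm\delta^k\omega_{(\la,k)}$ and $k=|\la|$. It therefore suffices to compute the Markov trace of a minimal idempotent $p_\la$ of $A_k^F$ lying in the Hecke quotient part. Such a component corresponds to $\la\in\Gamma_k$ and sits in $(1-z_k)A_k^F\cong H_k(\bm q^2)$. The plan is to compute $\tr(p_\la)$ recursively along a path in Young's lattice, using the conditional expectation $E^k_{k-1}$.

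\textbf{Branching recursion.} Take a $K$-evaluable compatible system of idempotents, as in Lemma \ref{Anmatrixunits}, with $p_\la\le \tilde p_\mu$ for $\mu=\la$ minus a box. As in that proof, $E^k_{k-1}(p_\la)=\alpha\,\tilde p_\mu$, where $\alpha=\tr(p_\la)/\tr(\tilde p_\mu)$. Next I sum over all $\nu\in\tilde\Gamma_k$ with $\mu\nearrow\nu$. Since $\sum_\nu E(p_\nu)$ restricted to the $\mu$-block equals $\tilde p_\mu$, this gives the consistency equation
$$\sum_{\nu\searrow\mu}\omega_{(\nu,k)}=\omega_{(\mu,k-1)}.$$
The sum runs over both kinds of neighbours: the $\nu=\mu\pm\square$ and the reflected vertices $(\mu',k)$ with $|\mu'|=k-2$. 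Written in terms of $d$, this becomes
$$\bm\delta\, d_\mu=\sum_{\nu=\mu+\square}d_\nu+\sum_{\nu=\mu-\square}d_\nu .$$
This is the standard Markov-trace recursion on the reflected Young lattice.

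\textbf{Identifying the solution.} The recursion alone does not determine $d_\nu$. One needs the extra input of the ratio $d_{\mu+\square}/d_\mu$, which comes from the Hecke part. I would get this ratio by comparing $\tr$ restricted to $H_k\subset\qBr_k$ with the Ocneanu/HOMFLY-type Markov trace. That trace is fixed by $\tr(ag_1)=(r/\delta)\tr(a)$ (Theorem \ref{Theorem q-Brauer Markov trace}) transported through the shifts $\psi_j$. Its weights on Murphy idempotents $F_t$ are known: adding a box of content $c$ multiplies the weight by a factor of the shape $(\bm r\bm q^{c'}-\bm r^{-1}\bm q^{-c'})$ times hook-quotient data. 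The task is then to check that the proposed product formula satisfies both the ratio rule and the branching recursion, with the reflected terms included. For the reflected terms I would reduce to the classical identity
$$\sum_{\text{addable}}\,-\sum_{\text{removable}}$$
for $q$-contents, a rational-function identity in $\bm r$ and $\bm q$. The check of agreement uses that the formula specializes at $\bm q=\bm r=1$ to (\ref{equation: Brauer trace formula}), together with uniqueness for the recursion with given Hecke ratios.

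The main obstacle is the Hecke ratio step. Because the embeddings are twisted through $\psi_{M-n}$ and the $e_j$ are not $*$-invariant, I must determine precisely which Markov parameter the trace induces on the Hecke quotient $Q_k$. I also need to confirm that $d_\la(i,j)$, rather than the ordinary content $j-i$, is the exponent that appears. My first approach would be to compute small cases ($|\la|\le 2$, via $eg_1=qe$ and $eg_2e=re$) and match them against the formula, and then set up the induction. If that becomes too heavy, I would fall back on citing \cite{Wenzl-qBr2}, Theorem 2.8. There the computation is done with the same idempotents, and the present setup only supplies the identification of the weights as $d^F_\la/\bm\delta^n$.
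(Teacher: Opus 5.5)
The paper gives no proof of this statement; it imports it from \cite{Wenzl-qBr2}, Theorem 2.8, so your fallback of citing it is exactly the paper's route. The self-contained derivation you sketch, however, fails at the step you yourself flag as the main obstacle.

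Your recursion $\bm\delta d_\mu=\sum_{\nu=\mu+\square}d_\nu+\sum_{\nu=\mu-\square}d_\nu$ is correct: it is just additivity of $\tr$ over a refinement of $\tilde p_\mu$. You are also right that it does not determine the $d_\nu$. The problem is the extra input you propose, a ratio $d_{\mu+\square}/d_\mu$ read off from the Ocneanu weights of $\tr$ restricted to the Hecke subalgebra $H_k$. A minimal idempotent $p^H_\la$ of $H_k^F$ is not minimal in $A_k^F$. Only $(1-z_k)p^H_\la$ lies in the block $(\la,k)$; the part $z_kp^H_\la$ spreads over the blocks $(\mu,k)$ with $|\mu|<k$. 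What the Ocneanu trace actually gives is
\[
\prod_{\square\in\la}\frac{\bm r\bm q^{c(\square)}-\bm r\inv\bm q^{-c(\square)}}{\bm q^{h_\la(\square)}-\bm q^{-h_\la(\square)}}=\bm\delta^k\,\tr(p^H_\la)=d^F_\la+\sum_{|\mu|<k}m_{\la\mu}\,d^F_\mu .
\]
Here $c(\square)=j-i$ is the content, and $m_{\la\mu}$ is the multiplicity of the Hecke simple module $\la$ in the restriction of the simple $A_k^F$-module $(\mu,k)$.

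Already $k=2$, $\la=(2)$ shows the failure. In $\qBr_2^F$ the block $(\emptyset,2)$ is $Fe$ with $g_1e=\bm q e$, so $m_{(2),\emptyset}=1$ and
\[
\frac{(\bm r-\bm r\inv)(\bm r\bm q-\bm r\inv\bm q\inv)}{(\bm q^2-\bm q^{-2})(\bm q-\bm q\inv)}=\frac{(\bm r\bm q^{2}-\bm r\inv\bm q^{-2})(\bm r\bm q\inv-\bm r\inv\bm q)}{(\bm q^2-\bm q^{-2})(\bm q-\bm q\inv)}+1=d^F_{(2)}+d^F_\emptyset .
\]
So the formula you want to prove satisfies no content-based ratio rule; this is exactly why $d_\la(i,j)$ rather than $j-i$ appears. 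Consequently your ``uniqueness for the recursion with given Hecke ratios'' has no correct input. Agreement after specializing $\bm q=\bm r=1$ cannot substitute for it, since a specialization does not pin down a rational function of $\bm r,\bm q$.

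The strategy can be repaired. The displayed system is unitriangular ($m_{\la\la}=1$), so it determines every $d^F_\la$ by induction on $|\la|$. But two ingredients are then needed that your proposal does not contain, and they are the real content of the theorem.
\begin{enumerate}
\item[(i)] The generic multiplicities $m_{\la\mu}$, which must come from the cellular structure. In $\qBr_k$, $H_k\ex{s}$ is a $q$-analogue of the permutation module on perfect matchings. This gives the Littlewood-type rule $m_{\la\mu}=\sum_{\nu\vdash s}c^\la_{\mu,2\nu}$, with $s=(k-|\mu|)/2$, $2\nu$ the partition with all parts doubled, and $c$ the Littlewood--Richardson coefficients.
\item[(ii)] A proof that the product formula for $d^F_\mu$ actually solves the system above. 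This amounts to a $q$-version of Littlewood's $\mathrm{GL}\downarrow\mathrm{O}$ branching identity.
\end{enumerate}
The unstated ``classical identity $\sum_{\text{addable}}-\sum_{\text{removable}}$'' you mention addresses only the branching recursion, which you already know is insufficient.
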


\begin{theorem}
	\label{qBrauersolution} If $K$ is a field in that $q=\pm 1$ or $r = \pm 1$, then $\qBr^K_n(r,q,\delta)\cong \Br^K_n(\delta)$. Hence it suffices
to consider fields $K$ where $q\neq\pm 1$, $r \neq \pm 1$ in the following.
	\begin{enumerate}
		[label = {\normalfont (\arabic*)}]

		\item The $q$-Brauer algebra $\qBr_n^K(r,q,\delta)$ is semisimple for all $n\in \N$
		      if $q$ is not a root of unity and $r\neq \pm q^N$ for any $N\in \Z$.

		\item   If $q^2$ is a primitive $e$-th root of unity but $r\neq \pm q^N$ for any $N\in
			      \Z$, then the $q$-Brauer algebra $\qBr_n^K(r,q,\delta)$ is semisimple if and
		      only if $n < e$.

		\item If $q$ is not a root of unity and $r= \pm q^N$ for some integer $N$, the
		      algebra $\qBr_n^K(r,q,\delta)$ is semisimple if and only if $n \le m_0(N)$, see
		      Lemma \ref{lemma: n zero prime for Brauer}.
		\item  Suppose $q^2$ is a primitive $e$-th root of unity, and $r=\pm q^N$. We may
		      assume $-e < N < 0$. Then the $q$-Brauer algebra $\qBr_n^K(r,q,\delta)$, is
		      semisimple if and only if $n \le \min\{e-1, m_0(N), m_0(N-e), m_0(e+N)\}$.
	\end{enumerate}
\end{theorem}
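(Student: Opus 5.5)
The plan is to apply Proposition \ref{traceevaluation} with $A_n$ the subalgebras of $\qBr_M$ and $Q_n = H_n(\bm q^2)$, exactly as in the proof of Theorem \ref{Brauersolution}. First the reduction: if $q=\pm1$, the relation $\delta(q-q\inv)=r-r\inv$ forces $r=\pm1$. We take as given from \cite{Wenzl-qBr1} that the specialization at $q=\pm1, r=\pm1$ is a Brauer algebra. The signs only twist the generators, and $g_i\mapsto \pm g_i$ is compatible with the relations. The same holds if $r=\pm1$ and $q\ne\pm1$, since then $\delta=0$; this degenerate case we also treat via \cite{Wenzl-qBr1}. From now on assume $q,r\neq\pm1$.

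Next, $n_1$ and evaluability. By the verification of the axioms, $n_1=\infty$ if $q^2$ is not a proper root of unity, and $n_1=e(q)-1$ otherwise. For $|\la|\le n_1$ all hooks satisfy $h_\la(i,j)<e(q)$. Hence $q^{h}-q^{-h}=(q-q\inv)\qint h q\neq0$, so the denominators in Wenzl's formula for $d^F_\la$ are nonzero in $K$, and $d^F_\la$ is $K$-evaluable. The Hecke algebras have compatible $K$-evaluable matrix units for $n<e(q)$, so Lemma \ref{Anmatrixunits} and Proposition \ref{traceevaluation}(3) apply. Consequently $m=\min\{n_0,n_1\}$ is the least $n\le n_1$ such that some $|\la|=n$ has a vanishing numerator factor, and otherwise $m=n_1$. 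By Theorem \ref{second semisimplicity theorem - axiomatic version}, semisimplicity holds exactly for $n\le m$.

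The numerator factor for a box $(i,j)$ vanishes iff $r^2=q^{-2d_\la(i,j)}$, i.e.\ $r=\pm q^{-d}$. If $r\neq\pm q^N$ for every integer $N$, no factor ever vanishes, so $m=n_1$; this gives parts (1) and (2), since $n\le e-1$ is the same as $n<e$. If $q$ is not a root of unity and $r=\pm q^N$, the factor vanishes iff $N+d_\la(i,j)=0$ exactly. This is precisely the Brauer vanishing condition with $\delta=N$, so $m=m_0(N)$ by Lemma \ref{lemma: n zero prime for Brauer}, which gives part (3). Here $N\neq0$ because $r\ne\pm1$.

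The main obstacle is part (4). Assume $q^2$ is a primitive $e$-th root of unity, so $e=e(q)$ and $n_1=e-1$. Then $q^{2e}=1$ and $q^e=\pm1$, so changing $N$ by $e$ only changes the sign in $r=\pm q^N$; this lets us normalize $-e<N<0$. The vanishing condition becomes $N+d_\la(i,j)\equiv 0 \pmod e$. For $|\la|\le e-1$, Remark \ref{tabmonotone} bounds $d_\la(i,j)$. If $i\le j$, then $0\le d\le 2|\la|-2$ roughly. If $i>j$, then $d\le0$ with $|d|\lesssim|\la|$. So $N+d$ ranges within a window meeting only the multiples $0$, $e$, and $-e$ of $e$. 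The three cases $N+d=0$, $N-e+d=0$, and $N+e+d=0$ give exactly $m_0(N)$, $m_0(N-e)$, and $m_0(N+e)$, and the minimum of these with $e-1$ is $m$. The care needed is in checking that $|N+d|<2e$ for $|\la|\le e-1$. If some multiple like $2e$ were attainable, it would occur only for diagrams of size at least $e$, which is beyond $n_1$ and harmless. So I would verify the bound coarsely rather than sharply.
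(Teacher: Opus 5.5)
Your proposal is correct and follows the paper's proof essentially step for step. Both proofs take $n_1$ from the Hecke quotients and get $K$-evaluability of $d^F_\la$ for $|\la|\le n_1$ from the nonvanishing hook denominators. Both then apply Proposition~\ref{traceevaluation}(3) with the vanishing condition $r=\pm q^{-d_\la(i,j)}$, and in the root-of-unity case reduce to $\min_k m_0(N-ke)$ with only $k\in\{-1,0,1\}$ relevant; you justify that last restriction more explicitly than the paper's one-line ``superfluous bounds'' remark.

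Your intermediate sign claims are slightly off, for example $d_{(3)}(1,3)=-1$ although $i<j$, and $d_{(3,3,3,3)}(4,3)=1$ although $i>j$. However, the bound you actually need, $-(|\la|-1)\le d_\la(i,j)\le 2|\la|-2$, does hold for every box, so $|N+d_\la(i,j)|<2e$ whenever $|\la|\le e-1$ and your conclusion stands.
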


\begin{proof}
	Note that $n_1 = \infty$ if $q$ is not a root of unity, and $n_1 = e-1$ if $q^2$ is a primitive $e$-th root of unity,  since the Hecke algebras
$H_n(K; q^2)$  play the role of $Q^K_n$.

	Part (1) follows from this and the fact that each factor of $d_\la^K$ is
	well-defined and nonzero for all Young diagrams $\la$ if $q$ is not a root of
	unity and $r\neq\pm q^n$ for some $n\in \Z$. This was already shown in
	\cite{Wenzl-qBr1}, Theorem 5.3. Part (2) is similar.

	If $q$ is not a root of unity, and $r = \pm q^N$ for some integer $N$, observe
	that $d_\la^K=0$ if and only if $q^{2 N + 2d_\la(i,j)}= 1$ for some
	$(i,j)\in\la$, which is equivalent to $N+ d_\la(i, j) = 0$. Hence, the
	determination of $m =  n_0$ is exactly the same as for the Brauer algebras
	$\Br_n^\Q(N)$, in characteristic $0$, see Theorem \ref{Brauersolution}.

	Suppose $q^2$ is a primitive $e$-th root of unity and $r=\pm q^N$, for some
	integer $N$. We have $q^e = \pm 1$, so $r = \pm q^{N + k e}$ for all integers
	$k$. Since we exclude $r = \pm 1$, we can assume $e \nmid N$. Hence we can
	assume $-e < N <0$. We have $d_\la^K = 0$ if and only if $e \mid N + d_\la(i,
		j)$ for some $(i , j) \in \la$, that is $N - ke + d_\la(i,j) = 0$ for some
	integer $k$. Consequently, $$ m = \min \{ e-1, \min_k m_0(N - ke) \} = \min \{
		e-1, m_0(N-e), m_0(N), m_0(N+e) \}, $$ since other values of $k$ produce
	superfluous bounds, %
\end{proof}

\begin{remark}
	One can check that the condition for semisimplicity here coincides with that in  \cite{rui2022jm}, Theorem A.
	It may be helpful to restate Theorem \ref{qBrauersolution} and \cite{rui2022jm}, Theorem A, in the following form,  cf.
\cite{Rui-BMW-Gram-determinants}, Theorem 5.9.

	Suppose $q \ne \pm 1$ and $r \ne \pm 1$, and let $e$ denote the order of $q^2$
	as a root of unity, with $e = \infty$ in case $q$ is not a root of unity. Then
	the $q$-Brauer algebra $\qBr_n^K(r,q,\delta)$ is semisimple if and only if $n <
		e$ and $$ r \not \in \{\pm q^{4 - k} : n \ge k \ge 5 \} \ \cup \ \{ \pm
		q^{4-2k} : n \ge k \ge 3\} \ \cup \ \{\pm q^{k-2} : n \ge k \ge 3\}. $$
\end{remark}

\begin{example}
	\label{Brauerreference}  Using $m_0$ in the statement of points (3) and (4) in Theorem \ref{Brauersolution} allows  us to avoid listing  a
multitude of  cases depending on the sign of $N$ and the parity of $e$ and $N$.  Still, it may be useful to present some sample cases in order to
form a more concrete idea of the result.
	\begin{enumerate}
		\item  Suppose $q^2$ is a primitive $e$-th root of unity, $q ^e = 1$, and $r=\pm q^N$,
		      where $-e < N < 0$. Necessarily, $e$ is odd. Consider the case that $N$ is odd,
		      $N-e$ is even. Then $$ m = \min\{ e-1, (e-N)/2 + 1, -N + 3, N+e + 1\}. $$

		\item  Suppose $q^2$ is a primitive $e$-th root of unity, $q^e = -1$, and $r=\pm q^N$,
		      where $-e < N < 0$, and $N \ne 0$. Consider the case that $N$ is odd and $e$ is
		      even. Then $$ m = \min\{e-1, -N +3 , N+e + 1\}. $$
	\end{enumerate}
\end{example}

\subsection{BMW algebras} We can give the formulas for $d_\la^F$ in terms of $\bm r$ and $\bm q$ by the
same argument as for the algebras $\qBr_n^F$.
\begin{theorem}
	\label{theorem:  weights in the BMW case}
	{\normalfont (\cite{Wenzl-BCD}, Theorem 5.5, \cite{TW}, (7.3))} The formula for the quantities $d_\la^F$   for the {\normalfont BMW}  algebras
$\BMW_n^F(\bm r,\bm q)$,  with $F = \Q(\bm r, \bm q)$,   coincides with that for the $q$-Brauer algebra $\qBr_n^F(\bm r,\bm q)$ after substituting
$\bm r$ by $\bm r \bm q$,   except that the  factors corresponding to diagonal boxes $(i,i)$ of $\la$ are replaced by

	\begin{equation}
		\label{eqn: diagonal factors for BMW}   \frac{(\bm r\bm q-\bm q^{-2\la_i+2i})(\bm r^{-1}\bm q^{-1}+\bm q^{-2\la_i'+2i-2})}{1-\bm
q^{-2h_\la(i,i )}}\
	\end{equation}
\end{theorem}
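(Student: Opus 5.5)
The plan is to identify the weights $\omega_{(\la,n)} = d^F_\la/\bm\delta^n$ with quantum dimensions of representations of orthogonal quantum groups. Both sides of the claimed identity are rational functions in $\bm r,\bm q$, so it suffices to check them at the infinitely many specializations $\bm r = \bm q^{N-1}$, $N$ large. Since an earlier specialization with $\bm r = \bm q^{N-1}$ need not lie over the generic field, I will work throughout with the rational functions in $F$ themselves. Each $d^F_\la$ is determined by a universal recursion, and the plan is to show that the quantum-dimension side satisfies the same recursion and normalization.

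\textbf{Step 1: the recursion.} From the proof of Lemma \ref{Anmatrixunits}, and the fact that $\tr$ restricted from $A_{n+1}^F$ to $A_n^F$ gives the weights of $A_n^F$, the weights satisfy
\[
\omega_{(\la,n)} = \sum_{(\nu,n+1)} \omega_{(\nu,n+1)},
\]
where the sum is over $(\la,n)\nearrow(\nu,n+1)$ in $\tilde\Lambda$. In terms of $d^F$ this reads
\[
\bm\delta\, d^F_\la = \sum_{\nu=\la\pm\square} d^F_\nu ,
\]
with $d^F_\emptyset = 1$ and $d^F_{(1)} = \bm\delta$. By Corollary \ref{corollary: generic semisimplicity}, $d^F_\la$ depends only on $\la$. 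I would next show that this system has a unique solution. One way is to induct on $|\la|$ and use the relation at level $n$ to express the single unknown on level $n+1$ once the others are fixed. This is awkward in general, because several $\nu$ occur in each relation. The cleaner route is to match the recursion to a known solution on a Zariski-dense set of parameters and invoke uniqueness there.

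\textbf{Step 2: the quantum group model.} For $\bm r = \bm q^{N-1}$ with $N$ large relative to $n$, I would use the standard Schur--Weyl type map from $\BMW_n$ onto $\End_{U_q(\mathfrak{so}_N)}(V^{\otimes n})$, where $V$ is the vector representation. This map comes from the Reshetikhin--Turaev functor applied to the Kauffman skein relations, and it carries the closure map $\cl$ to the partial quantum trace. Consequently $\tr$ becomes the normalized categorical trace, with $\dim_q V = \bm\delta$. For $n$ small compared with $N$ the map is an isomorphism, by comparing dimensions with the generic count of paths in $\tilde\Lambda$. Under it, a minimal idempotent in the component $(\la,n)$ has trace $\dim_q V_\la / (\dim_q V)^n$, where $V_\la$ is the irreducible $\mathfrak{so}_N$-module labelled by $\la$. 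Hence $d^F_\la$ specializes to $\dim_q V_\la$.

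\textbf{Step 3: the main obstacle.} I would then evaluate $\dim_q V_\la$ by the quantum Weyl dimension formula,
\[
\prod_{\alpha>0}\frac{[\langle \la+\rho,\alpha\rangle]}{[\langle\rho,\alpha\rangle]},
\]
taken over the positive roots $\epsilon_i\pm\epsilon_j$ (and $\epsilon_i$ in type $B$). The work is to regroup this product box by box, as in the El Samra--King computation for the Brauer case. Off-diagonal boxes with $i<j$ should pair with the roots $\epsilon_i+\epsilon_j$ and contribute $[N-1+d_\la(i,j)]$, matching the $q$-Brauer factor $\bm r\bm q^{d} - \bm r^{-1}\bm q^{-d}$ after $\bm r\mapsto\bm r\bm q$. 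Boxes with $i>j$ should be handled through the column lengths $\la'$. Diagonal boxes absorb the leftover root factors, which produces the modified factor \eqref{eqn: diagonal factors for BMW}. I expect this telescoping and regrouping to be the most delicate bookkeeping, together with the sign and $\pm$ conventions; the cases $N$ even and $N$ odd must be handled separately.

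\textbf{Step 4: conclusion.} The closed form agrees with $d^F_\la$ at $\bm r = \bm q^{N-1}$ for all large $N$, which is a Zariski-dense set, so the two rational functions agree identically. As a sanity check, I would specialize to $\bm r = \bm q = 1$ and confirm that the result reduces to Theorem \ref{theorem: Brauer trace formula}.
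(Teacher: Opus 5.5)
The paper gives no argument for this statement; it imports the formula from \cite{Wenzl-BCD}, Theorem 5.5, and \cite{TW}, (7.3). Your strategy is the standard route to such formulas: specialize to $\bm r=\bm q^{N-1}$, identify the weights with quantum dimensions of $U_q(\mathfrak{so}_N)$-modules, evaluate these by the Weyl formula, and conclude by density. The density step is fine, since a nonzero polynomial in $\bm r$ over $\Q(\bm q)$ has only finitely many roots. As written, however, the proposal does not yet prove the statement.

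Step~1 should be dropped, because its uniqueness claim is false. The relations $\bm\delta\, d_\la=\sum_{\nu=\la\pm\square}d_\nu$ with $d_\emptyset=1$, $d_{(1)}=\bm\delta$ do not determine the $d_\la$. Already at level two there is the single equation $\bm\delta^2=1+d_{(2)}+d_{(1,1)}$ in two unknowns. Moreover, since $\la\mapsto\la'$ is an automorphism of $\tilde\Lambda$, the family $\la\mapsto d_{\la'}$ is a second solution with the same normalization, on any set of parameters.

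Everything therefore rests on the direct identification in Step~2, which needs two facts you only assert.
(i) Evaluability. You need that $d^F_\la$ has no pole along $\bm r=\bm q^{N-1}$ and that its value there is the weight in the specialized algebra. This is Proposition~\ref{traceevaluation}(2) with $K=\Q(\bm q)$ and $\theta(\bm r)=\bm q^{N-1}$, but it requires $|\la|\le m$ for that $K$. Since $n_1=\infty$, you must show that $\tr$ is non-degenerate on $\BMW_k^K$ for $k\le|\la|$. This follows once $\BMW_k^K\to\End_{U_q(\mathfrak{so}_N)}(V^{\otimes k})$ is known to be surjective (hence an isomorphism, by the count $(2k-1)!!$ for $N>2k$) and the quantum dimensions are nonzero in $\Q(\bm q)$.
(ii) Labels. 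By the same transpose symmetry, comparing branching rules fixes the correspondence between the paper's cell labels $(\la,n)$ and highest weights only up to a global swap $\la\leftrightarrow\la'$. The formula is not transpose invariant: at $\bm r=\bm q^{N-1}$ it gives $[N-1](1+[N]/[2])$ for $(2)$ but $[N-1](1+[N-2]/[2])$ for $(1,1)$, where $[x]=(\bm q^x-\bm q^{-x})/(\bm q-\bm q^{-1})$. The labelling depends on the cell datum chosen for $H_2$, so you must check that the component labelled $(2)$ is the one on which $g_1$ acts by $\bm q$, as it does on $S^2_0V$.

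The more serious gap is Step~3. It is the entire content of the theorem and is only announced, and the one explicit claim made there is off by one. After $\bm r\mapsto\bm r\bm q$ and $\bm r=\bm q^{N-1}$, the off-diagonal factor is $[N+d_\la(i,j)]/[h_\la(i,j)]$, not $[N-1+d_\la(i,j)]$. For example, box $(1,2)$ of $(2)$ has $d_\la(1,2)=-1$ and must contribute $[N-1]$.

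Moreover, the diagonal factor specializes to $\bigl([h_\la(i,i)]+[N-1+\la_i-\la'_i]\bigr)/[h_\la(i,i)]$. This is a \emph{sum} of two quantum integers, so it cannot arise by handing leftover root factors to the diagonal boxes. It has to come from a telescoping across rows and columns, using an identity such as $[a]+[b]=[\tfrac{a+b}{2}]\,(\bm q^{(a-b)/2}+\bm q^{-(a-b)/2})$. In type B this also involves the short-root factors, which have half-integral exponents.

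Until that computation is carried out, your outline only reduces the theorem to a $q$-analogue of the El Samra--King formula. That computation is precisely what the cited references supply.
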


\begin{remark}
	For a Young diagram $\la$ and $(i, j) \in \la$,  let $a_\la(i, j) = \la_i + \la_j - i -j$ and  $b_\la(i, j) = -\la'_i - \la'_j  + i + j  -2$.
Thus, $d_\la(i, j) =  a_\la(i, j) $  for $i \le j$ and  $d_\la(i, j) =  b_\la(i, j) $  for $i > j$.    The analogue of Remark \ref{tabmonotone}
holds, namely $b_\la(i, j)$ increases as $i$ or $j$ increases, for $i \ge j$.    With this notation, the diagonal factors in (\ref{eqn: diagonal
factors for BMW})  are
	\begin{equation}
		\label{eqn:  BMW diagonal factors rewritten}
		\frac{(\bm r\bm q-\bm q^{-a_\la(i,i)})(\bm r^{-1}\bm q^{-1}+\bm q^{b_\la(i,i)})}{1-\bm q^{-2h_\la(i,i )}}\
	\end{equation}
	When $r = \eps q^{N-1}$  (in a field $K$)   with $\eps \in \{1,-1\}$,   the diagonal factors evaluate to
	\begin{equation}
		\label{eqn:  BMW diagonal factors rewritten again}
		\frac{( \eps   q^N -  q^{-a_\la(i,i)})(\eps   q^{-N} +  q^{b_\la(i,i)})}{1-  q^{-2h_\la(i,i )}}\
		=  \frac{( 1-  \eps q^{-N -a_\la(i,i)})(1 +  \eps q^{N + b_\la(i,i)})}{1-  q^{-2h_\la(i,i )}}\
	\end{equation}

\end{remark}

To develop specific semisimplicity criteria for the BMW algebras, we need to
find vanishing conditions for the factors in (\ref{eqn: BMW diagonal factors
	rewritten again}).

\begin{lemma}
	\label{lemma:  m 3 of N}
	Let $m_3(N) $ denote the least integer $n \ge 2$ such that there exists a Young diagram $\la$ of size $n$  and $(i, i) \in \la$ with
	$N + b_\la(i,i) = 0$.   Then $m_3(N) = N/2$ if $N$ is positive and even, and $m_3(N) = \infty$ otherwise.
\end{lemma}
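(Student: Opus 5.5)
The plan is to solve the equation $N+b_\la(i,i)=0$ explicitly and then minimize $|\la|$ over all solutions. By definition $b_\la(i,i) = -2\la'_i + 2i - 2$. So the condition $N + b_\la(i,i) = 0$ is equivalent to
$$\la'_i - i + 1 = N/2 .$$
This has two consequences, which I will record first. First, $N$ must be even. Second, since $(i,i)\in\la$ forces $\la'_i \ge i$, the left-hand side is at least $1$, so $N$ must be positive. Hence if $N$ is not positive and even there is no solution, and $m_3(N)=\infty$.

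Now suppose $N$ is positive and even. For the upper bound I will use the one-column diagram $\la=(1^{N/2})$ with $i=1$. Then $\la'_1 = N/2$, and $N + b_\la(1,1) = N - 2(N/2) = 0$, so $m_3(N)\le N/2$.

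For the lower bound, let $\la$ and $i$ satisfy the condition. Since $(i,i)\in\la$, each of the columns $1,\dots,i$ has length at least $\la'_i = N/2+i-1$. Therefore
$$|\la| \ \ge\ i\,(N/2+i-1)\ \ge\ N/2 .$$
Combined with the upper bound, this gives $m_3(N)=N/2$.

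The only delicate point is the convention $n\ge 2$ in the definition of $m_3$. For $N=2$ the diagram $(1)$ has size $1$, which is excluded. The smallest admissible solution is then $\la=(2)$ with $i=1$: indeed $\la'_1=1$ and $N+b_\la(1,1)=2-2=0$. So strictly $m_3(2)=2$ rather than $N/2 = 1$. I would state this edge case explicitly: one may write $m_3(N)=\max\{2,N/2\}$, or simply observe that the case $N=2$ is irrelevant for the later minimum computations, since $m_3$ only enters the semisimplicity bounds through comparisons with other small values. Apart from this bookkeeping, there is no real obstacle; the argument is the same monotonicity-and-minimal-diagram reasoning used in Lemma \ref{lemma:  n zero prime for Brauer}.
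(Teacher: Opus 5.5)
Your argument is correct and is essentially the paper's proof, which simply says it is similar to Lemma~\ref{lemma:  n zero prime for Brauer}, part (2). In both, the condition reduces to $\la'_i - i + 1 = N/2$, the constraint $\la'_i \ge i$ forces $N$ to be positive and even, the one-column diagram gives the upper bound, and the column-length estimate gives the lower bound; your remark that the convention $n \ge 2$ makes $m_3(2) = 2$ rather than $N/2 = 1$ correctly catches a slip in the statement. The edge case is harmless downstream, but not because of comparisons with other small values (e.g.\ $\min\{m_1(2), m_3(2)\}$ in Theorem~\ref{BMWsolution}(3) would actually change); rather, $m_3$ is evaluated at $2$ in the BMW criteria only when $r = -q$, i.e.\ $\delta = 0$, which Remark~\ref{remark:  BMW r = power of q} excludes.
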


\begin{proof}
	Similar to Lemma \ref{lemma:  n zero prime for Brauer}, part (2).
\end{proof}

\begin{remark}
	\label{remark:  BMW r = power of q}
	In the following, we consider the possibility $r = \pm q^a$.  We do not allow $r = q^{-1}$  or $r = -q$, since both possibilities imply $\delta
= 0$, which we exclude from our analysis.    If $K$ is a field in which $q=\pm 1$, $\BMW^K_n(r,q,\delta)\cong \Br^K_n(\delta)$. Hence it suffices
to consider fields $K$ where $q\neq\pm 1$.

	Suppose that $q$ is a root of unity in a field $K$. Let $e$ denote the order of
	$q^2$ and $f$ the order of $q$ as a root of unity. Then $f$ is even if and only
	if $f = 2e$, and in this case $q^e = -1$. If $f$ is odd, then $f = e$, so $q^e
		= 1$.

	If $q$ is a root of unity and $r = \pm q^{N-1}$ for some integer $N$, then
	since $q^e = \pm 1$, for all integers $k$, $r = \pm q^{k e + N -1}$. Replacing
	$N$ by $N + ke$, we can suppose that $-e < N \le 0$.
\end{remark}

\begin{notation}
	Consider the specialization $d_\la^K$  of the quantities from Theorem \ref{theorem:  weights in the BMW case} to parameters $r, q$ in a field
$K$,  where $r = \eps q^{N-1}$ for some integer $N$ and $\eps \in \{1, -1\}$.
	\begin{enumerate}
		\item  Let $m_1'(N, q)$ denote the least $n \ge 2$ such that there exists a Young
		      diagram $\la$ of size $n$ and there exists $(i, j )\in \la$ with $i \ne j$ such
		      that $q^{2(N + d_\la(i,j))} = 1$.
		\item Let $m_2'(N, \eps, q)$ denote the least $n \ge 2$ such that there exists a
		      Young diagram $\la$ of size $n$ and there exists $i$ with $(i,i)\in \la$ such
		      that $1- \eps q^{N + d_\la(i,i)} = 0$.
		\item  Let $m_3'(N, \eps, q)$ denote the least $n \ge 2$ such that there exists a
		      Young diagram $\la$ of size $n$ and there exists $i$ with $(i,i)\in \la$ such
		      that $1 + \eps q^{N + b_\la(i,i)} = 0$.

	\end{enumerate}
\end{notation}

\begin{theorem}
	\label{BMWsolution}    Consider the {\normalfont BMW}  algebras   $\BMW_n^K(r,q,\delta)$ over a field $K$ in which $q \ne \pm 1$.
	\begin{enumerate}
		[label = {\normalfont (\arabic*)}]
		\item  $\BMW_n^K(r,q,\delta)$ is semisimple for all $n\in \N$ if $q$ is not a root of unity and $r\neq \pm q^a$ for any $a\in \Z$.
		\item  If $q^2$ is a primitive $e$--th root of unity and $r\neq \pm q^a$ for any $a\in
			      \Z$, then $\BMW_n^K(r,q,\delta)$ is semisimple if and only if $n < e$.
		\item Suppose $q$ is not a root of unity and $r = \eps q^{N-1}$ with $\eps = \pm 1$.
		      If $\eps = 1$, then $\BMW_n^K(r,q,\delta)$ is semisimple if and only if $n \le
			      m_0(N)$. If $\eps = -1$, then $\BMW_n^K(r,q,\delta)$ is semisimple if and only
		      $n \le \min \{ m_1(N), m_3(N)\}$.
		\item   Suppose that $q$ is a root of unity and $r = \pm q^{N-1}$. If $e$ is the order
		      of $q^2$ as a root of unity we can assume $-e < N \le 0$, and $r = \eps
			      q^{N-1}$ with $\eps = \pm 1$. Then $\BMW_n^K(r,q,\delta)$ is semisimple if and
		      only if $$ n \le \min\{ e-1, m'_1(N, q), m'_2(N, \eps, q), m'_3(N, \eps, q) \}.
		      $$ The following lemma will describe how to compute the quantities $m'_i$ in
		      the various cases that can occur.

	\end{enumerate}
\end{theorem}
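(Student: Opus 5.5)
The plan is to follow the proofs of Theorems \ref{Brauersolution} and \ref{qBrauersolution}, with Proposition \ref{traceevaluation} as the main tool. The Hecke algebras play the role of $Q_n^K$, so $n_1=\infty$ if $q$ is not a root of unity and $n_1=e-1$ if $q^2$ is a primitive $e$-th root of unity. By Theorem \ref{second semisimplicity theorem - axiomatic version}, $\BMW_n^K$ is semisimple if and only if $n\le m=\min\{n_0,n_1\}$. Proposition \ref{traceevaluation}(3) says that $m$ is the least $n\le n_1$ for which some $\la$ with $|\la|=n$ has $d^F_\la$ $K$-evaluable with $d^K_\la=0$, and $m=n_1$ if there is no such $n$.

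First I check evaluability. Take the formula in Theorem \ref{theorem:  weights in the BMW case}, with off-diagonal factors $(\bm r\bm q\,\bm q^{d}-\bm r^{-1}\bm q^{-1}\bm q^{-d})/(\bm q^{h}-\bm q^{-h})$ and diagonal factors (\ref{eqn: BMW diagonal factors rewritten}). For $|\la|\le n_1$, every hook length $h$ satisfies $h\le |\la|<e$, so the denominators $\bm q^{h}-\bm q^{-h}$ and $1-\bm q^{-2h}$ are nonzero in $K$. Each factor is therefore $K$-evaluable, and $d^K_\la$ vanishes if and only if one of the numerator factors vanishes.

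Next I translate the vanishing conditions.
\begin{enumerate}
\item[(i)] Parts (1) and (2): if $r\ne \pm q^a$, an off-diagonal numerator vanishes only when $r^2q^{2+2d}=1$, which is impossible. A diagonal numerator vanishes only when $rq=q^{-a}$ or $r^{-1}q^{-1}=-q^{b}$, both excluded. So $n_0=\infty$ and $m=n_1$.
\item[(ii)] With $r=\eps q^{N-1}$, an off-diagonal factor vanishes iff $q^{2(N+d_\la(i,j))}=1$.
\item[(iii)] By (\ref{eqn:  BMW diagonal factors rewritten again}), a diagonal factor vanishes iff $1-\eps q^{-N-a_\la(i,i)}=0$ or $1+\eps q^{N+b_\la(i,i)}=0$.
\end{enumerate}
The first condition in (iii) is equivalent to $\eps q^{N+a_\la(i,i)}=1$, and $a_\la(i,i)=d_\la(i,i)$. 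These are exactly the conditions defining $m_1'$, $m_2'$ and $m_3'$, which gives part (4) directly once we take $-e<N\le 0$ as in Remark \ref{remark:  BMW r = power of q}.

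For part (3), $q$ is not a root of unity, so each condition forces an exponent to be zero.
\begin{enumerate}
\item[(a)] Off-diagonal boxes give $N+d_\la(i,j)=0$. This is $m_1(N)$, using the Brauer computation in Lemma \ref{lemma:  n zero prime for Brauer}.
\item[(b)] The first diagonal condition requires $\eps=1$ and $N+d_\la(i,i)=0$. This is $m_2(N)$.
\item[(c)] The second diagonal condition requires $\eps=-1$ and $N+b_\la(i,i)=0$. This is $m_3(N)$, by Lemma \ref{lemma:  m 3 of N}.
\end{enumerate}
So for $\eps=1$ we get $m=\min\{m_1,m_2\}=m_0(N)$, and for $\eps=-1$ we get $\min\{m_1(N),m_3(N)\}$.

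The main obstacle is the translation step, specifically the sign bookkeeping with $\eps$ and the shift $r\mapsto rq$ in the diagonal factors. Each root-of-unity vanishing condition has to match the correct one of $m_1',m_2',m_3'$, and each generic-$q$ condition has to reduce exactly to an integer equation. A secondary issue is that $m_0$, $m_1$, $m_3$ are defined by minimizing over $n\ge 2$, so $n=1$ needs a check. Here $A_1=R$, and this check is harmless provided $\delta\ne0$, which is why Remark \ref{remark:  BMW r = power of q} excludes $r=q^{-1}$ and $r=-q$.
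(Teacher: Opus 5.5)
Your proposal is correct and follows the paper's proof: take $n_1$ from the Hecke quotients, observe that $d^F_\la$ is $K$-evaluable for $|\la|\le e-1$, apply Proposition~\ref{traceevaluation}, and read the vanishing conditions off the explicit BMW weight formula, carried out in more detail than the paper gives. One caveat, which the paper's proof shares: in steps (b) and (c) you use that $q^k=-1$ has no solution when $q$ is not a root of unity, and this requires $\character K\neq 2$; in characteristic $2$ the cases $\eps=\pm1$ coincide and the correct bound in part (3) is $\min\{m_1(N),m_2(N),m_3(N)\}$ (for instance $r=q^3$, i.e.\ $N=4$, gives $d^K_{(1,1)}=0$, so $m=2$ rather than $m_0(4)=5$).
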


\begin{proof}
	The proof of  parts (1) -- (3) is  similar to Theorem \ref{qBrauersolution} parts (1) -- (3).    In particular, one uses that
	$n_1 = \infty$ if $q$ is not a root of unity, and $n_1 = e-1$ if $q^2$ is a primitive $e$-th root of unity,  since the Hecke algebras $H_n(K;
q^2)$  play the role of $Q^K_n$.   It follows from this that the quantities $d_\la^F$  are $K$ evaluable as long as $|\la| \le e-1$.     Hence, in
case (4),
	$$m = \min\{n_0, n_1\}   = \min\{  e-1,   m'_1(N, q),  m'_2(N, \eps, q),  m'_3(N, \eps, q)  \}, $$
	by Theorem  \ref{theorem:  weights in the BMW case} and  equation (\ref{eqn:  BMW diagonal factors rewritten again}) and   
Proposition \ref{traceevaluation}.
\end{proof}

The following lemma describes how to compute the quantities $m_i'$, and thus
$m = \min\{n_0, n_1\}$ when $r = \pm q^{N-1}$ and $q$ is a root of unity. The quantities
$m_i(N)$ that appear in the statement were determined in Remark \ref{remark: m
	sub i of zero}, Lemma \ref{lemma: n zero prime for Brauer}, and Lemma
\ref{lemma: m 3 of N}.

\begin{lemma}
	\label{lemma:  BMW explicit conditions}   Let $K$ be a field,  let $q \ne \pm 1$ be a root of unity in $K$,  and let $e$ denote the order of
$q^2$ and $f$ the order of $q$ as a root of unity.  Suppose $r = \eps q^{N-1}$  where $\eps  = \pm 1$  and $-e < N \le 0$,  see Remark \ref{remark:
BMW r = power of q}.
	\begin{enumerate}
		[label = {\normalfont (\arabic*)}]
		\item  $m'_1(N, q) = \min \{m_1(N), m_1(N-e),  m_1(N+e)\}$.
		\item  Suppose $\character K \ne 2$ and $\eps = 1$. Then $$ m'_2(N, 1, q) =
			      \min\{m_2(N), m_2(N-f)\}, $$ and $$ m'_3(N, 1, q) =
			      \begin{cases}
				      \infty   & \text{if  $f$ is odd}   \\
				      m_3(N+e) & \text{if $f$ is even }. \\
			      \end{cases}
		      $$
		\item  Suppose $\character K \ne 2$ and $\eps = -1$. Then $$ m'_2(N, -1, q) =
			      \begin{cases}
				      \infty   & \text{if  $f$ is odd}  \\
				      m_2(N-e) & \text{if $f$ is even } \\
			      \end{cases}
		      $$
		      and
		      $$
			      m'_3(N, -1, q) =  \min\{m_3(N + f), m_3(N+2f)\}.
		      $$
		\item Suppose $\character K = 2$. Then (since $-1 = 1$) $$ m'_2(N, 1, q) =
			      \min\{m_2(N), m_2(N-f)\}, \text{and} $$ $$ m'_3(N, 1, q) = \min\{m_3(N + f),
			      m_3(N+2f)\}. $$
	\end{enumerate}
\end{lemma}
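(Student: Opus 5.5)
Each $m'_i$ is reduced to integer vanishing conditions on $d_\la(i,j)$ and $b_\la(i,i)$, which are already handled by $m_1$, $m_2$, $m_3$ in Lemma \ref{lemma: n zero prime for Brauer} and Lemma \ref{lemma: m 3 of N}. The relevant quantities are $q^{2x}=1$, $\eps q^{x}=1$ and $\eps q^{x}=-1$ for an integer $x$. Since $q^2$ has order $e$ and $q$ has order $f$, we have:
\begin{itemize}
\item $q^{2x}=1$ if and only if $e\mid x$;
\item $q^x=1$ if and only if $f\mid x$;
\item when $f$ is even (so $f=2e$ and $q^e=-1$), $q^x=-1$ if and only if $x\equiv e \pmod{f}$;
\item when $f$ is odd, $q^x=-1$ never holds in characteristic $\ne 2$, since $-1$ is not a power of $q$ (an odd-order element has only odd-order powers, while $-1$ has order $2$).
\end{itemize}
In characteristic $2$ we have $-1=1$, so the conditions $\eps q^x=\pm1$ both become $f\mid x$.

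Applying these:
\begin{itemize}
\item $m'_1(N,q)$ is the least size of a diagram with an off-diagonal box satisfying $N+d_\la(i,j)\equiv 0 \pmod e$, that is, $(N-ke)+d_\la(i,j)=0$ for some $k$. So $m'_1=\min_k m_1(N-ke)$.
\item For $\eps=1$, $m'_2$ requires $N+d_\la(i,i)\equiv 0\pmod f$, giving $\min_k m_2(N-kf)$.
\item For $\eps=1$, $m'_3$ requires $q^{N+b_\la(i,i)}=-1$. This is impossible for $f$ odd, and for $f$ even it means $N+b_\la(i,i)\equiv e\pmod{2e}$, giving $\min_k m_3(N+e-2ke)$.
\item The case $\eps=-1$ swaps the roles: $m'_2$ needs $q^x=-1$ and $m'_3$ needs $q^x=1$.
\end{itemize}

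It then remains to show that only the listed shifts matter, using $-e<N\le 0$ and the explicit formulas.

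For $m_1$: $m_1(M)$ equals $M+1$ for $M>0$ and $-M+3$ for $M<0$, and it increases with $|M|$ on each side. Among the $N-ke$, only $N$, $N-e$ (the closest negative values) and $N+e$ (the smallest positive value) can give the minimum. The value $M=0$ arises only if $N=0$, and then Remark \ref{remark: m sub i of zero} gives $m_1(0)=3$, still consistent with keeping $N$.

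For $m_2$: $m_2(M)$ is finite only for $M$ negative and even, equal to $-M/2+1$, and decreases as $M$ increases toward $0$. The candidates $N-kf\le 0$ closest to $0$ are $N$ and $N-f$. Note that $f\ge e>-N$, so $N-f$ is the next value after $N$.

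For $m_3$: $m_3(M)$ is finite only for $M$ positive and even, equal to $M/2$, so we want the smallest positive even representatives. For $f$ even, $N+e$ lies in $(0,e]$ and is the smallest positive value in the class $N+e+2e\Z$. For the condition $N+b\equiv 0\pmod f$, the smallest positive representatives are $N+f$ and $N+2f$. The second is needed for parity, because when $f$ is odd and $N+f$ is odd, $N+2f$ is the first even one.

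In the $\eps=-1$ case of $m'_2$, the condition $x\equiv e\pmod{2e}$ with $x=N+d_\la(i,i)$ requires $x=N-e$ (or more negative shifts, which are worse), giving $m_2(N-e)$.

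The main obstacle is this pruning of shifts. It is monotonicity bookkeeping, and the parity cases must be checked carefully: $m_2$ and $m_3$ need even arguments. Possibly an argument equal to $0$ must also be handled via Remark \ref{remark: m sub i of zero}, and $m_3(0)$ should be interpreted as vacuous, since $n\ge 2$.
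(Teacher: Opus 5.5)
Your proposal is correct and follows the paper's own route: it converts each vanishing condition into a congruence modulo $e$, $f$, or $2e$, writes $m'_i$ as a minimum of $m_i$ over the shifted arguments ($N-ke$, $N-kf$, or $N+e+2e\Z$), and then discards redundant shifts using monotonicity and parity---you simply carry this out for all four parts, whereas the paper writes out only part (3). Two small corrections of wording: the point of $f\ge e>-N$ is that $N+f>0$, so $N$ is the largest non-positive element of $N+f\Z$; and $m_3(0)=\infty$ because $b_\la(i,i)=-2\la'_i+2i-2\le -2$ for every diagonal box, not because of the constraint $n\ge 2$.
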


\begin{proof}
	This is all straightforward.  We prove (3)  to indicate the pattern.  For $m'_2$,  we have to examine solutions to
	$
		1 + q^{N + d_\la(i, i)} = 0.
	$
	If  $f $ is odd,  this has no solutions when $\character K \ne 2$.    If $f = 2e$ is even,   then it is equivalent to $N + d_\la(i, i) = (2k +
1) e$, or
	$N - (2k +1)e + d_\la(i, i) = 0$.   Hence $m'_2(N, -1, q)$ is the minimum over $k$ of $m_2(N- (2k + 1)e)$.  However,  values of $k$ other than
$k =0$ produce redundant bounds, and hence $m'_2(N, -1, q) =m_2(N-e)$.

	For $m_3'$, we have to look at solutions to $1 - q^{N + b_\la(i, i)}$. This is
	equivalent to $N + b_\la(i, i) = k f$, or $N - k f + b_\la(i, i) = 0$. Hence,
	$m'_3(N, -1, q)$ is the minimum over $k$ of $m_3(N - k f)$. Values of $k$ other
	than $k = -1, -2$ give redundant bounds, so $m'_3(N, -1, q) =\min\{m_3(N + f),
		m_3(N+2f)\}$.
\end{proof}

\begin{remark}
	One can check that the semisimplicity criterion of Theorem \ref{BMWsolution}  and Lemma \ref{lemma:  BMW explicit conditions}  coincides with
that of  \cite{Rui-BMW-Gram-determinants}, Theorem 5.9.
\end{remark}

\begin{example}
	Consider the {\normalfont BMW}  algebras   $\BMW_n^K(r,q,\delta)$ over a field $K$ in which $q \ne \pm 1$.  Suppose $q$ is a root of unity and
$\ord(q^2) = e$,   $\ord(q) = 2e$.
	Suppose $r = -q^{N-1}$  with $-e < N \le 0$.   Suppose in addition that $N$ is even and $e$ is odd.   Verify that
	$$
		\begin{aligned}
			m'_1(N, q)     & =  \min\{ N+ e +1, -N + 3\} \\
			m'_2(N, -1, q) & =  \infty                   \\
			m'_3(N, -1, q) & = (N+f)/2 = N/2 + e.
		\end{aligned}
	$$
	Thus,  $\BMW_n^K(r,q,\delta)$  is semisimple if and only if
	$$
		n \le \min \{ e-1, N + e + 1, -N +3,  N/2 + e\}.
	$$
\end{example}

\appendix
\section{Some remarks on the $q$-Brauer algebras} \label{appendix}
We provide some observations on the $q$-Brauer algebras that are needed in the
main text.

\subsection{The involution} \label{appendix: involution}
\begin{lemma}
	\label{Lemma additional e 2 relations} \phantom{xxx}
	\begin{enumerate}
		[label = {\normalfont (\arabic*)}]
		\item  {\normalfont (\cite{Wenzl-qBr1}, Lemma 3.2) }
		      $
			      \ex 2 = e g_2\inv  g_3 g_1\inv  g_2  e = e g_2\inv g_3\inv  g_1 g_2 e = e g_2 g_3\inv g_1 g_2\inv.
		      $
		\item  $g_3 \powerpm \ex 2 = q \powerpm \ex 2 = \ex 2 g_3 \powerpm$.
		\item   $g_2\inv  g_3 g_1\inv  g_2 \ex 2 = \ex 2 = g_2\inv g_3\inv  g_1 g_2  \ex 2$.
		\item  $ \ex 2  g_2\inv  g_3 g_1\inv  g_2 = \ex 2 =  \ex 2 g_2\inv g_3\inv  g_1 g_2 $.
	\end{enumerate}
\end{lemma}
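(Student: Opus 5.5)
The plan is to work directly from the defining relations of Definition \ref{def qBr}, together with two symmetries. The first is the expressions for $\ex 2$ in part (1), which I take as given from \cite{Wenzl-qBr1}, reading the last one as $e g_2 g_3\inv g_1 g_2\inv e$. The second is the ``left-handed'' substitution $g_i \mapsto g_i\inv$, $q \mapsto q\inv$, $r \mapsto r\inv$. This substitution preserves the relations (1)--(3) of Definition \ref{def qBr}, and by part (1) it also preserves $\ex 2$ and relation (4) of the definition. So it carries true identities to true identities. I will not use the involution $*$, since this appendix is presumably meant to feed into its construction; each left/right statement is therefore proved separately.

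For part (2), I use that $g_3$ commutes with $e$, the braid relations, and $g_1 e = e g_1 = q e$. For the left action:
$$g_3\ex 2 = e\,g_3g_2g_3\,g_1\inv g_2\inv e = e\,g_2g_3\,(g_2 g_1\inv g_2\inv)\,e = e\,g_2g_3g_1\inv g_2\inv g_1\,e = q\,\ex 2.$$
Here I used $g_2g_1\inv g_2\inv = g_1\inv g_2\inv g_1$, which is the inverse of the braid relation. For the right action I start instead from the expression $\ex 2 = e g_2\inv g_3\inv g_1 g_2 e$ of part (1). Commuting $g_1$ past $g_3\inv$ and using $g_3\inv g_2 g_3 = g_2 g_3 g_2\inv$ gives $e g_2\inv g_1 g_2 g_3 g_2\inv e$. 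Then $g_2\inv g_1 g_2 = g_1 g_2 g_1\inv$, and $e g_1 = q e$ turns this into $q\, e g_2 g_3 g_1\inv g_2\inv e = q\ex 2$. The $g_3\inv$ statements follow from the quadratic relation $g_3\inv = g_3 - (q-q\inv)$. Combining (2) with $e g_1 = qe$ shows that $g_1^{\pm1}$ and $g_3^{\pm1}$ act on either side of $\ex 2$ by $q^{\pm1}$. In particular $g_3g_1\inv$ and $g_3\inv g_1$ both fix $\ex 2$ on either side.

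For part (3), the second equality $g_2\inv g_3\inv g_1g_2\ex 2 = \ex 2$ is the image of relation (4) of Definition \ref{def qBr} under the left-handed substitution, since that substitution fixes $\ex 2$. For the first equality, set $c = q - q\inv$ and expand $g_3g_1\inv = (g_3\inv + c)(g_1 - c)$, so that
$$g_2\inv g_3 g_1\inv g_2 = g_2\inv g_3\inv g_1 g_2 + c\, g_2\inv(g_1 - g_3\inv)g_2 - c^2.$$
Applied to $\ex 2$, the first term gives $\ex 2$. It then remains to show $g_2\inv(g_1 - g_3\inv)g_2\,\ex 2 = c\,\ex 2$. For this I rewrite $g_2\inv g_1 g_2 = g_1 g_2 g_1\inv$ and $g_2\inv g_3\inv g_2 = g_3 g_2\inv g_3\inv$, and use the eigenvalue relations, which reduces the claim to $q\inv g_1 g_2\ex 2 - q\inv g_3 g_2\inv\ex 2 = c\,\ex 2$. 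To close this I would substitute $g_2 = g_2\inv + c$ and use $w\ex 2 = \ex 2$ with $w = g_2g_3g_1\inv g_2\inv$ to eliminate the remaining $g_2$ terms.

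Part (4) is the mirror image: the same expansions applied on the right, using the right-hand versions of (2) and relation (4) of the definition together with its left-handed image. I expect this last piece of bookkeeping in (3) to be the main obstacle, namely controlling the terms in which a single $g_2^{\pm1}$ acts on $\ex 2$. If the direct reduction stalls, I would instead expand the first expression in part (1), $\ex 2 = e g_2\inv g_3 g_1\inv g_2 e$, and compare it with the defining word via the same skein-type expansion.
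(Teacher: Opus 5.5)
Your part (2) is fine. The right-hand computation via the second expression in (1) is a valid alternative to the paper's use of $g_3\powerpm(g_2 g_3 g_1\inv g_2\inv) = (g_2 g_3 g_1\inv g_2\inv)g_1\powerpm$ together with relation (4) of the definition.

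There is a genuine gap in parts (3) and (4). You assert that the substitution $g_i\mapsto g_i\inv$, $q\mapsto q\inv$, $r\mapsto r\inv$ preserves relation (4) ``by part (1)''. Part (1) only tells you that the substituted element $e\,g_2\inv g_3\inv g_1 g_2\,e$ equals $\ex 2$. The substituted relation (4) reads
\[
g_2\inv g_3\inv g_1 g_2\,\ex 2 = \ex 2 = \ex 2\, g_2\inv g_3\inv g_1 g_2 ,
\]
and this is exactly the second equality of (3) together with the second equality of (4). So the symmetry you invoke presupposes what is to be proved; the same caution you applied to $*$ applies here. The circularity also reaches the first equality of (3). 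In your expansion, the leading term $g_2\inv g_3\inv g_1 g_2\,\ex 2$ is replaced by $\ex 2$ using that same second equality. Put $X = g_2\inv g_3 g_1\inv g_2$, so that $X\inv = g_2\inv g_3\inv g_1 g_2$. What your computation establishes without circularity is only $X\ex 2 = X\inv \ex 2$, i.e.\ $X^2\ex 2 = \ex 2$, and that does not give $X\ex 2=\ex 2$. Part (4), which you propose to handle by the same mirror argument, has the same problem.

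The repair is to expand the outer factors instead of the middle one, so that the leading term is the word from the definition itself. With $c = q-q\inv$,
\[
g_2\inv g_3 g_1\inv g_2\,\ex 2 = (g_2 - c)\,g_3 g_1\inv\,(g_2\inv + c)\,\ex 2 = \ex 2 + c\bigl(g_2 g_3 g_1\inv - g_3 g_1\inv g_2\inv - c\, g_3 g_1\inv\bigr)\ex 2 .
\]
Now use $g_3 g_1\inv\ex 2 = \ex 2$, which follows from your part (2), and $g_3 g_1\inv g_2\inv \ex 2 = g_2\inv\ex 2$, which is relation (4) of the definition. These reduce the bracket to $(g_2 - g_2\inv - c)\ex 2 = 0$. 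The second equality of (3) then follows by inversion, since $g_2\inv g_3\inv g_1 g_2 = (g_2\inv g_3 g_1\inv g_2)\inv$, with no appeal to any symmetry. Part (4) is the mirror computation on the right. This is the paper's argument. Only after it is done does the left-handed substitution become a legitimate tool.
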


\begin{proof}
	Part (1) is from  \cite{Wenzl-qBr1}, Lemma 3.2.    For part (2),  note that the braid relations give $g_3\powerpm (g_2 g_3 g_1\inv g_2\inv) =
(g_2 g_3 g_1\inv g_2\inv)  g_1\powerpm$, so the assertion follows from Definition \ref{def qBr}  parts (3) and (4).
	For the first equality in part (3),
	$$
		\begin{aligned}
			g_2\inv  g_3 g_1\inv  g_2 \ex 2 & =    (g_2 - (q - q\inv) )  g_3 g_1\inv  (g_2 \inv + (q - q\inv) )  \ex 2
\\
			                                & = g_2  g_3 g_1\inv g_2 \inv \ex 2 + (q - q\inv) \left(  - g_3 g_1\inv g_2 \inv + g_2   g_3 g_1\inv
-  (q - q\inv)    g_3 g_1\inv                                            \right ) \ex 2 .
		\end{aligned}
	$$
	Note that $g_2  g_3 g_1\inv g_2 \inv \ex 2  = \ex 2$  and $g_3 g_1\inv g_2 \inv  \ex 2  = g_2 \inv \ex 2$,
	by Definition \ref{def qBr} part (4).   We have $g_3 g_1 \inv \ex 2 = \ex 2$  by  Definition \ref{def qBr} part (3) and statement (2) of this
Lemma. So the last expression simplifies to
	$$
		\ex 2 + (q - q\inv) \left(  -g_2\inv +  g_2 - (q - q\inv)    \right)  \ex 2   = \ex 2,
	$$
	since $$ -g_2\inv +  g_2 - (q - q\inv)  = 0.$$
	To get the second equality in part (3),  multiply both sides of the first equality by $g_2\inv g_3\inv  g_1 g_2 $ on the left.  The proof of
part (4) is similar.
\end{proof}

In the following statement, ``algebra involution" means an algebra
anti-automorphism of order 2.

\begin{proposition}
	{\normalfont (Compare ~\cite{Nguyen1}, Proposition 3.14)}  There is a unique algebra involution $*$ on $\qBr_n$  that fixes the generators,
i.e. $e^* = e$, $g_i^* = g_i$ and $(g_i \inv)^* = g_i\inv$.
\end{proposition}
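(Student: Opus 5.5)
Uniqueness is immediate: an anti-automorphism is determined by its values on a generating set, and $e, g_1^{\pm1},\dots,g_{n-1}^{\pm1}$ generate $\qBr_n$. For existence, define $*$ on the free algebra on $e, g_i, g_i\inv$ as the anti-automorphism fixing each generator. Then show it preserves the ideal of relations of Definition \ref{def qBr}, i.e. the image under $*$ of each defining relation again holds in $\qBr_n$. The induced map on $\qBr_n$ is an anti-endomorphism whose square fixes the generators, hence is the identity, so it is an involution.

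The relations are checked in turn. The inverse, braid and quadratic relations are symmetric under reversal of words: the braid word $g_ig_{i+1}g_i$ reverses to itself, and commuting relations reverse to commuting relations. Likewise $e^2=\delta e$, $eg_i=g_ie$ for $i>2$, $eg_1=g_1e=qe$ and $eg_2e=re$ are each mapped to a relation in the same list. The only nontrivial relation is (4). Write $\ex 2 = e g_2 g_3 g_1\inv g_2\inv e$. Applying $*$ gives $\ex 2^* = e g_2\inv g_1\inv g_3 g_2 e$, which equals $e g_2\inv g_3 g_1\inv g_2 e$ because $g_1$ and $g_3$ commute. By Lemma \ref{Lemma additional e 2 relations}(1) this is $\ex 2$, so $\ex 2^*=\ex 2$ holds already in $\qBr_n$.

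Next apply $*$ to relation (4), $g_2g_3g_1\inv g_2\inv\ex 2=\ex 2=\ex 2 g_2g_3g_1\inv g_2\inv$. Using $\ex 2^*=\ex 2$, it becomes
$$\ex 2\, g_2\inv g_1\inv g_3 g_2=\ex 2= g_2\inv g_1\inv g_3 g_2\,\ex 2 .$$
Since $g_1\inv$ and $g_3$ commute, $g_2\inv g_1\inv g_3 g_2 = g_2\inv g_3 g_1\inv g_2$, so these are exactly the first equalities in parts (3) and (4) of Lemma \ref{Lemma additional e 2 relations}. That lemma is proved from the defining relations alone, so the reversed relations hold in $\qBr_n$.

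The main obstacle is this last verification for relation (4). It reduces precisely to Lemma \ref{Lemma additional e 2 relations}, so the plan is to cite parts (1), (3) and (4) there. One point needs care: the identity $\ex 2^*=\ex 2$ must be derived in $\qBr_n$ from part (1), not assumed from the existence of $*$, to avoid circularity.
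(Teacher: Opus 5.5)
Your proposal is correct and is essentially the paper's proof: extend the word-reversing anti-involution of the free algebra, check that the ideal of relations is $*$-stable, and treat relation (4) of Definition~\ref{def qBr} as the only nontrivial case, which Lemma~\ref{Lemma additional e 2 relations} handles. Your version makes explicit what the paper's one-line argument leaves implicit. First, $e_{(2)}^* = e_{(2)}$ holds in $\qBr_n$ by part (1) together with $g_1^{-1}g_3 = g_3g_1^{-1}$; once that is in hand, the reversed forms of relation (4) are exactly the first equalities of parts (3) and (4).
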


\begin{proof}
	Let $\mathcal F$ be the free associative unital algebra on the symbols $e$ and $g_1\powerpm, \dots, g_{n-1}\powerpm$, and let $\mathcal I$ be
the ideal in $\mathcal F$ generated by the defining relations for $\qBr_n$ and the relations $g_i g_i\inv = g_i\inv g_i = 1$.    Let $*$ be the
algebra involution on $\mathcal F$ fixing the generators.
	The additional relations given in the previous Lemma show that $\mathcal I$ has a $*$-invariant set of generators and therefore  is
$*$--invariant.
\end{proof}

\subsection{Temperley-Lieb elements}   \label{appendix TL}
Recall the elements $e_j$ from Definition \ref{TL elements in q-Brauer}, and
the subalgebras $\qBr_{j, n} = \alg\{e_j, g_j, g_{j+1}, \dots, g_{n-1}\}$,

\begin{lemma}
	\label{commutation of e j and g k}    $e_j$ commutes with $g_k$ for $k \ge j+2$.
\end{lemma}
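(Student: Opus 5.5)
We will prove that $e_j$ commutes with $g_k$ for $k \ge j+2$ by induction on $j$, starting from the defining relations of $\qBr_n$ and the recursive formula for $e_{j+1}$ in Definition \ref{TL elements in q-Brauer}.

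\emph{Base case $j=1$.} Here $e_1 = e$, and Definition \ref{def qBr} part (3) says directly that $e g_k = g_k e$ for $k > 2$, that is, for $k \ge 3$.

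\emph{Inductive step.} Assume $e_j$ commutes with $g_k$ for every $k \ge j+2$. By definition,
$$
e_{j+1} = x\, e_j\, x\inv, \qquad\text{where } x = g_j g_{j+1} \text{ if $j$ is odd and } x = g_j\inv g_{j+1}\inv \text{ if $j$ is even.}
$$
Fix $k \ge j+3$. The generator $g_k$ commutes with $g_j$ and with $g_{j+1}$, since both indices differ from $k$ by at least $2$. Hence $g_k$ commutes with $x$ and with $x\inv$ in either parity case. Because $k \ge j+2$, the induction hypothesis also gives $g_k e_j = e_j g_k$. Therefore
$$
g_k e_{j+1} = g_k x e_j x\inv = x g_k e_j x\inv = x e_j g_k x\inv = x e_j x\inv g_k = e_{j+1} g_k,
$$
which is the claim for $j+1$.

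Only the far-commutation braid relations and the base relation $e g_k = g_k e$ for $k>2$ are used, so the argument works over any ground ring. The parity twist in the definition of $e_{j+1}$ causes no difficulty, because $g_k$ commutes with $g_j\powerpm$ and $g_{j+1}\powerpm$ alike.
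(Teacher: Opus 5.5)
Your proof is correct and follows the paper's own argument: the base case $j=1$ comes from Definition \ref{def qBr}(3), and the inductive step uses $e_{j+1} = \Ad(g_j^{\pm 1} g_{j+1}^{\pm 1})(e_j)$ together with far commutation of the braid generators. You have simply written out the details that the paper's one-line proof leaves to the reader.
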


\begin{proof}
	For $j = 1$, this is in Definition \ref{def qBr}.  For $j >1$, it follows from the definition of $e_j$ and induction on $j$.
\end{proof}

\begin{lemma}
	\label{lemma: B4 isomorphisms}  For $j \ge 1$ and $k \ge 2$,    $e_1 \mapsto e_{j+1}$  and $g_\ell \mapsto g_{\ell +j }$  for $1 \le \ell \le
k-1$ determines an isomorphism $\psi_{j}$ from
	$\qBr_{k}$ to $\qBr_{j+1, j+k}$
\end{lemma}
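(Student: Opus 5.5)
To prove Lemma \ref{lemma: B4 isomorphisms}, I would first show that the assignment defines a homomorphism. Then I would show it is surjective and injective. The most economical route is to realize $\psi_j$ as the restriction of an inner automorphism $\Ad(w_j)$ of $\qBr_M$, where $w_j$ is a product of the $g$'s. This also yields the trace-preservation claim of Lemma \ref{lemma: shift isomorphisms}.

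\emph{Construction of $w_j$.} By Definition \ref{TL elements in q-Brauer}, $e_{j+1} = \Ad(u_j)(e_j)$ with $u_j = g_jg_{j+1}$ for $j$ odd and $u_j = g_j\inv g_{j+1}\inv$ for $j$ even. Put $w_j = u_j u_{j-1}\cdots u_1$, so that $\Ad(w_j)(e_1) = e_{j+1}$ by induction.

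Next I check that $\Ad(u_i)(g_\ell) = g_{\ell+1}$ for $\ell \ge i+1$; this is the braid relation $g_ig_{i+1}g_\ell = g_{\ell+1}g_ig_{i+1}$ for $\ell = i+1$ and commutation for $\ell > i+1$. The same holds with inverses. The problem is that $\ell = i$ fails, so I cannot naively compose. Instead I would verify directly that $\Ad(w_j)(g_\ell) = g_{\ell+j}$ for $1\le \ell\le k-1$.

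Here I expect an obstacle: $u_1 = g_1g_2$ sends $g_1$ to $g_1g_2g_1g_2\inv g_1\inv$, which is not $g_2$. So the choice of $w_j$ must be adjusted. I would try $w_j = \prod (g_{j}^{\pm} g_{j+1}^{\pm}\cdots g_{j+k-1}^{\pm})$-type "shift" elements: the full cycle $c = g_1g_2\cdots g_{M-1}$ satisfies $c\, g_\ell\, c\inv = g_{\ell+1}$ for $\ell \le M-2$. Its inverse-version $c' = g_1\inv\cdots g_{M-1}\inv$ does the same. I would then show $\Ad(c)(e_1)$ or $\Ad(c')(e_1)$ equals $e_2$. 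Since $e_1$ commutes with $g_3,\dots$, one gets
\[
\Ad(g_1g_2\cdots g_{M-1})(e_1) = \Ad(g_1g_2)(e_1)\cdot(\text{correction}),
\]
and in fact $\Ad(g_1g_2)(e_1)$ equals $\Ad(g_1g_2g_3\cdots)(e_1)$ after commuting $g_3\cdots g_{M-1}$ through $e_1$ from the right. More precisely, $g_3\cdots g_{M-1}e_1(g_3\cdots g_{M-1})\inv = e_1$. Hence $\Ad(c)(e_1)=e_2$. Similarly $\Ad(c'')(e_2)=e_3$ with $c''$ the inverse cycle starting at $g_2$. Alternating between the two cycle types according to parity, and using Lemma \ref{commutation of e j and g k}, produces $w_j$ with $\Ad(w_j)(e_1)=e_{j+1}$ and $\Ad(w_j)(g_\ell) = g_{\ell+j}$ for $\ell \le k-1$, valid whenever $j+k\le M$. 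The bookkeeping of which cycle length keeps each $g_\ell$ in range is routine.

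\emph{Conclusion.} Given $w_j$, $\Ad(w_j)$ is an automorphism of $\qBr_M$. Its restriction to the subalgebra $\qBr_k$ (generated by $e_1,g_1,\dots,g_{k-1}$) is injective, and it maps this subalgebra onto the subalgebra generated by the images, namely $\qBr_{j+1,j+k}$. Because the map is implemented by conjugation, the relations are automatically respected and no relation check is needed. Trace preservation follows since $\tr$ is a trace.

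The main obstacle is choosing $w_j$ so that it simultaneously shifts all the $g_\ell$ and sends $e_1$ to $e_{j+1}$ with the correct parity twist. I would handle this with the alternating cycles described above.
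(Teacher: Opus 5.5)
Your final construction is exactly the paper's proof: conjugate by $w_j = g_{j,M-1}^{\pm}\cdots g_{3,M-1}^{+}\,g_{2,M-1}^{-}\,g_{1,M-1}^{+}$ with alternating signs, using that each full cycle shifts $g_\ell\mapsto g_{\ell+1}$ on its range and that $e_s$ commutes with $g_t$ for $t\ge s+2$, so the $s$-th cycle sends $e_s$ to $e_{s+1}$. Your discarded exploration has the diagnosis backwards: by the braid relation $\Ad(g_1g_2)(g_1)=g_2$, and it is the $g_\ell$ with $\ell\ge i+1$ that $\Ad(u_i)$ fails to shift; this does not affect the argument you actually settle on.
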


\begin{proof}
	Fix any $m \ge j+k -1$.  For any $s$ with $1 \le s \le j$,  since $e_s$ commutes with $g_t$ for $t \ge s+2$,
	$$
		e_{s+1} =
		\begin{cases}
			\Ad(g_{s, m}^+ )(e_s) & \text{if} \   s \ \text{is odd}   \\
			\Ad(g_{s, m}^- )(e_s) & \text{if}  \  s  \ \text{is even} \\
		\end{cases}
		,
	$$
	while $\Ad(g_{s, m}^\pm )(g_\ell)  = g_{\ell+1}$ for $s \le \ell \le m-1$.
	By induction on $j$,
	$$
		\Ad(g_{j, m}^\pm \cdots  g_{3, m}^+  g_{2, m}^-  g_{1, m}^+)
	$$
	maps $e_1 \mapsto e_{j+1}$ and $g_\ell \mapsto g_{\ell+j }$  for $1 \le \ell \le k-1$, with the first factor on the left being $g_{j, m}^+$ if
$j$ is odd and  $g_{j, m}^-$ if $j$ is even.
\end{proof}

We obtain a number of identities involving the elements $e_j$.

\begin{lemma}
	\label{braid like relations on e's and g's}
	For all admissible values of $j$,
	\begin{enumerate}
		[label = {\normalfont (\arabic*)}]
		\item $e_j^2 = \delta e_j$.
		\item  $g_j\powerpm e_j = e_j g_j\powerpm = q\powerpm e_j$.
		\item $e_j  g_{j+1}\powerpm e_j = r\powerpm e_j$.
		\item $e_{j+1} g_j\powerpm e_{j+1} = r\powerpm e_{j+1}$.
		\item $e_j e_{j+1} e_j = e_j$ and $e_{j+1} e_j e_{j+1} = e_{j+1}$.
		\item  $$e_j e_{j+1} = q r e_j g_{j+1}\inv g_j \inv = q r  g_{j+1}\inv g_j \inv e_{j+1}$$ if $j$ is odd, and
		      $$ e_j e_{j+1} = q\inv r\inv e_j g_{j+1}g_j =    q\inv r\inv g_{j+1}g_j  e_{j+1}$$
		      if $j$ is even, and similarly for $e_{j+1} e_j$ (with the order of all words reversed).
		\item $e_j e_{j+1} g_j$ and $e_j g_{j+1} g_j$  are in the span of $e_j$, $e_j e_{j+1}$, and $e_j g_{j+1}$;  and similarly for $g_j
e_{j+1} e_j$ and $g_j g_{j+1} e_j$  (with the order of all words reversed).
	\end{enumerate}
\end{lemma}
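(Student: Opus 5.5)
The plan is to prove all seven identities by reducing them, via the shift isomorphisms, to the two smallest cases $j=1$ and $j=2$. For $j=1$ and $j=2$ we compute directly from the defining relations in Definition \ref{def qBr}.

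\textbf{Reduction.} Lemma \ref{lemma: B4 isomorphisms} gives $\psi_j$ with $e_1\mapsto e_{j+1}$ and $g_\ell\mapsto g_{\ell+j}$. By Lemma \ref{lemma even shifts}, $\psi_{2s}$ maps $e_i\mapsto e_{i+2s}$ for all $i$. So every identity involving $e_j,e_{j+1},g_j,g_{j+1}$ with $j$ odd is the image under $\psi_{j-1}$ of the corresponding identity for $j=1$. With $j$ even it is the image under $\psi_{j-2}$ of the identity for $j=2$. The parity split in (6) comes from the fact that $e_2$ is obtained from $e_1$ by conjugating with $g_1g_2$, whereas $e_3$ is obtained from $e_2$ by conjugating with $g_2^{-1}g_3^{-1}$. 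There is an alternative for the even case: by the proof of Lemma \ref{lemma shift of e(s)}, $\psi_1(e'_i)=e_{i+1}$, so the even identities are the left-handed ($q\to q^{-1}$, $r\to r^{-1}$) versions of the odd ones. That explains the inversions of $q,r$ and of the $g$'s in the even formulas of (6).

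\textbf{Base computations.}
\begin{itemize}
\item (1) and (2). Since $e_{j}$ is a conjugate of $e_1$ and $\Ad$ of the relevant $g$-words sends $g_1$ to $g_j$ (as in the proof of Lemma \ref{lemma: B4 isomorphisms}), these follow at once from $e^2=\delta e$ and $eg_1=g_1e=qe$, with $g_1^{-1}e=q^{-1}e$.
\item (3). For $j=1$ this is $eg_2e=re$ and $eg_2^{-1}e=r^{-1}e$. For $j=2$ we instead transport the relation through the left-handed presentation.
\item (4). For $j=1$ we write $e_2=g_1g_2\,e\,g_2^{-1}g_1^{-1}$. We use (2) to absorb the outer $g_1$'s against $e$ and the braid relation $g_1g_2g_1=g_2g_1g_2$, reducing $e_2g_1^{\pm1}e_2$ to $g_1g_2(eg_2^{\pm1}e)g_2^{-1}g_1^{-1}$. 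This is $r^{\pm1}e_2$.
\item (6). For $j=1$ we compute $e\,e_2=e\,g_1g_2eg_2^{-1}g_1^{-1}=q\,eg_2e\,g_2^{-1}g_1^{-1}=qr\,e\,g_2^{-1}g_1^{-1}$. The second form, $qr\,g_2^{-1}g_1^{-1}e_2$, follows by writing $e_2g_1g_2=g_1g_2e$. The reversed versions follow by applying $*$ in the left-handed sense, that is, using Lemma \ref{left hand lemma 1}.
\item (5). From (6), $e_1e_2e_1=qr\,e\,g_2^{-1}g_1^{-1}e=r\,e\,g_2^{-1}e=e$ by (3). Similarly $e_2e_1e_2=qr\,g_2^{-1}g_1^{-1}e_2e_1e_2/\dots$; here we instead write $e_2e_1e_2$ using the reversed form of (6) together with (4).
\item (7). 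We use (6) to write $e_je_{j+1}g_j$ as a multiple of $e_jg_{j+1}^{\mp1}$. The quadratic relation $g_{j+1}^{-1}=g_{j+1}-(q-q^{-1})$ then puts it in the span of $e_j$ and $e_jg_{j+1}$. For $e_jg_{j+1}g_j$ we use the skein-type expansion of $g_j$, $g_j=g_j^{-1}+(q-q^{-1})$, together with (6) backwards: $e_jg_{j+1}^{-1}g_j^{-1}$ is proportional to $e_je_{j+1}$.
\end{itemize}

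\textbf{Main obstacle.} The delicate point is the $j=2$ base case, and in particular checking that the inverses and the $q^{-1}r^{-1}$ factors come out as stated. That requires the interplay between the right-handed elements $e_j$ and the left-handed $e'_j=e_j^*$. I would handle it by proving (2)–(6) for the pair $(e'_1,e'_2)$ in the presentation with generators $g_i^{-1}$ (formally identical to the $j=1$ case with $q,r$ inverted) and then transporting by $\psi_1$. The remaining work is to check that $\psi_1$ really sends $e'_2$ to $e_3$ and $g_1$ to $g_2$ while preserving the relations used.
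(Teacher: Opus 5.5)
Your proof is correct. Its computational core is the paper's: write $e_{j+1}$ as a conjugate of $e_j$, absorb the outer braid generators with (2), finish with (3) and the braid relation, and get (7) from (6) plus the quadratic relation.

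The difference is organizational. The paper does these computations directly for general $j$ in both parities. It also gets (1)--(3) for every $j$ straight from $\psi_{j-1}$, so your left-handed detour for (3) at $j=2$ is unnecessary, since $\psi_1(eg_2^{\pm1}e)=e_2g_3^{\pm1}e_2$.

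You instead compute only at $j=1$ and transport the result three ways:
\begin{enumerate}
\item $\psi_{2s}$ handles even shifts.
\item The left-handed presentation together with $\psi_1(e'_i)=e_{i+1}$ handles even $j$.
\item $*$ together with $e_j^*=e'_j$ handles the reversed identities.
\end{enumerate}
This is legitimate. The relations you use at $j=1$ are $e^2=\delta e$, $g_1^{\pm1}e=eg_1^{\pm1}=q^{\pm1}e$, $eg_2^{\pm1}e=r^{\pm1}e$, and the braid and quadratic relations. These are invariant under $g_i\leftrightarrow g_i^{-1}$, $q\leftrightarrow q^{-1}$, $r\leftrightarrow r^{-1}$, so you do not even need the $e_{(2)}$ relation in the left-handed presentation.

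What your route buys is an explanation of the parity pattern, and it produces the correct scalars in the reversed identities, which the paper's ``similarly'' leaves implicit. For odd $j$ one gets $e_{j+1}e_j=q^{-1}r^{-1}g_jg_{j+1}e_j=q^{-1}r^{-1}e_{j+1}g_jg_{j+1}$. This is not the forward formula read backwards: for instance $e_2e_1=q^{-1}r^{-1}g_1g_2e_1\neq qr\,g_1^{-1}g_2^{-1}e_1$ in general. The cost is dependence on the left-handed machinery and the involution, which the direct computation avoids.

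There are a few small slips to fix.

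\textbf{Part (5).} Your line for $e_2e_1e_2$ is garbled. A clean route is $e_2e_1e_2=qr\,e_2g_2^{-1}g_1^{-1}e_2=r\,e_2g_1^{-1}e_2=e_2$. This uses the second form of (6), then (2) at $j=2$, then (4) at $j=1$. Your stated alternative via the reversed (6) also works, but it needs (2) as well as (4).

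\textbf{Part (7), even $j$.} The roles flip. Here (6) gives $e_jg_{j+1}g_j=qr\,e_je_{j+1}$ directly, whereas $e_je_{j+1}g_j=q^{-1}r^{-1}e_jg_{j+1}g_j^2$ needs the quadratic relation. So $e_je_{j+1}g_j$ is not a multiple of $e_jg_{j+1}^{\pm1}$ there. Your transport yields a statement about $e_2e_3g_2^{-1}$, and you must convert using $g_2^{-1}=g_2-(q-q^{-1})$.

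\textbf{Part (7), odd $j$.} To reach $e_jg_{j+1}^{-1}g_j^{-1}$ from $e_jg_{j+1}g_j$, you must expand both $g_{j+1}$ and $g_j$, not only $g_j$.
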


\begin{proof}
	Parts (1), (2), and (3) are immediate from  Definition \ref{def qBr} and  Lemma \ref{lemma: B4 isomorphisms}.   For parts (4),  (5), and (6)
write each instance of $e_{j+1}$ as
	$\Ad(g_j g_{j+1})(e_j)$  or $\Ad(g_j\inv g_{j+1}\inv)(e_j)$, depending on the parity of $j$.    The results then follow from straightforward
computations, using parts (2) and (3) and braid  relations.  For part (7), use part (6) and the quadratic relation for the braid generators.
\end{proof}

Let $G_i= g_i g_{i+1} g_{i-1}\inv g_i\inv$. Let $\sigma_i$, $i \ge 1$, denote
the generators of the Artin braid group of type $A$, satisfying $\sigma_i
	\sigma_j = \sigma_j \sigma_i$ if $|i-j| \ge 2$, and $\sigma_i \sigma_{i+1}
	\sigma_i = \sigma_{i+1} \sigma_i \sigma_{i+1}$ for $i \ge 1$. Then $\sigma_i
	\mapsto G_{2i}$ and $\sigma_i \mapsto G_{2i-1}$ are braid group homomorphisms;
cf. \cite{Wenzl-qBr1}, Lemma 1.3. Let $G_i' = g_i\inv g_{i+1}\inv g_{i-1} g_i =
	(G_i\inv)^* $. With this notation, we have $$ e_{(2)} = e G_2 e = e G_2\inv e =
	e G_2^* e = e G_2' e. $$ Moreover, $\ex 2$ is invariant under multiplication on
the left or right by $G_2$, $G_2\inv$, $G_2^*$, or $G_2'$, see Lemma \ref{Lemma
	additional e 2 relations}.

For $k \ge 1$, write $e_{(k, 2)} = e_k G_{k+1} e_k$. The isomorphism $\qBr_4
	\to \alg\{e_k, g_k, g_{k+1}, g_{k+2}\}$, mapping $e \to e_k$ and $g_1, g_2,
	g_3$ to $g_k, g_{k+1}, g_{k+2}$, respectively, also maps $G_2 \mapsto G_{k+1}$,
and $e_{( 2)} \mapsto e_{(k, 2)} $. Hence, $$ e_{(k, 2)} = e_k G_{k+1} e_k =
	e_k G_{k+1}\inv e_k = e_k G_{k+1}^* e_k =e_k G_{k+1}' e_k.$$ Moreover, $\ex {k,
		2}$ is invariant under multiplication on the left or right by $G_{k+1}$,
$G_{k+1}\inv$, $G_{k+1}^*$, or $G_{k+1}'$. Depending on the parity of $k$,
$e_{k+2}$ is either $\Ad(G_{k+1})(e_k)$ or $\Ad(G_{k+1}')(e_k)$. In the first
case,
\begin{equation}
	\label{e k 2}
	e_k e_{k+2} = e_k G_{k+1} e_k G_{k+1}\inv= e_{(k, 2)}G_{k+1}\inv= e_{(k, 2)} = e_k G_{k+1} e_k.
\end{equation}
and similarly in the second case, $e_k e_{k+2} =  e_{(k, 2)}$.  Likewise, in either case,
$
	e_{k+2} e_k = e_{(k, 2)},
$
so $e_k$ commutes with $e_{k+2}$.

\begin{lemma}
	\label{lemma: commutation of ej}  $e_j$ commutes with $e_k$   if $|j-k| \ge 2$.
\end{lemma}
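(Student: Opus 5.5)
We argue by reducing to the already established fact that $e_k$ commutes with $e_{k+2}$ (equation (\ref{e k 2}) and the remark following it), using the shift isomorphisms and the commutation of the $e_j$ with far-away braid generators. Assume without loss of generality that $j < k$ and $k - j \ge 2$.

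\emph{Case $k - j = 2$.} This is exactly the computation just carried out. There we showed $e_j e_{j+2} = e_{(j,2)} = e_{j+2} e_j$, in both parity cases.

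\emph{Case $k - j \ge 3$.} Since $e_j \in \qBr_{j, M}$, the plan is to show that $e_k$ is obtained from $e_{j+2}$ by conjugating with a product of braid generators $g_\ell$ with $\ell \ge j+2$, all of which commute with $e_j$ by Lemma \ref{commutation of e j and g k}. Indeed, by Definition \ref{TL elements in q-Brauer},
\[
e_{t+1} = \Ad(g_t^{\pm 1} g_{t+1}^{\pm 1})(e_t).
\]
Iterating from $t = j+2$ up to $t = k-1$ gives $e_k = \Ad(w)(e_{j+2})$, where $w$ is a word in $g_{j+2}^{\pm1}, \dots, g_{k}^{\pm1}$. (We have $k \le M-1$ by admissibility, so these generators exist.) Every letter of $w$ has index $\ge j+2$, so $w$ commutes with $e_j$. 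Therefore
\[
e_j e_k = e_j\, w e_{j+2} w^{-1} = w\, e_j e_{j+2}\, w^{-1} = w\, e_{j+2} e_j\, w^{-1} = e_k e_j,
\]
where the middle step uses the case $k - j = 2$.

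The only point requiring care is the index bookkeeping: the conjugating word for $e_k$ built from $e_{j+2}$ must involve only $g_\ell$ with $\ell \ge j+2$, and never $g_{j+1}$. This holds because the recursion $e_{t+1} = \Ad(g_t^{\pm1} g_{t+1}^{\pm1})(e_t)$ only uses indices $t, t+1 \ge j+2$ once we start at $t = j+2$. No obstacle is expected beyond the $k - j = 2$ case, which is already settled.
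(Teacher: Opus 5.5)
Your proof is correct and is essentially the paper's argument. The paper fixes the smaller index, takes $e_k e_{k+2} = e_{k+2} e_k$ as the base case, and inducts one step at a time using $e_{j+1} = \Ad(g_j^{\pm 1} g_{j+1}^{\pm 1})(e_j)$ together with Lemma \ref{commutation of e j and g k}; you unroll that induction into a single conjugating word $w$ in $g_{j+2}^{\pm 1}, \dots, g_k^{\pm 1}$.
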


\begin{proof}
	We have just seen that $e_k$ commutes with $e_{k+2}$.   Let $j \ge k+2$.  Then $e_{j+1} = \Ad(g_j g_{j+1})(e_j)$  or
	$e_{j+1} = \Ad(g_j\inv g_{j+1}\inv)(e_j)$, depending on the parity of $j$. In either case, if $e_k$ commutes with $e_j$,  then $e_k$ commutes
with $e_{j+1}$.  By induction, $e_k$ commutes with $e_j$ for all $j \ge k+2$.
\end{proof}

\begin{remark}
	Lemma \ref{braid like relations on e's and g's} parts (1) and (5) and Lemma \ref{lemma: commutation of ej}  show that the elements $e_j$
satisfy the Temperley-Lieb relations.
\end{remark}

\subsection{The algebras $A_n$ and the Temperley-Lieb elements}  \label{appendix An properties}

As in the main text, fix some large integer $M$ and define, for $n \ge 1$ $$A_n
	= \qBr_{M - n + 1, M} = \alg\{ e_{M-n+1}, g_{M-n+1}, \dots, g_{M-1}\} =
	\psi_{M-n}(\qBr_n). $$ and $A_0 = S$.

\begin{lemma}
	\label{A k and A k-1} \phantom{xxx}
	\begin{enumerate}
		[label = {\normalfont (\arabic*)}]
		\item For each $k \ge 2$, $A_k$ is the span of $A_{k-1}$, $A_{k-1} e_{M-k+1}
			      A_{k-1}$, and $A_{k-1} g_{M-k+1} A_{k-1}$,
		\item For each $k \ge 2$, $A_k e_{M-k+1} = A_{k-1} e_{M-k+1} $.
	\end{enumerate}
\end{lemma}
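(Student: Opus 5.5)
Both statements transport to $\qBr_k$ through the shift isomorphism $\psi_{M-k}$, so I will prove the analogous statements there and transport them back. By Lemma \ref{lemma: shift isomorphisms}, $\psi_{M-k}$ maps $\qBr_k$ onto $A_k$, sends $e=e_1$ to $e_{M-k+1}$, and sends $g_\ell$ to $g_{M-k+\ell}$. It restricts to an isomorphism of $\qBr_{2,k}=\alg\{e_2,g_2,\dots,g_{k-1}\}$ onto $\qBr_{M-k+2,M}=A_{k-1}$. To see this, note that the image of $e_2$ is $e_{M-k+2}$ or a ``left-handed'' variant. The variant still generates the same algebra together with the $g$'s, because $e_{j+1}$ is conjugate to $e_j$ by an element of the Hecke part, and by Lemma \ref{lemma shift of e(s)} the parity issue is harmless. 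So it suffices to prove, in $\qBr_k$ with $B=\qBr_{2,k}$:
\[
\text{(1)}\ \qBr_k = B + BeB + Bg_1B,\qquad \text{(2)}\ \qBr_k e = Be .
\]

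For (1), let $V=B+BeB+Bg_1B$. Since $1\in V$, it is enough to show $V$ is stable under left multiplication by the generators $e=e_1$, $g_1^{\pm1}$. Multiplication by elements of $B$ is clear.

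First, $e\cdot B$ and $g_1\cdot B$ lie in $V$ trivially.

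Next, for $x\in BeB$ or $Bg_1B$ we must handle $e\,b\,e$, $e\,b\,g_1$, $g_1 b e$ and $g_1 b g_1$ with $b\in B$. I will reduce $b$ modulo the ``next'' subalgebra $\qBr_{3,k}$, which commutes with $e_1,g_1$ (Lemma \ref{commutation of e j and g k}). By induction on $k$, applied to $B\cong \qBr_{k-1}$ via the shift, $b$ lies in
\[
\qBr_{3,k} + \qBr_{3,k}e_2\qBr_{3,k} + \qBr_{3,k}g_2\qBr_{3,k}.
\]
The outer factors commute past $e_1,g_1$. This reduces everything to the finite list of words $e_1 y e_1$, $e_1 y g_1$, $g_1 y e_1$, $g_1 y g_1$ with $y\in\{1,e_2,g_2\}$.

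Each of these is handled by the local relations of Lemma \ref{braid like relations on e's and g's}, parts (2), (3), (5), (6), (7), together with the quadratic relation and the braid relation $g_1g_2g_1=g_2g_1g_2$. For example:
\begin{itemize}
\item $e_1e_2e_1=e_1$;
\item $e_1g_2e_1=re_1$;
\item $g_1e_2e_1$ is a combination of $e_1$, $e_2e_1$ and $g_2e_1$ by part (7);
\item $g_1g_2g_1=g_2g_1g_2\in Bg_1B$.
\end{itemize}
Since $g_1^{-1}=g_1-(q-q^{-1})$, this covers $g_1^{\pm1}$ as well. The base case $k=2$ is the direct statement that $\qBr_2$ is spanned by $1$, $e$ and $g_1$.

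For (2), the inclusion $Be\subseteq \qBr_k e$ is clear. For the reverse, apply (1): $\qBr_k e$ is spanned by $Be$, $BeBe$ and $Bg_1Be$.
\begin{itemize}
\item $g_1$ commutes with $e$ up to the scalar $q$, which handles words ending in $g_1$ next to $e$.
\item Axiom-type property (8), that $eBe\subseteq \qBr_{3,k}e$, is the key input. I will prove it alongside (2), by the same reduction to the words $e_1ye_1$ for $y\in\{1,e_2,g_2\}$, all of which lie in $\qBr_{3,k}e_1$. This gives $BeBe\subseteq Be$.
\item For $Bg_1Be$, write $b\in B$ via the induction as above. The terms reduce to $g_1 y e_1$ with $y\in\{1,e_2,g_2\}$. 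These are $qe_1$, then $g_1e_2e_1\in\spn\{e_1,e_2e_1,g_2e_1\}\subseteq Be_1$, and finally $g_1g_2e_1=e_2e_1\cdot(\text{scalar})$ by part (6) up to lower terms, which is in $Be_1$.
\end{itemize}

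The main obstacle is the bookkeeping in the inductive reduction. In particular, one must check that the shifted decomposition of $B$ uses $e_2$ (or $e_2'$) consistently with the parity twist, and that the relations of part (7) are available in the left-handed form where needed. I would verify this using Lemma \ref{left hand lemma 1} and Lemma \ref{lemma shift of e(s)}.
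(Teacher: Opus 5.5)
Your plan is the paper's own: induction on $k$, expand the middle factor by the inductive decomposition, move the outer pieces past the new generators, and finish with Lemma \ref{braid like relations on e's and g's}. But the step that drives the reduction is false: $\qBr_{3,k}$ does \emph{not} commute with $g_1$.

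It contains $e_3$, and the paper states explicitly that $e_j$ does not commute with $g_k$ for $k\le j-2$ (Remark after Definition \ref{TL elements in q-Brauer}). Lemma \ref{commutation of e j and g k} only says that $e_1$ commutes with $g_3,g_4,\dots$, which is the other direction. Since $\qBr_{3,k}$ does commute with $e_1$, your reduction is legitimate when an element of $\qBr_{3,k}$ only has to pass an $e_1$, e.g.\ $e_1\,c\,y\,c'\,e_1=c\,(e_1ye_1)\,c'$. It fails for $g_1\,c\,y\,c'\,g_1$, $g_1\,c\,y\,c'\,e_1$ and $e_1\,c\,y\,c'\,g_1$, and for the terms $Bg_1Be$ in part (2). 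So the reduction to the finite list $\chi y\chi'$ with $y\in\{1,e_2,g_2\}$ is unjustified. The paper's write-up makes the same move (``$w_0=a\chi\chi'$'' with $a\in A_{k-1}$ and $\chi=g_{M-k}$), so following it does not close this gap.

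For part (1), one way around it:
\begin{itemize}
\item Start from $\qBr_k=\sum_s H_k\ex sH_k$ \cite{Wenzl-qBr1}.
\item Write $H_k=H_{2,k}+H_{2,k}g_1H_{2,k}$ and $H_{2,k}=H_{3,k}+H_{3,k}g_2H_{3,k}$.
\item Write $\ex s=e_1X=Xe_1$ with $X=e_3e_5\cdots e_{2s-1}\in\qBr_{3,k}\subseteq\qBr_{2,k}$.
\end{itemize}
Then the only things that ever have to pass $g_1$ are Hecke elements of $H_{3,k}$; $X$ only has to pass $e_1$. The identities $g_1g_2e_1=qr\,e_2e_1$ and $e_1g_2g_1\in e_1\qBr_{2,k}$ finish it. Part (2) still needs a comparable argument.

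Separately, your transport step is wrong for half of the parities. With $j=M-k$ we have $\psi_j(e_2)=\Ad(g_{j+1}g_{j+2})(e_{j+1})$, and this equals $e_{j+2}$ only when $j$ is even. When $j$ is odd, $\psi_j^{-1}(A_{k-1})=\qBr'_{2,k}=(\qBr_{2,k})^*$, which is a different algebra. Already $\qBr_{2,3}=\spn\{1,e_2,g_2\}$, and $e_2^*\in\qBr_{2,3}$ would force $e_2^*=e_2$ (pass to $H_3$, then use $(e_2^*)^2=\delta e_2^*$), contradicting $e_2\neq e_2^*$.

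The same problem sits inside your induction: $\psi_1(\qBr_{2,k-1})\neq\qBr_{3,k}$, whereas $\psi_1(\qBr'_{2,k-1})=\qBr_{3,k}$. Lemma \ref{lemma shift of e(s)} does not dispose of this. Its proof says precisely that odd shifts carry $e'_j$, not $e_j$, to $e_{j+k}$, and it helps only for the products $\ex s$ because $\exprime s=\ex s$.

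This part is repairable. Prove the statements for both $\qBr_{2,k}$ and $\qBr'_{2,k}$, getting the latter from the presentation by $e,g_i^{-1}$ with parameters $r^{-1},q^{-1}$. Or avoid the transport altogether by working directly in $A_k$, as the paper does.
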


\begin{proof}
	For part (1), $A_2$ is the span of $\bm 1$,  $e_{M-1}$, $g_{M-1}$,  so the assertion is valid for $k = 2$.  Assume the assertion holds for some
value of $k \ge 2$ and consider $A_{k+1}$, which is the algebra generated by $A_k$ and by $e_{M-k}$, $g_{M-k}$.  Consider a word  $w$ in
	$e_{M-k}$, $g_{M-k}$, and elements of $A_k$, with two or more instances of $e_{M-k}$ or  $g_{M-k}$.  We need to show that $w$ is in the span of
words with fewer instances of  $e_{M-k}$ or  $g_{M-k}$.  Consider a subword $w_0 =\chi a \chi'$, where $a \in A_k$ and
	$\chi, \chi' \in \{e_{M-k}, g_{M-k}\}$.   Applying the induction hypothesis to $a$, either $a \in A_{k-1}$, or $a$ is in the span of words
	$a_1 \eta a_2$, with $a_i \in A_{k-1}$, and $\eta \in \{ e_{M-k+1}, g_{M-k+1}    \}$.   In the former case, $w_0 = a \chi \chi'$, and in the
latter case, $w_0$ is in the span of words $a_1 \chi \eta \chi' a_2$.  We now refer to Lemma \ref{braid like relations on e's and g's}, which
(along with the quadratic relation on the braid generators) shows that any such word is in the span of words with at most one instance  $e_{M-k}$
or  $g_{M-k}$.   The result follows by induction on the number of instances of  $e_{M-k}$ or  $g_{M-k}$ in the original word $w$.

	Part (2) follows by exactly the same argument. One only has to observe that
	(using Lemma \ref{braid like relations on e's and g's}) for $\chi\in \{e_{M-k},
		g_{M-k}\}$ and $\eta \in \{ e_{M-k+1}, g_{M-k+1} \}$, the words $\chi e_{M-k}$
	and $\chi \eta e_{M-k}$ are in $A_k e_{M-k}$.
\end{proof}

\begin{lemma}
	\label{Proposition  Ak properties}    \phantom{xxxx}
	\begin{enumerate}
		[label = {\normalfont (\arabic*)}]
		\item  $A_k e_{M-k+1} A_k = A_{k-1} e_{M-k+1} A_{k-1}$.
		\item  $e_{M-k+1} \in A_k$ commutes with $A_{k-2}$.
		\item $e_{M-k+1} A_k e_{M-k+1} = e_{M-k+1} A_{k-2}$
		\item $x \mapsto x \bar e_{M-k+1}$ is  injective  from $A_{k-1}$ to $A_k$.
		\item  In particular, $x \bar e_{M-k+1}$ is an injective homomorphism from $A_{k-1}$
		      to $A_k$.
	\end{enumerate}
\end{lemma}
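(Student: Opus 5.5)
The plan is to transport everything through the shift isomorphism $\psi_{M-k}\colon \qBr_k \to A_k$ of Lemma \ref{lemma: shift isomorphisms}. It sends $e_1 \mapsto e_{M-k+1}$ and $g_\ell \mapsto g_{\ell+M-k}$. Hence $A_{k-1} = \qBr_{M-k+2,M}$ corresponds to $\qBr_{2,k}$, and $A_{k-2}$ corresponds to $\qBr_{3,k}$. We may therefore work inside $\qBr_k$ with $e = e_1$, $B = \qBr_{2,k}$ and $C = \qBr_{3,k}$, using only the relations of Lemma \ref{braid like relations on e's and g's}, Lemma \ref{commutation of e j and g k} and Lemma \ref{lemma: commutation of ej}.

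\textbf{Parts (2) and (1).} Part (2) is immediate. $C$ is generated by $e_3, g_3, \dots, g_{k-1}$, and $e_1$ commutes with $g_\ell$ for $\ell \ge 3$ (Lemma \ref{commutation of e j and g k}) and with $e_3$ (Lemma \ref{lemma: commutation of ej}).

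For part (1), Lemma \ref{A k and A k-1}(2) gives $A_k e_{M-k+1} = A_{k-1} e_{M-k+1}$. I will prove the mirror statement $e_{M-k+1} A_k = e_{M-k+1} A_{k-1}$ by the same induction on the number of occurrences of $e_{M-k+1}, g_{M-k+1}$. This is legitimate because Lemma \ref{braid like relations on e's and g's} also records the word-reversed identities (parts (6) and (7)). Note that the involution cannot be used to shortcut this step, since $A_n$ is not $*$-invariant. Combining the two one-sided statements,
$$A_k e A_k = A_{k-1} e A_k = A_{k-1} e A_{k-1}.$$

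\textbf{Part (3).} The inclusion $e C \subseteq e B e$, up to scalars, is clear from $e e_2 e = e$ and part (2). For the reverse inclusion, apply Lemma \ref{A k and A k-1}(1) twice: $\qBr_k$ is spanned by $B$, $BeB$, $Bg_1B$, and $B$ is spanned by $C$, $Ce_2C$, $Cg_2C$. Since $e$ commutes with $C$, the problem reduces to showing that $e\,w\,e \in eC$ for words $w$ built from $e_2^{\pm}$-type letters, $g_2^{\pm1}$, at most one middle letter $\chi \in \{e, g_1^{\pm1}\}$, and elements of $C$. The basic evaluations come from Lemma \ref{braid like relations on e's and g's}:
\[
e\,e_2\,e = e, \qquad e\,g_2^{\pm1}\,e = r^{\pm1}e, \qquad g_1^{\pm1}e = q^{\pm1}e.
\]
The hard terms are of the shape $e\,\eta\,c\,\chi\,c'\,\eta'\,e$ with $\eta,\eta' \in \{e_2, g_2\}$ and $c, c' \in C$. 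Here $c$ need not commute with $\eta$, which is the main obstacle. My first attempt is to push $c, c'$ outward using the identity $e\,e_{(2)} = e_{(2)}$-type invariance (Lemma \ref{Lemma additional e 2 relations}), together with $e_1e_3 = e_{(2)}$, so that $e_3$ and $g_3$ sitting between two copies of $e$ can be absorbed. If this bookkeeping becomes unmanageable, I will instead cite Wenzl's result (\cite{Wenzl-qBr1}) that $e\,\qBr_k\,e = e\,\qBr_{3,k}$. That result is also what the partial-closure conditional expectation there encodes.

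\textbf{Parts (4) and (5).} For (4), work over the generic ring $R$; this suffices because the axioms are stated over $R$ and then specialized using freeness. Suppose $x \in B$ and $xe = 0$. For every $y \in B$, Lemma \ref{lemma q-Brauer Markov property} gives
\[
0 = \tr(x e y) = \tr(e\,yx) = \delta^{-1}\tr(yx).
\]
Faithfulness of $\tr$ on $\qBr_{k-1}^F \cong B^F$ (shown via the Brauer-specialization determinant argument) then forces $x = 0$ over $F$, hence over $R \subseteq F$.

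For (5), I read the statement as asserting multiplicativity of $x \mapsto x\bar e$ on the part of the algebra commuting with $\bar e$, namely $A_{k-2}$ by part (2). There one has $x\bar e\, y\bar e = xy\,\bar e^2 = xy\,\bar e$, and injectivity is inherited from (4).
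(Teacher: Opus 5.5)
The genuine gap is in (3). Write $e=e_{M-k+1}$. You expand $A_k$ twice with Lemma \ref{A k and A k-1}(1) and are left with terms $e\,\eta\,c\,\chi\,c'\,\eta'\,e$ that you cannot reduce. This is a real obstruction: $c\in A_{k-2}$ need not commute with $\eta\in\{e_{M-k+2},g_{M-k+2}\}$, and when $\chi=g_{M-k+1}$ it need not commute with $\chi$ either, since $g_{M-k+1}$ does not commute with $e_{M-k+3}$. The ``push outward'' idea is never carried out, and the fallback citation is to a statement you have not pinned down, so (3) is unproved as written.

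The missing step is one you already use in (1). Lemma \ref{A k and A k-1}(2) gives $A_ke=A_{k-1}e$, hence
\[
eA_ke=e(A_ke)=eA_{k-1}e ,
\]
so the middle letter $\chi$ never arises. Now expand $A_{k-1}$ once, as the span of $A_{k-2}$, $A_{k-2}e_{M-k+2}A_{k-2}$ and $A_{k-2}g_{M-k+2}A_{k-2}$, and use (2). For $c,c'\in A_{k-2}$ this gives $ece=\delta\,ec$, then $e\,c\,e_{M-k+2}\,c'\,e=c\,(e\,e_{M-k+2}\,e)\,c'=e\,cc'$, and $e\,c\,g_{M-k+2}\,c'\,e=r\,e\,cc'$ by Lemma \ref{braid like relations on e's and g's}. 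This is exactly the paper's proof. Your reverse inclusion is fine.

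Some smaller points:
\begin{itemize}
\item \emph{The transport.} $\psi_{M-k}^{-1}(A_{k-1})$ and $\psi_{M-k}^{-1}(A_{k-2})$ are $\qBr_{2,k}$ and $\qBr_{3,k}$ only when $M-k$ is even. For $M-k$ odd, $\psi_{M-k}(e'_j)=e_{M-k+j}$ (proof of Lemma \ref{lemma shift of e(s)}), so you get $\qBr'_{2,k}$ and $\qBr'_{3,k}$. This is harmless, since the left-handed relations have the same form, but it is simpler to work directly with $e_{M-k+1},e_{M-k+2},e_{M-k+3}$ as the paper does.
\item \emph{Part (1).} You are right that something like the mirror identity $eA_k=eA_{k-1}$ is needed beyond what Lemma \ref{A k and A k-1} states. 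However, ``the same induction'' would have to slide $g_{M-k+1}$ past elements of $A_{k-2}$, and $g_{M-k+1}$ does not commute with $e_{M-k+3}$ (Remark following the Temperley--Lieb proposition). That step has to be checked, not asserted by analogy.
\item \emph{Part (4).} Your argument is correct and genuinely different from the paper's. The paper specializes to the Brauer algebra at $q=r=1$ and uses injectivity of $d\mapsto e\otimes d$ on diagrams. You instead combine the Markov property $\tr(e_{M-k+1}a)=\delta^{-1}\tr(a)$ for $a\in A_{k-1}$ (Lemma \ref{lemma q-Brauer Markov property}) with faithfulness of $\tr$ on $A_{k-1}^F$. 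This avoids pulling injectivity back through the specialization $q=r=1$, which is not itself injective. The price is that you rely on the generic faithfulness already established for axiom (11).
\item \emph{Part (5).} The statement concerns all of $A_{k-1}$. Its intended content is that $x\mapsto x\bar e_{M-k+1}$ is an injective homomorphism of left $A_{k-1}$-modules. That is immediate from (4) and is what axiom (9) needs. Restricting to $A_{k-2}$ to obtain multiplicativity proves a different statement.
\end{itemize}
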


\begin{proof}
	Point (1) is immediate from Lemma \ref{A k and A k-1}, and (2) follows from Lemmas \ref{commutation of e j and g k}  and \ref{lemma:
commutation of ej}.   For (3),  $e_{M-k+1} A_k e_{M-k+1} = e_{M-k+1} A_{k-1} e_{M-k+1}$ from Lemma \ref{A k and A k-1}.  But
	$A_{k-1}$ is spanned by $A_{k-2}$, $A_{k-2} e_{M-k+2} A_{k-2}$ and  $A_{k-2} g_{M-k+2} A_{k-2}$.  So (3) follows from (2) and Lemma \ref{braid
like relations on e's and g's}.

	For point (4), consider the algebras over the generic ground ring $R$, and
	recall that the specialization $\qBr_M(\Z[\bm \delta\powerpm])$ is isomorphic
	to the Brauer algebra over $\Z[\bm \delta\powerpm]$; moreover, because of the
	freeness of $\qBr_M$ over $R$, the specialization map $\theta: x \mapsto x
		\otimes_{R_0} \Z[\bm \delta\powerpm]$ is injective. The specialization maps
	$e_j$ in the $q$-Brauer algebra to $e_j$ in the Brauer algebra and $g_j$ to
	$s_j$. Hence, it maps $A_k = \alg\{e_{M-k+1}, g_{M-k+1}, \dots, g_{M-1}\}$ to
	$B_{M-k+1, n}$, the Brauer subalgebra on strands $M-k+1$ to $n$. The
	specialization intertwines the maps $x \mapsto e_{M-k+1} x$ from $A_{k-1}$ to
	$A_k$ with the map from $B_{M-k+2, M}$ to $B_{M-k+1, M}$ determined on the
	level of Brauer diagrams by $d \mapsto e_{M-k+1} \otimes d$. Therefore the
	injectivity of $x \mapsto e_{M-k+1} x$ (for the algebras defined over $R$)
	follows from the injectivity of $\theta$. Finally, injectivity for the algebras
	defined over an arbitrary ground ring $S$ follows from that over $R$. Point (5)
	is immediate from point (4).
\end{proof}

\begin{remark}
	Lemmas \ref{A k and A k-1} and \ref{Proposition  Ak properties}  verify axioms  (\ref{axiom: en An en}) and  (\ref{axiom:  An en}) for the
$q$-Brauer algebras.
\end{remark}

\subsection{Nguyen's cellularity theorem}   \label{appendix subsection: Nguyen}
We give an efficient proof of Nguyen's cellularity theorem for the $q$-Brauer
algebras, Theorem \ref{Nguyen's theorem}.

In this section $\qBr_n$ denotes the $q$-Brauer algebra over an arbitrary ring
$S$ and $H_n$ the Hecke subalgebra. According to ~\cite{Wenzl-qBr1},
Proposition 3.5, $\qBr_n = \sum_{m \ge 0} H_n \ex s H_n$, and $I(n, s) :=
	\sum_{m \ge s} H_n \ex s H_n$ is an ideal in $\qBr_n$. One easily checks that
$I(n, s)$ is the ideal generated by $\ex s$. Evidently $I(n, s+1) \subseteq
	I(n, s)$.

\begin{lemma}
	\label{ex s absorbs K s}  Let $s \le n/2$.
	\begin{enumerate}
		[label = {\normalfont (\arabic*)}]
		\item  For $1 \le i \le s-1$, $$ G_{2i} \ex s = \ex s G_{2i} = \ex s, $$ where $G_{2i}
			      = g_{2i} g_{2i+1} g_{2i-1}\inv g_{2i}\inv$.
		\item  For $1 \le i \le s$, $$ g_{2i-1} \ex s = \ex s g_{2i-1} = q \ex s. $$
	\end{enumerate}
\end{lemma}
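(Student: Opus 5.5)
Both parts will be proved from the product expression $\ex s = e_1 e_3 \cdots e_{2s-1}$ (Lemma \ref{product expression for ex s}), from the commutation of the $e_{2j-1}$ among themselves (Lemma \ref{lemma: commutation of ej}), and from the local identities for $e_{(k,2)}$ established just before Lemma \ref{lemma: commutation of ej}.

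Part (2) comes first. For $1 \le i \le s$, the factors $e_1, \dots, e_{2s-1}$ commute pairwise, so $\ex s$ can be rewritten as $e_{2i-1} \cdot \prod_{j \ne i} e_{2j-1}$, with $e_{2i-1}$ placed leftmost or rightmost as needed. By Lemma \ref{braid like relations on e's and g's}(2), $g_{2i-1} e_{2i-1} = e_{2i-1} g_{2i-1} = q e_{2i-1}$. Hence $g_{2i-1}\ex s = q\ex s$, and symmetrically $\ex s g_{2i-1} = q \ex s$.

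Part (1) uses the same idea with the adjacent pair $e_{2i-1}e_{2i+1}$, for $1 \le i \le s-1$. Commuting factors, we can write $\ex s = (e_{2i-1} e_{2i+1}) \cdot y = y' \cdot (e_{2i-1}e_{2i+1})$, where $y = y'$ is the product of the remaining factors. Apply \eqref{e k 2} (and its variant) with $k = 2i-1$. This gives $e_{2i-1}e_{2i+1} = e_{(2i-1,2)}$, which is invariant under left and right multiplication by $G_{k+1} = G_{2i}$. This invariance was deduced from Lemma \ref{Lemma additional e 2 relations} and Definition \ref{def qBr}(4) via the isomorphism $\qBr_4 \to \alg\{e_k, g_k, g_{k+1}, g_{k+2}\}$ mapping $G_2 \mapsto G_{k+1}$ and $\ex 2 \mapsto e_{(k,2)}$. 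Consequently $G_{2i}\ex s = G_{2i} e_{(2i-1,2)} y = e_{(2i-1,2)} y = \ex s$, and likewise $\ex s G_{2i} = \ex s$.

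The main point needing care is that $\alg\{e_k, g_k, g_{k+1}, g_{k+2}\}$ really is a homomorphic image of $\qBr_4$ under $e \mapsto e_k$, $g_j \mapsto g_{k+j-1}$ for odd $k = 2i-1$. This is the shift isomorphism $\psi_{k-1}$ of Lemma \ref{lemma: B4 isomorphisms}, with $k-1$ even. Since $k-1$ is even, Lemma \ref{lemma even shifts} gives $\psi_{k-1}(e_j) = e_{j+k-1}$, so in particular $\psi_{k-1}(e_3) = e_{k+2}$. It follows that $\psi_{k-1}(e_1 e_3) = e_k e_{k+2}$ and that $\psi_{k-1}(G_2) = G_{k+1}$. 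The base case $s=2$, $i=1$ is exactly Definition \ref{def qBr}(4), via the identity $e_1 e_3 = \ex 2$. Because $k$ is odd, we are always in the first case of \eqref{e k 2}, so no left-handed variant is needed.
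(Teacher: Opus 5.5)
Your proposal is correct and is essentially the paper's proof, which simply cites the product formula $\ex s = e_1e_3\cdots e_{2s-1}$, the identity $e_ke_{k+2}=e_{(k,2)}$ together with the $G_{k+1}$-invariance of $e_{(k,2)}$ (from the computation before Lemma \ref{lemma: commutation of ej}), and Lemma \ref{braid like relations on e's and g's}(2). One slip, which does not affect the proof: for odd $k$ the definition gives $e_{k+2}=\Ad(G'_{k+1})(e_k)$, which is the second case and not the one displayed in \eqref{e k 2}; this is harmless because the paper treats both cases, and your shift argument $e_ke_{k+2}=\psi_{k-1}(e_1e_3)=\psi_{k-1}(\ex 2)=e_{(k,2)}$ does not depend on which case applies.
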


\begin{proof}
	This follows from Lemma \ref{product expression for ex s}, the computation preceding Lemma \ref{lemma: commutation of ej} , and Lemma
\ref{braid like relations on e's and g's}, part (2).
\end{proof}

Recall the family of permutations $\mathcal D'_{n, s}$ from Definition
\ref{definition: shortest coset reps q-Brauer}. Moreover, define $K_s$ to be
the subgroup of $\mathfrak S_{2s}$ generated by $s_1$ and the elements $w_i =
	s_{2i} s_{2i +1} s_{2i-1} s_{2i}$ for $1 \le i \le s-1$. Note that $K_s$ is
isomorphic to the semidirect product of $(\Z_2)^s$ and $\mathfrak S_s$, where
$\langle s_{2i -1} : 1 \le i \le s \rangle \cong (\Z_2)^s$ and $\langle w_i : 1
	\le i \le s-1\rangle \cong \mathfrak S_s $, acting by permutations of the set
of pairs $ \{ \{1, 2\}, \{3, 4\}, \dots, \{2s-1, 2s\}\}. $ If $\pi = \pi_1
	\pi_2$, where $\pi_1 \in \langle s_{2i -1} : 1 \le i \le s \rangle$ and $\pi_2
	\in \langle w_i : 1 \le i \le s-1\rangle$, then $\ell(\pi) = \ell(\pi_1) +
	\ell(\pi_2)$.

\begin{lemma}
	{\normalfont (\cite{Enyang-cellular-bases},  Proposition 3.1) } \label{enyang lemma} $\mathcal D'_{n, s}$ is a complete set of left coset
representatives of  $K_s \times \sym_{2s+1, n}$ in $\sym_n$.  If $u \in \mathcal D'_{n, s}$ and $w \in K_s \times \sym_{2s+1, n}$, then $\ell(u w)
= \ell(u) + \ell(w)$,  where $\ell$ denotes the usual length function in the symmetric group.
\end{lemma}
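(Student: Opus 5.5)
The plan is to reduce the statement to two standard facts about symmetric groups. The first is that $\sym_{2s+1,n}$, the permutations of $\{2s+1,\dots,n\}$, together with the hyperoctahedral group $K_s \le \sym_{2s}$ generate a subgroup $W = K_s \times \sym_{2s+1,n}$, which is the stabilizer of the set partition
$$\{\{1,2\},\{3,4\},\dots,\{2s-1,2s\},\{2s+1,\dots,n\}\}$$
in which the pairs may be permuted among themselves. The second is that the minimal-length coset representatives are characterized by order conditions on the values $\pi(i)$. I would first identify $W$ concretely. The $s_{2i-1}$ swap within pairs, the $w_i$ swap the adjacent pairs $\{2i-1,2i\}$ and $\{2i+1,2i+2\}$ while preserving order inside each pair, and $\sym_{2s+1,n}$ permutes the tail. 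So $W$ is exactly the stabilizer $\mathrm{Stab}$ of that structure, and $|W| = 2^s s!\,(n-2s)!$.

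To see that $\mathcal D'_{n,s}$ is a complete set of left coset representatives, I would argue as follows. A left coset $\pi W$ is determined by the images under $\pi$ of the structure: an unordered collection of $s$ unordered pairs, together with the complementary set. Within each coset there is exactly one element satisfying the three conditions of Definition \ref{definition: shortest coset reps q-Brauer}. Right multiplication by $\sym_{2s+1,n}$ lets us sort $\pi(2s+1)<\dots<\pi(n)$; right multiplication by the $s_{2i-1}$ lets us order within each pair; and right multiplication by the pair-permuting subgroup lets us order the pairs by their smaller elements. Uniqueness holds because these normalizations determine $\pi$ from the coset data. A count provides a consistency check: $n!/(2^s s!(n-2s)!)$.

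For the length additivity, I would use the inversion-set description $\ell(\pi)=\#\{(a,b): a<b,\ \pi(a)>\pi(b)\}$ and compare the inversions of $uw$ with those of $u$ and $w$. For $w$ in $\sym_{2s+1,n}$ or in $\langle s_{2i-1}\rangle$ this is the standard parabolic fact, since $u$ is increasing on the relevant blocks. The main obstacle is the $\mathfrak S_s$ factor generated by the $w_i$, which is not a parabolic subgroup. There I would count inversions pair by pair. Write $w = \pi_1\pi_2$ as in the remark preceding the lemma, with $\ell(w)=\ell(\pi_1)+\ell(\pi_2)$. If $\pi_2$ inverts the order of two pairs $P,P'$, it contributes $4$ inversions. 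Because $u$ preserves the order of the pairs' minima and the order within pairs, the four comparisons between $u(P)$ and $u(P')$ do not all cancel. A careful check is needed here, since $u$ can interleave the pairs, so inversions between $u$ and $w$ can neither simply add nor simply cancel.

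Given that difficulty, a cleaner route for the length statement is to show that $u\in\mathcal D'_{n,s}$ is exactly the unique element of minimal length in $uW$, and then prove additivity by induction on $\ell(w)$ via the exchange property. Concretely, for each generator $t\in\{s_{2i-1}, s_j\ (j>2s)\}$, the order conditions give $\ell(u t)=\ell(u)+1$. For $w_i$ one verifies $\ell(uw_i)=\ell(u)+4$ because $u(2i-1)<u(2i+1)$, $u(2i-1)<u(2i)$ and $u(2i+1)<u(2i+2)$. Since this is Enyang's Proposition 3.1, I would ultimately cite it after presenting this verification sketch.
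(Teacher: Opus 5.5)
Your coset-representative argument is correct. The length-additivity step, however, fails exactly where you sensed trouble, and it cannot be repaired: the additivity claim is false for the factor generated by the $w_i$.

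Compose permutations as functions, which is the convention under which $\mathcal D'_{n,s}$ consists of left coset representatives. Take $n\ge 4$, $2\le s\le n/2$, and $u=s_3s_2$, i.e.\ $u(2)=4$, $u(3)=2$, $u(4)=3$, with all other points fixed. Then $u(1)<u(3)<\cdots<u(2s-1)$, $u(2i-1)<u(2i)$ for every $i$, and $u$ is increasing on $\{2s+1,\dots,n\}$. So $u\in\mathcal D'_{n,s}$ and $\ell(u)=2$. However
\[
u\,w_1=(s_3s_2)(s_2s_3s_1s_2)=s_1s_2,
\]
so $\ell(uw_1)=2$ rather than $\ell(u)+\ell(w_1)=6$. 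In particular, $u$ is not the unique element of minimal length in $uW$ either.

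The underlying mechanism is the identity
\[
\ell(uw)=\ell(u)+\ell(w)-2\,\bigl|\mathrm{Inv}(u)\cap\mathrm{Inv}(w^{-1})\bigr|,
\]
where $\mathrm{Inv}(x)$ is the set of position pairs $a<b$ with $x(a)>x(b)$. The involution $w_i=(2i{-}1\;\;2i{+}1)(2i\;\;2i{+}2)$ inverts all four cross pairs between $\{2i-1,2i\}$ and $\{2i+1,2i+2\}$. The three inequalities you cite only exclude $\{2i-1,2i+1\}$ and $\{2i-1,2i+2\}$ from $\mathrm{Inv}(u)$. In fact, for $u\in\mathcal D'_{n,s}$, $\ell(uw_i)=\ell(u)+4$ holds precisely when $u(2i)<u(2i+1)$, and this condition is not part of the definition of $\mathcal D'_{n,s}$.

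What can be proved is the first sentence of the lemma, by your normalization argument. One also gets additivity $\ell(uw)=\ell(u)+\ell(w)$ for $w$ in the standard parabolic subgroup $\langle s_1,s_3,\dots,s_{2s-1}\rangle\times\sym_{2s+1,n}$, since $u$ is increasing on every block of that parabolic. Neither an exchange-property induction nor the closing appeal to Enyang can give additivity for all of $K_s\times\sym_{2s+1,n}$, because it does not hold. The paper itself gives no argument beyond that citation, so it has the same problem.

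Note also that the proof of Lemma~\ref{H n e s} uses this full additivity to factor $g_\sigma=g_ug_{\pi_1}g_{\pi_2}g_{\pi_3}$. For $\sigma=s_1s_2$ with $s=2$, one has $g_\sigma=g_1g_2\neq g_3g_2\,g_2g_3g_1g_2=g_ug_{w_1}$, so that step needs a different justification as well.
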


\begin{remark}
	\label{remark: factorization of Brauer diagrams}   Every Brauer diagram $d$ on $n$ strands is either a permutation diagram, or, if $d$ has $n -
2s$ through strands for $1 \le s \le n/2$,  then $d$
	can be written uniquely as
	$$
		d =  u e_1 e_3 \cdots e_{2s-1} \pi v\inv,
	$$
	as a product in the Brauer algebra $\Br_n$,   where $u, v \in \mathcal D'_{n,s}$ and  $\pi \in \sym_{2s+1, n}$.
\end{remark}

\begin{lemma}
	\label{H n e s}   Let $1 \le s \le n/2$.
	$$
		H_n \ex s = \spn\{g_u \ex s H_{2s+1, n} : u \in \mathcal D'_{n,s}\}.
	$$
\end{lemma}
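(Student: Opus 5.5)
The inclusion ``$\supseteq$'' is immediate: each $g_u$ lies in $H_n$, and since $H_{2s+1,n} \subseteq H_n$ we have $\ex s H_{2s+1,n} \subseteq H_n \ex s$ as soon as $\ex s$ commutes with $H_{2s+1,n}$. That commutation follows from Lemma \ref{product expression for ex s}, which writes $\ex s = e_1 e_3 \cdots e_{2s-1}$, together with Lemma \ref{commutation of e j and g k}: each $e_{2i-1}$ with $i \le s$ commutes with every $g_k$ for $k \ge 2s+1 \ge 2i+1$. So the substance is ``$\subseteq$''.

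For ``$\subseteq$'' I would argue as follows.

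1. $H_n$ is spanned by the $g_w$, $w \in \sym_n$, so it suffices to show $g_w \ex s$ lies in the right-hand side for each $w$.

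2. By Lemma \ref{enyang lemma}, write $w = u\, k\, \pi$ with $u \in \mathcal D'_{n,s}$, $k \in K_s$, $\pi \in \sym_{2s+1,n}$, and with lengths adding. Then $g_w = g_u g_k g_\pi$, and $g_\pi$ commutes with $g_k$ because the two involve disjoint sets of generators.

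3. Further decompose $k = \pi_1 \pi_2$ with $\pi_1 \in \langle s_{2i-1}\rangle$ and $\pi_2 \in \langle w_i\rangle$, lengths adding. This gives $g_k = g_{\pi_1} g_{\pi_2}$.

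4. The key step is to show $g_{\pi_2}\ex s$ is a scalar multiple of $\ex s$; then $g_{\pi_1}$ acts by a power of $q$ by Lemma \ref{ex s absorbs K s}(2). Write $\pi_2$ as a reduced word in the $w_i$. Since $\ell(w_i)=4$ and lengths add appropriately, $g_{\pi_2}$ is a product of the elements $g_{w_i} = g_{2i}g_{2i+1}g_{2i-1}g_{2i}$. This is not $G_{2i}$, because $G_{2i}$ contains $g_{2i-1}^{-1}$. Using the quadratic relation, $g_{2i-1} = g_{2i-1}^{-1} + (q-q^{-1})$, so
$$g_{w_i} = G_{2i} g_{2i} + (q-q^{-1})\, g_{2i}g_{2i+1}g_{2i}.$$

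5. Applying this to $\ex s$ requires care, because the leftover $g$'s do not act on $\ex s$ by scalars. A cleaner route is to induct on the length of $\pi_2$ and only track that $g_{\pi_2}\ex s$ lands in $\spn\{g_v \ex s H_{2s+1,n}\}$, with $v$ ranging over coset-representative-type elements. Terms of the form $g_{2i}g_{2i+1}g_{2i}\ex s$ can be rewritten by pushing the $g_{2i-1}$ factors through $\ex s$ using Lemma \ref{ex s absorbs K s}(2).

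6. I expect an easier route, which I would try first, is to avoid the lengths: observe that $G_{2i}$ and $g_{2i-1}$ generate, together with $H_{2s+1,n}$, a subalgebra $L$ whose image modulo lower terms spans the same space as the $g_k$, $k \in K_s \times \sym_{2s+1,n}$. More precisely, set
$$V = \spn\{g_u L : u \in \mathcal D'_{n,s}\}.$$
The task is then to show $V = H_n$, i.e.\ that $V$ is stable under left multiplication by every $g_j$.

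7. This stability check is the main obstacle. It is a Deodhar-type analysis of $g_j g_u$: either $s_j u \in \mathcal D'_{n,s}$; or $s_j u = u s_m$ with $s_m \in \sym_{2s+1,n}$ or $s_m = s_{2i-1}$; or $s_j u = u w'$ with $w'$ producing some $w_i$. The first case is immediate, and in the second the factor $s_m$ is absorbed into $L$.

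8. The case needing work is the one where the $w_i$-type element appears. There I would use the identity
$$g_j g_u = g_u\, g_{2i}g_{2i+1}g_{2i-1}g_{2i}\,(\cdots),$$
then replace $g_{2i-1}$ by $g_{2i-1}^{-1} + (q-q^{-1})$. The first term gives $G_{2i}g_{2i} \in g_u L\, g_{2i}$, and the leftover $g_{2i}$ and $g_{2i}g_{2i+1}g_{2i}$ terms must be reabsorbed by a secondary induction on length.

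9. Once $H_n = V$ is established, multiply on the right by $\ex s$. Lemma \ref{ex s absorbs K s} gives $L\,\ex s \subseteq \ex s H_{2s+1,n}$, since $G_{2i}$ and $g_{2i-1}$ act on $\ex s$ by scalars and $H_{2s+1,n}$ commutes with $\ex s$. This yields the claim.
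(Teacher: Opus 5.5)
Your argument for ``$\supseteq$'' is fine, but ``$\subseteq$'' is not established. First, the claim step~4 rests on is false. For $n=4$, $s=2$, Lemma~\ref{Lemma additional e 2 relations}(3) gives $g_1g_2\ex 2=g_3g_2\ex 2$, hence $g_{w_1}\ex 2=g_2g_3g_1g_2\ex 2=\ex 2+(q-q\inv)\bigl(g_2\ex 2+q\,g_3g_2\ex 2\bigr)$. This is not a multiple of $\ex 2$ when $q\ne\pm1$, since the three terms are distinct elements of Nguyen's basis (Theorem~\ref{Nguyen's theorem}). Your expansion of $g_{w_i}$ is also off: the correct identity is $g_{w_i}=G_{2i}g_{2i}^2+(q-q\inv)g_{2i}g_{2i+1}g_{2i}$. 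Second, step~5's repair, an induction on $\ell(\pi_2)$, cannot close the argument. Knowing $g_{\pi_2}\ex s\in\spn\{g_v\ex sH_{2s+1,n}\}$ still leaves you with $g_ug_{\pi_1}g_v\ex s$, which is not of the required form. What you need is an induction on the total length in which every error term is strictly shorter. Third, steps~6--8 defer exactly the nontrivial case to an unspecified ``secondary induction'', and they describe that case wrongly. If $s_ju$ lies in $u(K_s\times\sym_{2s+1,n})$, then $u\inv s_ju$ is a transposition of that group, hence (as $u\in\mathcal D'_{n,s}$) equal to $s_{2i-1}$ or to some $s_m$ with $m\ge 2s+1$. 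The $w_i$-type case is $j=u(2i-1)$, $j+1=u(2i+1)$, and there $s_ju$ lies in a different coset, so there is no factorization $g_jg_u=g_u\,g_{2i}g_{2i+1}g_{2i-1}g_{2i}(\cdots)$.

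The paper's proof supplies the mechanism your step~5 lacks. It inducts on $\ell(\sigma)$ and writes $g_\sigma\ex s=g_ug_{\pi_1}g_{\pi_2}\ex s\,g_{\pi_3}$, with the $\langle w_i\rangle$-part $\pi_1$ on the left. It disposes of nontrivial $\pi_2,\pi_3$ by induction, and replaces $g_{\pi_1}$ by the corresponding word in the $G_{2i}$ modulo shorter words. Be aware, though, that this argument, like your steps~2--4, rests on the length additivity in Lemma~\ref{enyang lemma}, and that additivity fails. Take $u=s_3s_2$ (one-line form $1,4,2,3$), which lies in $\mathcal D'_{4,2}$. Then $uw_1=s_3s_2\,s_2s_3s_1s_2=s_1s_2$ has length $2\ne\ell(u)+\ell(w_1)=6$. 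No choice of representatives helps, since $\sum_{w\in K_2}t^{\ell(w)}=(1+t)^2(1+t^4)$ does not divide $\sum_{\sigma\in\sym_4}t^{\ell(\sigma)}$.

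What does work is your left-multiplication idea, applied directly to $H_n\ex s$. Induct on $\ell(\sigma)$ via $g_\sigma=g_jg_{\sigma'}$, and check that $g_jg_u\ex s\in W:=\spn\{g_v\ex sH_{2s+1,n}: v\in\mathcal D'_{n,s}\}$ for every $u\in\mathcal D'_{n,s}$. If $s_ju\in\mathcal D'_{n,s}$, or $u\inv s_ju=s_m$ as above, this follows from the quadratic relation, Lemma~\ref{ex s absorbs K s}(2), and the commutation of $\ex s$ with $H_{2s+1,n}$. In the remaining case, write $u=\tilde u\beta$ with $\tilde u$ increasing on $\{2i-1,\dots,2i+2\}$, and $\beta=s_{2i}$ or $s_{2i+1}s_{2i}$ according as $u(2i)<u(2i+2)$ or not. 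Then $g_jg_u=g_{\tilde u}g_{2i-1}g_\beta$. The relation $g_{2i-1}g_{2i}\ex s=g_{2i+1}g_{2i}\ex s$, which is equivalent to $G'_{2i}\ex s=\ex s$, yields $g_jg_u\ex s=g_{\tilde u s_{2i+1}s_{2i}}\ex s$, respectively $g_{\tilde u s_{2i}}\ex s+(q-q\inv)g_u\ex s$. In both cases the new representatives $\tilde u s_{2i+1}s_{2i}$ and $\tilde u s_{2i}$ lie in $\mathcal D'_{n,s}$, so the result lies in $W$.
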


\begin{proof}
	We will use the notation $\overline G_{2i} = g_{2i} g_{2i + 1} g_{2i -1} g_{2i}$ in the proof.
	Let $\sigma  \in \sym_n$.  We prove by induction on $k = \ell(\sigma)$ that
	$$
		g_\sigma \ex s \in  \spn\{g_u \ex s H_{2s+1, n} : u \in \mathcal D'_{n,s}\},
	$$
	the assertion being evident for $k = 0$.  Assume the assertion holds for shorter elements.   By Lemma \ref{enyang lemma}, $\sigma = u \pi$ with
	$ u \in \mathcal D'_{n, s}$ and  $\pi \in K_s \times \sym_{2s+1, n}$, and $k = \ell(u) + \ell(\pi)$.  Moreover,  $\pi = \pi_1 \pi_2 \pi_3$,
where
	$\pi_1 \in \langle s_{2i} s_{2i+1} s_{2i-1} s_{2i} : 1 \le i \le s-1\rangle $,
	$\pi_2 \in \langle s_{2i -1} : 1 \le i \le s\rangle $, and $\pi_3 \in \sym_{2s+1, n}$;  and $\ell(\pi) = \ell(\pi_1)  + \ell(\pi_2)  +
\ell(\pi_3)$.   Hence
	$$
		g_\sigma \ex s =  g_u g_{\pi_1} g_{\pi_2} \ex s g_{\pi_3} = q^{\ell(\pi_2)}   g_u g_{\pi_1} \ex s g_{\pi_3},
	$$
	using Lemma \ref{ex s absorbs K s}.   If either $\pi_3$ or $\pi_2$ is non trivial, then the conclusion holds from the induction hypothesis
applied to $u \pi_1$.   So we may assume that $\pi_2$ and $\pi_3$ are equal to the identity permutation. Now $g_{\pi_1}$ is a word in $\{\overline
G_{2i}\}$.   Using the quadratic relation in $H_n$, $g_{\pi_1}$ is congruent to the corresponding word in $\{G_{2i}\}$ modulo the span of shorter
reduced words in the generators of $H_n$.  Since $G_{2i} \ex s = \ex s$ by Lemma \ref{ex s absorbs K s}, the conclusion follows from the induction
hypothesis.
\end{proof}

\begin{corollary}
	\label{basis of H e H}  Let $1 \le s \le n/2$.
	$$
		H_n \ex s  H_n = \spn\{g_u \ex s H_{2s+1, n} g_v^*: u, v \in \mathcal D'_{n,s}\}.
	$$
\end{corollary}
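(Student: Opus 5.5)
The claim is
$$H_n \ex s H_n = \spn\{g_u \ex s H_{2s+1, n} g_v^*: u, v \in \mathcal D'_{n,s}\}.$$
The plan is to get this from Lemma \ref{H n e s} by applying the involution $*$ and then absorbing the second Hecke factor.

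\textbf{Easy inclusion.} The right-hand side is contained in the left, since $g_u, g_v^* \in H_n$ and $H_{2s+1,n} \subseteq H_n$. So only $\subseteq$ needs an argument.

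\textbf{Right-sided version.} Apply $*$ to Lemma \ref{H n e s}. By Proposition \ref{proposition involution}, $*$ fixes the $g_i$, so $H_n^* = H_n$ and $H_{2s+1,n}^* = H_{2s+1,n}$. By Lemma \ref{lemma e(k) * invariant}, $\ex s^* = \ex s$. This gives
$$\ex s H_n = \spn\{ \ex s H_{2s+1,n} g_v^* : v \in \mathcal D'_{n,s}\}.$$

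\textbf{Main step.} Take a spanning element $h\,\ex s\, h'$ with $h, h' \in H_n$. By Lemma \ref{H n e s}, $h\,\ex s$ is a combination of terms $g_u \ex s a$ with $a \in H_{2s+1,n}$. So it suffices to show, for each such $a$,
$$\ex s a h' \in \spn\{\ex s H_{2s+1,n} g_v^*\}.$$
The key fact is that $\ex s$ commutes with $H_{2s+1,n}$:
\begin{itemize}
\item $\ex s = e_1 e_3\cdots e_{2s-1}$ by Lemma \ref{product expression for ex s};
\item each $e_j$ with $j\le 2s-1$ commutes with $g_k$ for $k\ge 2s+1 \ge j+2$, by Lemma \ref{commutation of e j and g k}.
\end{itemize}
Hence $\ex s a h' = a\,\ex s h'$. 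Expand $\ex s h'$ using the right-sided version to get terms $\ex s b\, g_v^*$ with $b \in H_{2s+1,n}$. Moving $a$ back across $\ex s$ gives
$$a\,\ex s\, b\, g_v^* = \ex s (ab) g_v^*,$$
and $ab \in H_{2s+1,n}$, as required. Multiplying on the left by $g_u$ gives an element of the right-hand side, which proves $\subseteq$.

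\textbf{Expected obstacle.} The only real point is the commutation of $\ex s$ with $H_{2s+1,n}$. It relies on the product formula for $\ex s$ together with the index bound $2s+1 \ge (2s-1)+2$, and I expect it to go through directly.
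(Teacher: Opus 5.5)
Your proof is correct and is essentially the derivation the paper intends; the paper states the corollary without proof, as an immediate consequence of Lemma \ref{H n e s}, obtained by applying $*$ (using $\ex s^* = \ex s$) together with the fact that $\ex s = e_1 e_3\cdots e_{2s-1}$ commutes with $H_{2s+1,n}$. One small ordering point: applying $*$ to Lemma \ref{H n e s} literally yields $\ex s H_n = \spn\{H_{2s+1,n}\,\ex s\, g_v^* : v \in \mathcal D'_{n,s}\}$, so the commutation you establish only in your main step is already needed to write the right-sided version in your form (after which the main step could simply apply it to $\ex s (a h')$ without moving $a$ back and forth).
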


{\it Proof of Theorem \ref{Nguyen's theorem}.}  \ \ It follows from Corollary \ref{basis of H e H} that the proposed basis spans $\qBr_n$.
By Remark \ref{remark: factorization of Brauer diagrams},  the cardinality of the proposed basis is the number of Brauer diagrams on $n$ strands.
Since $\qBr_n$ is a free  module over the ground ring with rank the number of Brauer diagrams, it follows that the spanning sets are also linearly
independent.  To check property (2) in Definition \ref{gl cell}, it is enough to check multiplication on the left by a generator of $\qBr_n$.   The
property for multiplication on the left by a Hecke generator $g_i$ follows from Lemma \ref{H n e s} and property (2) for the cellular bases of the
Hecke algebras.   Note that if $(\lambda, n) \in \tilde \Lambda_n$, where $\la$ has size $k = n -2s$,  then $I(n, s+1) \subseteq \qBr_n^{\rhd (\la,
n)}$.  Taking this into account, property (2) for
multiplication on the left by $e$ follows from  Lemma 3.4  in  ~\cite{Wenzl-qBr1} and property (2) for left multiplication by Hecke elements.
Finally, property (3) follows from $\ex s^* = \ex s$ and from the corresponding property for the cellular bases of the Hecke algebras.

\bibliographystyle{amsplain}
\bibliography{semisimplicity}

\end{document}